\documentclass[11pt,twoside]{article} 
\usepackage[utf8]{inputenc} \usepackage[T1]{fontenc} 
\usepackage[pagewise,switch,mathlines]{lineno}
\usepackage{tabularx,array}
\usepackage[a4paper, left=25mm, right=25mm, top=25mm, bottom=25mm]{geometry} 
\usepackage{graphicx} \usepackage{setspace} \usepackage{xcolor} \usepackage{colortbl} 
\usepackage{amsmath,amssymb,amsthm} \usepackage{mathtools} \mathtoolsset{showonlyrefs} \usepackage{mathrsfs} \usepackage{bm} 
\usepackage{circuitikz} 
\usepackage[all,2cell]{xy}
\usepackage{tikz-cd} \usetikzlibrary{cd} 
\usepackage{enumerate}
\usepackage{marginnote}
\usepackage{microtype}
\usepackage{booktabs}

\usepackage[colorinlistoftodos]{todonotes}
\usepackage{comment} 

\usepackage{imakeidx} \makeindex[intoc] 

\usepackage[nottoc]{tocbibind} 

\usepackage[colorlinks=true]{hyperref} \hypersetup{urlcolor=blue, citecolor=red, linkcolor=blue} 
\usepackage[
  backend=biber,
  style=numeric-comp,
  sorting=nyt,
  sortcites=true,
  giveninits=true,
  maxnames=99
]{biblatex}

\theoremstyle{plain}
\newtheorem{theorem}{Theorem}[section]
\newtheorem{proposition}[theorem]{Proposition} 
\newtheorem{lemma}[theorem]{Lemma}

\theoremstyle{definition}
\newtheorem{definition}[theorem]{Definition}

\theoremstyle{remark}
\newtheorem{remark}[theorem]{Remark}
\newtheorem{example}[theorem]{Example} 

\newcommand{\dd}{\mathrm{d}}

\newcommand{\T}{\mathrm{T}}

\usepackage{fancyhdr}
\usepackage{listings}

\begin{document} \parskip=3pt \vspace{5em} {\huge\sffamily\raggedright \begin{spacing}{1.1} Jet Bundles as Higher-Order Polarised \(k\)-Contact Manifolds\end{spacing} } \vspace{2em} {\large\raggedright \today } \vspace{3em} \\
{\Large\raggedright\sffamily Javier de Lucas }\vspace{1mm}\newline {\raggedright Department of Mathematical Methods in Physics, University of Warsaw,\\ ul. Pasteura 5, 02-093 Warszawa, Poland.\\Centre de Recherches Math\'ematiques, Universit\'e de Montr\'eal,\\ Succ. Centre-Ville, CP6128, chemin de la Tour, Montréal (Québec), Canada H3C 3J7. \\ e-mail: \href{mailto:javier.de.lucas@fuw.edu.pl}{javier.de.lucas@fuw.edu.pl} --- orcid: \href{https://orcid.org/0000-0001-8643-144X}{0000-0001-8643-144X} } \bigskip \\\vspace{2em}

{\large\bfseries\raggedright Abstract}\vspace{1mm}\newline
{

Let \(\pi:E\to Q\) be a fibred manifold, with \(\dim Q=n\) and rank \(m\). We prove that the Cartan distribution \(\mathcal C^r_\pi\) on \(J^r\pi\) is an \(N^r_\pi\)-contact distribution, where \(N^r_\pi=m\binom{n+r-1}{r-1}\), by giving a natural local construction of an \(N^r_\pi\)-contact form. This recovers the canonical structure of \(J^r\pi\) and the Spencer contractions, among other structures. It also yields a natural local Hamiltonian structure on \(J^r\pi\), recovering and extending the standard theory of characteristics to general Lie symmetries of the Cartan distribution.

We introduce new classes of polarisations for \(k\)-contact distributions. This leads to our main recognition theorem, which shows that a polarised \(k\)-contact manifold is locally equivalent to a finite-order jet bundle with its Cartan distribution precisely when its polarisation is of jet type. This characterises finite-order jet geometry as polarised \(N_\pi^r\)-contact geometry of jet type. Moreover, the highest-order vertical polarisation, the symbol spaces, the vertical and horizontal differentials, holonomic submanifolds, and initial conditions for differential equations are reconstructed via \(k\)-contact geometry. For instance, adapted coordinates become \(k\)-contact Darboux coordinates, solutions of PDEs are treated as polarised Legendrian submanifolds, jet prolongations are recovered as polarised Legendrian prolongations, and so on.

The resulting formalism gives a deeper geometric understanding of several parts of jet geometry, provides a uniform intrinsic language for constructions that are awkward in a single fixed jet presentation, such as Bäcklund transformations, and allows jet theory to be extended to new problems. In particular, our techniques provide very general reduction methods for PDEs. The results are applied to PDEs with mathematical and physical relevance.

}

\bigskip

{\large\bfseries\raggedright Keywords:}
Jet geometry; Cartan distributions; $k$-contact geometry; polarised $k$-contact manifolds; Spencer operator; holonomic sections; Lie symmetries.

\medskip

{\large\bfseries\raggedright MSC2020 codes:}
34A26, 53D99 (primary), 35F21, 35A09 (secondary)

{\setcounter{tocdepth}{2}
\small
\parskip 0pt
\tableofcontents
}

\pagestyle{fancy}

\fancyhead[LE]{ } 
\fancyhead[RE]{}
\fancyhead[RO]{J. de Lucas} 
\fancyhead[LO]{}    

\fancyfoot[L]{}     
\fancyfoot[C]{\thepage}                  
\fancyfoot[R]{}            


\renewcommand{\headrulewidth}{0.1pt}  
\renewcommand{\footrulewidth}{0pt}    

\renewcommand{\headrule}{%
    \vspace{3pt}                
    \hrule width\headwidth height 0.4pt 
    \vspace{0pt}                
}

\setlength{\headsep}{30pt}  

 
 \section{Introduction}

Jet bundles provide one of the standard geometric languages for differential equations, variational calculus, field theory and symmetry theory. Given a fibred manifold \(\pi:E\to Q\), the finite-order jet bundle \(J^r\pi\) records the Taylor data of local sections of \(\pi\) up to order \(r\). Its Cartan distribution \(\mathcal C^r_\pi\) encodes holonomicity via the usual contact forms, it allows one to describe prolongations of vector fields,  the Spencer operator measures the failure of a tangent vector to be holonomic. These structures underlie the geometric theory of PDEs, formal integrability, symbols, characteristics, Lie symmetries,  the variational bicomplex, and their physical applications \cite{Saunders_89,Olver1993,KV_99,Anderson_92}.

Meanwhile, a contact manifold is a $(2n+1)$-dimensional manifold with a corank one maximally non-integrable distribution \cite{Geiges2008}. In mechanics, contact manifolds provide a natural setting for dissipative and nonconservative systems \cite{Riv_22}. Contact forms in this precise mathematical sense should not be confused with the {\it contact forms} appearing in finite-order jet manifold theory, which give rise to the relevant Cartan distributions \cite{Saunders_89}. Nevertheless, this work will unveil how these contact forms in jet geometry are related to their geometric counterparts in contact geometry and, more specifically, to their generalisations in field theory.

The field-theoretic analogues of the geometric contact forms have led to \(k\)-contact and multicontact geometries, where one replaces the ordinary corank-one contact distribution by structures adapted to several independent variables; see, among others, \cite{LGLRR_20,LL_20,GGMRR_20,GGMRR_21,Riv_22,Riv_23}. These formalisms have been mainly developed for first-order field theories, Herglotz-type variational principles, dissipation laws, nonconservative Hamiltonian and Lagrangian first-order field theories. Until now, however, the relation between \(k\)-contact geometry and  higher-order jet geometry   was essentially completely unknown. Only a Darboux theorem leading to a relation of $k$-contact manifolds to first-order jet manifolds via a particular type of polarisation was known \cite{Riv_22}. 

A {\it $k$-contact manifold} is a manifold that is endowed with a maximally non-integrable distribution of arbitrary constant corank, which admits, locally around every point, a complement spanned by infinitesimal commuting Lie symmetries of the distribution \cite{LRS_24}. More classically \cite{Riv_22}, a $k$-contact manifold is a manifold locally endowed with a vector-valued one-form $\bm\eta$ , called an {\it adapted $k$-contact form}, such that $\ker \bm\eta\oplus \ker \dd\bm\eta=\T U$ and $\ker\bm\eta\neq 0$. The two definitions are locally equivalent and interesting, but they have different applications. Indeed, as shown here, the distributional definition is a key to show that jet manifolds of any finite order can be described via $k$-contact geometry. This fact was also relevant to derive some previous applications of $k$-contact geometry to integrable systems and the theory of distributions \cite{LRS_24,LRS_26,SF_25}.

The first purpose of this paper is to show that $k$-contact geometry recovers finite-order jet geometry as a particular instance and provides a  larger framework that allows for its description, deeper understanding, and generalisations. In short, we first recover finite jets and then extend them. Some relevant applications are schematically described, but the range of potential applications is much wider. Indeed, our second purpose is to show how the new approach extends the classical theory of finite-order jet bundles, while providing techniques that are not natural in the classical jet-bundle setting, but are very useful in the applications of jet geometry to PDEs, symmetries, reductions, transformations, etc. In this sense, the paper describes in Table \ref{tab:future_directions_k_contact} further directions of research, including the variational theory of polarised \(k\)-contact manifolds, the extension of the Riemann invariant method for PDEs \cite{GL_23}, the relation to multisymplectic and multicontact formalisms, the treatment of non-conservative field theories, a general theory of reduction for PDEs, the analysis of $\lambda$ and $\mu$ symmetries, the description of contact geometry with differential one-forms with values in a vector bundle, etc.

After recalling some basic theory on jet manifolds and $k$-contact geometry, and presenting $k$-contact manifolds as a category for the first time with morphisms given by the so-called {\it transcontact maps} (which relate \(k\)-contact manifolds with different values of \(k\)), one introduces $k$-contact morphisms as transcontact maps having an inverse that is a transcontact map.

Then, as a first main result, Theorem \ref{cor:Cartan_Nr_contact} proves that every finite-order jet bundle carries a canonical  \(N^r_\pi\)-contact distribution and a local $N^r_\pi$-contact form. More precisely, if \(\dim Q=n\) and $\pi:E\rightarrow Q$ has rank \(m\), then the Cartan distribution \(\mathcal C^r_\pi\) on \(J^r\pi\) is an \(N^r_\pi\)-contact distribution, where
\[
N^r_\pi=m\binom{n+r-1}{r-1}.
\]
This integer is exactly the corank of \(\mathcal C^r_\pi\). The result is immediate for $r=1$ and previously known \cite{Riv_22}, as contact forms can be gathered to give an adapted \(N^1_\pi\)-contact form. For \(r>1\), the result is not immediate, and only the understanding of $k$-contact distributions in a distributional context gives the answer appropriately. Indeed, the standard higher-order contact forms do not themselves form an adapted \(N^r_\pi\)-contact form. Instead, one can construct \(N^r_\pi\) commuting vector fields \(R^J_\alpha\) transverse to \(\mathcal C^r_\pi\) which are infinitesimal symmetries of \(\mathcal C^r_\pi\). These fields form a so-called {\it local Reeb frame} and are constructed on each adapted chart of $J^r\pi$ naturally. The dual coframe, vanishing on \(\mathcal C^r_\pi\), gives a local adapted \(N^r_\pi\)-contact form \(\bm\eta^r_\pi\). The adapted forms and Reeb frames depend on choices, but their common kernel \(\mathcal C^r_\pi\) is globally defined. This is why the distributional definition of \(k\)-contact geometry recalled in Section~\ref{sec:distributional_kcontact} is the natural one for higher-order jets. 

Our theory explains the features of the illustrating example given in Section~\ref{Sec:IntrExample}. Our construction gives a new interpretation of the {\it Spencer operator}. In Theorem~\ref{cor:Cartan_Nr_contact}, the adapted \(N^r_\pi\)-contact form satisfies
\(
\mathfrak{R}\circ\bm\eta^r_\pi=\mathfrak S^r_\pi,
\)
where \(\mathfrak{R}:e^\alpha_J\in \mathbb{R}^{N^r_\pi}\mapsto R_\alpha^J\in \mathfrak{X}(J^r\pi)\) is the {\it Reeb lift} and \(\mathfrak S^r_\pi\) is the Spencer operator written in the adapted Reeb frame. Moreover, the mixed part of \(\dd\bm\eta^r_\pi\) recovers the {\it algebraic Spencer contractions}, as shown in Section~\ref{sec:polarisations}.  Furthermore, $\mathfrak{R}\circ\bm\eta^r_\pi$ may be understood as the canonical jet structure of $J^r\pi$ \cite{Campos_2010}. Indeed, it is remarkable that, in appropriate local coordinates, $\mathfrak{R}\circ\bm\eta^r_\pi$ coincides with the canonical jet structure of $J^r\pi$. Note that  the study of the individual components of $\bm \eta^r_\pi$ alone, a so-called {\it $k$-contact structure} \cite{Riv_22}, is insufficient for recovering the full geometric structure  related to finite-order jet manifolds. In this sense, this reinforces the approach of \cite{LRS_24}, while the initial $k$-contact structure approach is of course still valid and useful for many purposes, specially in dissipative field theories \cite{Riv_22}.

The second main novelty of this work is a local recognition theorem, Theorem \ref{thm:local_split_jet_recognition}, characterising which $k$-contact manifolds are locally equivalent to finite-order jet manifolds. With this aim, we first introduce polarisations in Definition~\ref{def:higher_order_polarisation} and prove that the highest-order vertical distribution
\(
\mathcal G^r_\pi:=\ker \T\pi_{r,r-1}\subset\mathcal C^r_\pi
\)
is a polarisation of the \(N^r_\pi\)-contact manifold \((J^r\pi,\mathcal C^r_\pi)\); see Lemma~\ref{lem:jet_symbol_dominant}. This polarisation is not the whole vertical bundle over the base. It is the highest-order symbol direction inside the Cartan distribution. Starting from it, one recovers the spaces $\ker \T\pi_{r,r-s}$ for $s=1,\ldots,r-1$, the symbol spaces and the Spencer contractions through a derived flag generated by brackets with local horizontal Cartan directions.

The process of retrieving the whole structure of the jet manifold from the Cartan distribution and the space $\ker\T\pi_{r,r-1}$ leads us to define several types of polarisations that capture, to a certain extent, the special features of the Cartan distribution. In short, a {\it polarised $k$-contact manifold} is a triple $(M,\mathcal{D},\mathcal{P})$ where $\mathcal{P}$ is subbundle of $\mathcal{D}$ that is integrable and Legendrian in a particular manner given by Definition \ref{def:higher_order_polarisation}, which retrieves the original definition in \cite{Riv_22} as a particular instance. This allows one to develop a theory of polarised $k$-contact vector fields and other related notions, which will be used to analyse Lie symmetries of Cartan distributions. 

In order to study jet manifolds, more specific types of polarisations appear.  In particular, one gets Definition~\ref{def:split_polarisation}, Definition~\ref{def:Spencer_type_derived_flag}, and Definition~\ref{def:polarised_jet_type}. Finally,  Theorem~\ref{thm:local_split_jet_recognition} proves that a polarised \(k\)-contact manifold is locally equivalent, namely polarised $N^r_\pi$-contactomorphic, to a finite-order jet bundle precisely when Definition~\ref{def:polarised_jet_type} concerning polarisations of jet type and finite order $r$ holds. Thus, finite-order jet bundles are not merely examples of \(k\)-contact manifolds. They form a rigid and intrinsically characterised sector of polarised \(k\)-contact geometry. The global version, Theorem~\ref{thm:global_jet_recognition}, explains how the local identifications glue to a global jet bundle under suitable regularity and descent assumptions. Introduced definitions of polarisation allow for necessary and sufficient conditions that can be applied to study particular types of polarisations. Indeed, this will be a key for further applications in this paper. Moreover, Theorem \ref{thm:local_split_jet_recognition} can be understood as a Darboux theorem which provides a constructing algorithm for obtaining coordinates putting $(M,\mathcal{D},\mathcal{P})$ locally as a jet manifold. This follows the idea in \cite{GLRR_24} for defining Darboux coordinates as describing canonical forms of geometric structures in different manners and generalises the original Darboux theorem for $k$-contact structures in \cite{Riv_22}.

Our recognition theorems are  conceptually important. The paper does not only reinterpret jet geometry in a different language. It shows that the Cartan geometry of finite jets is a distinguished type of a broader \(k\)-contact framework. In this sense, polarised \(k\)-contact geometry is more flexible than ordinary jet geometry: it contains finite-order jet models as particular objects, but it is not tied to one fixed fibration, one fixed choice of independent variables, or one fixed jet presentation. This extra flexibility is not merely aesthetic. It allows one to formulate operations which are not natural, if one insists on remaining inside a single jet bundle \(J^r\pi\).   

A further structural consequence of our formalism is that jet prolongation itself is recovered intrinsically. More precisely, Proposition~\ref{prop:jet_as_polarised_legendrian_prolongation} identifies \(J^{r+1}\pi\) canonically with the bundle \(\operatorname{Leg}_{\mathcal G^r_\pi}(\mathcal C^r_\pi)\) of polarised Legendrian planes in \(\mathcal C^r_\pi\), and the tautological distribution on this bundle becomes the Cartan distribution \(\mathcal C^{r+1}_\pi\). Thus, the passage from order \(r\) to order \(r+1\) is recovered as a polarised Legendrian prolongation. 

We next retrieve some common features of jet geometry in a $k$-contact manner, which also allows for extending some of these definitions for types of polarised $k$-contact manifolds. Section~\ref{subsec:holonomic_sections_jets} shows that holonomic sections in jet bundles can be understood as our so-called {\it polarised Legendrian submanifolds}. A system of PDEs \(\mathcal E\subset J^r\pi\) is then treated as a submanifold with an induced distribution
\(
\mathcal C_{\mathcal E}:=\T\mathcal E\cap\mathcal C^r_\pi.
\)
Genuine solutions are polarised Legendrian submanifolds included in $\mathcal{E}$. The product-space construction in Proposition~\ref{prop:product_jet_spaces_polarised_k_contact} supplies the polarised \(k\)-contact geometry on fibre products \(J^p\pi\times_M J^q\pi'\), which provides the ambient framework used later for Bäcklund transformations, auxiliary systems and Miura-type correspondences.

The formalism also gives a natural local Hamiltonian theory. After choosing a local adapted \(N^r_\pi\)-contact form \(\bm\eta^r_\pi\), every Cartan vector field \(X\) has a local \(k\)-contact Hamiltonian
\(
\bm h_X=-\iota_X\bm\eta^r_\pi.
\) 
The locus where \(\bm h_X\) vanishes is precisely the locus where \(X\) becomes tangent to the Cartan distribution, as proved in Proposition~\ref{prop:locus_hamiltonian_zero}. For prolonged point symmetries and evolutionary symmetries, the components of \(\bm h_X\) recover the usual Lie characteristics and their total derivatives after the triangular change from the standard contact coframe to the adapted \(k\)-contact coframe; see Proposition~\ref{prop:triangular_hamiltonian_characteristics}. Thus, the classical characteristic calculus of Lie symmetries is recovered as a distinguished sector of a wider local \(k\)-contact Hamiltonian theory.

Section~\ref{sec:kConPDEs} develops this point further. The induced polarisation \(\mathcal G^r_{\mathcal E}=\T\mathcal E\cap\mathcal G^r_\pi\) measures the residual highest-order freedom left by the equation. The vertical principal symbol is interpreted as the restriction of the defining equations to \(\mathcal G^r_\pi\), and normal forms become transversality statements with respect to suitable subbundles of the highest-order polarisation. The same section also introduces the polarised solution set \(\operatorname{Pol}_{\mathcal E}(I_e)\) over an isotropic initial datum \(I_e\subset\mathcal C_{\mathcal E,e}\). This set consists of possible \(n\)-dimensional solution directions extending the datum, constrained by the \(k\)-contact orthogonal \(I_e^{\perp_{\mathcal C}}\) and by transversality to \(\mathcal G^r_\pi\). Thus the algebraic part of the Cauchy problem is expressed intrinsically through Levi orthogonality and polarisation.

The same section also treats Hamiltonian loci of \(k\)-contact vector fields. After choosing a local adapted \(k\)-contact form, the zero locus of the corresponding \(k\)-contact Hamiltonian records where the vector field takes values in the relevant Cartan \(N^r_\pi\)-contact distribution. These loci provide natural differential systems carrying the induced Cartan distribution and residual polarisation.

The Hamiltonian interpretation is not only formal. It gives invariant geometric loci. If a Lie algebra of Cartan symmetries acts on a differential equation, the common zero locus of the corresponding local \(k\)-contact Hamiltonians is an intrinsic Hamiltonian formulation of the invariant subsystem; see Theorem~\ref{thm:hamiltonian_invariant_subsystems}. 

Next, Section~\ref{Sec:SymReductionJets} develops a reduction mechanism in which one first reduces an ambient constrained submanifold \(N\subset J^r\pi\) by quotienting enlarged distributions such as \(\mathcal C^r_\pi+\mathcal D_G\), \(\T N+\mathcal D_G\) and \(\mathcal G^r_\pi+\mathcal D_G\). When the quotient \(N/G\) inherits a jet-type polarisation, Theorem~\ref{thm:modular_reduction_jet_type} reconstructs it as a finite-order jet bundle, and the reduced equation \((\mathcal E\cap N)/G\) becomes a differential equation inside this reconstructed jet bundle. The main point of this construction is that it does not require the group action to consist of symmetries of the Cartan distribution, as in standard methods. Moreover, it gives a verifiable criterion to recognise when the reduced system comes again from a different jet manifold, which appears to be a new feature of the present polarised \(k\)-contact formulation.

The treatment of transformations illustrates the same point. Projectable transformations preserve the fibration and hence preserve the canonical vertical structures after prolongation. Hodograph transformations may exchange independent and dependent variables, and therefore are generally not projectable with respect to the original fibration. Nevertheless, they may preserve the Cartan distribution. In the present language, these are \(k\)-contact transformations between different jet manifolds and need not be projectable to $Q$; see Definition~\ref{def:hodograph_kcontact_transformation}. They send holonomic submanifolds to holonomic submanifolds precisely on the locus where the transformed integral submanifold is transverse to the new projection; see Proposition~\ref{prop:hodograph_preserves_holonomicity} and Theorem~\ref{thm:hodograph_transforms_solutions}. This separates the Cartan distribution, the choice of independent variables and the transversality condition that turns an integral submanifold into the graph of a section.

The same philosophy applies to Bäcklund transformations, coverings and Lax pairs. These constructions typically introduce auxiliary variables, relate different equations, or encode a PDE through compatibility of an overdetermined system. They are therefore not naturally confined to a single fixed jet bundle. In the polarised \(k\)-contact framework, the extended space carries an induced or enlarged Cartan distribution. Their compatibility defects can be represented by vertical fields or vertical obstruction terms. When these fields are \(k\)-contact, their adapted Hamiltonians define the corresponding tangency loci; in general, the compatibility condition is expressed as a Cartan prolongation or curvature-defect condition, as discussed in Section~\ref{Sec:BacklundLax}.

The last applications should be read in this spirit. They are not intended to replace the classical theories of Bäcklund transformations, Lax representations, hodograph transformations, etc. Their role is to show that the polarised \(k\)-contact framework supplies a single geometric setting in which several constructions, usually treated by different techniques, can be described in terms of Cartan distributions, polarisations, Hamiltonian loci and Spencer-type compatibility. This is the sense in which the present paper is foundational: it identifies finite-order jet geometry as a rigid sector of polarised \(k\)-contact geometry, and at the same time shows that the \(k\)-contact viewpoint naturally accommodates operations which go beyond a fixed jet-theoretic presentation.

 The paper is organised as follows. Section~\ref{sec:Generalities} recalls the geometry of finite-order jet bundles, Cartan distributions, symbols and Spencer contractions. Section~\ref{sec:distributional_kcontact} reviews \(k\)-contact distributions from the distributional viewpoint. Section~\ref{Sec:IntrExample} explains, through a second-order field-theoretic example, why the standard higher-order contact forms do not directly give adapted \(k\)-contact forms. Section~\ref{Sec:jetkcontact} proves that finite-order Cartan distributions are \(N^r_\pi\)-contact distributions and identifies the adapted forms with the Spencer operator. Section~\ref{sec:polarisations} introduces higher-order polarisations and the associated tower of generalised distributions. Section~\ref{sec:jet_manifolds_as_higher_order_polarised_k_contact_manifolds} introduces jet-type polarisations and proves the recognition theorem for finite-order jet bundles. Section~\ref{subsec:holonomic_sections_jets} relates holonomic sections, polarised Legendrian submanifolds and solution-preserving transformations. Section~\ref{sec:lie_characteristics_hamiltonian} develops the Hamiltonian interpretation of Lie characteristics. Section~\ref{sec:kConPDEs} studies systems of PDEs as \(k\)-contact submanifolds, including polarised initial data, Hamiltonian loci, normal forms and examples. Section~\ref{Sec:SymReductionJets} explains reduction between jet bundles by quotienting enlarged Cartan and polarisation data modulo group directions. Section~\ref{Sec:BacklundLax} discusses Bäcklund transformations, coverings and Lax pairs as Cartan compatibility and curvature-defect conditions. Section~\ref{Sec:Hodograph} treats hodograph transformations as non-projectable \(k\)-contact transformations. 
 The final section summarises the results and includes dictionaries between finite-order jet geometry and higher-order polarised \(k\)-contact geometry.

\section{Generalities on the geometry of jet bundles}\label{sec:Generalities}

Let us fix the notation and the basic facts on finite-order jet bundles used throughout the paper (see \cite{Saunders_89} for details). All manifolds, bundles, and maps are assumed to be smooth. Integer non-negative numbers are denoted by $\mathbb{N}_{\geq 0}$. Given a generalised subbundle $D\subset \T M$, we write $\Gamma(D)$ for the space of vector fields defined on open subsets of $M$ taking values in $D$. In particular, $
\mathfrak{X}(M)$ is the space of vector fields on $M$.

Let $\pi:E\to Q$ be a fibred manifold with $\dim Q=n$ and fibre dimension $m$.  For $r\geq 1$, the {\it $r$-th jet bundle} $J^r\pi$ is the manifold of {\it $r$-jets}, $j_x^r\phi$, of local sections $\phi$ of $\pi$ at points $x\in Q$, while it is hereafter assumed that $J^0\pi:=E$. We denote by $\pi_r:J^r\pi\to Q$, $\pi_{r,0}:J^r\pi\to E$, and $\pi_{r,s}:J^r\pi\to J^s\pi$, $0\leq s\leq r$, the canonical projections. If $(x^i,u^\alpha)$, with $i=1,\ldots,n$ and $\alpha=1,\ldots,m$, are adapted coordinates to $\pi:E\rightarrow Q$ on $U\subset E$, the  induced local adapted coordinates on $\pi^{-1}_r(U)\subset J^r\pi$ are denoted by $(x^i,u^\alpha_I)$, where $I=(i_1,\ldots,i_n)\in\mathbb N_{\geq 0}^n$ is a multi-index with $|I|:=i_1+\cdots+i_n\leq r$. We write $\widehat{e}_i$ for the multi-index whose only non-zero entry is a $1$ in the $i$-th position. If $J\geq I$, namely $j_a\geq i_a$ for every $a=1,\ldots,n$, then $J-I\in\mathbb N_{\geq 0}^n$. Meanwhile, we define $(J-I)!:=(j_1-i_1)!\cdots(j_n-i_n)!$ and $x^{J-I}:=(x^1)^{j_1-i_1}\cdots(x^n)^{j_n-i_n}$ for $J\geq I$.

 For each \(s\geq 0\), the number of multi-indices \(I\) with \(|I|=s\) is \(\binom{n+s-1}{s}\). Hence, the number of fibre jet variables \(u^\alpha_I\) with \(0\leq |I|\leq r\) is \(m\sum_{s=0}^r\binom{n+s-1}{s}=m\binom{n+r}{r}\), and therefore \(\dim J^r\pi=n+m\binom{n+r}{r}\). The number of variables $u^\alpha_J$ for   \(0\leq |J|\leq r-1\) reads 
\[
N^r_\pi:=m\sum_{s=0}^{r-1}\binom{n+s-1}{s}=m\binom{n+r-1}{r-1}.
\]

Every local section $\phi:U\subset Q\to E$ admits its canonical prolongation $j^r\phi:U\to J^r\pi$, $x\mapsto j_x^r\phi$. A section $\psi:U\subset Q\to J^r\pi$ of $\pi_r$ is called \emph{holonomic} if $\psi=j^r\phi$ for some local section $\phi:U\subset Q\rightarrow E$ of $\pi$. More generally, $L\subset J^r\pi$ is an \emph{\(n\)-dimensional holonomic submanifold} if $\pi_r|_L:L\to Q$ is a local diffeomorphism onto its image and $L=j^r\phi(U)$ for some local section $\phi:U\subset Q\rightarrow E$.

The \emph{local contact one-forms} on $U\subset J^r\pi$ related to $(x^i,u^\alpha_I)$ are $\theta^\alpha_I:=\dd u^\alpha_I-\sum_{i=1}^nu^\alpha_{I+\widehat{e}_i}\dd x^i$, with $0\leq |I|\leq r-1$ and $\alpha=1,\ldots,m$. All contact one-forms on $J^r\pi$ span a globally defined codistribution $\mathcal C^{r\circ}\subset \T^*J^r\pi$, although the individual forms $\theta^\alpha_I$ depend on the adapted coordinate system. The \emph{Cartan distribution} is
\[
 \mathcal C^r_\pi:=\bigcap_{\stackrel{1\leq\alpha\leq m}{|I|\leq r-1}}\ker\theta^\alpha_I.
\]
Equivalently, in adapted coordinates, one has on $U$ that
\begin{equation}\label{eq:notationCD}
 \mathcal C^r_\pi=\left\langle D_i,\frac{\partial}{\partial u^\alpha_K}:i=1,\ldots,n,\ |K|=r,\ \alpha=1,\ldots,m\right\rangle,\quad D_i:=\frac{\partial}{\partial x^i}+\sum_{\alpha=1}^m\sum_{|I|\leq r-1}u^\alpha_{I+\widehat{e}_i}\frac{\partial}{\partial u^\alpha_I}.
\end{equation}
The vector fields $D_1,\ldots,D_n$ are the \emph{local total derivatives} associated with the chosen adapted coordinates. All possible local total derivatives for different adapted coordinates on $J^r\pi$ do not span, in general, a global distribution on $J^r\pi$, but the distribution $\mathcal C^r_\pi$ is globally defined. The rank and corank of $\mathcal C^r_\pi$ are
\[
 \operatorname{rank}\mathcal C^r_\pi=n+m\binom{n+r-1}{r},\qquad \operatorname{corank}\mathcal C^r_\pi=N^r_\pi=m\binom{n+r-1}{r-1}.
\]
Although the one-forms $\theta^\alpha_I$ are traditionally called contact forms, they are contact forms in the usual corank-one sense only for $r=1$ and $m=1$. They should rather be regarded as local generators of the codistribution, $\mathcal{C}^{r\circ}\subset \T^*J^r\pi$, given by the annihilator of $\mathcal{C}^{r}$.

\begin{proposition}\label{prop:holonomic_characterisations}
Let $\psi:U\subset Q\to J^r\pi$ be a local section of $\pi_r$. The following conditions are equivalent: $\psi$ is holonomic; $\psi^*\theta=0$ for every $\theta\in\Gamma(\mathcal C^{r\circ})$; $\T\psi(\T U)\subset\mathcal C^r_\pi$; and, in adapted coordinates $\psi(x)=(x^i,\psi^\alpha_I(x))$, one has $\partial\psi^\alpha_I/\partial x^i=\psi^\alpha_{I+\widehat{e}_i}$ for all $|I|\leq r-1$, $i=1,\ldots,n$, and $\alpha=1,\ldots,m$. In particular, an $n$-dimensional submanifold $L\subset J^r\pi$ transverse to $\pi_r$, namely $\T L\cap  \ker \T\pi_r=0$, is holonomic if and only if $\T L\subset\mathcal C^r_\pi$.
\end{proposition}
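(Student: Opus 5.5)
The plan is to prove a cycle of equivalences by working in adapted coordinates \((x^i,u^\alpha_I)\) on \(\pi_r^{-1}(U)\). Since all conditions are local and the contact codistribution \(\mathcal C^{r\circ}\) is locally spanned by the forms \(\theta^\alpha_I\), \(|I|\leq r-1\), it suffices to check each equivalence on one adapted chart. I will write \(\psi(x)=(x^i,\psi^\alpha_I(x))\); this form is legitimate because \(\psi\) is a section of \(\pi_r\).

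The first step is the pull-back computation
\[
\psi^*\theta^\alpha_I=\sum_{i=1}^n\Bigl(\frac{\partial\psi^\alpha_I}{\partial x^i}-\psi^\alpha_{I+\widehat e_i}\Bigr)\dd x^i .
\]
The \(\theta^\alpha_I\) span \(\mathcal C^{r\circ}\) over functions, and \(\psi^*(f\theta)=(f\circ\psi)\psi^*\theta\). Hence \(\psi^*\theta=0\) for all \(\theta\in\Gamma(\mathcal C^{r\circ})\) is equivalent to the coordinate conditions \(\partial\psi^\alpha_I/\partial x^i=\psi^\alpha_{I+\widehat e_i}\). Next, \(\T\psi(\T U)\subset\mathcal C^r_\pi\) means \(\theta^\alpha_I(\T\psi(v))=(\psi^*\theta^\alpha_I)(v)=0\) for all \(v\in\T U\), because \(\mathcal C^r_\pi\) is the common kernel of the \(\theta^\alpha_I\). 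So the pull-back condition and the tangency condition are equivalent as well.

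The second step links these to holonomy. If \(\psi=j^r\phi\), then by definition of jet coordinates \(\psi^\alpha_I=\partial^{|I|}\phi^\alpha/\partial x^I\), and the coordinate identities hold trivially. Conversely, assume the identities and set \(\phi:=\pi_{r,0}\circ\psi\), so that \(\phi^\alpha=\psi^\alpha_0\). I show by induction on \(|I|\) that \(\psi^\alpha_I=\partial_I\phi^\alpha\). The case \(|I|=0\) holds by definition. If the claim holds for \(I\) with \(|I|\leq r-1\), then \(\psi^\alpha_{I+\widehat e_i}=\partial_i\psi^\alpha_I=\partial_i\partial_I\phi^\alpha\). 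The main subtlety is that a multi-index \(K\) with \(|K|\geq1\) can be reached as \(I+\widehat e_i\) along several paths. This causes no inconsistency: any path gives the same answer because partial derivatives of the smooth function \(\phi^\alpha\) commute. Since \(\psi^\alpha_K\) is a single well-defined function, it equals \(\partial_K\phi^\alpha\). Hence \(\psi=j^r\phi\). Independence of the chart follows because holonomy is a chart-free notion and the other conditions are characterised invariantly.

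Finally, let \(L\) be \(n\)-dimensional with \(\T L\cap\ker\T\pi_r=0\). Then \(\pi_r|_L\) is a local diffeomorphism, so locally \(L=\psi(V)\) for a local section \(\psi=(\pi_r|_L)^{-1}\). Moreover, \(\T L=\T\psi(\T V)\). Applying the equivalence between holonomy and tangency to each such local section gives that \(L\) is holonomic if and only if \(\T L\subset\mathcal C^r_\pi\). I expect no genuine obstacle. The only point requiring care is the well-definedness in the induction, which is handled by the symmetry of mixed partial derivatives.
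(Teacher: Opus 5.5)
Your proposal is correct and follows essentially the same route as the paper: the pull-back formula for $\theta^\alpha_I$ gives the equivalence of the contact, tangency and coordinate conditions, and the final claim comes from writing a transverse submanifold locally as the image of a section of $\pi_r$. You simply make explicit what the paper leaves implicit, namely the induction on $|I|$ that recovers $\psi=j^r\phi$ from the coordinate identities.
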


\begin{proof}
The equivalence follows immediately from the coordinate expression of the contact forms, namely $(\psi^*\theta^\alpha_I)(\partial/\partial x^i)=\partial\psi^\alpha_I/\partial x^i-\psi^\alpha_{I+\widehat{e}_i}$ for $|I|<r$, $i=1,\ldots,n$, and $\alpha=1,\ldots,m$. The final statement follows by writing a submanifold transverse to $\pi_r$ locally as the image of a section of $\pi_r$, which is possible due to the transversality condition.
\end{proof}

 Since $\mathcal{G} ^r_\pi:=\ker \T\pi_{r,r-1}\subset\mathcal C^r_\pi$, one has the canonical exact sequence
\[
 0\hookrightarrow \mathcal{G} _\pi^r\hookrightarrow \mathcal C^r_\pi\xrightarrow{\T\pi_r}\T Q\longrightarrow 0.
\]
Thus, one has $\mathcal C^r_\pi/\mathcal{G} _\pi^r\simeq \pi_r^*\T Q$. Local choices of total derivatives $D_1,\ldots,D_n$ give local splittings of this sequence, but such splittings are not canonical. In adapted coordinates in some $U\subset J^r\pi$, one has
\[
 \mathcal{G}^r_\pi=\left\langle\frac{\partial}{\partial u^\alpha_K}: |K|=r, \alpha=1,\ldots,m\right\rangle,
\]
and $\mathcal{G}^r_\pi$ is canonically isomorphic to $\pi_r^*S^r\T^*Q\otimes\pi_{r,0}^*VE$, where $S^r\T_x^*Q$ is the space of totally symmetric $r$-covariant tensor products on elements of $\T_x^*Q$ and $S^r\T^*Q=\bigsqcup_{x\in Q}S^r\T^*Q$ (see  \cite[Sec.~6.]{Seiler_2007_Spencer}). We shall also use the so-called \emph{vertical tower} $\mathcal V_{r}:=\ker \T\pi_{r}$, $\mathcal V_s:=\ker \T\pi_{r,r-s-1}$ for $s=0,\ldots,r-1$, and $\mathcal{V}_{-1}:=0$. In adapted coordinates on $U\subset J^r\pi$, it follows that  $\mathcal V_s$ is spanned by $\partial/\partial u^\alpha_I$ with $r\geq |I|\geq r-s$ and $\alpha=1,\ldots,m$ for each $s=0,\ldots,r$. The graded quotients satisfy
\begin{equation}\label{eq:isovertical}
 \mathcal V_{s}/\mathcal V_{s-1}\simeq \pi_r^*S^{r-s}\T^*Q\otimes\pi_{r,0}^*VE,\qquad s=0,\ldots,r,
\end{equation}
where $S^0\T^*Q$ is a trivial line bundle over $Q$. Then,
\[
 \left[D_j,\frac{\partial}{\partial u^\alpha_I}\right]=
 \begin{cases}
 -\dfrac{\partial}{\partial u^\alpha_{I-\widehat{e}_j}},& i_j>0,\\[1ex]
 0,& i_j =0.
 \end{cases}
\]
This allows us to define, via the standard isomorphisms \eqref{eq:isovertical},  the usual contraction $\pi_r^*\T Q\times \pi_r^*S^{r-s}\T^*Q\otimes \pi^*_{r,0}VE\to \pi^*_{r}S^{r-s-1}\T^*Q\otimes \pi^*_{r,0}VE$, up to the conventional sign, for $s=0,\ldots,r-1$. This has appeared in literature related to the algebraic Spencer contraction for jet manifolds \cite[Def.~2.1 and Sec.~6]{Seiler_2007_Spencer}.

The \emph{canonical structure form on \(J^r\pi\)} (see \cite[Definition 4.8]{Campos_2010}) or {\it Cartan one-form} \cite[Section 2.1]{CattafiCrainicSalazar_2020}, is the vector-valued one-form
\[
\mathfrak{S}_\pi^r\in\Omega^1\!\left(J^r\pi,\pi_{r,r-1}^*V\pi_{r-1}\right),\qquad \mathfrak{S}_\pi^r=\sum_{\alpha=1}^m\sum_{|I|\leq r-1}\theta^\alpha_I\otimes\frac{\partial}{\partial u^\alpha_I}.
\]
Equivalently, if \(\theta\in J^r\pi\), \(\theta_{r-1}:=\pi_{r,r-1}(\theta)\), and \(v\in \T_\theta J^r\pi\), then \(\T_\theta\pi_{r,r-1}(v)\in \T_{\theta_{r-1}}J^{r-1}\pi\) projects to \(\T_\theta\pi_r(v)\in \T_{\pi_r(\theta)}Q\). The \(r\)-jet \(\theta\) determines the holonomic lift of this base vector to \(\T_{\theta_{r-1}}J^{r-1}\pi\). The canonical structure measures the vertical difference between \(\T_\theta\pi_{r,r-1}(v)\) and this holonomic lift. Thus, \((\mathfrak{S}^r_\pi)_\theta(v)\) measures the failure of \(v\) to project to the tangent vector on \(J^{r-1}\pi\) prescribed by the \(r\)-jet \(\theta\). The kernel of the canonical structure on $J^r\pi$ is precisely the Cartan distribution, $\ker \mathfrak{S}^r_\pi=\mathcal C^r_\pi$. Hence, for a section $\psi:U\to J^r\pi$, the condition $\mathfrak{S}^r_\pi\circ \T\psi=0$ is equivalent to holonomicity. In a dual manner, $\mathfrak{S}^r_\pi$ can be understood as a differential operator mapping functions on $J^r\pi$ into one-forms on $J^r\pi$. In this sense, $\mathfrak{S}^r_\pi$ is understood as a Spencer operator. Then, the contraction of \(\dd f\) with the Spencer operator gives
$$
\iota_{\dd f}\mathfrak{S}_\pi^r=\sum_{\alpha=1}^m\sum_{|I|\leq r-1}\left(\frac{\partial f}{\partial u^\alpha_I}-\sum_{i=1}^n u^\alpha_{I+\widehat{e}_i}\frac{\partial f}{\partial x^i}\right)\frac{\partial}{\partial u^\alpha_I}
$$
which is the vertical derivative of $f$ (see \cite{Olver_Pohjanpelto_2008}).

For later use, let us also recall the algebraic Spencer contraction in the jet setting. Let \(x\in Q\), \(e\in (J^r\pi)_x\), and define $V_e\pi=(\pi^*V\pi)_e$, where $V\pi=\ker \T\pi$. If \(A\in S^s\T_x^\ast Q\otimes V_e\pi\), and \(h\in \T_xQ\) is a horizontal direction, its Spencer contraction is the element
\[
 \iota_hA\in S^{s-1}\T_x^\ast Q\otimes V_e\pi
\]
obtained by inserting \(h\) in one symmetric argument of \(A\). Equivalently, this gives a bilinear map
\[
 \T_xQ\times S^s\T_x^\ast Q\otimes V_e\pi\longrightarrow S^{s-1}\T_x^\ast Q\otimes V_e\pi,\qquad (h,A)\longmapsto\iota_hA .
\]
More geometrically, there is a natural identification \(\ker \T_e\pi_{r,r-1}\simeq \pi_r^*S^r\T_{\pi_r(e)}^\ast Q\otimes \pi_{r,0}^*VE\). In adapted jet coordinates \((x^i,u^\alpha_I)\), this contraction is expressed as follows. If \(h=\sum_{i=1}^n h^i\partial/\partial x^i\) and  \(A=\sum^{\alpha=1,\ldots,m}_{|K|=s}A^\alpha_K\partial/\partial u^\alpha_K\), then
\[
\iota_hA=\sum_{\alpha=1}^m\sum_{|I|=s-1}\left(\sum_{i=1}^n h^iA^\alpha_{I+\widehat e_i}\right)\frac{\partial}{\partial u^\alpha_I}.
\]
This is the algebraic operation behind the Spencer operator in the jet tower.

We call {\it Cartan symmetry} of $J^r\pi$ a vector field $Y\in \mathfrak{X}(J^r\pi)$ such that $[Y,\Gamma(\mathcal{C}^r_\pi)]\subset \Gamma(\mathcal{C}^r_\pi)$. 
Let $X\in\mathfrak X(E)$, namely $X=\sum_{i=1}^n\xi^i(x,u)\partial/\partial x^i+\sum_{\alpha=1}^m\varphi^\alpha(x,u)\partial/\partial u^\alpha$ in local adapted coordinates to $\pi:E\rightarrow Q$. Its \emph{\(r\)-th prolongation} is the Cartan symmetry given by
\begin{equation}\label{eq:prolongation}
     X^{(r)}=\sum_{i=1}^n\xi^i\frac{\partial}{\partial x^i}+\sum_{\alpha=1}^m\sum_{|I|\leq r}\varphi^\alpha_I\frac{\partial}{\partial u^\alpha_I},
\end{equation}
where $\varphi^\alpha_0=\varphi^\alpha$ and $\varphi^\alpha_{I+\widehat{e}_i}=D_i\varphi^\alpha_I-\sum_{j=1}^n u^\alpha_{I+\widehat{e}_j}D_i\xi^j$ for $\alpha=1,\ldots,m$, $i=1,\ldots,n$, and $|I|\leq r-1$. Moreover, since only the coordinates $\varphi^\alpha_{I}$ for $|I|=r$ of $X^{(r)}$ depend  on the coordinates $u^\alpha_I$ with $|I|=r$, one has
\[
 [X^{(r)},\Gamma(\mathcal{G}^r_\pi)]\subset\Gamma(\mathcal{G}^r_\pi).
\]

The \emph{characteristics} of $X$ associated with adapted coordinates $(x^i,u^\alpha_J)$ are the fibre component $Q^\alpha=\varphi^\alpha-\sum_{i=1}^n u^\alpha_i\xi^i$ for $\alpha=1,\ldots,m$. Meanwhile, for a general Cartan vector field \(Y\in\mathfrak X(J^r\pi)\), in a local contact coframe \(\{\theta^\alpha_I\}_{|I|\le r-1}\), one may define the functions
\[ Q^\alpha_I:=\theta^\alpha_I(Y)=\varphi^\alpha_I-\sum_{i=1}^nu^\alpha_{I+\widehat{e}_i}\xi^i,\qquad |I|\le r-1,\qquad \alpha=1,\ldots,m.
\]
 This family represents the quotient class of \(Y\) in \(\T J^r\pi/\mathcal C^r_\pi\). For a general Cartan vector field, the functions \(Q^\alpha_I\) need not arise from a finite family of functions. For \(X^{(r)}\) one has \(Q^\alpha_I=D_IQ^\alpha\) and the characteristic of $X\in \Gamma(E)$ allows us to retrieve all the $Q^\alpha_I$ and the whole form of $X^{(r)}$ from ${\bm Q}$. For evolutionary Lie symmetries, namely $Y\in \Gamma(\ker \T\pi_r)$, the characteristics of $Y$ are $Yu^\alpha=Q^\alpha$ for $\alpha=1,\ldots,m$. Moreover, again $\iota_{Y^{(r)}}\theta^\alpha_I=D_IQ^\alpha$ for $\alpha=1,\ldots,m$ and $|I|\leq r-1$. Hence, it makes sense to write \(X_{\bm Q}\) for the evolutionary Lie symmetry with characteristic \(\bm Q=(Q^\alpha)\). Moreover, the commutator of the corresponding prolonged vector fields is again evolutionary, and its characteristic is the usual bracket
\[
[Q,P]^\alpha=X^{(r)}_Q(P^\alpha)- X^{(r)}_P(Q^\alpha),\qquad \alpha=1,\ldots,m.
\]
Thus, the space of characteristics carries the Lie algebra structure induced by commutators of prolonged vector fields.

\section{A distributional approach to \texorpdfstring{$k$}--contact geometry}\label{Sec:diskcontact}\label{sec:distributional_kcontact}

Let us recall the basic facts on \(k\)-contact geometry that will be used throughout the paper, following the distributional viewpoint of \cite{LRS_24}. This approach is particularly convenient here because the Cartan distributions of jet bundles are globally defined, whereas the adapted \(k\)-contact forms describing them will usually be only local. Moreover, the characterisation of Cartan distributions as $k$-contact distributions can be naturally achieved by using their distributional characterisation (see Theorem 3.14 in \cite{LRS_24}) as will be seen below.    

Throughout this section, \(k\geq 1\) is a fixed integer and \(V\) denotes a \(k\)-dimensional real vector space. After choosing a basis \(\{e_1,\ldots,e_k\}\) of \(V\), a \(V\)-valued $\ell$-form \(\bm\zeta\in\Omega^\ell(U,V)\) is canonically  written as \(\bm\zeta=\sum_{A=1}^k\zeta^A\otimes e_A\) for certain $\ell$-forms $\zeta^1,\ldots,\zeta^k\in \Omega^\ell(U)$. We use the notation
\[
 \ker\bm\zeta:=\bigcap_{A=1}^k\ker\zeta^A,\qquad \ker\dd\bm\zeta:=\bigcap_{A=1}^k\ker\dd\zeta^A.
\]
Moreover, the contraction of a vector field \(X\in \mathfrak{X}(U)\) with a \(V\)-valued differential $\ell$-form $\bm \zeta\in \Omega^\ell(U,V)$ is the \(V\)-valued differential $(\ell-1)$-form \(\iota_X\bm\zeta=\sum_{A=1}^k\iota_X\zeta^A\otimes e_A\in \Omega^{\ell-1}(U,V)\). 

We denote
\[
\odot\,\dd x^{\langle J\rangle}(X_1,\ldots,X_r)
:=
\frac{1}{J!}\sum_{\sigma\in S_r}
\bigl((\dd x^1)^{\otimes j_1}\otimes\cdots\otimes(\dd x^n)^{\otimes j_n}\bigr)
(X_{\sigma(1)},\ldots,X_{\sigma(r)}),
\]
for all $ X_1,\ldots,X_r\in \mathfrak{X}(M)$. More simply, $\dd x^{i_1}\odot \dd x^{i_2}=\odot\,\dd x^{\langle \widehat{e}_{i_1}+\widehat{e}_{i_2}\rangle}$.

Let us now turn to defining one of the main objects of this paper.
\begin{definition}\label{dfn:k-contact-manifold}
A \(V\)-valued one-form \(\bm\eta\in\Omega^1(U,V)\) on an open subset \(U\subset M\) is called a \emph{\(k\)-contact form} if \(\ker\bm\eta\) is a non-zero distribution of corank \(k\) and
\(
 \ker\bm\eta\oplus\ker\dd\bm\eta=\T U.
\)
The family of components of a \(k\)-contact form will be called \emph{a \(k\)-contact structure} as standard in the literature \cite{Riv_22}.
If such a form $\bm\eta$ is globally defined on \(M\), the pair \((M,\bm\eta)\) is called a \emph{co-oriented \(k\)-contact manifold}. The distribution \(\ker\dd\bm\eta\) is its \emph{Reeb distribution}. 
\end{definition}

A one-contact form is a contact form. Indeed, the condition $\ker \eta\oplus\ker\dd\eta=\T U$ amounts to $\langle\eta\rangle \oplus\,\,{\rm Im\dd\eta}=\T^*U$, which gives \(\dim M=2n+1\) and nonvanishing \(\eta\wedge(\dd\eta)^n\). The converse is immediate. 

 The formulation above is better adapted to the present paper because it treats the \(k\) components of a \(k\)-contact form as a single vector-valued object, which will be a key to retrieve certain typical structures in jet manifolds. 

\begin{theorem}\label{thm:k-contact-Reeb}
Let \((M,\bm\eta)\) be a co-oriented \(k\)-contact manifold with \(\bm\eta=\sum_{A=1}^k\eta^A\otimes e_A\). There exists a unique family of vector fields \(R_1,\ldots,R_k\in\mathfrak X(M)\) such that
\[
 \iota_{R_A}\eta^B=\delta_A^B,\qquad \iota_{R_A}\dd\eta^B=0,\qquad A,B=1,\ldots,k.
\]
Moreover, \([R_A,R_B]=0\) for all \(A,B\), and \(\ker\dd\bm\eta=\langle R_1,\ldots,R_k\rangle\).
\end{theorem}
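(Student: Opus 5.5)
The plan rests on the decomposition \(\T M=\ker\bm\eta\oplus\ker\dd\bm\eta\) from Definition~\ref{dfn:k-contact-manifold}, together with a dimension count. Since \(\ker\bm\eta\) has corank \(k\), the covectors \(\eta^1_x,\ldots,\eta^k_x\) are linearly independent at every \(x\in M\). Moreover, \(\ker\dd\bm\eta\) is a complement of \(\ker\bm\eta\), so it has constant rank \(k\). The restriction
\[
\bm\eta_x|_{\ker\dd\bm\eta_x}:\ker\dd\bm\eta_x\to V
\]
is injective, because its kernel lies in \(\ker\bm\eta_x\cap\ker\dd\bm\eta_x=0\). By dimension, it is therefore an isomorphism.

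\emph{Existence, uniqueness and smoothness.} At each point, define \(R_A(x)\in\ker\dd\bm\eta_x\) as the unique preimage of \(e_A\) under this isomorphism. This gives exactly \(\iota_{R_A}\eta^B=\delta^B_A\) and \(\iota_{R_A}\dd\eta^B=0\).

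Uniqueness is immediate. Any family satisfying both conditions lies in \(\ker\dd\bm\eta\) by the second condition, and is then determined by the first.

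For smoothness, I would argue as follows. \(\ker\dd\bm\eta\) is the kernel of the smooth bundle map \(\T M\to (\T^*M)^{k}\), \(v\mapsto(\iota_v\dd\eta^A)_A\), and it has constant rank \(k\). Hence it is a smooth subbundle, and locally it admits smooth frames \(Z_1,\ldots,Z_k\). The matrix \(\eta^B(Z_C)\) is smooth and invertible, so multiplying by its inverse gives smooth \(R_A\). Since the \(R_A\) form a basis of \(\ker\dd\bm\eta\) pointwise, we also get \(\ker\dd\bm\eta=\langle R_1,\ldots,R_k\rangle\).

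\emph{Commutation.} This is the step needing a computation, and I would handle it via Cartan's formula. Since \(\eta^C(R_A)\) is constant and \(\iota_{R_A}\dd\eta^C=0\),
\[
\mathcal L_{R_A}\eta^C=\dd\iota_{R_A}\eta^C+\iota_{R_A}\dd\eta^C=0,
\qquad
\mathcal L_{R_A}\dd\eta^C=\dd\mathcal L_{R_A}\eta^C=0 .
\]
Using \(\iota_{[X,Y]}=[\mathcal L_X,\iota_Y]\), we obtain
\[
\iota_{[R_A,R_B]}\eta^C=\mathcal L_{R_A}(\iota_{R_B}\eta^C)-\iota_{R_B}\mathcal L_{R_A}\eta^C=0-0=0,
\]
and similarly
\[
\iota_{[R_A,R_B]}\dd\eta^C=\mathcal L_{R_A}(\iota_{R_B}\dd\eta^C)-\iota_{R_B}\mathcal L_{R_A}\dd\eta^C=0.
\]
Thus \([R_A,R_B]\in\ker\bm\eta\cap\ker\dd\bm\eta=0\).

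The main obstacle, if any, is only the smoothness of \(\ker\dd\bm\eta\) as a subbundle. This follows from constant rank, and the constancy of the rank is forced by the direct sum decomposition with a corank-\(k\) distribution.
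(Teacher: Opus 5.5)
Your proof is correct and complete. The dimension count shows that $\bm\eta_x|_{\ker\dd\bm\eta_x}$ is an isomorphism onto $V$. The constant-rank argument gives smoothness, and the Cartan-calculus identities $\mathcal L_{R_A}\eta^C=0$ and $\iota_{[X,Y]}=[\mathcal L_X,\iota_Y]$ give $[R_A,R_B]\in\ker\bm\eta\cap\ker\dd\bm\eta=0$. All of these steps are sound.

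The paper does not prove this theorem itself. It recalls it from the $k$-contact literature \cite{Riv_22,LRS_24}, where the argument is essentially the one you give, so there is no different in-paper route to compare with.

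The closest argument inside the paper is in the proof of Theorem~\ref{cor:Cartan_Nr_contact}, which runs your commutation computation in reverse. There one starts from commuting Lie symmetries $R_A$ of the distribution with $\eta^B(R_A)$ constant. From this one deduces $\mathcal L_{R_A}\bm\eta^r_\pi=0$, and hence $\iota_{R_A}\dd\bm\eta^r_\pi=0$ by Cartan's formula.

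Optionally, you could shorten the smoothness step. The map $v\mapsto\bigl(\bm\eta(v),(\iota_v\dd\eta^A)_A\bigr)$ is an injective smooth bundle morphism, so $R_A$ is simply the preimage of $(e_A,0)$. This avoids a separate appeal to the smoothness of $\ker\dd\bm\eta$.
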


\begin{definition}The \(k\)-tuple \(\mathbf R=(R_1,\ldots,R_k)\) of Theorem~\ref{thm:k-contact-Reeb} is called the \emph{Reeb \(k\)-vector field} of \((M,\bm\eta)\), and the vector fields \(R_1,\ldots,R_k\) are called the \emph{Reeb vector fields}, which span the \emph{Reeb distribution}. When understood as a frame of the Reeb distribution, the Reeb vector fields are also called a \emph{Reeb frame}.
\end{definition}

The Reeb distribution depends on the chosen \(k\)-contact form. In contrast, \(\ker\bm\eta\) is the object that will be most important in the sequel. This motivates the following definition.

\begin{definition}\label{def:k-contact-distribution}
A \emph{\(k\)-contact distribution} on \(M\) is a regular distribution \(\mathcal D\subset \T M\) of corank \(k\) such that every point of \(M\) has an open neighbourhood \(U\) and a \(k\)-contact form \(\bm\eta\in\Omega^1(U,V)\) satisfying \(\mathcal D|_U=\ker\bm\eta\). A \emph{\(k\)-contact manifold} is a pair \((M,\mathcal D)\), where \(\mathcal D\) is a \(k\)-contact distribution.  
\end{definition}

Let \(\mathcal D\subset \T M\) be any regular distribution and let \(\operatorname{pr}_{\mathcal D}:\T M\to \T M/\mathcal D\) be the quotient projection. The \emph{Levi tensor} of \(\mathcal D\) is the vector bundle morphism
\[
 \mathcal L_{\mathcal D}:\Lambda^2\mathcal D\longrightarrow \T M/\mathcal D,\qquad \mathcal L_{\mathcal D}(X_x,Y_x):=[X,Y]_x\ \operatorname{mod}\mathcal D_x,
\]
where \(X,Y\in\Gamma(\mathcal D)\) are local extensions of \(X_x,Y_x\in\mathcal D_x\). Note that $[X,Y]_x$ mod $\mathcal{D}_x$ is independent of the chosen vector field extensions of $X_x$ and $Y_x$. If \(\mathcal D=\ker\bm\eta\), then
\begin{equation}\label{eq:iso}
 \mathcal L_{\mathcal D}(X,Y)=-\dd\bm\eta(X,Y),\qquad X,Y\in\Gamma(\mathcal D),
\end{equation}
where we assumed the natural isomorphism \(v\in V\mapsto \bm\eta^{-1}(v)\in \T M/\mathcal D\).

\begin{definition}\label{def:GeometricSubsapces}
Let \(\mathcal D\subset \T M\) be a regular distribution and let \(E_x\subset\mathcal D_x\). The \emph{Levi orthogonal} of \(E_x\) is
\[
 E_x^{\perp_{\mathcal D}}:=\{v_x\in\mathcal D_x:\mathcal L_{\mathcal D}(v_x,w_x)=0,\ \forall w_x\in E_x\}.
\]
The subspace \(E_x\) is \emph{isotropic} if \(E_x\subset E_x^{\perp_{\mathcal D}}\) and \emph{Legendrian} if it is isotropic and admits an isotropic complement in \(\mathcal D_x\), namely if there exists an isotropic subspace \(W_x\subset\mathcal D_x\) such that \(\mathcal D_x=E_x\oplus W_x\). A subspace $E_x\subset \T_xM$ is \emph{coisotropic} if $(E_x\cap \mathcal{D}_x)^{\perp_\mathcal{D}}\subset E_x\cap \mathcal{D}_x$ and \emph{maximally coisotropic} if $(E_x\cap \mathcal{D}_x)^{\perp_\mathcal{D}}=E_x\cap \mathcal{D}_x$.
\end{definition}

If \(\mathcal D=\ker\bm\eta\) for a $k$-contact form $\bm \eta$, then Definition \ref{def:GeometricSubsapces} recovers isotropic, coisotropic, maximally coisotropic, and Legendrian spaces in $k$-contact geometry \cite{LRS_24}. For instance, a subspace \(E_x\subset\mathcal D_x\) is isotropic if and only if \(\dd\bm \eta_x|_{E_x\times E_x}=0\). A submanifold \(N\subset M\) with \(\T N\subset\mathcal D\) is called isotropic, coisotropic, maximally coisotropic, or Legendrian if \(\T_xN\subset\mathcal D_x\) has the corresponding property for every \(x\in N\). 

\begin{definition}
A regular distribution \(0\neq \mathcal D\subset \T M\) is \emph{maximally non-integrable} if \(\mathcal D_x^{\perp_{\mathcal D}}=\{0\}\) for every \(x\in M\). Equivalently, the Levi tensor \(\mathcal L_{\mathcal D}\) is \emph{non-degenerate} in the sense that  \(\ker \mathcal{L}_{\mathcal D}=0\).
\end{definition}

Every \(k\)-contact distribution is maximally non-integrable. Indeed, if \(\mathcal D|_U=\ker\bm\eta\), then the condition \(\ker\bm\eta\oplus\ker\dd\bm\eta=\T U\) implies \(\mathcal D\cap\ker\dd\bm\eta=\{0\}\), which amounts to the non-degeneracy of the Levi tensor.  

\begin{definition}
A vector field \(X\in\mathfrak X(M)\) is a \emph{Lie symmetry of a distribution} \(\mathcal D\subset \T M\) if
\(
 [X,\Gamma(\mathcal D)]\subset\Gamma(\mathcal D).
\)
If \((M,\mathcal D)\) is additionally a \(k\)-contact manifold, such a vector field is also called a \emph{\(k\)-contact vector field}. Moreover, if \(\mathcal D=\ker\bm \eta\) for a \(k\)-contact form \(\bm\eta\in\Omega^1(M,\mathbb R^k)\), then \(X\) is also called an \emph{\(\bm\eta\)-Hamiltonian vector field}, and its \emph{associated \(\bm\eta\)-Hamiltonian \(k\)-function} is defined to be \(\bm h_X=-\iota_X\bm\eta\).
\end{definition}

We hereafter write $\mathfrak{X}_{\mathcal{D}}(M)$ for the space of $k$-contact vector fields on $M$ relative to the $k$-contact distribution $\mathcal{D}$. In the case of a co-oriented $k$-contact manifold $(M,\bm\eta)$, its space of $k$-contact vector fields will be defined by $\mathfrak{X}_{\bm \eta}(M)$. Meanwhile, we write ${\rm Ham}(M,\bm\eta)$ for its space of associated $k$-contact Hamiltonian functions.
 
It is worth noting that every $\bm \eta$-Hamiltonian function is related to a unique $\bm \eta$-Hamiltonian vector field \cite{LRS_24}. This results from the fact that $[X,\Gamma(\ker\bm\eta)]\subset\Gamma(\ker\bm\eta)$ implies that $\iota_X\dd\bm\eta=\dd \bm h_X-\sum_{A=1}^k\eta^A\otimes (\mathcal{L}_{R_A}\bm h_X) $. This implies that the difference of two vector fields with the same $\bm h_X$ takes values in $\ker \bm \eta\cap \ker\dd\bm \eta=0$.

\begin{definition}
Let \(\mathcal D\) be a regular distribution of corank \(k\). A \emph{local Reeb frame} of $\mathcal{D}$ is a family of vector fields \(S_1,\ldots,S_k\in\mathfrak X(U)\) on an open subset \(U\subset M\) such that \([S_A,S_B]=0\), each \(S_A\) is a Lie symmetry of \(\mathcal D|_U\), and
\[
 \mathcal D|_U\oplus\langle S_1,\ldots,S_k\rangle=\T U.
\]
\end{definition}

If \(\mathcal D|_U=\ker\bm\eta\), the Reeb vector fields of \(\bm\eta\) form a local Reeb frame. Conversely, if \(S_1,\ldots,S_k\) is a local Reeb frame for \(\mathcal D\), then there exists a unique $k$-contact form \(\bm\eta=\sum_{A=1}^k\eta^A\otimes e_A\) on \(U\) such that \(\ker\bm\eta=\mathcal D|_U\) and \(\eta^A(S_B)=\delta^A_B\) for $A,B=1,\ldots,k$. The vector fields \(S_1,\ldots,S_k\) are then the Reeb vector fields of \(\bm\eta\), as shown in \cite{LRS_24}. Indeed, one may prove the following result \cite[Theorem 3.14]{LRS_24}.

\begin{theorem}\label{Thm:LocalEqu}
A corank-\(k\) distribution \(\mathcal D\subset \T M\) is a \(k\)-contact distribution if and only if it is maximally non-integrable and, around every point of \(M\), it admits a local Reeb frame.
\end{theorem}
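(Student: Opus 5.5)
We prove the two implications separately; the nontrivial direction is sufficiency.

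\emph{Necessity.} Let \(\mathcal D\) be a \(k\)-contact distribution and fix \(x\in M\). By Definition~\ref{def:k-contact-distribution} there is a neighbourhood \(U\) of \(x\) and a \(k\)-contact form \(\bm\eta\) on \(U\) with \(\mathcal D|_U=\ker\bm\eta\).

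1. Maximal non-integrability has already been observed after the definition of maximally non-integrable distributions. Indeed, \(\mathcal D\cap\ker\dd\bm\eta=0\), and by \eqref{eq:iso} this is exactly non-degeneracy of the Levi tensor.

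2. Theorem~\ref{thm:k-contact-Reeb}, applied on \(U\), gives commuting Reeb fields \(R_1,\ldots,R_k\) spanning \(\ker\dd\bm\eta\). Since \(\ker\bm\eta\oplus\ker\dd\bm\eta=\T U\), they span a complement of \(\mathcal D|_U\).

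3. Each \(R_A\) is a Lie symmetry of \(\mathcal D|_U\). For \(Y\in\Gamma(\mathcal D)\), Cartan's formula gives
\[
\eta^B([R_A,Y])=R_A(\eta^B(Y))-(\mathcal L_{R_A}\eta^B)(Y).
\]
The first term vanishes because \(\eta^B(Y)=0\). Also \(\mathcal L_{R_A}\eta^B=\dd\iota_{R_A}\eta^B+\iota_{R_A}\dd\eta^B=\dd\delta^B_A+0=0\). Hence \([R_A,Y]\in\Gamma(\mathcal D)\), and the \(R_A\) form a local Reeb frame.

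\emph{Sufficiency.} Assume \(\mathcal D\) is maximally non-integrable, and let \(S_1,\ldots,S_k\) be a local Reeb frame on \(U\).

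1. Define \(\bm\eta=\sum_A\eta^A\otimes e_A\) as the coframe dual to the \(S_A\) relative to the splitting \(\T U=\mathcal D|_U\oplus\langle S_1,\ldots,S_k\rangle\). That is, \(\eta^A|_{\mathcal D}=0\) and \(\eta^A(S_B)=\delta^A_B\). Then \(\ker\bm\eta=\mathcal D|_U\), which has corank \(k\) and is nonzero.

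2. We show \(\iota_{S_B}\dd\eta^A=0\). Since \(\mathcal D\) is a regular distribution of corank \(k\), every vector field on \(U\) decomposes as \(Z=Y+\sum_C f^C S_C\) with \(Y\in\Gamma(\mathcal D)\).
   \begin{itemize}
   \item On \(Y\in\Gamma(\mathcal D)\): \(\dd\eta^A(S_B,Y)=S_B(\eta^A(Y))-Y(\eta^A(S_B))-\eta^A([S_B,Y])\). The first term vanishes since \(\eta^A(Y)=0\), the second since \(\eta^A(S_B)\) is constant, and the third because \(S_B\) is a Lie symmetry of \(\mathcal D\).
   \item On \(S_C\): \(\dd\eta^A(S_B,S_C)=-\eta^A([S_B,S_C])=0\), since the frame commutes and the other terms differentiate constants.
   \end{itemize}
   Thus \(\langle S_1,\ldots,S_k\rangle\subset\ker\dd\bm\eta\).

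3. We show \(\ker\dd\bm\eta\cap\mathcal D=0\). If \(v\in\mathcal D_x\cap\ker\dd\bm\eta_x\), then \(\dd\bm\eta(v,w)=0\) for all \(w\in\mathcal D_x\). By \eqref{eq:iso} this means \(\mathcal L_{\mathcal D}(v,w)=0\) for all such \(w\), so \(v\in\mathcal D_x^{\perp_{\mathcal D}}=0\) by maximal non-integrability.

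4. We conclude \(\ker\bm\eta\oplus\ker\dd\bm\eta=\T U\). Step 3 gives \(\dim\ker\dd\bm\eta_x\le\operatorname{corank}\mathcal D=k\). Together with Step 2, this forces \(\ker\dd\bm\eta=\langle S_1,\ldots,S_k\rangle\), which is a complement of \(\mathcal D\). Hence \(\bm\eta\) is a \(k\)-contact form with \(\ker\bm\eta=\mathcal D|_U\), and \(\mathcal D\) is \(k\)-contact. Uniqueness in Theorem~\ref{thm:k-contact-Reeb} then identifies the \(S_A\) as the Reeb fields of \(\bm\eta\).

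The main obstacle is Step 2 of sufficiency. It relies on three facts: the Lie-symmetry condition is used on \emph{all} sections of \(\mathcal D\); the frame is commuting; and \(\eta^A(S_B)\) is constant, so that its derivative along \(\mathcal D\) vanishes. The dimension count in Step 4 is what turns the inclusion \(\langle S_A\rangle\subset\ker\dd\bm\eta\) into equality.
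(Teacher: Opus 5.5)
Your proof is correct and takes essentially the same route as the paper. The paper cites \cite[Theorem 3.14]{LRS_24} for this statement, but it runs exactly your sufficiency argument in the proof of Theorem~\ref{cor:Cartan_Nr_contact}: dual coframe to the Reeb frame, $\iota_{R_A}\dd\bm\eta=0$ from the symmetry and commutation properties, $\mathcal D\cap\ker\dd\bm\eta=0$ from maximal non-integrability, then a rank count. Your direct evaluation of $\dd\eta^A(S_B,\cdot)$ replaces its passage through $\mathcal L_{S_B}\bm\eta=0$ and Cartan's formula, a purely cosmetic difference.

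One small precision in the necessity part, where you follow the paper's own phrasing: $\mathcal D\cap\ker\dd\bm\eta=0$ by itself is weaker than Levi non-degeneracy. You should add one step. If $\dd\bm\eta(v,\cdot)$ vanishes on $\mathcal D_x$, it also vanishes on the complement $\ker\dd\bm\eta_x$, so $\dd\bm\eta(v,\cdot)=0$ on $T_xU$ and hence $v\in\mathcal D_x\cap\ker\dd\bm\eta_x=0$.
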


Theorem \ref{Thm:LocalEqu} will allow us to explain why Cartan distributions are $k$-contact distributions although their associated contact forms do not give rise to a $k$-contact structure for $r>1$. The difficulty to prove this fact is not the maximal non-integrability of the Cartan distribution, but the construction of an associated local Reeb frame. For many maximally non-integrable distributions, the lack of an associated local Reeb frame can be determined by the following construction \cite{SF_25,LRS_24}, which will also be relevant in further parts of this work.

\begin{definition}\label{def:derived_flag_kcontact_distribution}
Let \(\mathcal D\) be a regular distribution on $M$. The \emph{\(\mathcal D\)-derived flag} of \(\mathcal D\) is the increasing sequence of generalised distributions defined by \(\mathcal D^{(0)}:=\mathcal D\) and, recursively, by
\[
\mathcal D^{(m)}:=\mathcal D^{(m-1)}+[\mathcal D,\mathcal D^{(m-1)}],\qquad m\geq 1,
\]
where \([\mathcal D,\mathcal D^{(m-1)}]\) denotes the generalised distribution spanned by Lie brackets of locally defined vector fields taking values in $\mathcal D $ and $\mathcal D^{(m-1)}$, respectively. 
\end{definition}

Another relevant definition is the following one. 
\begin{definition} Given a generalised distribution $\mathcal{D}$ on $M$, its {\it symmetry distribution} is the generalised distribution $\mathfrak{sym}(\mathcal D)$ on $M$ of the form
$$
\mathfrak{sym}(\mathcal{D})_x=\{X_x\in \T_xM:\exists X\in \mathfrak{X}(M),[X,\Gamma(\mathcal D)]\subset \Gamma(\mathcal D)\}.
$$
\end{definition}

\begin{definition}
Let \((M,\mathcal D_M)\) and \((N,\mathcal D_N)\) be \(k\)- and \(k'\)-contact manifolds, respectively. A \emph{transcontact map} from \(M\) to \(N\) is a mapping \(\Phi:M\to N\) such that \(\T\Phi(\mathcal D_M)\subset\mathcal D_N\). If a transcontact map $\Phi$ has an inverse that is also a transcontact map, the map is called {\it a $k$-contactomorphism}. 
\end{definition}

The identity map is a transcontact map from a $k$-contact manifold onto itself. Moreover, the composition of transcontact maps is again a transcontact map. It follows immediately that \(k\)-contact manifolds, with their transcontact map as morphisms, form a category. 

\begin{example}\label{Ex:FirstOrder}
Let us recall the description of the first-order jet bundle of a vector bundle $\pi:E\rightarrow Q$ of rank $k$ as a $k$-contact manifold. Consider adapted coordinates \((x^i,y^A,y^A_i)\) on \(U\subset J^1\pi\). Then, the Cartan distribution reads locally on $U$ as
\[
 \mathcal C^1_\pi=\left\langle \frac{\partial}{\partial x^i}+\sum_{A=1}^k y^A_i\frac{\partial}{\partial y^A},\frac{\partial}{\partial y^B_i}\,:\,i=1,\ldots,n,\,\,B=1,\ldots,k\right\rangle.
\]
Moreover, the \(\mathbb{R}^k\)-valued one-form
\[
 \bm\eta=\sum_{A=1}^k\left(\dd y^A-\sum_{i=1}^n y^A_i\dd x^i\right)\otimes e_A
\]
becomes a $k$-contact form on $U$, which
satisfies \(\ker\bm\eta=\mathcal C^1_\pi|_U\), and whose Reeb vector fields are \(R_1=\partial/\partial y^1,\ldots,R_k=\partial/\partial y^k\). Applying the above procedure on a neighbourhood $U$ of every point of $J^1\pi$, the generated $k$-contact forms  and the Reeb vector fields change, but the kernels of the $k$-contact forms always reproduce the Cartan distribution on $U$. Hence, the Cartan distribution of $J^1\pi$ is a $k$-contact distribution, and
\((J^1\pi,\mathcal C^1_\pi)\) is a \(k\)-contact manifold.

\end{example}

\begin{example}\label{ex:covering_kcontact}
Let \(W=\mathbb R^\ell\) with coordinates \(w^1,\ldots,w^\ell\) and consider the product \(\widehat U=U\times W\), where \(U\subset J^1\pi\) is as in Example~\ref{Ex:FirstOrder}. Let \(A^1,\ldots,A^\ell\) be closed basic one-forms on $U$, pulled back from the base, so that locally \(A^a=\sum_{i=1}^nA^a_i(x)\dd x^i\) for $a=1,\ldots,\ell$. Assume $x^i,y^A_i,w^j$ to be defined on $\hat{U}$ in the natural manner.
Let us construct the lifted total derivatives as follows
\[
 \widehat D_i=\frac{\partial}{\partial x^i}+\sum_{A=1}^k y^A_i\frac{\partial}{\partial y^A}+\sum_{a=1}^\ell A^a_i(x)\frac{\partial}{\partial w^a},\qquad i=1,\ldots,n,
\]
and the distribution by
\[
 \widehat{\mathcal C}=\left\langle \widehat D_i,\frac{\partial}{\partial y^A_i}\,:\,i=1,\ldots,n,\quad A=1,\ldots,k\right\rangle\subset \T\widehat U.
\]
Equivalently, \(\widehat{\mathcal C}\) is the kernel of 
\[
\widehat{\bm\eta}=\sum_{A=1}^k\left(\dd y^A-\sum_{i=1}^n y^A_i\dd x^i\right)\otimes e_A+\sum_{a=1}^\ell(\dd w^a- A^a)\otimes \varepsilon_a
\]
where \(\{e_A\}_{A=1}^k\) is the canonical basis of \(\mathbb R^k\) and \(\{\varepsilon_a\}_{a=1}^\ell\) is the canonical basis of \(\mathbb R^\ell\). Then, $\widehat{\bm\eta}$ is a \((k+\ell)\)-contact form on \(\widehat U\). Moreover, \(\dd\theta^A=\sum_{i=1}^n \dd x^i\wedge \dd y^A_i\), and
\[
 \ker\dd\widehat{\bm\eta}
 =
 \left\langle 
 \frac{\partial}{\partial y^A},\frac{\partial}{\partial w^a}
 :A=1,\ldots,k,\quad a=1,\ldots,\ell
 \right\rangle,
\]
and therefore
\(
 \T\widehat U=\ker\widehat{\bm\eta}\oplus\ker\dd\widehat{\bm\eta}.
\)
The corresponding Reeb vector fields are
\(
 R_A=\frac{\partial}{\partial y^A},\,\,S_a=\frac{\partial}{\partial w^a},
\) for $A=1,\ldots,k$ and $a=1,\ldots,\ell$. 
Thus, \((\widehat U,\widehat{\mathcal C})\) is  a \((k+\ell)\)-contact manifold.

\end{example}

Let us recall the definition of polarisation employed in \cite{LRS_24}, which is a distributional generalisation of the initial one \cite{Riv_22}. The notion has been renamed here because, as shown later in this paper, it is a particular case adapted to a first-order jet model of a notion that is much more general.

\begin{definition}
Let \((M,\mathcal D)\) be a \(k\)-contact manifold and assume that $\dim M=nk+n+k$ for some positive integers $n,k$. A \emph{first-order polarisation} of \((M,\mathcal D)\) is an integrable subbundle \(\mathcal P\subset\mathcal D\) of rank \(nk\). 
A triple \((M,\mathcal D,\mathcal P)\) satisfying these conditions is called a \emph{polarised \(k\)-contact manifold of first order}.
\end{definition}

The classical Darboux theorem for polarised co-oriented \(k\)-contact manifolds of first order \cite{Riv_22,LRS_24} says that, if \((M,\bm\eta,\mathcal P)\) is a co-oriented \(k\)-contact manifold of dimension \(n+nk+k\) with an integrable distribution \(\mathcal P\subset\ker\bm\eta\) of rank \(nk\), then around every point there are coordinates \((x^i,y^A,y^A_i)\) such that
\[
 \bm\eta=\sum_{A=1}^k\left(\dd y^A-\sum_{i=1}^n y^A_i\dd x^i\right)\otimes e_A,\quad \mathcal P=\left\langle\frac{\partial}{\partial y^A_i}:i=1,\ldots,n,\, A=1,\ldots,k\right\rangle,\quad R_B=\frac{\partial}{\partial y^B},
\]
where $B=1,\ldots,k$. 
Moreover, $\mathcal P$ admits locally a complement given by the horizontal distribution $\langle D_i=\partial/\partial x^i+\sum_{A=1}^k y^A_i\partial/\partial y^A:i=1,\ldots,n\rangle$ and hence $\mathcal P$ is a  first-order polarisation. 
 
\begin{definition}\label{def:eta_hamiltonian_bracket}
Let \((M,\boldsymbol\eta)\) be a co-oriented \(k\)-contact manifold, and let \(\mathfrak X_{\boldsymbol\eta}(M)\) denote the Lie algebra of \(\boldsymbol\eta\)-Hamiltonian \(k\)-contact vector fields. The \emph{bracket of two \(\boldsymbol\eta\)-Hamiltonian functions} is defined by 
\[
\{\boldsymbol h_X,\boldsymbol h_Y\}_{\boldsymbol\eta}:=\boldsymbol h_{[X,Y]}=-\iota_{[X,Y]}\boldsymbol\eta,\qquad X,Y\in\mathfrak X_{\boldsymbol\eta}(M).
\]

\end{definition}
It can be proved that the bracket of $\bm \eta$-Hamiltonian functions is a Lie bracket and a Lie algebra morphism for the space $X\in \mathfrak{X}_{\bm\eta}(M)\mapsto -\iota_X\bm\eta\in {\rm Ham}(M,\bm\eta)$ \cite{LRS_24}.
\section{A clarifying example}\label{Sec:IntrExample}

It has been clear since the first developments of \(k\)-contact geometry that first-order jet manifolds admit, locally, natural \(k\)-contact forms induced by the standard contact forms in adapted coordinates \cite{Riv_22,LRS_24}. But, as shown below, the  contact forms for $J^r\pi$ do not give rise to a \(k\)-contact structure when $r>1$. It is therefore noteworthy that the Cartan distribution of any $J^r\pi$ is nevertheless an \(N^r_\pi\)-contact distribution, as shown for the first time in the literature in Section \ref{Sec:jetkcontact}. Let us illustrate this phenomenon in a second-order jet manifold $J^2\pi$. The fact  \(r=2\) allows us to show that standard contact forms do not give rise to a \(k\)-contact structure when $r>1$. We choose \(n=2\) because it allows us to construct a jet manifold that can be used to study partial differential equations \cite{Olver1993}. Finally, \(m=2\) is chosen to see how the number of dependent variables affects the form of the $k$-contact form locally associated with the Cartan distribution of $J^2\pi$.   

Let \(\pi:E=\mathbb R^2\times\mathbb R^2\to\mathbb R^2=M\), \((x^1,x^2,u,v)\mapsto(x^1,x^2)\) and consider induced global adapted coordinates on $J^2\pi$ of the form
\[
 (x^1,x^2,q^u,q^v,q^u_1,q^u_2,q^v_1,q^v_2,q^u_{11},q^u_{12},q^u_{22},q^v_{11},q^v_{12},q^v_{22}),
\]
where \(q^\alpha_i,q^\alpha_{ij}\), with \(\alpha\in\{u,v\}\) and \(1\leq i\leq j\leq2\), are  the corresponding jet coordinates for $q^u:=u,q^v:=v$. Then, \(\dim J^2\pi=14\) and the Cartan distribution is
\[
 \mathcal C^2_\pi=\left\langle D_1,D_2,\frac{\partial}{\partial q^\alpha_{11}},\frac{\partial}{\partial q^\alpha_{12}},\frac{\partial}{\partial q^\alpha_{22}}\right\rangle_{\alpha\in\{u,v\}},\,\,
D_i=\frac{\partial}{\partial x^i}+\sum_{\alpha\in\{u,v\}}\left(q^\alpha_i\frac{\partial}{\partial q^\alpha}+q^\alpha_{i1}\frac{\partial}{\partial q^\alpha_1}+q^\alpha_{i2}\frac{\partial}{\partial q^\alpha_2}\right),
\]
for $ i=1,2$ 
with \(q^\alpha_{12}=q^\alpha_{21}\) and $\alpha\in \{u,v\}$. Thus, \(\operatorname{rank}\mathcal C^2_\pi=8\), and its codimension is \(6\). This agrees with
\[
 N^2_\pi=m\binom{n+r-1}{r-1}=2\binom{3}{1}=6.
\]

Let \(\{e_\alpha^i\}_{\alpha\in\{u,v\}}^{i=0,1,2}\) be the canonical basis of \(\mathbb R^6\). The standard contact forms on $J^2\pi$ are
\[
 \theta^\alpha_0=\dd q^\alpha-q^\alpha_1\dd x^1-q^\alpha_2\dd x^2,\quad \theta^\alpha_1=\dd q^\alpha_1-q^\alpha_{11}\dd x^1-q^\alpha_{12}\dd x^2,\quad \theta^\alpha_2=\dd q^\alpha_2-q^\alpha_{12}\dd x^1-q^\alpha_{22}\dd x^2,
\]
for $\alpha\in \{u,v\}$. 
They define the \(\mathbb R^6\)-valued one-form
\(
 \bm\Theta=\sum_{\alpha\in\{u,v\}}\left(\theta^\alpha_0\otimes e_\alpha^0+\theta^\alpha_1\otimes e_\alpha^1+\theta^\alpha_2\otimes e_\alpha^2\right).
\)
By construction, \(\ker\bm\Theta=\mathcal C^2_\pi\). Nevertheless, \(\bm\Theta\) is not a six-contact form. Indeed, if
\[
 \Omega^\alpha_0=\dd x^1\wedge\dd q^\alpha_1+\dd x^2\wedge\dd q^\alpha_2,\qquad \Omega^\alpha_1=\dd x^1\wedge\dd q^\alpha_{11}+\dd x^2\wedge\dd q^\alpha_{12},\qquad \Omega^\alpha_2=\dd x^1\wedge\dd q^\alpha_{12}+\dd x^2\wedge\dd q^\alpha_{22},
\]
then
\[
 \dd\bm\Theta=\sum_{\alpha\in\{u,v\}}\left(\Omega^\alpha_0\otimes e_\alpha^0+\Omega^\alpha_1\otimes e_\alpha^1+\Omega^\alpha_2\otimes e_\alpha^2\right)\Longrightarrow
\ker\dd\bm\Theta=\left\langle\frac{\partial}{\partial u},\frac{\partial}{\partial v}\right\rangle.
\]
In particular, \(\ker\bm\Theta\cap\ker\dd\bm\Theta=\{0\}\), which expresses the maximal non-integrability of the distribution \(\ker\bm\Theta=\mathcal C^2_\pi\) as follows from expression \eqref{eq:iso}. However,
\[
 \dim(\ker\bm\Theta\oplus\ker\dd\bm\Theta)=\dim\mathcal C^2_\pi+\dim\ker\dd\bm\Theta=8+2=10<14=\dim J^2\pi,
\]
so \(\ker\bm\Theta\oplus\ker\dd\bm\Theta\neq \T J^2\pi\). Thus, the standard higher-order contact forms on \(J^2\pi\) recover the Cartan distribution \(\mathcal C^2_\pi\) as their common kernel, but they do not form a six-contact structure. This does not imply that \(\mathcal C^2_\pi\) fails to be a six-contact distribution; it only means that \(\bm\Theta\) is not an adapted six-contact form.

Let us use Theorem \ref{Thm:LocalEqu} to show that \(\mathcal C^2_\pi\) is a six-contact distribution. A commuting frame of infinitesimal symmetries of \(\mathcal C^2_\pi\), spanning a complement of \(\mathcal C^2_\pi\) in \(\T J^2\pi\), is given by
\[
 R_\alpha^0=\frac{\partial}{\partial q^\alpha}=\operatorname{pr}^2\left(\frac{\partial}{\partial q^\alpha}\right),\qquad R_\alpha^i=x^i\frac{\partial}{\partial q^\alpha}+\frac{\partial}{\partial q^\alpha_i}=\operatorname{pr}^2\left(x^i\frac{\partial}{\partial q^\alpha}\right),\qquad \alpha\in\{u,v\},\quad i=1,2.
\]
These vector fields commute pairwise, since their coefficients depend only on \(x^1,x^2\) and none of them differentiates the base coordinates. They preserve the Cartan distribution \(\mathcal C^2_\pi\) because they are prolongations to \(J^2\pi\) of vector fields on \(E\simeq\mathbb{R}^4\). Moreover, \(R_\alpha^0,R_\alpha^1,R_\alpha^2\), for \(\alpha\in\{u,v\}\), are linearly independent at every point of \(J^2\pi\) and span a complement to \(\mathcal C^2_\pi\). Hence, the distributional characterisation of \(k\)-contact distributions in Theorem \ref{Thm:LocalEqu} implies that \(\mathcal C^2_\pi\) is locally described by a six-contact form $\bm\eta^2_\pi$. In the present global coordinate system, the adapted six-contact form is obtained by imposing \((\eta^2_\pi)^\alpha_i(R_\beta^j)=\delta^\alpha_\beta\delta^i_j\) for \(\alpha,\beta\in\{u,v\}\) and \(i,j\in\{0,1,2\}\) and $\bm\eta^2_\pi|_{\mathcal{C}^2_\pi}=0$. The unique adapted \(\mathbb R^6\)-valued one-form satisfying previous conditions is 
\[
 \bm\eta^2_\pi=\sum_{\alpha\in\{u,v\}}\left((\theta^\alpha_0-x^1\theta^\alpha_1-x^2\theta^\alpha_2)\otimes e_\alpha^0+\theta^\alpha_1\otimes e_\alpha^1+\theta^\alpha_2\otimes e_\alpha^2\right).
\]
A direct computation gives
\[
 \dd\bm\eta^2_\pi=\sum_{\alpha\in\{u,v\}}\left(-(x^1\Omega^\alpha_1+x^2\Omega^\alpha_2)\otimes e_\alpha^0+\Omega^\alpha_1\otimes e_\alpha^1+\Omega^\alpha_2\otimes e_\alpha^2\right).
\]
Thus,
\(
 \ker\dd\bm\eta^2_\pi=\left\langle R_\alpha^0,R_\alpha^1,R_\alpha^2\right\rangle_{\alpha\in\{u,v\}},
\)
and, since this distribution is complementary to \(\mathcal C^2_\pi=\ker\bm\eta^2_\pi\), one obtains
\[
 \T J^2\pi=\ker\bm\eta^2_\pi\oplus\ker\dd\bm\eta^2_\pi.
\]
Therefore, \(\bm\eta^2_\pi\) is an adapted six-contact form for the Cartan distribution \(\mathcal C^2_\pi\). Notice that \(\bm\eta^2_\pi\) has the same kernel as the standard contact form \(\bm\Theta\), but the vector fields \(R_\alpha^0,R_\alpha^1,R_\alpha^2\), with \(\alpha\in\{u,v\}\), constitute a Reeb frame.

This example already displays several structures that will be used systematically in further parts of this work. First,   \(\ker\bm\Theta\) gives the Cartan distribution but $\bm\Theta$ does not give rise to an adapted six-contact form. Second, the adapted six-contact form \(\bm\eta^2_\pi\) is obtained by choosing a commuting transverse frame of Lie symmetries of the Cartan distribution $\mathcal{C}^r_\pi$ giving rise to a local Reeb frame. Third, 
\[
 \mathcal{G}^2_\pi=\ker \T\pi_{2,1}=\left\langle\frac{\partial}{\partial q^\alpha_{11}},\frac{\partial}{\partial q^\alpha_{12}},\frac{\partial}{\partial q^\alpha_{22}}\right\rangle_{\alpha\in\{u,v\}}
\]
is contained in \(\mathcal C^2_\pi\). These three ingredients will reappear in the general construction of an \(N^r_\pi\)-contact form for arbitrary Cartan distributions. The present example also illustrates that our construction for $J^2\pi$ is not a peculiarity but a general feature of jet manifolds.

The analogy goes much further. The Spencer operator on \(J^2\pi\) is
\[
 \mathfrak{S}_\pi^2=\sum_{\alpha\in\{u,v\}}\left(\theta^\alpha_0\otimes\frac{\partial}{\partial q^\alpha}+\theta^\alpha_1\otimes\frac{\partial}{\partial q^\alpha_1}+\theta^\alpha_2\otimes\frac{\partial}{\partial q^\alpha_2}\right).
\]
One can also understand it as a differential one-form taking values in $\pi_{2,1}^*VJ^{1}\pi\stackrel{[\cdot]}{\simeq} \T J^2\pi/\ker \pi_{2,1}$, which is obtained by passing to the quotient $[ \mathfrak{S}^2_\pi]$. Indeed, one can understand the  $\partial/\partial q^\alpha_{i}$ as a basis of sections of $\Gamma(\pi_{2,1}^*VJ^{1}\pi)$. In this sense, the above is also the canonical structure on $J^2\pi$ \cite{Campos_2010}. 

Let \(\mathfrak R:\mathbb R^6\to \mathfrak{X}(J^2\pi)\) be the linear mapping defined by \(\mathfrak R(e_\alpha^i)=R_\alpha^i\) for $\alpha\in \{u,v\}$ and $i=0,1,2$. Then,
\[
 \mathfrak R\circ\bm\eta^2_\pi=\sum_{\alpha\in\{u,v\}}\left((\theta^\alpha_0-x^1\theta^\alpha_1-x^2\theta^\alpha_2)\otimes R_\alpha^0+\theta^\alpha_1\otimes R_\alpha^1+\theta^\alpha_2\otimes R_\alpha^2\right)=\mathfrak{S}_\pi^2.
\]
Indeed, for each \(\alpha\in\{u,v\}\), one has
\begin{multline*}
 (\theta^\alpha_0-x^1\theta^\alpha_1-x^2\theta^\alpha_2)\otimes\frac{\partial}{\partial q^\alpha}+\theta^\alpha_1\otimes\left(x^1\frac{\partial}{\partial q^\alpha}+\frac{\partial}{\partial q^\alpha_1}\right)+\\\theta^\alpha_2\otimes\left(x^2\frac{\partial}{\partial q^\alpha}+\frac{\partial}{\partial q^\alpha_2}\right)=\theta^\alpha_0\otimes\frac{\partial}{\partial q^\alpha}+\theta^\alpha_1\otimes\frac{\partial}{\partial q^\alpha_1}+\theta^\alpha_2\otimes\frac{\partial}{\partial q^\alpha_2}.
\end{multline*}
Thus, the adapted six-contact form gives a geometric realisation of the Spencer operator in this second-order jet model. More geometrically, $[\mathfrak{R}\circ \bm\eta^2_\pi]$ is the canonical structure on $J^2\pi$ expressed in terms of a Reeb frame (see \cite[Definition~4.8]{Campos_2010}). On the other hand, one can consider the contraction of the differential of a function $F\in C^\infty(J^2\pi)$ with $\mathfrak{R}\circ \bm\eta^2_\pi$ and one obtains the vertical differential \cite{Olver_Pohjanpelto_2008,Olver_1995_Equivalence}.

Let us also describe the usual Spencer contraction by means of our six-contact form. Write the Cartan distribution in the adapted form \(\mathcal C^2_\pi=\mathcal H^2_\pi\oplus\mathcal{G}^2_\pi\), where \(\mathcal H^2_\pi=\langle D_1,D_2\rangle\) and \(\mathcal{G}^2_\pi=\ker \T\pi_{2,1}\). Since \(\mathcal C^2_\pi=\ker\bm\eta^2_\pi\), for any \(X,Y\in\Gamma(\mathcal C^2_\pi)\) one has \(\dd\bm\eta^2_\pi(X,Y)=-\bm\eta^2_\pi([X,Y])\). Thus, the mixed part of \(\dd\bm\eta^2_\pi\) on \(\mathcal H^2_\pi\times\mathcal{G}^2_\pi\) measures the failure of a vector field in $\ker \T\pi_{2,1}$ to commute with a horizontal Cartan derivative out of the Cartan distribution. More specifically, in our model, if \(p\in J^2\pi\) and \(x=\pi_0(p)\), we have the natural identifications 
\[
 a^1\frac{\partial}{\partial x^1}+a^2\frac{\partial}{\partial x^2}\in \T_xM\longmapsto a^1D_1|_p+a^2D_2|_p\in(\mathcal H^2_\pi)_p,
\]
and
\begin{multline*}\sum_{\alpha\in\{u,v\}}\left(A^\alpha_{11}\frac{\partial}{\partial q^\alpha_{11}}+A^\alpha_{12}\frac{\partial}{\partial q^\alpha_{12}}+A^\alpha_{22}\frac{\partial}{\partial q^\alpha_{22}}\right)\in(\mathcal{G}^2_\pi)_p\\\longmapsto \sum_{\alpha\in\{u,v\}}\left(A^\alpha_{11}\dd x^1\odot\dd x^1+A^\alpha_{12}\dd x^1\odot\dd x^2+A^\alpha_{22}\dd x^2\odot\dd x^2\right)\otimes\frac{\partial}{\partial q^\alpha}\in S^2\T_x^*M\otimes V_p\pi,
\end{multline*}
with the usual jet-coordinate convention for symmetric multi-indices. Hence, if
\[
 h=a^1D_1+a^2D_2,\qquad A=\sum_{\alpha\in\{u,v\}}\left(A^\alpha_{11}\frac{\partial}{\partial q^\alpha_{11}}+A^\alpha_{12}\frac{\partial}{\partial q^\alpha_{12}}+A^\alpha_{22}\frac{\partial}{\partial q^\alpha_{22}}\right)\in\mathcal{G}^2_\pi,
\]
then
\[
 \mathfrak R\circ\dd\bm\eta^2_\pi(h,A)=\sum_{\alpha\in\{u,v\}}\left((a^1A^\alpha_{11}+a^2A^\alpha_{12})\frac{\partial}{\partial q^\alpha_1}+(a^1A^\alpha_{12}+a^2A^\alpha_{22})\frac{\partial}{\partial q^\alpha_2}\right),
\]
where the right-hand part can be understood naturally as an element in $\T_x^*M\otimes V_p\pi$. 
Under the previous identifications, this is precisely the coordinate expression of the algebraic Spencer contraction \((h,A)\mapsto\iota_hA\), now in the genuine PDE case \(\T_xM\times S^2\T_x^*M\otimes V_p\pi\to \T_x^*M\otimes V_p\pi\). Thus, \(\mathfrak{R}\circ \dd\bm\eta^2_\pi|_{\mathcal{H}^2_\pi\times \mathcal{G}^2_\pi}\) recovers the algebraic Spencer contraction.

\section{Jet bundles as \texorpdfstring{\(k\)}{k}-contact manifolds}\label{Sec:jetkcontact} 

We now explain why the example of the previous section is not accidental, but rather a particular case of a general construction. Recall that \[ N^r_\pi:=m\binom{n+r-1}{r-1} \] is the corank of the Cartan distribution \(\mathcal C^r_\pi\) on \(J^r\pi\). Let us prove that \(\mathcal C^r_\pi\) is an \(N^r_\pi\)-contact distribution. The key point is the existence, in every adapted coordinate system to $J^r
\pi$, of a commuting frame of Lie symmetries of the Cartan distribution $\mathcal{C}^r_\pi$ spanning a complement to $\mathcal{C}^r_\pi$: a (local) \emph{Reeb frame}. To simplify the notation, it is convenient to recall that $J!=\prod_{i=1}^n j_i!, x^J=\prod_{i=1}^n (x^i)^{j_i},$ and write
\[ 
 \binom{J}{I}=\frac{J!}{I!(J-I)!}.
\]
With this notation, \((x+y)^J=\sum_{I\leq J}\binom{J}{I}x^I y^{J-I}\). Let us show now a lemma that gives the explicit expression of a local Reeb frame for the Cartan distribution of $J^r\pi$ in adapted coordinates.

\begin{lemma}\label{lem:adapted_Nr_contact} Let \((x^i,u^\alpha_I)\), with \(\alpha=1,\ldots,m\) and \(|I|\leq r\), be an adapted coordinate system on an open subset \(U\subset J^r\pi\). Define
\begin{equation}\label{relReebCon} R^J_\alpha:=\sum_{I\leq J}\frac{x^{J-I}}{(J-I)!}\frac{\partial}{\partial u^\alpha_I} =\operatorname{pr}^{(r)}\left(\frac{x^J}{J!}\frac{\partial}{\partial u^\alpha}\right),\qquad |J|\leq r-1,\qquad\alpha=1,\ldots,m. 
    \end{equation} 
   The vector fields \(R^J_\alpha\) commute pairwise, are Lie symmetries of \(\mathcal C^r_\pi|_U\), and span a rank-\(N^r_\pi\) subbundle \(\mathcal R\subset \T U\) such that \( \T U=\mathcal C^r_\pi|_U\oplus\mathcal R. \) Moreover, 
    \begin{equation}\label{relConReeb} \frac{\partial}{\partial u^\alpha_J}=\sum_{I\leq J}(-1)^{|J-I|}\frac{x^{J-I}}{(J-I)!}R^I_\alpha,\qquad \alpha=1,\ldots,m,\qquad |J|\leq r-1. \end{equation} 
\end{lemma}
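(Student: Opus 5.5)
The plan is to verify first the prolongation identity in \eqref{relReebCon}, since the symmetry property then comes for free. Take $X=\frac{x^J}{J!}\frac{\partial}{\partial u^\alpha}$ on $E$, so $\xi^i=0$ and $\varphi^\beta=\delta^\beta_\alpha x^J/J!$, and apply the recursion $\varphi^\beta_{I+\widehat e_i}=D_i\varphi^\beta_I$ from \eqref{eq:prolongation}. Because $\varphi^\beta$ depends only on $x$, every $\varphi^\beta_I$ depends only on $x$ as well, and $D_i$ acts on it as $\partial/\partial x^i$. Hence
\[
\varphi^\beta_I=\delta^\beta_\alpha\,\partial_x^I\Big(\frac{x^J}{J!}\Big)=\delta^\beta_\alpha\frac{x^{J-I}}{(J-I)!}
\]
for $I\leq J$, and $\varphi^\beta_I=0$ otherwise. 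This gives the stated formula for $R^J_\alpha$, with all $|I|\leq r$ admissible; since $I\leq J$ forces $|I|\leq r-1$, no top-order component occurs. Every prolongation $X^{(r)}$ is a Cartan symmetry, so each $R^J_\alpha$ is a Lie symmetry of $\mathcal C^r_\pi|_U$. For commutativity, the coefficients of all $R^J_\alpha$ depend only on $x$, and none of these fields has a $\partial/\partial x^i$ component, so every bracket vanishes.

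Next comes the inversion formula \eqref{relConReeb}. Substitute the definition of $R^I_\alpha$ into its right-hand side to obtain
\[
\sum_{K\leq J}\Big(\sum_{K\leq I\leq J}(-1)^{|J-I|}\frac{x^{J-I}x^{I-K}}{(J-I)!(I-K)!}\Big)\frac{\partial}{\partial u^\alpha_K}.
\]
The inner coefficient equals $\frac{x^{J-K}}{(J-K)!}\sum_{L\leq J-K}\binom{J-K}{L}(-1)^{|J-K-L|}$, which factorises as a product over $a$ of $(1-1)^{j_a-k_a}$. It therefore vanishes unless $K=J$, where it equals $1$.

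For the splitting, the inversion formula shows that $\mathcal R=\langle R^J_\alpha\rangle$ equals $\langle\partial/\partial u^\alpha_J:|J|\leq r-1\rangle$. This is a rank-$N^r_\pi$ subbundle, because the transition between the two frames is unitriangular. Using \eqref{eq:notationCD}, the Cartan distribution is spanned by the $D_i$ and the $\partial/\partial u^\alpha_K$ with $|K|=r$. Together with the $\partial/\partial u^\alpha_J$, $|J|\leq r-1$, these fields form a frame of $\T U$: subtracting from each $D_i$ its $\partial/\partial u^\alpha_J$ components leaves $\partial/\partial x^i$. The sum is therefore direct and equals $\T U$, and the ranks add up as $\operatorname{rank}\mathcal C^r_\pi+N^r_\pi=\dim J^r\pi$.

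I expect the only delicate point to be the first step. One must be careful that $D_i$ reduces to $\partial/\partial x^i$ on functions of $x$ alone. One must also check that the truncation at order $r$ in the prolongation does not interfere, and it does not, since all nonzero components have $|I|\leq |J|\leq r-1$. The multinomial cancellation in the inversion step is routine.
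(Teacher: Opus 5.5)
Your proof is correct and follows essentially the same route as the paper. You get commutativity from the $x$-only coefficients, the symmetry property from the prolongation identity, the complement from the equality $\mathcal R=\langle\partial/\partial u^\alpha_J:|J|\le r-1\rangle$, and the inversion formula from the $(1-1)^N$ cancellation. The differences are only organisational: you verify the prolongation identity explicitly (the paper merely asserts it), and you deduce the span equality from the already-proved inversion formula rather than by the paper's induction on the order $s$.
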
 
    
    \begin{proof} The vector fields \(R^J_\alpha\) commute pairwise. Indeed, their coefficients depend only on the coordinates \(x^i\), whereas $R^J_\alpha x^i=0$ for all $i=1,\ldots,n$, $|J|\leq r-1$,  and $\alpha=1,\ldots,m$. The \(R^J_\alpha\) are also Lie symmetries of the Cartan distribution $\mathcal{C}_\pi^r$, because expressions \eqref{relReebCon} show that they are prolongations to \(U\subset J^r\pi\) of vector fields locally defined on \(E\). 
        
        Let us prove that the fields \(R^J_\alpha\) span a complement to \(\mathcal C^r_\pi|_U\). In particular, we show by induction on \(s=0,\ldots,r-1\) that, for every \(\alpha=1,\ldots,m\), one has 
        \begin{equation}\label{eq:claim} \left\langle R^J_\alpha:\alpha=1,\ldots,m,|J|\leq s\right\rangle = \left\langle \frac{\partial}{\partial u^\alpha_J}:\alpha=1,\ldots,m,|J|\leq s\right\rangle . 
        \end{equation}
        For \(s=0\), this follows from \(R^0_\alpha=\partial/\partial u^\alpha\) with $\alpha=1,\ldots,m$. Assume that the claim \eqref{eq:claim} holds up to order \(s-1\). If \(|J|=s\), then \[ R^J_\alpha=\frac{\partial}{\partial u^\alpha_J}+\sum_{I<J}\frac{x^{J-I}}{(J-I)!}\frac{\partial}{\partial u^\alpha_I}. \] By the induction hypothesis, all the lower-order fields \(\partial/\partial u^\alpha_I\), with \(I<J\), already belong to the span of the \(R^I_\alpha\) with \(|I|\leq s-1\). Hence, \(\partial/\partial u^\alpha_J\) belongs to the span of the \(R^I_\alpha\) with \(|I|\leq s\). The converse inclusion is immediate from the defining expression of the \(R^J_\alpha\). Consequently, relation \eqref{eq:claim} is satisfied for $s$, and by induction hypothesis for every $s\leq r-1$.
        
        Since \(\mathcal C^r_\pi|_U\) is locally spanned by the total derivatives \(D_1,\ldots,D_n\) and the fields \(\partial/\partial u^\alpha_K\) with \(|K|=r\), it follows that the \(R^J_\alpha\),  with \(|J|\leq r-1\) and $\alpha=1,\ldots,m$, span a rank-\(N^r_\pi\) distribution \(\mathcal R\) complementary to \(\mathcal C^r_\pi|_U\). 
        
        It remains to prove the inverse formula \eqref{relConReeb}. Put 
        \[ 
        \widetilde{\partial}^J_\alpha:=\sum_{I\leq J}(-1)^{|J-I|}\frac{x^{J-I}}{(J-I)!}R^I_\alpha\,\qquad  \alpha=1,\ldots,m,\qquad |J|\leq r-1. 
        \] 
        
        Using the expression of \(R^I_\alpha\), we get \[ \widetilde{\partial}^J_\alpha=\sum_{I\leq J}(-1)^{|J-I|}\frac{x^{J-I}}{(J-I)!}\sum_{L\leq I}\frac{x^{I-L}}{(I-L)!}\frac{\partial}{\partial u^\alpha_L} = \sum_{L\leq I\leq J}(-1)^{|J-I|}\frac{x^{J-I}}{(J-I)!}\frac{x^{I-L}}{(I-L)!} \frac{\partial}{\partial u^\alpha_L}. \] For fixed \(L\leq J\), the coefficient of \(\partial/\partial u^\alpha_L\) is \[ \sum_{L\leq I\leq J}(-1)^{|J-I|}\frac{x^{J-L}}{(J-I)!(I-L)!} = \frac{x^{J-L}}{(J-L)!}\sum_{L\leq I\leq J}(-1)^{|J-I|}\binom{J-L}{I-L}. \] Writing \(M=I-L\) and \(N=J-L\), this becomes \[ \frac{x^N}{N!}\sum_{M\leq N}(-1)^{|N-M|}\binom{N}{M} = \frac{x^N}{N!}(1-1)^N. \] 
        Hence, it is zero except in the case \(N=0\), that is, when \(L=J\), and in that case it equals \(1\). Therefore, \(\widetilde{\partial}^J_\alpha=\partial/\partial u^\alpha_J\), which proves \eqref{relConReeb}.
    
    \end{proof}

        The previous local Reeb frame $\{R_\alpha^J\}_{\alpha=1,\ldots,m}^{|J|\leq r-1}$ is not unique. A different adapted coordinate system of $J^r\pi$ gives a different local Reeb frame $\{R_\alpha^{'J}\}_{\alpha=1,\ldots,m}^{|J|\leq r-1}$.  
        In general, all local Reeb frames $\{R_\alpha^J\}_{\alpha=1,\ldots,m}^{|J|\leq r-1}$ on $J^r\pi$ need not glue to a globally defined Reeb distribution. If the jet bundle $J^r\pi$ admits a global adapted coordinate system, then the Reeb frame of Lemma~\ref{lem:adapted_Nr_contact} gives rise to a global frame of commuting Lie symmetries of the Cartan distribution spanning a complement to it. 

\begin{lemma}
The  Cartan distribution \(\mathcal C^r_\pi\subset \T J^r\pi\) is maximally non-integrable.
\end{lemma}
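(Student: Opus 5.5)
We work locally in an adapted chart \((x^i,u^\alpha_I)\) on \(U\subset J^r\pi\), using the local frame \(D_1,\ldots,D_n\), \(\partial/\partial u^\alpha_K\) with \(|K|=r\), of \(\mathcal C^r_\pi|_U\) given in \eqref{eq:notationCD}. Maximal non-integrability is pointwise and independent of extensions, so it suffices to take an arbitrary \(v\in(\mathcal C^r_\pi)_p\), write \(v=\sum_i a^iD_i|_p+\sum_{\alpha,|K|=r}A^\alpha_K\,\partial/\partial u^\alpha_K|_p\), and show that \(\mathcal L_{\mathcal C^r_\pi}(v,w)=0\) for all \(w\in(\mathcal C^r_\pi)_p\) forces \(a^i=0\) and \(A^\alpha_K=0\). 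The Levi tensor is tensorial, so we evaluate it on frame elements using constant-coefficient extensions.

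First we compute the brackets of the frame. The vertical fields \(\partial/\partial u^\alpha_K\), \(|K|=r\), commute with each other. The bracket formula recalled in Section~\ref{sec:Generalities} gives \([D_j,\partial/\partial u^\alpha_K]=-\partial/\partial u^\alpha_{K-\widehat e_j}\) when \(k_j>0\), and zero otherwise. We also have \([D_i,D_j]=0\): since \(D_i\) carries \(u^\alpha_{I+\widehat e_i}\partial/\partial u^\alpha_I\) only for \(|I|\le r-1\), the \(\partial/\partial u^\alpha_I\) coefficient of \([D_i,D_j]\) is \(D_i(u^\alpha_{I+\widehat e_j})-D_j(u^\alpha_{I+\widehat e_i})\) for \(|I|\le r-2\), which equals \(u^\alpha_{I+\widehat e_i+\widehat e_j}-u^\alpha_{I+\widehat e_j+\widehat e_i}=0\). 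For \(|I|=r-1\) no such terms appear, because \(D_i\) does not differentiate the order-\(r\) coordinates. The \(\partial/\partial u^\alpha_L\) with \(|L|=r-1\) are linearly independent modulo \(\mathcal C^r_\pi\), as they are part of the complement spanned by the \(\partial/\partial u^\alpha_J\), \(|J|\le r-1\) (cf. Lemma~\ref{lem:adapted_Nr_contact}).

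Next we pair \(v\) against the two types of frame elements. Pairing with \(w=\partial/\partial u^\beta_L\), \(|L|=r\), gives \(\mathcal L(v,w)=-\sum_{j:\,l_j>0}a^j\,\partial/\partial u^\beta_{L-\widehat e_j}\bmod\mathcal C^r_\pi\). We take \(L=r\widehat e_j\), which is allowed since \(r\ge1\); then the only surviving term is \(-a^j\partial/\partial u^\beta_{(r-1)\widehat e_j}\), so \(a^j=0\) for every \(j\). With \(v\) now vertical, pairing with \(w=D_j\) gives \(\mathcal L(v,D_j)=\sum_{\alpha,\,|K|=r,\,k_j>0}A^\alpha_K\,\partial/\partial u^\alpha_{K-\widehat e_j}\bmod\mathcal C^r_\pi\). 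The map \(K\mapsto K-\widehat e_j\) is injective on \(\{K: k_j>0\}\), so \(A^\alpha_K=0\) whenever \(k_j>0\). Every \(K\) with \(|K|=r\ge1\) has some \(k_j>0\), so letting \(j\) run over \(1,\ldots,n\) kills all the \(A^\alpha_K\). Hence \(v=0\), so \((\mathcal C^r_\pi)_p^{\perp_{\mathcal C}}=0\) and the Levi tensor is non-degenerate.

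The only delicate point is the bookkeeping in the first pairing. Choosing the pure multi-index \(L=r\widehat e_j\) isolates the coefficient \(a^j\) and avoids cancellations between different horizontal directions. Once the horizontal part is removed, the vertical part is handled directly by the injective index shift. Equivalently, one can phrase the argument through \(-\dd\bm\Theta\) restricted to \(\mathcal C^r_\pi\), using \eqref{eq:iso}-type identities, but the bracket computation above is the most direct route.
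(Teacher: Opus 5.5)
Your proof is correct and follows essentially the same route as the paper. Both arguments evaluate the Levi tensor on the adapted frame $D_i$, $\partial/\partial u^\alpha_K$ ($|K|=r$) and use $[D_j,\partial/\partial u^\alpha_K]=-\partial/\partial u^\alpha_{K-\widehat e_j}$: brackets with the $D_j$ kill the vertical coefficients, and brackets with the order-$r$ vertical fields kill the horizontal ones. You do these two steps in the reverse order and spell out details the paper leaves implicit, namely $[D_i,D_j]=0$, the choice $L=r\widehat e_j$, and the independence of the $\partial/\partial u^\alpha_L$, $|L|=r-1$, modulo $\mathcal C^r_\pi$.
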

\begin{proof} The distribution $\mathcal{C}^r_\pi$ is maximally non-integrable if and only if it does not admit a non-vanishing tangent vector $v_p\in (\mathcal{C}^r_\pi)_p$ such that any extension to a vector field $X$ taking values in $\mathcal{C}^r_\pi$ is such that $[X,\Gamma(\mathcal{C}^r_\pi)]_p\subset  (\mathcal{C}^r_\pi)_p$. Assume such a vector field $X$ exists for $v_p\neq 0$. Then, for every $Y\in\Gamma(\mathcal{C}^r_\pi)$, one has $[X,Y]_p\in (\mathcal{C}^r_\pi)_p$. In particular, set $X=\sum_{i=1}^nf^iD_i+\sum_{|I|=r}\sum_{\alpha=1}^mf_\alpha^I\frac{\partial}{\partial u^\alpha_I}$ and recall that $[D_i,\partial/\partial u^\alpha_I]=-\partial/\partial u^\alpha_{I-\widehat{e}_i}$ for $I\geq \widehat{e}_i$, which does not take values in $\mathcal{C}^r_\pi$. Using this, the commutators of $X$ with all the $D_j$ show that the $f_\alpha^J(p)=0$ for $\alpha=1,\ldots,m$ and $|J|=r$. Then, the commutators of $X$ with all the \(\frac{\partial}{\partial u^\alpha_J}\) give that $f^1(p)=\ldots=f^n(p)=0$.  Hence, $X_p=v_p=0$, which contradicts the assumption that $v_p\neq 0$.
\end{proof}

Let us show now that every adapted coordinate system on $U\subset J^r\pi$ gives rise to a $N_\pi^r$-contact form on $U$ whose kernel is $\mathcal{C}^r_\pi|_U$. This will be a fundamental result in our theory. 
\begin{theorem}\label{cor:Cartan_Nr_contact} The Cartan distribution \(\mathcal C^r_\pi\) on \(J^r\pi\) is an \(N^r_\pi\)-contact distribution, where \(N^r_\pi=m\binom{n+r-1}{r-1}\). In every adapted coordinate system \((x^i,u^\alpha_I)\), an associated local adapted \(N^r_\pi\)-contact form is 
            \begin{equation}\label{eq:adapted_eta}\bm\eta^r_\pi=\sum_{\alpha=1}^m\sum_{|J|\leq r-1}\eta^\alpha_J\otimes e_\alpha^J,\qquad \eta^\alpha_J=\sum_{\substack{L\geq J\\ |L|\leq r-1}}(-1)^{|L-J|}\frac{x^{L-J}}{(L-J)!}\theta^\alpha_L, 
            \end{equation} 
             where \(\{e_\alpha^J\}_{\alpha=1,\ldots,m}^{|J|\leq r-1}\) is the canonical basis of \(\mathbb R^{N^r_\pi}\). The Reeb vector fields of \(\bm\eta^r_\pi\) are the fields \(R_\alpha^J\) of Lemma~\ref{lem:adapted_Nr_contact}. Moreover, the Spencer operator of \(J^r\pi\) is locally represented in $(x^i,u^\alpha_I)$ by 
             \[ 
             \mathfrak{R}\circ \bm\eta^r_\pi:=\sum_{\alpha=1}^m\sum_{|J|\leq r-1}\eta^\alpha_J\otimes R_\alpha^J= \mathfrak{S}_\pi^r. 
             \] 
             Additionally, the canonical structure on $J^r\pi$ is given by $[ \mathfrak{R}\circ \bm\eta^r_\pi]$, independently of the choice of the adapted coordinate system.
            \end{theorem}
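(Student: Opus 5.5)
The plan is to combine Lemma~\ref{lem:adapted_Nr_contact}, the preceding lemma on maximal non-integrability, and Theorem~\ref{Thm:LocalEqu}. Around any point of \(J^r\pi\) we choose adapted coordinates \((x^i,u^\alpha_I)\) on \(U\). Lemma~\ref{lem:adapted_Nr_contact} gives commuting Lie symmetries \(R^J_\alpha\) of \(\mathcal C^r_\pi|_U\), \(|J|\le r-1\), spanning a complement of \(\mathcal C^r_\pi|_U\). Hence \(\{R^J_\alpha\}\) is a local Reeb frame. Since \(\mathcal C^r_\pi\) is maximally non-integrable of corank \(N^r_\pi\), Theorem~\ref{Thm:LocalEqu} shows that \(\mathcal C^r_\pi\) is an \(N^r_\pi\)-contact distribution. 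Moreover, the discussion preceding Theorem~\ref{Thm:LocalEqu} provides a unique \(N^r_\pi\)-contact form \(\bm\eta\) on \(U\) with \(\ker\bm\eta=\mathcal C^r_\pi|_U\) and \(\eta^\alpha_J(R^K_\beta)=\delta^\alpha_\beta\delta^K_J\), and its Reeb vector fields are the \(R^J_\alpha\).

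It then remains to identify \(\bm\eta\) with \eqref{eq:adapted_eta}. Each \(\eta^\alpha_J\) in \eqref{eq:adapted_eta} is a combination of the \(\theta^\alpha_L\), so it vanishes on \(\mathcal C^r_\pi\), and by uniqueness it suffices to check duality with the Reeb frame. Since \(\theta^\alpha_L(\partial/\partial u^\beta_I)=\delta^\alpha_\beta\delta^I_L\) for \(|I|,|L|\le r-1\), we get
\[
\theta^\alpha_L(R^K_\beta)=\delta^\alpha_\beta\,\frac{x^{K-L}}{(K-L)!}\quad (L\le K),
\]
and zero otherwise. Therefore
\[
\eta^\alpha_J(R^K_\beta)=\delta^\alpha_\beta\sum_{J\le L\le K}(-1)^{|L-J|}\frac{x^{L-J}x^{K-L}}{(L-J)!(K-L)!}=\delta^\alpha_\beta\frac{x^{K-J}}{(K-J)!}(1-1)^{K-J}=\delta^\alpha_\beta\delta^K_J .
\]
This is exactly the binomial cancellation already used in the proof of \eqref{relConReeb}: the matrix \((x^{K-J}/(K-J)!)\) is inverted by the alternating one. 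The triangularity in multi-indices is the only real bookkeeping here.

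For the Spencer identity, we substitute the definitions:
\[
\sum_J\eta^\alpha_J\otimes R^J_\alpha=\sum_{L}\theta^\alpha_L\otimes\Bigl(\sum_{J\le L}(-1)^{|L-J|}\frac{x^{L-J}}{(L-J)!}R^J_\alpha\Bigr)=\sum_L\theta^\alpha_L\otimes\frac{\partial}{\partial u^\alpha_L}
\]
by \eqref{relConReeb}. Summing over \(\alpha\) gives \(\mathfrak S^r_\pi\). The final claim follows because \(\mathfrak S^r_\pi\), viewed modulo \(\ker\T\pi_{r,r-1}\) as a form with values in \(\pi_{r,r-1}^*V\pi_{r-1}\), is the canonical structure form. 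That form is intrinsically defined, as recalled in Section~\ref{sec:Generalities}, so its class is independent of the adapted chart even though \(\bm\eta^r_\pi\) and \(\mathfrak R\) individually are not.

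The step I expect to be most delicate is this last one. To check coordinate independence honestly, one should verify that the quotient \([\partial/\partial u^\alpha_L]\) transforms like a basis of \(V\pi_{r-1}\) under changes of adapted coordinates. I would rely on the intrinsic description of \(\mathfrak S^r_\pi\) from \cite{Campos_2010} rather than on a direct coordinate computation.
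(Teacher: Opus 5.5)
Your proposal is correct, and its computational parts follow the paper. The dual coframe of the Reeb frame of Lemma~\ref{lem:adapted_Nr_contact} is matched with \eqref{eq:adapted_eta} by inverting the triangular matrix $\bigl(x^{K-J}/(K-J)!\bigr)$ with its alternating counterpart, and the Spencer identity comes from \eqref{relConReeb}. Your explicit check $\eta^\alpha_J(R^K_\beta)=\delta^\alpha_\beta\delta^K_J$ is more detailed than the paper's ``a proof similar to that for \eqref{relConReeb}''. Your exchange of summations is equivalent to the paper's evaluation of both sides on the coordinate functions $x^i,u^\alpha_I$.

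The one genuine difference is how you obtain the $N^r_\pi$-contact property of the dual form. You quote Theorem~\ref{Thm:LocalEqu} and the converse statement preceding it. The paper proves it directly:
\begin{itemize}
\item $\mathcal L_{R^J_\alpha}\bm\eta^r_\pi=0$ is checked on $\Gamma(\mathcal C^r_\pi)$, using that the $R^J_\alpha$ preserve $\mathcal C^r_\pi$.
\item It is also checked on the frame, using that the pairings are constant and the frame commutes.
\item Cartan's formula then gives $\iota_{R^J_\alpha}\dd\bm\eta^r_\pi=0$.
\item Maximal non-integrability plus a rank count give $\ker\dd\bm\eta^r_\pi=\mathcal R$, hence $\T U=\ker\bm\eta^r_\pi\oplus\ker\dd\bm\eta^r_\pi$.
\end{itemize}
Your route is shorter and modular. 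The paper's is self-contained and shows exactly where maximal non-integrability enters. That matters, because the converse as stated in Section~\ref{sec:distributional_kcontact} does not mention this hypothesis. Without it, the dual form of a commuting symmetry frame need not be $k$-contact (e.g.\ for an integrable distribution). You correctly secured maximal non-integrability before invoking it.

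The step you flag as delicate is in fact immediate; it needs no transformation law for the fields $\partial/\partial u^\alpha_L$. Apply $\T\pi_{r,r-1}$ (equivalently, pass to the quotient by $\ker\T\pi_{r,r-1}$) to $\sum_{\alpha,|L|\le r-1}\theta^\alpha_L(v)\,\partial/\partial u^\alpha_L$. This gives $\T\pi_{r,r-1}(v)$ minus $\sum_i \dd x^i(v)\bigl(\partial/\partial x^i+\sum_{\alpha,|I|\le r-1}u^\alpha_{I+\widehat e_i}\partial/\partial u^\alpha_I\bigr)$. The subtracted term is the holonomic lift of $\T\pi_r(v)$ prescribed by the $r$-jet, so the result is the chart-free description of the canonical structure recalled in Section~\ref{sec:Generalities}. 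The paper likewise rests the last claim on this identification.
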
 
        
        \begin{proof} Fix an adapted open coordinate domain \(U\subset J^r\pi\) and the Reeb distribution $\mathcal{R}$ spanned by the elements of the Reeb frame \(R^J_\alpha\) of Lemma~\ref{lem:adapted_Nr_contact}. Since \(\T U=\mathcal C^r_\pi|_U\oplus\mathcal R\), there is a unique \(\mathbb R^{N^r_\pi}\)-valued one-form \(\bm\eta^r_\pi=\sum_{\alpha=1}^m\sum_{|J|\leq r-1}\eta^\alpha_J\otimes e_\alpha^J\) such that \(\ker\bm\eta^r_\pi=\mathcal C^r_\pi|_U\) and \(\eta^\alpha_J(R^\beta_I)=\delta^\alpha_\beta\delta^J_I\) for $\alpha,\beta=1,\ldots,m,|I|,|J|<r$. The relation between the standard coframe given by the $\theta^\alpha_I$ and the adapted $\eta^\alpha_I$, for  $\alpha=1,\ldots,m$ and  $|I|<r$, namely 
            \begin{equation}\label{eq:relEtaTheta}    \theta^\alpha_I=\sum_{\substack{J\geq I\\ |J|\leq r-1}}\frac{x^{J-I}}{(J-I)!}\eta^\alpha_J,\qquad \eta^\alpha_I=\sum_{\substack{J\geq I\\ |J|\leq r-1}}(-1)^{|J-I|}\frac{x^{J-I}}{(J-I)!}\theta^\alpha_J, 
            \end{equation}
            follow from relations \eqref{relReebCon} and \eqref{relConReeb} and a proof similar to that for expression \ref{relConReeb}. The above expressions give the displayed formula for \(\bm\eta^r_\pi\). 
            
            Let us prove that \(\bm\eta^r_\pi\) is an \(N^r_\pi\)-contact form. For every \(R_A=R^J_\alpha\), one has \(\mathcal L_{R_A}\bm\eta^r_\pi=0\). Indeed, if \(X\in\Gamma(\mathcal C^r_\pi|_U)\), then \((\mathcal L_{R_A}\eta^\beta_I)(X)=R_A(\eta^\beta_I(X))-\eta^\beta_I([R_A,X])=0\), since \(\eta^\beta_I(X)=0\) and \(R_A\) preserves \(\Gamma(\mathcal C^r_\pi)\). On the other hand, \((\mathcal L_{R_A}\eta^\beta_I)(R_B)=R_A(\eta^\beta_I(R_B))-\eta^\beta_I([R_A,R_B])=0\), since the functions \(\eta^\beta_I(R_B)\) are constant and the fields \(R_A\) commute. As \(\T U=\mathcal C^r_\pi|_U\oplus\mathcal R\), this proves \(\mathcal L_{R_A}\bm\eta^r_\pi=0\). Cartan's formula gives \(\iota_{R_A}\dd\bm\eta^r_\pi=0\), because \(\iota_{R_A}\bm\eta^r_\pi\) is constant, and hence \(\mathcal R\subset\ker\dd\bm\eta^r_\pi\). Since \(\bm\eta^r_\pi\) vanishes on \(\mathcal C^r_\pi|_U\), the Levi tensor of \(\mathcal C^r_\pi\) is represented by \(-\dd\bm\eta^r_\pi|_{\mathcal C^r_\pi\times\mathcal C^r_\pi}\). Hence, $\ker \dd\bm\eta^r_\pi|_{\mathcal C^r_\pi\times\mathcal C^r_\pi}=\ker\dd\bm\eta^r_\pi\cap \mathcal{C}^r_\pi$.
            The Cartan distribution is maximally non-integrable, so 
                        if \(v\in(\mathcal{C}^r_\pi\cap\ker\dd\bm\eta^r_\pi)_p\) and \(\dd\bm\eta^r_\pi(v,w)=0\) for every \(w\in(\mathcal C^r_\pi)_p\), it follows that
            \(v=0\). Therefore, \(\mathcal C^r_\pi|_U\cap\ker\dd\bm\eta^r_\pi=\{0\}\). As \(\mathcal R\subset\ker\dd\bm\eta^r_\pi\) and \(\operatorname{rank}\mathcal R=N^r_\pi=\operatorname{codim}\mathcal C^r_\pi\), it follows that \(\ker\dd\bm\eta^r_\pi=\mathcal R\). Consequently, 
            \[ 
            \T U=\ker\bm\eta^r_\pi\oplus\ker\dd\bm\eta^r_\pi, 
            \] 
            and \(\bm\eta^r_\pi\) is a \(N^r_\pi\)-contact form on $U$. Since this construction can be performed around every point in $J^r\pi$, it follows that  \((J^r\pi,\mathcal C^r_\pi)\) is an \(N^r_\pi\)-contact manifold. Finally, in adapted coordinates the Spencer operator is 
            \[ 
            \mathfrak{S}_\pi^r=\sum_{\alpha=1}^m\sum_{|I|\leq r-1}\theta^\alpha_I\otimes\frac{\partial}{\partial u^\alpha_I}. 
            \] 
            Relations \eqref{relConReeb} and \eqref{relReebCon} and the dual relations \eqref{eq:relEtaTheta} show that \(\widetilde D_\pi^r:=\sum_{\alpha=1}^m\sum_{|J|\leq r-1}\eta^\alpha_J\otimes R_\alpha^J\) satisfies that \(\widetilde D_\pi^r(x^i)=0=\mathfrak{S}_\pi^r(x^i)\). Moreover, for every \(|I|\leq r-1\), one has 
            \[ 
            \widetilde D_\pi^ru^\alpha_I=\sum_{\substack{J\geq I\\ |J|\leq r-1}}\frac{x^{J-I}}{(J-I)!}\eta^\alpha_J=\theta^\alpha_I=\mathfrak{S}_\pi^ru^\alpha_I. 
            \] 
            Hence, \(\mathfrak{R}\circ \bm\eta^r_\pi=\widetilde D_\pi^r=\mathfrak{S}_\pi^r\). 
            
        \end{proof} 
        
        Note that  $[\mathfrak{R}\circ \bm \eta^r_\pi]$ is the canonical structure in $J^r\pi$. In adapted coordinates, where $\partial/\partial u^\alpha_J$ with $|J|\leq r-1$ can be considered as a basis of $\T J^r\pi/\ker \T\pi_{r,r-1}$, one has that $\mathfrak{R}\circ \bm \eta^r_\pi$  and $[\mathfrak{R}\circ \bm \eta^r_\pi]$ have the same coordinate expression. Moreover, the fact that $\mathfrak{R}\circ\bm\eta^r_\pi$ is the same as $\mathfrak{S}^r_\pi$ in adapted coordinates,  allows us to apply the coordinate expression $\mathfrak{R}\circ \bm\eta^r_\pi$  to functions as a Spencer operator, which gives the vertical differential in jets \cite{Olver_1995_Equivalence}. 
        \begin{remark} In Theorem~\ref{cor:Cartan_Nr_contact}, the local form \(\bm\eta^r_\pi\), its Reeb frame \(R_\alpha^J\), and the complement \(\mathcal R\) depend on the chosen adapted coordinates. The Cartan distribution \(\mathcal C^r_\pi\), the vertical polarisation \( \mathcal{G}^r_\pi=\ker \T\pi_{r,r-1}\), and the Spencer operator \(\mathfrak{S}_\pi^r\), related to the canonical structure on $J^r\pi$, are the invariant objects. 
        \end{remark} 
        
        Thus, every finite-order jet bundle \(J^r\pi\) carries a canonical \(N^r_\pi\)-contact distribution, namely its Cartan distribution \(\mathcal C^r_\pi\). This does not mean that the adapted local \(N^r_\pi\)-contact forms are globally defined or canonical, unless $J^r\pi$ admits global adapted coordinates. Nevertheless, the theorem shows that jet geometry lies naturally inside \(k\)-contact geometry. The following sections characterise which types of \(k\)-contact manifolds arise from jet bundles and explain how this viewpoint reformulates holonomic submanifolds, systems of PDEs, Lie symmetries, characteristics, reductions, Spencer operators, and many other topics treated in the geometry of jets and their applications in field theories and their potential generalisations.

\section{Polarisations for \texorpdfstring{$k$}{k}-contact manifolds}\label{Sec:polarisations}
\label{sec:polarisations}

The previous section proved that every $(J^r\pi,\mathcal{C}^r_\pi)$ is an  \(N^r_\pi\)-contact manifold and briefly analysed its relation with some of the natural structures on jet manifolds. In this and in the following section, we are interested in addressing the converse problem: which \(k\)-contact manifolds are locally or globally jet bundles? The local answer to the first-order case of this question is solved by the $k$-contact Darboux theorem given in \cite{LRS_24,Riv_22} and the   polarisation notion defined in \cite{Riv_22}. In essence, the result says that the $k$-contact manifold $(M,\mathcal{D})$ has to admit a first-order polarisation $\mathcal{P}\subset\mathcal{D}$, which is locally diffeomorphic to some $\ker \T\pi_{1,0}$, while $\mathcal{D}$ is simultaneously  diffeomorphic to some $\mathcal{C}^1_\pi$  and the dimension of the manifold $M$ is $n+nk+k$.

For general jet manifolds, however, the usual first-order definition of polarisation is too rigid, because it assumes, for instance, that the manifold has dimension of the form \(n+nk+k\). Thus, our idea is that the higher-order theory requires a notion of polarisation that still singles out \(\ker \T\pi_{r,r-1}\) and  does not impose the first-order dimensional constraint on the manifold. 

\begin{definition}\label{def:higher_order_polarisation}
A \emph{polarisation} in a $k$-contact manifold \((M,\mathcal D)\) is an integrable Legendrian subbundle \(\mathcal P\subset\mathcal D\) that has maximal rank among all the integrable Legendrian subbundles of \(\mathcal D\). 
A \emph{polarised \(k\)-contact manifold} is a triple \((M,\mathcal D,\mathcal P)\).
\end{definition}

The fact that $\mathcal{P}$ is Legendrian ensures that there exists no other isotropic subbundle $\mathcal{P}'$ such that \(\mathcal P_p'\supsetneq\mathcal P_p\) at some $p\in M$. In fact, since $\mathcal{P}_p$ is Legendrian, it admits an isotropic complement $\mathcal{H}_p$ in $\mathcal{D}$. If such a \(\mathcal P'\) existed, then \(\operatorname{rank}\mathcal P_p'>\operatorname{rank}\mathcal P_p\) would force \(\mathcal H_p\cap\mathcal P'_p\neq 0\). For \(0\neq v\in\mathcal H_p\cap\mathcal P_p'\), the isotropy of \(\mathcal H\) and \(\mathcal P'\), together with \(\mathcal P_p\subset\mathcal P_p'\), gives \(v\in \mathcal{H}_p^{\perp_{\mathcal{D}}}\) and \(v\in \mathcal{P}_p^{\perp_{\mathcal{D}}}\), hence \(v\in \mathcal{D}_p^{\perp_{\mathcal{D}}}\). This contradicts that $\mathcal{D}^{\perp_{\mathcal{D}}}=0$, since $\mathcal{D}$ is maximally non-integrable.

The maximality of the rank of $\mathcal{P}$ prevents the notion of higher-order polarisation from covering smaller, although admissible, isotropic factors. For example, on a jet bundle \(J^r\pi\), the distribution \(\mathcal{G}^r_\pi\) is a Legendrian subbundle, but the horizontal distribution \(\langle D_1,\ldots,D_n\rangle\) is globally defined and gives rise to a Legendrian subbundle when $J^r\pi$ admits global adapted coordinates, which is of different nature. The maximality restricts then attention to the higher-order jet geometry relevant for applications.

In what follows, our polarisation notion will become a key whose features will allow us to describe many features of $k$-contact manifolds and jet geometry naturally. To show that our previous definition retrieves first-order polarisations as a particular case, and to prove further results of our paper, we have first to study how $\bm\eta^r_\pi$ retrieves Spencer contractions. 

\begin{theorem}
Let \((x^i,u^\alpha_I)\), with \(|I|\le r\), be adapted coordinates on \(U\subset J^r\pi\). Consider the local splitting
\[
\mathcal C^r_\pi|_U=\mathcal H^r_\pi\oplus\mathcal{G}^r_\pi|_U,\qquad \mathcal H^r_\pi=\langle D_1,\ldots,D_n\rangle,\qquad \mathcal{G}^r_\pi=\ker \T\pi_{r,r-1}.
\]
Let \(\bm\eta^r_\pi=\sum_{\alpha=1,\ldots,m}\sum_{|J|\le r-1}\eta^\alpha_J\bm{\otimes} e_\alpha^J\) be the adapted local \(N^r_\pi\)-contact form associated with the Reeb frame $R_\alpha^J$ with
\(
|J|\le r-1,\)\ for \(\alpha=1,\ldots,m,
\) on $U$, 
and let the linear map \(\mathfrak R:\mathbb{R}^{N^r_\pi}\rightarrow \mathfrak{X}(U)\) be the Reeb lift \(\mathfrak R(\bm e_\alpha^J)=R_\alpha^J\). Then, for every \(h=\sum_{i=1}^nh^iD_i\in\Gamma(\mathcal H^r_\pi)\) and every \(A=\sum_{\alpha=1,\ldots,m}\sum_{|K|=r}A^\alpha_K\partial/\partial u^\alpha_K\in\Gamma(\mathcal{G}^r_\pi)\), one has
\begin{equation}
\label{eq:SpencerContraction}
\mathfrak{R}\circ\dd\bm\eta^r_\pi(h,A)=\sum_{\alpha=1}^m\sum_{|I|=r-1}\left(\sum_{i=1}^n h^iA^\alpha_{I+\widehat{e}_i}\right)\frac{\partial}{\partial u^\alpha_I},
\end{equation}
in the adapted coordinate system. If $V=\langle \partial/\partial u^{\alpha}_J:|J|=r-1\rangle$ and under the standard  identifications
\[
(\mathcal H^r_\pi)_p\ni D\mapsto \T\pi_r(D)\in \T_{\pi_r(p)}Q,\,\,\mathcal{G}^r_\pi\ni\!\!\sum_{\alpha=1}^m\sum_{|I|=r} \lambda^\alpha_I\frac{\partial}{\partial u_I^\alpha}\mapsto\!\!\! \sum_{\alpha=1}^m\sum_{|I|=r}\!\!\!\lambda_I^\alpha \odot_I \dd x^{\langle I\rangle}\otimes \frac{\partial}{\partial u^\alpha}\in  S^r\T_x^*Q\otimes V_e\pi,
\]
\[
V_p\ni\sum_{|I|=r-1}^{\alpha=1,\ldots,m}\lambda_I^\alpha \frac{\partial}{\partial u^\alpha_I} \mapsto \sum_{\alpha=1}^m\sum_{|I|=r-1}\lambda_I^\alpha \odot_I\dd x^{\langle I\rangle}\otimes \frac{\partial}{\partial u^\alpha}\in  S^{r-1}\T_x^*Q\otimes V_e\pi,
\]
the bilinear map \((h,A)\mapsto\mathfrak R\circ\dd\bm\eta^r_\pi(h,A)\) is exactly the algebraic Spencer contraction \((h,A)\mapsto\iota_hA\).
\end{theorem}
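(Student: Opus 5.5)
Since $h$ and $A$ both take values in $\mathcal C^r_\pi=\ker\bm\eta^r_\pi$, I would avoid differentiating the explicit formula \eqref{eq:adapted_eta}. Instead, use the identity $\dd\bm\eta^r_\pi(h,A)=h(\bm\eta^r_\pi(A))-A(\bm\eta^r_\pi(h))-\bm\eta^r_\pi([h,A])=-\bm\eta^r_\pi([h,A])$, which is the Levi tensor relation \eqref{eq:iso}. This gives
\[
\mathfrak R\circ\dd\bm\eta^r_\pi(h,A)=-\mathfrak R\circ\bm\eta^r_\pi([h,A])=-\mathfrak S^r_\pi([h,A]),
\]
where the last equality is Theorem~\ref{cor:Cartan_Nr_contact}. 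So the whole computation reduces to evaluating the Spencer operator $\mathfrak S^r_\pi=\sum\theta^\alpha_I\otimes\partial/\partial u^\alpha_I$ on the bracket $[h,A]$. This is where the Reeb frame, with its polynomial coefficients in $x$, drops out of the computation.

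Next I would compute $[h,A]$ modulo $\mathcal C^r_\pi$, using $\mathcal C^r_\pi$-linearity in both arguments. Expanding with $h=\sum h^iD_i$ and $A=\sum A^\alpha_K\partial/\partial u^\alpha_K$, every term in which a coefficient is differentiated is a multiple of some $D_i$ or some $\partial/\partial u^\alpha_K$ with $|K|=r$, so it lies in $\mathcal C^r_\pi$ and is killed by the $\theta$'s. The remaining term is $\sum h^iA^\alpha_K[D_i,\partial/\partial u^\alpha_K]$. By the bracket formula of Section~\ref{sec:Generalities}, this equals $-\sum h^iA^\alpha_K\,\partial/\partial u^\alpha_{K-\widehat e_i}$, summed over $K\geq\widehat e_i$.

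Then I would apply $\theta^\beta_J$ to this term. Since $\theta^\beta_J(\partial/\partial u^\alpha_L)=\delta^\beta_\alpha\delta^L_J$ for $|L|\le r-1$, only $J=K-\widehat e_i$ survives. Reindexing with $I=K-\widehat e_i$, $|I|=r-1$, the two minus signs cancel and the result is exactly \eqref{eq:SpencerContraction}.

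For the intrinsic statement, I would compare \eqref{eq:SpencerContraction} with the coordinate expression of $\iota_hA$ recalled in Section~\ref{sec:Generalities}, under the stated identifications. The main obstacle is bookkeeping: the identification $\partial/\partial u^\alpha_I\mapsto\odot\dd x^{\langle I\rangle}\otimes\partial/\partial u^\alpha$ uses the $1/J!$ normalisation. I would check that inserting $\partial/\partial x^i$ into $\odot\,\dd x^{\langle I+\widehat e_i\rangle}$ yields $\odot\,\dd x^{\langle I\rangle}$ with coefficient one, rather than a factor $i_i+1$. A low-order check such as $\dd x^1\odot\dd x^1$ would settle this. If a factor does appear, it is absorbed by the same convention that defines the coordinate Spencer contraction in Section~\ref{sec:Generalities}, up to the conventional sign mentioned there.
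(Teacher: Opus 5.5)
Your proposal is correct and follows essentially the same route as the paper. Both arguments use \(\dd\bm\eta^r_\pi(X,Y)=-\bm\eta^r_\pi([X,Y])\) on \(\mathcal C^r_\pi\) together with \(\mathfrak R\circ\bm\eta^r_\pi=\mathfrak S^r_\pi\) to reduce everything to evaluating the \(\theta\)'s on the brackets \([D_i,\partial/\partial u^\beta_K]\), and then reindex with \(K=I+\widehat e_i\); your explicit tensoriality check simply makes the paper's ``by bilinearity'' precise.

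Your normalisation worry resolves with no extra factor. With the paper's \(1/J!\) convention, \(\odot\,\dd x^{\langle J\rangle}\) equals \(1\) on any tuple \(\partial/\partial x^{a_1},\ldots,\partial/\partial x^{a_r}\) whose multi-index content is \(J\), and \(0\) otherwise. Hence inserting \(\partial/\partial x^i\) into \(\odot\,\dd x^{\langle I+\widehat e_i\rangle}\) gives exactly \(\odot\,\dd x^{\langle I\rangle}\), and your final hedge is unnecessary.
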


\begin{proof}
Since \(\mathcal C^r_\pi=\ker\bm\eta^r_\pi\), for \(X,Y\in\Gamma(\mathcal C^r_\pi)\) one has
\(
\dd\bm\eta^r_\pi(X,Y)=-\bm\eta^r_\pi([X,Y]).
\)
Since it was proved that $\mathfrak{R}\circ \bm\eta^r_\pi=\mathfrak{S}^r_\pi$, the calculation of $\mathfrak{R}\circ\dd\bm\eta^r_\pi|_{\mathcal{H}^r_\pi\times \mathcal{G}^r_\pi}$ requires only to compute \(-\mathfrak{R}\circ \bm \eta^r_\pi([X,Y])=-\mathfrak{S}^r_\pi([X,Y])\), for \(X\in\Gamma(\mathcal H^r_\pi)\) and \(Y\in\Gamma(\mathcal{G}^r_\pi)\). Let us compute it in a basis of vector fields of the adapted coordinates. 

Hence, for \(|K|=r\), \(|L|<r\) and $\alpha,\beta=1,\ldots,m$, one has
\begin{equation}\label{eq:commutator_D_partial}
-\theta^\alpha_L\left(\left[D_i,\frac{\partial}{\partial u^\beta_K}\right]\right)=\delta^\alpha_\beta\,\delta_{K,L+\widehat{e}_i},\quad \alpha=1,\ldots,m,\,\, |L|<r, \,\,|K|=r.
\end{equation}
It follows that
\[
\mathfrak{R}\circ\dd\bm\eta^r_\pi\left(D_i,\frac{\partial}{\partial u^\beta_K}\right)=
\sum_{\substack{\alpha=1,\ldots,m\\ |L|\leq r-1}}\delta^\alpha_\beta\,\delta_{K,L+\widehat{e}_i}\frac{\partial}{\partial u^\alpha_L}=\sum_ {|L|\leq r-1}\,\delta_{K,L+\widehat{e}_i}\frac{\partial}{\partial u^\beta_L}.
\]
The expression is zero when \(K_i=0\). Otherwise,
\[
\mathfrak R\circ\dd\bm\eta^r_\pi\left(D_i,\frac{\partial}{\partial u^\beta_K}\right)=\frac{\partial}{\partial u^\beta_{K-\widehat{e}_i}}.
\]
By bilinearity, for \(h=\sum_{i=1}^nh^iD_i\) and \(A=\sum_{\alpha=1}^m\sum_{|K|=r}A^\alpha_K\partial/\partial u^\alpha_K\), one obtains
\[
\mathfrak R\circ\dd\bm\eta^r_\pi(h,A)=\sum_{\alpha=1}^m\sum_{|K|=r}\sum_{i=1}^n h^iA^\alpha_K\,\mathfrak R\circ\dd\bm\eta^r_\pi\left(D_i,\frac{\partial}{\partial u^\alpha_K}\right).
\]
Reindexing \(K=I+\widehat{e}_i\), with \(|I|=r-1\), gives
\[
\mathfrak R\circ\dd\bm\eta^r_\pi(h,A)=\sum_{\alpha=1}^m\sum_{|I|=r-1}\left(\sum_{i=1}^n h^iA^\alpha_{I+\widehat{e}_i}\right)\frac{\partial}{\partial u^\alpha_I}.
\]
Under the above  identifications, the last expression is exactly the usual formula for \(\iota_hA\). This proves the claim.
\end{proof}
\begin{lemma}\label{lem:jet_symbol_dominant}
Let \((J^r\pi,\mathcal C^r_\pi)\) be the \(r\)-th order jet model. Then,
\(
\mathcal{G}^r_\pi:=\ker \T\pi_{r,r-1}
\)
is a polarisation of \((J^r\pi,\mathcal C^r_\pi)\).
\end{lemma}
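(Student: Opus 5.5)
The proof checks the three requirements of Definition~\ref{def:higher_order_polarisation} in turn: integrability, the Legendrian property, and maximality of rank. Integrability is immediate. In adapted coordinates, $\mathcal G^r_\pi$ is spanned by the coordinate fields $\partial/\partial u^\alpha_K$ with $|K|=r$, and these commute pairwise. Equivalently, $\mathcal G^r_\pi$ is the vertical bundle of the submersion $\pi_{r,r-1}$. The inclusion $\mathcal G^r_\pi\subset\mathcal C^r_\pi$ was already observed in Section~\ref{sec:Generalities}.

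For the Legendrian property, we use the fact that the Levi tensor is $\mathcal L_{\mathcal C^r_\pi}=-\dd\bm\eta^r_\pi$ on $\mathcal C^r_\pi$, by \eqref{eq:iso} and Theorem~\ref{cor:Cartan_Nr_contact}. Isotropy of $\mathcal G^r_\pi$ then follows because the brackets of the generators $\partial/\partial u^\alpha_K$ vanish. For an isotropic complement at each point $p$, we take $(\mathcal H^r_\pi)_p=\langle D_1,\ldots,D_n\rangle_p$ in a chart around $p$. The splitting $\mathcal C^r_\pi|_U=\mathcal H^r_\pi\oplus\mathcal G^r_\pi|_U$ holds by \eqref{eq:notationCD}, and $[D_i,D_j]=0$, so $\mathcal H^r_\pi$ is isotropic. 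Only a pointwise complement is required, so the chart dependence of $\mathcal H^r_\pi$ is harmless.

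The main step is maximality. We prove the stronger statement that every isotropic subspace $W\subset(\mathcal C^r_\pi)_p$ satisfies $\dim W\le\operatorname{rank}\mathcal G^r_\pi=m\binom{n+r-1}{r}$. Write $W_0:=W\cap(\mathcal G^r_\pi)_p$ and let $H_W:=\T\pi_r(W)\subset\T_xQ$, of dimension $d$, so that $\dim W=d+\dim W_0$. If $d=0$ the bound is trivial. If $d\ge1$, take $w=h+a\in W$ with $\T\pi_r(w)=h\neq0$ and any $A\in W_0$. Isotropy gives $\dd\bm\eta^r_\pi(w,A)=0$. Since the $\mathcal G\times\mathcal G$ part of the Levi tensor vanishes, the previous theorem (formula \eqref{eq:SpencerContraction}) turns this condition into $\iota_hA=0$. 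Hence $W_0$ lies in the common kernel of the Spencer contractions $\iota_h$ for $h\in H_W$. After a linear change of the base coordinates making $H_W=\langle\partial_{x^1},\ldots,\partial_{x^d}\rangle$, this common kernel consists of the symbols $A^\alpha_K$ supported on multi-indices $K$ with $k_1=\cdots=k_d=0$. Its dimension is therefore $m\binom{n-d+r-1}{r}$.

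It remains to prove the elementary inequality
\[
d+m\binom{n-d+r-1}{r}\le m\binom{n+r-1}{r}.
\]
This follows from the identity $\binom{n'+r-1}{r}-\binom{n'+r-2}{r}=\binom{n'+r-2}{r-1}\ge1$, applied $d$ times. The bound shows that no isotropic subbundle, integrable Legendrian or not, has rank exceeding that of $\mathcal G^r_\pi$, which gives maximality.

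I expect the main obstacle to be this maximality step, specifically the careful reduction of the isotropy condition on mixed pairs to the Spencer-contraction kernel. The delicate point is that the $\mathcal H$-component of $w$ is not a single $D_i$ but a combination $\sum h^iD_i$, and that $a$ is an arbitrary element of $\mathcal G$. Both contributions must be shown to reduce to $\iota_hA$ via the bilinearity already established in \eqref{eq:SpencerContraction}.
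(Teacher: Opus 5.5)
Your proof is correct and follows essentially the same route as the paper. Integrability and isotropy of $\mathcal G^r_\pi$ and of the complement $\mathcal H^r_\pi=\langle D_1,\ldots,D_n\rangle$ come from the vanishing brackets, and the pointwise bound $\dim W\le d+m\binom{n-d+r-1}{r}\le m\binom{n+r-1}{r}$ for isotropic $W$ comes from identifying the mixed part of $\dd\bm\eta^r_\pi$ with the Spencer contraction. The only difference is cosmetic: you prove the final binomial inequality by telescoping Pascal's rule, whereas the paper counts the degree-$r$ monomials involving at least one of the $d$ distinguished base variables.
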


\begin{proof}
In adapted coordinates \((x^i,u^\alpha_I)\) with \(|I|\leq r\) on $U\subset J^r\pi$, the Cartan distribution splits locally as
\[
\mathcal C^r_\pi|_U=\mathcal H_\pi^r\oplus\mathcal{G}^r_\pi|_U,\qquad \mathcal H_\pi^r=\langle D_1,\ldots,D_n\rangle,\qquad \mathcal{G}^r_\pi|_U=\left\langle\frac{\partial}{\partial u^\alpha_K}:|K|=r\right\rangle=\ker \T\pi_{r,r-1}.
\]
Both \(\mathcal H_\pi^r\) and \(\mathcal{G}^r_\pi\) are integrable, since \([D_i,D_j]=0\) and \(\mathcal{G}^r_\pi=\ker \T\pi_{r,r-1}\). Thus, both spaces are also isotropic for \(\dd\bm\eta^r_\pi\) because the latter vanishes on integrable distributions taking values in $\mathcal{C}^r_\pi$. Hence, \(\mathcal{G}^r_\pi\) is an integrable Legendrian subbundle.

It remains to prove that \(\mathcal{G}^r_\pi\) has maximal rank among the integrable Legendrian subbundles of \(\mathcal C^r_\pi\). This is a pointwise linear algebra statement. Consider the restriction of $\dd \bm\eta^r_\pi$ to $\mathcal{H}_\pi^r\times\mathcal{G}^r_\pi$, which amounts to the Spencer contraction. 
Let \(W\subset(\mathcal C^r_{\pi})_\theta\) be an isotropic subspace. Denote by \(A\subset (\mathcal{H}_\pi^r)_\theta\) the projection of \(W\) onto \((\mathcal{H}_\pi^r)_\theta\), and put \(\ell:=\dim A\). Since \(0\to W\cap \mathcal{G}^r_\pi\to W\to A\to0\) is exact, one has
\begin{equation}\label{eq:dimW}
\dim W=\ell+\dim(W\cap (\mathcal{G}^r_\pi)_\theta).
\end{equation}
If \(B:=W\cap (\mathcal{G}^r_\pi)_\theta\), then the isotropy of \(W\) and $(\mathcal{G}^r_\pi)_\theta$ imply that every element of \(B\) is annihilated by contraction with every tangent vector of \(A\). Therefore,
\[
B\subset\{T\in (\mathcal{G}^r_\pi)_\theta:\dd\bm\eta^r_\pi(a,T)=0, \forall a\in A\}.
\]
After choosing a basis of \((\mathcal{H}_\pi^r)_\theta\) adapted to \(A\) and using \eqref{eq:SpencerContraction}, the  right-hand side of the above expression is naturally identified with \(S^r((\mathcal{H}^r_\pi)_\theta/A )^*\otimes V_e\pi\). Hence, in view of \eqref{eq:dimW}, it follows that  
\[
\dim(W\cap (\mathcal{G}^r_\pi)_\theta)\leq m\binom{n-\ell+r-1}{r} \Longrightarrow \dim W\leq \ell+m\binom{n-\ell+r-1}{r}.
\]
Since the number of degree \(r\) symmetric monomials in \(n\) variables involving at least one of the \(\ell\) variables spanning \(A\) is at least \(\ell\), we have
\begin{equation}\label{eq:ConPol}
\binom{n+r-1}{r}-\binom{n-\ell+r-1}{r}\geq \ell\Longrightarrow 
\ell+m\binom{n-\ell+r-1}{r}\leq m\binom{n+r-1}{r}=\operatorname{rank}\mathcal{G}^r_\pi.
\end{equation}
Thus, every isotropic subspace \(W\subset(\mathcal C^r_{\pi})_\theta\) has dimension bounded by the rank of \(\mathcal{G}^r_\pi.
\)
Since \(\mathcal{G}^r_\pi\) itself is Legendrian, it has maximal possible rank among admissible isotropic subbundles. Therefore, \(\mathcal{G}^r_\pi\) satisfies  Definition~\ref{def:higher_order_polarisation}  and becomes a polarisation of \((J^r\pi,\mathcal C^r_\pi)\).
\end{proof}

\begin{proposition}
If \((M,\mathcal D,\mathcal P)\) is a first-order polarised \(k\)-contact manifold, $\mathcal{P}$ is a polarisation in the sense of Definition \ref{def:higher_order_polarisation}.
\end{proposition}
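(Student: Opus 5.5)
The plan is to reduce the statement to the first-order Darboux theorem and then reuse the rank bound from the proof of Lemma~\ref{lem:jet_symbol_dominant} with $r=1$. Let $(M,\mathcal D,\mathcal P)$ be a first-order polarised $k$-contact manifold, so $\dim M=n+nk+k$ and $\mathcal P\subset\mathcal D$ is integrable of rank $nk$.

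Since the conditions in Definition~\ref{def:higher_order_polarisation} are local and pointwise, it suffices to verify them near an arbitrary point. There we choose an adapted $k$-contact form $\bm\eta$ with $\ker\bm\eta=\mathcal D$ and apply the Darboux theorem for polarised $k$-contact manifolds of first order recalled after Example~\ref{Ex:FirstOrder}. This gives coordinates $(x^i,y^A,y^A_i)$ in which $\bm\eta=\sum_A(\dd y^A-\sum_i y^A_i\dd x^i)\otimes e_A$ and $\mathcal P=\langle\partial/\partial y^A_i\rangle$. Thus, locally, $(M,\mathcal D,\mathcal P)$ is identified with $(J^1\pi,\mathcal C^1_\pi,\mathcal G^1_\pi)$ for the trivial bundle $\pi:\mathbb R^n\times\mathbb R^k\to\mathbb R^n$, with $m=k$ and $r=1$. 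Because $\bm\eta$ need only be a $k$-contact form in an open set, we can instead take the local $\bm\eta$ supplied by Definition~\ref{def:k-contact-distribution} and use the local version of the Darboux theorem. If one wants to be careful here, one should note that the Darboux theorem only needs $\mathcal P$ integrable of rank $nk$ inside $\ker\bm\eta$.

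Under this identification, Lemma~\ref{lem:jet_symbol_dominant} with $r=1$ says that $\mathcal G^1_\pi=\ker \T\pi_{1,0}$ is a polarisation. Concretely, $\mathcal P$ is integrable by hypothesis. It is isotropic because $\dd\bm\eta=\sum_i\dd x^i\wedge\dd y^A_i$ vanishes on $\langle\partial/\partial y^A_i\rangle$. It is Legendrian because $\langle D_i\rangle$ is an isotropic complement in $\mathcal D$. Its rank $nk$ is maximal because the bound $\ell+k\binom{n-\ell}{1}\le kn$ from \eqref{eq:ConPol} holds for every isotropic subspace. Maximality is a pointwise statement about isotropic subspaces of $\mathcal D_p$ and does not depend on the chart, so it transfers directly.

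The main obstacle is the applicability of the Darboux theorem. It is stated for co-oriented $k$-contact manifolds, so one has to check that it applies to a local adapted form for the distribution and that the dimension hypothesis is consistent. A subsidiary point is whether the paper's ``maximal rank among integrable Legendrian subbundles'' is meant globally or pointwise; the pointwise isotropic bound handles both readings. If one prefers to avoid the Darboux theorem, one can argue directly. Isotropy of an integrable $\mathcal P\subset\mathcal D$ follows from \eqref{eq:iso}, since brackets of sections of $\mathcal P$ stay in $\mathcal P\subset\mathcal D$. An isotropic complement, however, seems to need the normal form, so I would rely on Darboux.
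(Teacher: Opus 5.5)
Your proposal follows the paper's own proof: you identify \((M,\mathcal D,\mathcal P)\) locally with \((J^1\pi,\mathcal C^1_\pi,\mathcal G^1_\pi)\) through the first-order Darboux theorem and invoke Lemma~\ref{lem:jet_symbol_dominant} with \(r=1\), and your explicit checks (isotropy, the complement \(\langle D_i\rangle\), the bound \(\ell+k(n-\ell)\le kn\)) are that lemma unpacked. One correction to your aside: only the identification of \(\mathcal D\) and \(\mathcal P\) is needed, and ``Darboux only needs \(\mathcal P\) integrable of rank \(nk\)'' is too strong at the level of forms, because the exact normal form for an arbitrarily chosen adapted \(\bm\eta\) is impossible when its Reeb fields do not preserve \(\mathcal P\); moreover, the isotropic complement needs no normal form at all, since non-degeneracy of the Levi tensor and \(\operatorname{rank}\mathcal P=nk\) give \(\mathcal P\cong(\mathcal D/\mathcal P)^*\otimes(\T M/\mathcal D)\), after which any complement can be made isotropic by a \(\mathcal P\)-valued shift.
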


\begin{proof} Applying the Darboux theorem for \(k\)-contact manifolds, we can assume that \((M,\mathcal D,\mathcal{P})\) is locally given by the standard \(k\)-contact Darboux model, which gives a polarised $k$-contactomorphism with a first-order jet manifold so that \(\mathcal D\) is mapped into the Cartan distribution and \(\mathcal{P}\) is mapped to $\mathcal{G}^1_\pi$. In view of Lemma \ref{lem:jet_symbol_dominant}, one obtains that $\mathcal{P}$ is a polarisation in the sense of Definition \ref{def:higher_order_polarisation}. 
\end{proof}

Defining a polarisation allows us to define a notion of a polarised \(k\)-vector field and  a tower of generalised distributions that will play a relevant role, for instance, in the characterisation of jet bundles as a type of polarised \(k\)-contact manifolds. 

\begin{definition} A \emph{polarised $k$-contact vector field} relative to a polarised $k$-contact manifold $(M,\mathcal{D},\mathcal{P})$ is a $k$-contact vector field $X$ on $M$ such that $[X,\Gamma(\mathcal{P})]\subset \Gamma(\mathcal P)$.
\end{definition}

One of the first results concerning the tower of the polarisation is related to the prolongation of vector fields on $E$ to $J^r\pi$. The following result shows that the prolongation of a vector field on $E$  gives rise to a prolongation that is a polarised \(k\)-contact vector field on the jet bundle. This is a key result for applications to symmetries of PDEs, as it shows that the usual prolongation of vector fields on fibre bundles is compatible with the polarisation of the jet bundle. Nevertheless, we leave the detailed analysis of the consequences of this result for a further work, as it can be much extended.
 
\begin{theorem}\label{thm:prolonged_point_symmetries_are_polarised}
Given \(X\in\mathfrak X(E)\), its prolongation \(X^{(r)}\in\mathfrak X(J^r\pi)\) is a polarised \(N^r_\pi\)-contact vector field of \((J^r\pi,\mathcal C^r_\pi,\mathcal{G}^r_\pi)\), where \(\mathcal{G}^r_\pi:=\ker \T\pi_{r,r-1}\). 
Moreover, 
\[
 [X^{(r)},\Gamma(\ker \T\pi_{r,r-s})]\subset\Gamma(\ker \T\pi_{r,r-s}),\qquad 1\leq s\leq r.
\]
In an adapted coordinate system for an open $U\subset J^r\pi$, it follows that $\mathcal{V}_s=\langle \partial/\partial u_K^\alpha:|K|\geq r-s,\alpha=1,\ldots,m\rangle$ for $s=0,1,\ldots,r-1$.
\end{theorem}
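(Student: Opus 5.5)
Three things need checking: that \(X^{(r)}\) is a Lie symmetry of \(\mathcal C^r_\pi\); that it preserves every \(\ker \T\pi_{r,r-s}\), whose case \(s=1\) is the polarisation \(\mathcal G^r_\pi\); and the coordinate description of the vertical tower.

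\emph{The \(N^r_\pi\)-contact property.} We take from Section~\ref{sec:Generalities} the classical fact that \(X^{(r)}\) is a Cartan symmetry. If one wants to re-derive it from the prolongation formula \eqref{eq:prolongation}, the standard computation gives \(\mathcal L_{X^{(r)}}\theta^\alpha_I\in\langle\theta^\beta_J\rangle\) for \(|I|\le r-1\). Indeed, the recursion \(\varphi^\alpha_{I+\widehat e_i}=D_i\varphi^\alpha_I-\sum_j u^\alpha_{I+\widehat e_j}D_i\xi^j\) is exactly what makes the \(\dd x^i\)-components of \(\mathcal L_{X^{(r)}}\theta^\alpha_I\) cancel modulo contact forms. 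Hence \([X^{(r)},\Gamma(\mathcal C^r_\pi)]\subset\Gamma(\mathcal C^r_\pi)\), and \(X^{(r)}\) is an \(N^r_\pi\)-contact vector field by Theorem~\ref{cor:Cartan_Nr_contact}.

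\emph{Preservation of the vertical tower.} The cleanest route is projectability. By construction, the component \(\varphi^\alpha_I\) of \(X^{(r)}\) depends only on \(x\) and on the \(u^\beta_J\) with \(|J|\le|I|\); this follows by induction on \(|I|\) from the recursion, since \(D_i\) raises order by one. Likewise \(\xi^i\) depends only on \((x,u)\). Therefore \(X^{(r)}\) is \(\pi_{r,r-s}\)-projectable for every \(1\le s\le r\): it is \(\pi_{r,r-s}\)-related to \(X^{(r-s)}\) (with \(X^{(0)}=X\)). Now let \(Y\in\Gamma(\ker\T\pi_{r,r-s})\), so \(Y\) is \(\pi_{r,r-s}\)-related to \(0\). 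The bracket of related fields is related, so \([X^{(r)},Y]\) is \(\pi_{r,r-s}\)-related to \([X^{(r-s)},0]=0\), that is, \([X^{(r)},Y]\in\Gamma(\ker\T\pi_{r,r-s})\).

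For a coordinate check instead, take \(Y=\partial/\partial u^\beta_K\) with \(|K|>r-s\). Then \([X^{(r)},\partial/\partial u^\beta_K]=-\sum_{|I|\le r}\frac{\partial\varphi^\alpha_I}{\partial u^\beta_K}\,\partial/\partial u^\alpha_I-\sum_i \frac{\partial \xi^i}{\partial u^\beta_K}\,\partial/\partial x^i\). Here \(\partial\xi^i/\partial u^\beta_K=0\) since \(|K|\ge1\), and \(\partial\varphi^\alpha_I/\partial u^\beta_K\neq0\) only if \(|I|\ge|K|>r-s\). So the bracket lies in the span of the \(\partial/\partial u^\alpha_I\) with \(|I|>r-s\), which is \(\ker\T\pi_{r,r-s}\); brackets with \(C^\infty\)-multiples follow by the Leibniz rule. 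Taking \(s=1\) gives \([X^{(r)},\Gamma(\mathcal G^r_\pi)]\subset\Gamma(\mathcal G^r_\pi)\), as already noted in Section~\ref{sec:Generalities}. Together with Lemma~\ref{lem:jet_symbol_dominant}, which says \(\mathcal G^r_\pi\) is a polarisation, this shows \(X^{(r)}\) is a polarised \(N^r_\pi\)-contact vector field.

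\emph{Coordinates and the main obstacle.} The description \(\mathcal V_s=\ker\T\pi_{r,r-s-1}=\langle\partial/\partial u^\alpha_K:|K|\ge r-s\rangle\) is immediate, because \(\pi_{r,r-s-1}\) forgets exactly the coordinates of order \(>r-s-1\). The only real obstacle is the induction establishing the order dependence of \(\varphi^\alpha_I\). One has to check that \(D_i\varphi^\alpha_I\) introduces only coordinates of order at most \(|I|+1\), and that \(\sum_j u^\alpha_{I+\widehat e_j}D_i\xi^j\) does the same, which holds because \(\xi^j\) depends on \((x,u)\) only. This is routine, but it is the crux of projectability.
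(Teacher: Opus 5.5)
Your proposal is correct and follows essentially the same route as the paper. The paper takes the Cartan-symmetry property as the defining property of prolongations, and deduces preservation of each \(\ker\T\pi_{r,r-s}\) (hence of \(\mathcal G^r_\pi\)) from the fact that \(\varphi^\alpha_I\) depends only on jet coordinates of order at most \(|I|\), which makes \(X^{(r)}\) projectable along \(\pi_{r,r-s}\) onto \(X^{(r-s)}\). Your additional material fills in steps the paper leaves implicit: the related-vector-field argument, the direct bracket computation with \(\partial/\partial u^\beta_K\), and the sketch of \(\mathcal L_{X^{(r)}}\theta^\alpha_I\in\langle\theta^\beta_J\rangle\).
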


\begin{proof} 
Given a fixed $s=1,\ldots,r$, every \(\ker\T\pi_{r,r-s}\) is locally generated by the vertical fields \(\partial/\partial u^\alpha_K\) with \(r\geq |K|\ge r-s+1\) and $\alpha=1,\ldots,m$. The preservation of the Cartan distribution $\mathcal{C}^r_\pi$ is the defining infinitesimal property of Lie prolongations to $J^r\pi$. It remains to prove the preservation of the vertical tower.  Locally on an adapted coordinate system $(x^i,u^\alpha_J)$ on an open $U\subset J^r\pi$, one has
\[
 X=\sum_{i=1}^n\xi^i(x,u)\frac{\partial}{\partial x^i}+\sum_{\alpha=1}^m\phi^\alpha(x,u)\frac{\partial}{\partial u^\alpha},\qquad X^{(r)}=\sum_{i=1}^n\xi^i\frac{\partial}{\partial x^i}+\sum_{\alpha=1}^m\sum_{|I|\leq r}\phi^\alpha_I\frac{\partial}{\partial u^\alpha_I},
\]
where $\phi^\alpha_{J+\widehat{e}_i}=D_i\phi^\alpha_{J}-\sum_{j=1}^n(D_i\xi^j) u^\alpha_{J+\widehat e_j}$ for $\alpha=1,\ldots,m$ and $|J|\leq r-1$. Then,  \(\phi^\alpha_I\) depends only on jet coordinates of order smaller or equal to \(|I|\). Hence, for every \(s=0,\ldots,r-1\), the field \(X^{(r)}\) is projectable with respect to the canonical projection \(\pi_{r,r-s-1}:J^r\pi\to J^{r-s-1}\pi\), its projection being \(X^{(r-s-1)}\). Therefore, it preserves the  distribution \(\ker \T\pi_{r,r-s-1}\). Hence,
\[
 [X^{(r)},\Gamma(\ker\T\pi_{r,r-s-1})]\subset\Gamma(\ker\T\pi_{r,r-s-1}),\qquad s=0,\ldots,r-1.
\]
In particular, for \(s=0\),  one gets that \(X^{(r)}\) preserves \(\mathcal{G}^r_\pi\), and therefore it is a polarised \(N^r_\pi\)-contact vector field.
\end{proof}

\begin{definition}
Let \((M,\mathcal D_M,\mathcal P_M)\) and \((N,\mathcal D_N,\mathcal P_N)\) be polarised \(k\)- and \(k'\)-contact manifolds. A \emph{polarised transcontact map} is a transcontact map \(\Phi:M\to N\) such that \(\T\Phi(\mathcal D_M)\subset\mathcal D_N\) and \(\T\Phi(\mathcal P_M)\subset\mathcal P_N\). If $\Phi^{-1}$ exists and it is also a polarised transcontact morphism, then we say that $(M,\mathcal{D}_M,\mathcal{P}_M)$ and $(N,\mathcal{D}_N,\mathcal{P}_N)$ are \emph{polarised $k$-contactomorphic}.
\end{definition}

\section{Jet manifolds as higher-order polarised \texorpdfstring{\(k\)}{k}-contact manifolds}
\label{sec:jet_manifolds_as_higher_order_polarised_k_contact_manifolds}

 The previous section provided a general definition for polarisations that covered the classical first-order case \cite{Riv_22} as a particular instance. The new polarisation notion is very relevant for further developments and will also be a key for other applications. More specifically, the so-called polarisation of jet type and order $r$ will be a key to characterise $k$-contact manifolds that are locally polarised $k$-contactomorphic to $J^r\pi$. The following notion gives the first step in this direction. 

 \begin{definition}\label{def:split_polarisation}A {\it split polarisation} of a \(k\)-contact manifold \((M,\mathcal D)\) is a polarisation $\mathcal{P}\subset \mathcal D$ such that on an open subset $U$ around every point $p\in M$ there exists an integrable isotropic subbundle, \(H\subset\mathcal D\) such that \(\mathcal D|_U=H\oplus\mathcal P|_U\). 
\end{definition}

For $k$-contact manifolds $(M,\mathcal{D})$, where $\dim M=nk+n+k$ and $\mathcal{P}$ is an integrable subbundle of $\mathcal{D}$ of rank $nk$, the $k$-contact Darboux theorem \cite{Riv_22} ensures that there exists, around each point in $M$, an integrable isotropic complement of $\mathcal P$ in $\mathcal D$. In jet manifolds, we also proved that  $\mathcal G^r_\pi$ has an integrable isotropic complement. 
Nevertheless, the integrable Legendrian condition of maximal rank on \(\mathcal P\) only ensures an isotropic complement pointwise: it does not imply in general the existence of a smooth integrable isotropic complement even locally. 
The following counterexample illustrates this fact.

\begin{example}\label{ex:legendrian_without_integrable_isotropic_complement}
Let us show  that an integrable Legendrian subbundle of maximal rank may admit smooth isotropic complements pointwise without admitting any integrable isotropic complement. Let \(M=\mathbb R^8\) with coordinates \((x,y,p,q,z^1,z^2,z^3,z^4)\), and consider the \(\mathbb R^4\)-valued differential one-form \(\bm\eta=\sum_{\alpha=1}^4\eta^\alpha\otimes e_\alpha\) given by
\[
\eta^1=\dd z^1+p\,\dd x+\frac{x^2}{2}\,\dd y,\qquad
\eta^2=\dd z^2+q\,\dd x,\qquad
\eta^3=\dd z^3-p\,\dd y,\qquad
\eta^4=\dd z^4-q\,\dd y .
\]
Then, \(\mathcal D=\ker\bm\eta\) is locally generated by
\[
X_1=\frac{\partial}{\partial x}
-p\frac{\partial}{\partial z^1}
-q\frac{\partial}{\partial z^2},\,\,
X_2=\frac{\partial}{\partial y}
+x\frac{\partial}{\partial p}
-\frac{x^2}{2}\frac{\partial}{\partial z^1}
+p\frac{\partial}{\partial z^3}
+q\frac{\partial}{\partial z^4},
\,\,
P_1=\frac{\partial}{\partial p},\,\,
P_2=\frac{\partial}{\partial q}.
\]
Thus,
\(
\mathcal D=\langle X_1,X_2,P_1,P_2\rangle 
\) and
 $\ker \dd\bm \eta=\langle \partial/\partial{z^1},\ldots,\partial/\partial {z^4}\rangle$, which gives  $\ker\bm\eta\oplus\ker\dd\bm\eta=\T \mathbb{R}^8$. 
Hence, \(\mathcal D\)  is a four-contact distribution and $\bm\eta$ is a four-contact form. 

There are no isotropic subbundles of \(\mathcal D\) of rank four, because $\mathcal{D}$ is a four-contact distribution of rank four and $\mathcal{D}^{\perp_\mathcal{D}}=0$. There are no isotropic subbundles of $\mathcal{D}$ of rank three either. Indeed, let
\(
v=aX_1+bX_2+cP_1+dP_2\in\mathcal D_\xi
\)
be non-zero, and let
\(
w=AX_1+BX_2+CP_1+DP_2\in\mathcal D_\xi
\) for $\xi\in \mathbb{R}^8$.
The condition \(\dd\bm\eta(v,w)=0\) is equivalent to
\[
aC-cA=0,\qquad aD-dA=0,\qquad bC-cB=0,\qquad bD-dB=0.
\]
In other words, 
$$
{\rm rk}\left[\begin{array}{cc}a&c\\A&C\end{array}\right]<2,\qquad {\rm rk}\left[\begin{array}{cc}b&c\\B&C\end{array}\right]<2,\qquad 
{\rm rk}\left[\begin{array}{cc}a&d\\A&D\end{array}\right]<2,\qquad {\rm rk}\left[\begin{array}{cc}b&d\\B&D\end{array}\right]<2,
$$
If \(v\in\langle X_1,X_2\rangle\setminus\{0\}\), these equations force \(C=D=0\), so the \(\dd\bm\eta\)-orthogonal of \(v\) in \(\mathcal D_\xi\) is contained in \(\langle X_1,X_2\rangle\), which has rank \(2\). If \(v\in\langle P_1,P_2\rangle\setminus\{0\}\), they force \(A=B=0\), so the orthogonal of \(v\) is contained in \(\langle P_1,P_2\rangle\), also of rank \(2\). Finally, if \(v\) has both a non-zero \(\langle X_1,X_2\rangle\)-component and a non-zero \(\langle P_1,P_2\rangle\)-component, the equations imply that \(w\) is proportional to \(v\). Hence, for every non-zero \(v\in\mathcal D_\xi\), the space
\(
\{w\in\mathcal D_\xi:\dd\bm\eta(v,w)=0\}
\)
has dimension at most two. Therefore, an isotropic subspace of \(\mathcal D\) cannot have dimension three. 

Now set
\(
\mathcal P=\langle P_1,P_2\rangle\subset\mathcal D,
\)
which is integrable and isotropic. Moreover, it admits an isotropic complement pointwise: indeed,
\(
H_0=\langle X_1,X_2\rangle
\) satisfies \(
\mathcal D=H_0\oplus\mathcal P
\) and is isotropic, since \([X_1,X_2]=P_1\in\Gamma(\mathcal D)\). In particular, \(\mathcal P=\langle P_1,P_2\rangle\) is maximal isotropic. Thus, \(\mathcal P\) is Legendrian relative to $\bm\eta$.

We now prove that \(\mathcal P\) admits no integrable isotropic complement. Let \(H\subset\mathcal D\) be any smooth complement of \(\mathcal P\). Since \(\mathcal D=H_0\oplus\mathcal P\), locally \(H\) is the graph of a bundle map \(H_0\to\mathcal P\). Therefore \(H\) is generated by vector fields of the form
\[
E_1=X_1+aP_1+bP_2,\qquad
E_2=X_2+cP_1+dP_2,
\]
for some smooth functions \(a,b,c,d\). The condition that \(H\) be isotropic gives
\[
\dd\bm \eta(E_1,E_2)=-ce_1-de_2-ae_3-be_4=0.
\]
Consequently, the only isotropic complement of \(\mathcal P\) is \(H_0=\langle X_1,X_2\rangle\). But
\(
[X_1,X_2]=P_1\notin\Gamma(H_0),
\)
so \(H_0\) is not integrable. Hence, \(\mathcal P\) is an integrable Legendrian subbundle admitting isotropic complements pointwise, but it admits no integrable isotropic complement.
\end{example}

The above example also shows that determining an integrable isotropic complement of a Legendrian subbundle may require a possibly long but elementary calculation. More theoretically, our interest in split polarisations is due to their natural appearance in finite-order jet manifolds as $\ker\T\pi_{r,r-1}$ with an integrable complement spanned by some local total derivatives. Moreover, it is worth noting that for $k=1$, they are related to bi-Legendrian contact geometry \cite{CappellettiMontano2005BiLegendrianConnections}. 

Now, let us give a second necessary condition for a  $k$-contact manifold to be  locally polarised $k$-contactomorphic to  a finite-order jet manifold. 
\begin{definition}\label{def:Spencer_type_derived_flag} We say that $(M,\mathcal{D},\mathcal{P})$ is of {\it Spencer type} and order $r$ if \(\mathcal D^{(r)}=\T M\) and, on a neighbourhood of every point there exists an integrable isotropic complement \(H\subset\mathcal D\) of \(\mathcal P\) such that, for \(W\simeq \mathcal D^{(r)}/\mathcal D^{(r-1)}\), one has
\[
\mathcal P\simeq S^rH^*\otimes W,\qquad \mathcal D^{(s)}/\mathcal D^{(s-1)}\simeq S^{r-s}H^*\otimes W,\qquad s=1,\ldots,r.
\]
Moreover, under these identifications, the bracket action of \(H\) on the graded quotients,
\(
H\otimes\mathcal P\longrightarrow \mathcal D^{(1)}/\mathcal D, X\otimes Y\longmapsto -[X,Y]\mod \mathcal D,
\)
and
\begin{equation}\label{eq:ConSpec}
H\otimes\big(\mathcal D^{(s)}/\mathcal D^{(s-1)}\big)\longrightarrow \mathcal D^{(s+1)}/\mathcal D^{(s)},\qquad X\otimes[Y]\longmapsto -[X,Y]\mod \mathcal D^{(s)},
\end{equation}
for \(s=1,\ldots,r-1\), coincides  with the Spencer contraction
\[
H\otimes S^{r-s}H^*\otimes W\longrightarrow S^{r-s-1}H^*\otimes W,\qquad X\otimes\omega\otimes w\longmapsto \iota_X\omega\otimes w.
\]
\end{definition}

 The point of introducing the above definition, which is necessary but not sufficient to ensure the  existence of a local $k$-contactomorphism with a jet manifold $J^r\pi$, is to provide necessary conditions that are simple enough to be determined in practical cases. It is also worth noting that the mappings \eqref{eq:ConSpec} are vector bundle morphisms, despite being defined by means of a Lie bracket of vector fields.
 
\begin{definition}\label{def:polarised_jet_type}
A polarised \(k\)-contact manifold \((M,\mathcal D,\mathcal P)\) is of {\it jet type and order \(r\)} if it is of Spencer type of order $r$ and, around every $p\in M$ there exists an open $U\subset M$, an isotropic, integrable complement $H$ of $\mathcal{V}_0:=\mathcal{P}|_U$ in $\mathcal{D}|_U$ and an integrable complement $\mathcal{V}_r$ of $H$ in $\T U$ so that \(\T U=H\oplus\mathcal V_r\) and 
\begin{equation}\label{Tower_condition}
\mathcal{V}_s=\mathcal{D}^{(s)}\cap \mathcal{V}_r,\qquad s=0,\ldots,r-1,
\end{equation}
are regular involutive distributions.
\end{definition}

\begin{lemma}\label{lem:derived-flag-vertical-flag}
Let \((M,\mathcal D, \mathcal{P})\) be a polarised $k$-contact manifold of jet type and order $r$. If the associated $H$, $\mathcal{V}_r$ are locally defined on $U$, then
\(
\mathcal D^{(s)}|_U=H\oplus\mathcal V_s,\) for \(s=0,\ldots,r-1,
\)
and  there are natural identifications \(\mathcal D^{(s)}|_U/\mathcal D^{(s-1)}|_U\simeq\mathcal V_s/\mathcal V_{s-1}\) for \(s=1,\ldots,r-1\).
\end{lemma}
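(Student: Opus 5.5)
The argument is pointwise linear algebra on $U$, driven by the definition $\mathcal V_s=\mathcal D^{(s)}\cap\mathcal V_r$ in \eqref{Tower_condition} and the splitting $\T U=H\oplus\mathcal V_r$. The key remark is that $H\subset\mathcal D=\mathcal D^{(0)}\subset\mathcal D^{(s)}$ for every $s\geq 0$, since the $\mathcal D$-derived flag of Definition~\ref{def:derived_flag_kcontact_distribution} is increasing. I will then apply the modular law to the decomposition $\T U=H\oplus\mathcal V_r$.

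First I prove $\mathcal D^{(s)}|_U=H\oplus\mathcal V_s$ for $s=0,\ldots,r-1$. Fix $p\in U$ and $v\in\mathcal D^{(s)}_p$, and decompose $v=h+w$ with $h\in H_p$ and $w\in(\mathcal V_r)_p$. Because $h\in H_p\subset\mathcal D^{(s)}_p$, we get $w=v-h\in\mathcal D^{(s)}_p\cap(\mathcal V_r)_p=(\mathcal V_s)_p$. Hence $\mathcal D^{(s)}_p\subset H_p+(\mathcal V_s)_p$. The reverse inclusion is immediate, since both $H_p$ and $(\mathcal V_s)_p$ lie in $\mathcal D^{(s)}_p$. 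The sum is direct because $H_p\cap(\mathcal V_s)_p\subset H_p\cap(\mathcal V_r)_p=0$.

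For $s=0$ this gives $\mathcal D|_U=H\oplus(\mathcal D\cap\mathcal V_r)$. I will check consistency with $\mathcal V_0=\mathcal P|_U$. Since $\mathcal P\subset\mathcal D$, the identity $\mathcal P=\mathcal D\cap\mathcal V_r$ reduces to $\mathcal P\subset\mathcal V_r$. This holds because $\mathcal P$ and $\mathcal D\cap\mathcal V_r$ are both complements of $H$ in $\mathcal D$ and \eqref{Tower_condition} at $s=0$ identifies them. This step is also where the regularity of the $\mathcal V_s$, assumed in Definition~\ref{def:polarised_jet_type}, is used: it makes $H\oplus\mathcal V_s$ a genuine regular subbundle, so each $\mathcal D^{(s)}|_U$ is regular of constant rank, even though the derived flag is a priori only a generalised distribution.

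Second, for the quotients with $1\leq s\leq r-1$, I compose the inclusion $\mathcal V_s\hookrightarrow\mathcal D^{(s)}|_U$ with the projection onto $\mathcal D^{(s)}|_U/\mathcal D^{(s-1)}|_U$. This map is surjective because $\mathcal D^{(s)}=H\oplus\mathcal V_s$ and $H\subset\mathcal D^{(s-1)}$. Its kernel is $\mathcal V_s\cap\mathcal D^{(s-1)}=\mathcal V_s\cap\mathcal V_r\cap\mathcal D^{(s-1)}=\mathcal V_{s-1}$. Therefore it descends to a vector bundle isomorphism $\mathcal V_s/\mathcal V_{s-1}\simeq\mathcal D^{(s)}|_U/\mathcal D^{(s-1)}|_U$.

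This identification is natural in the sense that it depends only on the inclusions, not on $H$. Indeed, if one projects along $H$ in the opposite direction, the splitting is irrelevant modulo $\mathcal D^{(s-1)}$, because $H\subset\mathcal D^{(s-1)}$.

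I do not expect a genuine obstacle. The only delicate point is the bookkeeping of $\mathcal V_0$ versus $\mathcal P$, together with the pointwise-versus-smooth issue; regularity of the $\mathcal V_s$ handles the latter.
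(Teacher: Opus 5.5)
Your proof is correct and is essentially the paper's argument: you split $v\in\mathcal D^{(s)}_p$ along $\T U=H\oplus\mathcal V_r$, use $H\subset\mathcal D\subset\mathcal D^{(s)}$ to place the $\mathcal V_r$-component in $\mathcal D^{(s)}\cap\mathcal V_r=\mathcal V_s$, and get directness from $H\cap\mathcal V_r=0$. Your kernel computation $\mathcal V_s\cap\mathcal D^{(s-1)}=\mathcal V_{s-1}$, which uses $\mathcal V_{s-1}\subset\mathcal V_s$ because the flag is increasing, just spells out the paper's one-line ``taking quotients'' step, and your remark that $\mathcal P|_U=\mathcal D\cap\mathcal V_r$ is simply the $s=0$ case of the hypothesis \eqref{Tower_condition}.
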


\begin{proof}
Since \(H\subset\mathcal D=\mathcal D^{(0)}\), one has \(H\subset\mathcal D^{(s)}\) for every \(s\). Let \(v_Z\in\mathcal D^{(s)}\). By \(TU=H\oplus\mathcal V_r\), there is a unique decomposition \(v_Z=v_X+v_Y\), with \(v_X\in H\) and \(v_Y\in\mathcal V_r\). Since \(v_X\in H\subset\mathcal D^{(s)}\), it follows that \(v_Y=v_Z-v_X\in\mathcal D^{(s)}\cap\mathcal V_r=\mathcal V_s\). Hence, \(\mathcal D^{(s)}\subset H\oplus\mathcal V_s\). The converse inclusion follows from \(H\subset\mathcal D^{(s)}\) and \(\mathcal V_s\subset\mathcal D^{(s)}\). The sum is direct because \(TU=H\oplus\mathcal V_r\). Taking quotients gives \(\mathcal D^{(s)}/\mathcal D^{(s-1)}\simeq\mathcal V_s/\mathcal V_{s-1}\).
\end{proof}

By Lemma~\ref{lem:derived-flag-vertical-flag}, the identifications
\(\mathcal D^{(s)}/\mathcal D^{(s-1)}\simeq \mathcal V_s/\mathcal V_{s-1}\) identify the bracket action of \(H\) on the derived quotients of \(\mathcal D\) with the bracket action
\[
H\otimes \mathcal V_s/\mathcal V_{s-1}
\longrightarrow
\mathcal V_{s+1}/\mathcal V_s.
\]
Thus, the Spencer contraction condition may be expressed equivalently in terms of the derived flag of \(\mathcal D\) or in terms of the \(\mathcal V_0,\ldots,\mathcal{V}_r\). 
\begin{remark}\label{rem:independence_H}
The jet type polarization notion  is independent of the chosen integrable isotropic complement $H$. Indeed, suppose that \(H\subset\mathcal D\) is an integrable isotropic complement of a jet type polarisation \(\mathcal P\), and that there exists an integrable regular subbundle \(\mathcal V_r\subset \T U\) such that \(\T U=H\oplus\mathcal V_r\), \(\mathcal V_0=\mathcal P\), and \eqref{Tower_condition} holds. Let \(H'\subset\mathcal D\) be another integrable isotropic complement of \(\mathcal P\). Since \(\mathcal D=H\oplus\mathcal P=H'\oplus\mathcal P\), locally \(H'\) is the graph of a bundle morphism \(A:H\to\mathcal P\), namely \(H'=\{X+A(X):X\in H\}\). Since \(\mathcal P\subset\mathcal V_r\), one has \(H'\cap\mathcal V_r=0\): if \(X+A(X)\in\mathcal V_r\), then \(X\in H\cap\mathcal V_r=0\). Therefore, \(\T U=H'\oplus\mathcal V_r\). As the derived flag \(\mathcal D^{(s)}\) is intrinsic, the intersections \(\mathcal D^{(s)}\cap\mathcal V_r\) are unchanged. Hence, the same subbundles \(\mathcal V_s\) are obtained for \(H'\).
 Moreover, the induced Spencer bracket maps on the graded quotients are also independent of the chosen admissible complement. Let \(X'\in\Gamma(H')\) and write \(X'=X+A(X)\), with \(X\in\Gamma(H)\) and \(A(X)\in\Gamma(\mathcal P)\). If \(Y\in\Gamma(\mathcal V_s)\), then
\(
[X',Y]=[X,Y]+[A(X),Y].
\)
Since \(\mathcal P=\mathcal V_0\subset\mathcal V_s\) and the \(\mathcal V_s\) are involutive, one has \([A(X),Y]\in\Gamma(\mathcal V_s)\). Consequently,
\[
[X',Y]\equiv [X,Y]\mod \mathcal V_s.
\]
Thus, the bracket maps
\(
H\otimes(\mathcal V_s/\mathcal V_{s-1})\longrightarrow \mathcal V_{s+1}/\mathcal V_s
\)
and
\(
H'\otimes(\mathcal V_s/\mathcal V_{s-1})\longrightarrow \mathcal V_{s+1}/\mathcal V_s
\)
coincide after the natural identification \(H'\simeq H\simeq \mathcal D/\mathcal P\). Therefore, if the bracket action for \(H\) is the Spencer contraction, then the bracket action for \(H'\) is the same Spencer contraction.
\end{remark}

Note that the only unknowns to be determined in Definition \ref{def:polarised_jet_type} are $H$ and $\mathcal{V}_{r}$. In any case, all possible $H$ can be worked out by means of a possibly long and tedious computation and one only needs to ensure the existence of one.

\begin{proposition}The triple \((J^r\pi,\mathcal C^r_\pi,\mathcal G^r_\pi)\), where \(\mathcal G^r_\pi=\ker \T\pi_{r,r-1}\), is of jet type and order \(r\).
\end{proposition}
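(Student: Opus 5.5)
The plan is to work entirely in an adapted chart \((x^i,u^\alpha_I)\) on \(U\subset J^r\pi\). There we take \(H:=\mathcal H^r_\pi=\langle D_1,\ldots,D_n\rangle\) and \(\mathcal V_r:=\ker\T\pi_r|_U=\langle\partial/\partial u^\alpha_I:|I|\le r\rangle\). By Lemma~\ref{lem:jet_symbol_dominant}, \(\mathcal G^r_\pi\) is a polarisation, and its proof already shows that \(H\) is an integrable isotropic complement of \(\mathcal G^r_\pi\) in \(\mathcal C^r_\pi|_U\). Clearly \(\T U=H\oplus\mathcal V_r\), since each \(D_i\) has \(\partial/\partial x^i\)-component \(1\), and \(\mathcal V_r\) is integrable because it is the vertical bundle of \(\pi_r\).

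The core step is to compute the \(\mathcal C^r_\pi\)-derived flag explicitly. I will prove by induction on \(s\) that
\[
(\mathcal C^r_\pi)^{(s)}|_U=H\oplus\Big\langle\frac{\partial}{\partial u^\alpha_I}:|I|\ge r-s\Big\rangle,\qquad s=0,\ldots,r.
\]
The case \(s=0\) is \eqref{eq:notationCD}. For the inductive step, use the bracket formulas \([D_j,\partial/\partial u^\alpha_I]=-\partial/\partial u^\alpha_{I-\widehat e_j}\) and \([D_i,D_j]=0\). The brackets \([\partial/\partial u^\beta_K,\partial/\partial u^\alpha_I]=0\) vanish. The brackets \([\partial/\partial u^\beta_K,D_j]\) with \(|K|=r\) produce only \(\partial/\partial u^\beta_{K-\widehat e_j}\), which are already of order \(r-1\ge r-s\). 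Brackets of \(\Gamma(\mathcal C^r_\pi)\) with \(\Gamma(\mathcal D^{(s-1)})\) with function coefficients add only terms in \(\mathcal D^{(s-1)}\), by the Leibniz rule. Hence the new directions at step \(s\) are exactly the \(\partial/\partial u^\alpha_{I}\) with \(|I|=r-s\), and every such multi-index arises as \(I+\widehat e_j-\widehat e_j\).

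In particular \(\mathcal D^{(r)}=\T U\), and \(\mathcal D^{(s)}\cap\mathcal V_r=\langle\partial/\partial u^\alpha_I:|I|\ge r-s\rangle=\ker\T\pi_{r,r-s-1}\). This is the vertical tower \(\mathcal V_s\), which is regular and involutive, so \eqref{Tower_condition} holds.

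It remains to verify the Spencer-type conditions of Definition~\ref{def:Spencer_type_derived_flag}. I set \(W:=\mathcal D^{(r)}/\mathcal D^{(r-1)}\simeq\langle\partial/\partial u^\alpha\rangle\simeq\pi_{r,0}^*VE\). I identify \(H\simeq\pi_r^*\T Q\) via \(\T\pi_r\), so that \(H^*\simeq\pi_r^*\T^*Q\). The identifications \(\mathcal P\simeq S^rH^*\otimes W\) and \(\mathcal D^{(s)}/\mathcal D^{(s-1)}\simeq\mathcal V_s/\mathcal V_{s-1}\simeq S^{r-s}H^*\otimes W\) then come from \eqref{eq:isovertical} and Lemma~\ref{lem:derived-flag-vertical-flag}. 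The bracket maps are \(D_j\otimes[\partial/\partial u^\alpha_I]\mapsto-[D_j,\partial/\partial u^\alpha_I]=\partial/\partial u^\alpha_{I-\widehat e_j}\). These are exactly the coordinate formula for \(\iota_hA\), as in the theorem giving \eqref{eq:SpencerContraction} for the first step, and the same computation applies verbatim at every level \(s\).

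The main obstacle is bookkeeping rather than conceptual: the identification \(\partial/\partial u^\alpha_I\mapsto\odot\,\dd x^{\langle I\rangle}\otimes\partial/\partial u^\alpha\) must be normalised with the factorials so that insertion of \(\partial/\partial x^j\) gives exactly \(\odot\,\dd x^{\langle I-\widehat e_j\rangle}\), with no combinatorial factor. With the \(1/J!\) normalisation in the definition of \(\odot\,\dd x^{\langle J\rangle}\), this holds, and the sign convention is absorbed by the minus sign in the bracket map.

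I will also note that independence of the chart follows from Remark~\ref{rem:independence_H}, since only the existence of some \(H,\mathcal V_r\) around each point is required.
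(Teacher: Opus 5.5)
Your proposal is correct and follows essentially the same route as the paper: you make the same choices \(H=\langle D_1,\ldots,D_n\rangle\) and \(\mathcal V_r=\ker\T\pi_r\), identify the tower with \(\ker\T\pi_{r,r-s-1}\), and match the bracket \([D_j,\partial/\partial u^\alpha_I]=-\partial/\partial u^\alpha_{I-\widehat e_j}\) with the Spencer contraction. Your inductive computation of the derived flag and your check of the \(1/J!\) normalisation spell out two points that the paper's proof only asserts; the appeal to Remark~\ref{rem:independence_H} is unnecessary, since that remark concerns changing \(H\), and the definition already requires only local existence of \(H\) and \(\mathcal V_r\).
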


\begin{proof}
Work in adapted coordinates \((x^i,u^\alpha_I)\) on some $U\subset J^r\pi$ with  \(|I|\le r\) and $\alpha=1,\ldots,m$. The Cartan distribution decomposes on $U$ as
\(
\mathcal C^r_\pi|_U=H^r_\pi\oplus\mathcal G^r_\pi|_U,
\)
where \(H^r_\pi=\langle D_1,\ldots,D_n\rangle\) and \(\mathcal G^r_\pi=\langle\partial/\partial u^\alpha_K:|K|=r,\alpha=1,\ldots,m\rangle\). Both \(H^r_\pi\) and \(\mathcal G^r_\pi\) are integrable, and both are isotropic for $\dd\bm \eta^r_\pi$.

If \(\mathcal V^\pi_0:=\mathcal G^r_\pi\), while $\mathcal{V}^\pi_s:=\ker \T\pi_{r,r-s-1}$ for $1\le s\le r-1$, and $\mathcal{V}^\pi_r:=\ker \T\pi_r$, then one gets that \(\mathcal{D}^{(s)}=H^r_\pi\oplus\mathcal V^\pi_s=H^r_\pi\oplus\ker \T\pi_{r,r-s-1}\) for \(0\le s\le r-1\), and  \( \mathcal{D}^{(r)}:=H^r_\pi\oplus\ker \T\pi_r\). Then, the distributions \(\mathcal V^\pi_s\) are regular and involutive. Moreover,
\[
\left[D_i,\frac{\partial}{\partial u^\alpha_K}\right]=
\begin{cases}
-\dfrac{\partial}{\partial u^\alpha_{K-\widehat e_i}},&K_i>0,\\[4pt]
0,&K_i=0.
\end{cases}
\]
with the bracket by \(D_i\) corresponding to contraction by \(\partial/\partial x^i\), up to sign. 
Previous expressions also show 
 that
\[
\mathcal V^\pi_s/\mathcal V^\pi_{s-1}\simeq S^{r-s}H^{r*}_\pi\otimes\pi_{r,0}^*VE,\qquad s=0,\ldots,r,\quad \mathcal V^\pi_{-1}:=0.
\]
Hence, the bracket-induced maps are precisely the Spencer contractions.

\end{proof}

\begin{theorem}\label{thm:local_split_jet_recognition}
A  polarised \(k\)-contact manifold \((M,\mathcal D,\mathcal P )\) is of jet type and order \(r\) if, and only if, locally around every point in $M$, there exists a fibred manifold \(\pi:E\to Q\), together with a local polarised  $k$-contactomorphism 
\(
\Phi:U\longrightarrow \Phi(U)\subset J^r\pi.
\)
Moreover, if \(n=\operatorname{rank}H\) and \(m=\operatorname{rank}(\mathcal V_r/\mathcal V_{r-1})\), then
\[
k=m\binom{n+r-1}{r-1}.
\]
\end{theorem}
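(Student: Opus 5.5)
The ``if'' direction is essentially done. The preceding proposition shows that \((J^r\pi,\mathcal C^r_\pi,\mathcal G^r_\pi)\) is of jet type and order \(r\). Every ingredient of Definition~\ref{def:polarised_jet_type} is natural under polarised \(k\)-contactomorphisms: the derived flag \(\mathcal D^{(s)}\), integrability, isotropy, the bracket-induced maps \eqref{eq:ConSpec}, and the intersections \(\mathcal D^{(s)}\cap\mathcal V_r\). Since being of jet type is a local condition, it therefore transfers from \(\Phi(U)\) to \(U\). Remark~\ref{rem:independence_H} guarantees that nothing depends on the chosen complement \(H\). The dimension count then follows from the jet model: \(k=\operatorname{corank}\mathcal C^r_\pi=N^r_\pi\), where \(n=\operatorname{rank}H^r_\pi\) and \(m=\operatorname{rank}(\mathcal V^\pi_r/\mathcal V^\pi_{r-1})=\operatorname{rank}VE\).

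For the ``only if'' direction I would build the jet chart directly from the tower. Fix \(p\), and take \(U\), \(H\), \(\mathcal V_r\) as in Definition~\ref{def:polarised_jet_type}. By Lemma~\ref{lem:derived-flag-vertical-flag}, \(\mathcal D^{(s)}=H\oplus\mathcal V_s\).

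\textbf{Step 1 (base and fibre).} Since \(\mathcal V_r\) and the \(\mathcal V_s\) are regular and involutive, shrinking \(U\) gives a tower of submersions
\[
U\to U/\mathcal V_{r-1}=:E\to U/\mathcal V_r=:Q,
\]
with \(\dim Q=\operatorname{rank}H=n\) and fibre dimension \(m\). Choose coordinates \(x^i\) on \(Q\) and \(u^\alpha\) on the fibres of \(E\to Q\), and pull them back to \(U\). The intermediate quotients \(U/\mathcal V_{s}\) will play the role of \(J^{r-s-1}\pi\).

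\textbf{Step 2 (frame for \(H\)).} Because \(H\) is integrable and transverse to \(\mathcal V_r\), and \(\mathcal V_r\) is the kernel of the projection to \(Q\), the map \(\T\pi_Q|_H\) is an isomorphism. Hence \(H\) has a unique local frame \(D_i\) that is \(\pi_Q\)-related to \(\partial/\partial x^i\). Integrability of \(H\) then forces \([D_i,D_j]=0\).

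\textbf{Step 3 (higher jet coordinates).} Define recursively \(u^\alpha_{I+\widehat e_i}:=D_i u^\alpha_I\). The key point is that \(u^\alpha_I\) with \(|I|=s\) is constant along \(\mathcal V_{r-s-1}\). To see this, note that \(\mathcal D^{(r-s-1)}\) is preserved modulo \(\mathcal D^{(r-s)}\) by brackets with \(H\), and that \(\dd u^\alpha_I\) annihilates \(\mathcal D^{(r-s-1)}\cap\mathcal V_r\); this follows by induction using \([D_i,\Gamma(\mathcal V_{t})]\subset\Gamma(\mathcal V_{t+1})\), which is a consequence of the Spencer-type bracket conditions. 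Symmetry in \(I\) follows from \([D_i,D_j]=0\). The Spencer identification, namely that the bracket of \(H\) on the graded pieces is exactly the contraction \(S^{r-s}H^*\otimes W\to S^{r-s-1}H^*\otimes W\), is used to show that the differentials \(\dd u^\alpha_I\), \(|I|=r-s\), restricted to \(\mathcal V_s/\mathcal V_{s-1}\) form a basis of its dual. Dually, this says the contraction map is injective and surjective, as in the jet model; this is where the ranks \(\binom{n+r-s-1}{r-s}m\) enter. Together the functions \((x^i,u^\alpha_I)_{|I|\le r}\) then form a coordinate system on a smaller \(U\), and the number of functions with \(|I|\le r-1\) equals \(\operatorname{corank}\mathcal D=N^r_\pi\), giving \(k\).

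\textbf{Step 4 (identification).} In these coordinates \(D_i=\partial_{x^i}+\sum u^\alpha_{I+\widehat e_i}\partial_{u^\alpha_I}+(\text{terms along }\partial_{u^\alpha_K},|K|=r)\). Those extra terms can be absorbed by replacing \(H\) with another isotropic integrable complement, which is allowed by Remark~\ref{rem:independence_H}, or they can be shown to vanish from the definition of \(u_{I}\) with \(|I|=r\). Then \(\mathcal D=H\oplus\mathcal V_0\) coincides with \eqref{eq:notationCD}, and \(\mathcal P=\mathcal V_0=\langle\partial/\partial u^\alpha_K\rangle_{|K|=r}=\mathcal G^r_\pi\). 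The chart \(\Phi\) obtained this way is the required polarised \(k\)-contactomorphism into \(J^r\pi\) for the fibration \(E\to Q\).

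I expect Step 3 to be the main obstacle, specifically proving that the recursively defined \(u^\alpha_I\) descend to the correct quotients and are independent. The plan there is to use the graded bracket maps \eqref{eq:ConSpec} as the dual of the total derivative, following the computation \([D_j,\partial/\partial u^\alpha_I]=-\partial/\partial u^\alpha_{I-\widehat e_j}\) in reverse.
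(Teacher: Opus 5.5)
Your proposal follows essentially the same route as the paper. The ``if'' direction transports the jet-type structure of \((J^r\pi,\mathcal C^r_\pi,\mathcal G^r_\pi)\) through \(\Phi\). The ``only if'' direction uses \(Q=U/\mathcal V_r\), \(E=U/\mathcal V_{r-1}\), the \(H\)-lifts \(D_i\), \(u^\alpha_I:=D_Iu^\alpha\), the vanishing \(Yu^\alpha_I=0\) for \(Y\in\Gamma(\mathcal V_s)\), \(|I|<r-s\), and then independence of the \(\dd u^\alpha_I\). The vanishing rests on \([D_i,Y]\in\Gamma(\mathcal V_{s+1})\), which really comes from the derived flag together with the verticality of \([D_i,Y]\) over \(Q\), not from the Spencer condition.

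Two corrections. First, your Step 3 independence argument should be the paper's descending induction on the maps \(\Sigma_s([Y])=(Yu^\alpha_I)_{|I|=r-s}\). It starts from \(\Sigma_r\), which is an isomorphism by the choice of the \(u^\alpha\), and uses \([D_i,Y](u^\alpha_J)=-Yu^\alpha_{J+\widehat e_i}\) together with injectivity of \(\omega\mapsto(X\mapsto\iota_X\omega)\) on \(S^{r-s}H^*\otimes W\) for \(r-s\geq1\); the contraction map itself is not injective in general. Second, your alternative claim in Step 4 that the \(\partial/\partial u^\alpha_K\)-components of \(D_i\) vanish is false: take \(H=\langle\partial_x+p\,\partial_u+f\,\partial_p\rangle\) with \(f\neq0\) on \(J^1(\mathbb R,\mathbb R)\). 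The claim is harmless, though, since those components lie in \(\mathcal V_0=\mathcal P\subset\mathcal D\).
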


\begin{proof}
Assume first that \((M,\mathcal D,\mathcal P)\) is locally polarised $k$-contactomorphic to a jet model as in the statement. Then, there exists a local diffeomorphism $\Phi:U\subset M\rightarrow V\subset J^r\pi$ on a neighbourhood $U$ of every $x\in M$ so that $J^r\pi$ admits an adapted coordinate system on $V$, while $\T\Phi(\mathcal{D}|_U)=\mathcal{C}^r_\pi|_V$ and $\T\Phi(\mathcal{P}|_U)=\ker \T\pi_{r,r-1}|_V$. Hence, there exists an integrable isotropic complement \(H'=\T\Phi^{-1}(H)\) of \(\mathcal P|_U\) in \(\mathcal D|_U\), where \(H=\langle D_1,\ldots,D_n\rangle\), and an integrable complement \(\mathcal V_r=\T\Phi^{-1}(\ker\T\pi_r|_V)\) of \(H'\) in \(\T U\). The standard vertical flag on \(J^r\pi\) is
\(
\mathcal G^r_\pi=\mathcal V^\pi_0\subset\mathcal V^\pi_1\subset\cdots\subset\mathcal V^\pi_r,
\)
where \(\mathcal V^\pi_s=\ker \T\pi_{r,r-s-1}\) for \(0\le s\le r-1\) and \(\mathcal V^\pi_r=\ker \T\pi_r\). Moreover, \(\T\Phi(\mathcal D^{(s)})=(\mathcal C^r_\pi)^{(s)}\) for \(s=0,\ldots,r\). Additionally, $\mathcal{D}$, $\mathcal{P}$ satisfy locally all the conditions of a distribution of jet type and order $r$ since $\mathcal{C}^r_\pi$, $\mathcal{G}^r_\pi$, and $\ker \T\pi_r$ do so. 

Conversely, assume that \((M,\mathcal D,\mathcal P)\) is of jet type and order \(r\). Work on a sufficiently small open set \(U\subset M\), and use the sequence \(\mathcal P=\mathcal V_0\subset\cdots\subset\mathcal V_r\). Since \(\T U=H\oplus\mathcal V_r\) and both are involutive, they define local transverse foliations. Hence, after possibly shrinking \(U\), there is a submersion \(q:U\to Q\), with vertical distribution \(\mathcal V_r\), and local coordinates \(x^1,\ldots,x^n\) on \(Q\), where \(n=\operatorname{rank}H\). Let \(D_1,\ldots,D_n\) be the \(H\)-horizontal lifts of \(\partial/\partial x^1,\ldots,\partial/\partial x^n\). Writing again \(x^j=q^*x^j\) for $j=1,\ldots,n$, with a slight abuse of notation, one has \(D_i(x^j)=\delta_i^j\) for $i,j=1,\ldots,n$.

Since \(\mathcal V_{r-1}\) is involutive and contained in \(\mathcal V_r\), the local quotient \(E:=U/\mathcal V_{r-1}\) is smooth and carries a natural local fibration \(E\to Q\). Choose local functions \(u^1,\ldots,u^m\) constant along the integrals of \(\mathcal V_{r-1}\) whose descents are fibre coordinates on \(E\to Q\), where \(m=\operatorname{rank}(\mathcal V_r/\mathcal V_{r-1})\). Define
\[
u^\alpha_I:=D_Iu^\alpha,\qquad |I|\le r.
\]
The commutativity of the \(D_1,\ldots,D_n\) makes these functions independent of the order of differentiation.

We now prove that, for every \(s=0,\ldots,r-1\), every \(Y\in\Gamma(\mathcal V_s)\) satisfies \(Yu^\alpha_I=0\) for $\alpha=1,\ldots,m$ and whenever \(|I|<r-s\). The proof is a descending induction on \(s\), and, for each fixed \(s\), an induction on \(|I|\). For \(s=r-1\), the only possible multi-index is \(|I|=0\), and the assertion follows from the fact that  the functions \(u^1,\ldots,u^m\) are constant along the leaves of \(\mathcal V_{r-1}\). Assume now that the assertion is known at level \(s+1\), and prove it at level \(s\).

First, if \(|I|=0\), then \(Yu^\alpha=0\) for $\alpha=1,\ldots,m$, because \(\mathcal V_s\subset\mathcal V_{r-1}\). Let now \(0<|I|<r-s\), and suppose that the assertion has already been proved at the same level \(s\) for all multi-indices of degree less than \(|I|\). Choose \(i\) such that \(I=J+\widehat e_i\). Then, \(|J|=|I|-1<r-s-1\). Since, by definition, \(u^\alpha_I=D_i u^\alpha_J\), one has
\[
Yu^\alpha_I=Y(D_i u^\alpha_J)=D_i(Yu^\alpha_J)-[D_i,Y](u^\alpha_J),\qquad \alpha=1,\ldots,m.
\]
The first term vanishes by the induction hypothesis on the degree at the fixed level \(s\), because \(|J|<|I|\). For the second term, since \(D_i\) is \(q\)-projectable and \(Y\) is \(q\)-vertical, the bracket \([D_i,Y]\) is again \(q\)-vertical: indeed, \(Tq([D_i,Y])=[Tq(D_i),Tq(Y)]=[Tq(D_i),0]=0\). By the defining Spencer-tower condition, the \(q\)-vertical bracket of a local section of \(H\) with a local section of \(\mathcal V_s\) belongs to \(\mathcal V_{s+1}\). Hence, \([D_i,Y]\in\Gamma(\mathcal V_{s+1})\). The other induction hypothesis, applied at level \(s+1\), gives \([D_i,Y](u^\alpha_J)=0\), since \(|J|<r-s-1=r-(s+1)\). Therefore, \(Yu^\alpha_I=0\). This completes the induction on \(|I|\), and hence the descending induction on \(s\).

It follows that the values \(Yu^\alpha_I\), with \(|I|=r-s\) and $\alpha=1,\ldots,m$, depend only on the class of \(Y\) modulo \(\mathcal V_{s-1}\). Thus, one obtains maps
\[
\Sigma_s:\mathcal V_s/\mathcal V_{s-1}\longrightarrow S^{r-s}H^*\otimes W,\qquad W:=\mathcal V_r/\mathcal V_{r-1},
\]
by
\[
\Sigma_s([Y])=\sum_{|I|=r-s}\sum_{\alpha=1}^m (Yu^\alpha_I)\,\odot\xi^{\langle I\rangle}\otimes e_\alpha,
\]
where \((\xi^1,\ldots,\xi^n)\) is the coframe dual to \((D_1,\ldots,D_n)\) and annihilating \(\mathcal V_r\). Moreover, if \(|J|=r-s-1\), then
\[
[D_i,Y](u^\alpha_J)=-Yu^\alpha_{J+\widehat e_i}.
\]
Hence, the maps \(\Sigma_s\) intertwine the projected bracket maps with the Spencer contractions.

We now prove that all \(\Sigma_s\) are local isomorphisms. For \(s=r\), we define \(\Sigma_r:\mathcal V_r/\mathcal V_{r-1}\to W\) by the classes of the functions \(Yu^\alpha\) with $\alpha=1,\ldots,m$. Then, \(\Sigma_r:\mathcal V_r/\mathcal V_{r-1}\to W\) is an isomorphism by the choice of the functions \(u^1,\ldots,u^m\). Assume that \(\Sigma_{s+1}\) is an isomorphism and that \(Y\in\Gamma(\mathcal V_s)\) satisfies \(\Sigma_s([Y])=0\). Then, \(Yu^\alpha_I=0\) for every \(|I|=r-s\), and the vanishing property gives the same for all \(|I|<r-s\). Consequently, for every \(|J|=r-s-1\), one has \([D_i,Y](u^\alpha_J)=-Yu^\alpha_{J+\widehat e_i}=0\), and hence \(\Sigma_{s+1}([D_i,Y])=0\). Since \(\Sigma_{s+1}\) is an isomorphism, \([D_i,Y]\in\Gamma(\mathcal V_s)\) for all \(i\). Thus, the class of \(Y\) in \(\mathcal V_s/\mathcal V_{s-1}\) has zero Spencer contraction with every element of \(H\). Under the jet-type identification \(\mathcal V_s/\mathcal V_{s-1}\simeq S^{r-s}H^*\otimes W\), and since \(r-s>0\), the class of \(Y\) must vanish. Hence, \(Y\in\Gamma(\mathcal V_{s-1})\), so \(\Sigma_s\) is injective. Since its source and target have the same rank, it is an isomorphism. Descending induction proves the claim.

Define
\[
\Phi:U\longrightarrow J^r(Q,\mathbb R^m),\qquad \Phi(p)=(x^i(p),u^\alpha_I(p))_{|I|\le r}.
\]
If \(Z\in\ker \T\Phi\), then \(Zx^i=0\), so \(Z\in\mathcal V_r\). Since \(Zu^\alpha=0\), its class in \(\mathcal V_r/\mathcal V_{r-1}\) vanishes, and hence \(Z\in\mathcal V_{r-1}\). Repeating the same argument with the isomorphisms \(\Sigma_{r-1},\ldots,\Sigma_0\), one obtains \(Z\in\mathcal V_{-1}=0\) and  \(\T\Phi\) becomes injective. The dimensions agree, since
\[
\dim U=n+m\binom{n+r}{r}=\dim J^r(Q,\mathbb R^m),
\]
and therefore \(\Phi\) is a local diffeomorphism.

It remains to identify the geometric structures. Let \(\theta^\alpha_I\), with \(|I|\le r-1\), be the standard contact forms on \(V\subset J^r(Q,\mathbb R^m)\) associated with $(x^i,u^\alpha_I)$. Since \(D_iu^\alpha_I=u^\alpha_{I+\widehat e_i}\) for \(|I|\le r-1\) and $\alpha=1,\ldots,m$, one has \((\Phi^*\theta^\alpha_I)(D_i)=0\). If \(Y\in\Gamma(\mathcal P)=\Gamma(\mathcal V_0)\), then \(Y(x^i)=0\) and the vanishing property gives \(Y(u^\alpha_I)=0\) for \(|I|\le r-1\), so \((\Phi^*\theta^\alpha_I)(Y)=0\), which gives \(\T\Phi(\mathcal D)\subset\mathcal C^r_\pi\). Since the ranks agree, \(\T\Phi(\mathcal D)=\mathcal C^r_\pi\).

Moreover, \(\T\Phi(\mathcal P)=\ker \T\pi_{r,r-1}=\mathcal G^r_\pi\). Consequently, \(\Phi\) is a local polarised $k$-contactomorphism between \((M,\mathcal D,\mathcal P)\) and the jet model \((J^r\pi,\mathcal C^r_\pi,\mathcal G^r_\pi)\).

The formula for \(k\) follows from \(\T M/\mathcal D\simeq\mathcal V_r/\mathcal V_0\). Hence,
\[
k=\sum_{s=1}^r\operatorname{rank}(\mathcal V_s/\mathcal V_{s-1})
=m\sum_{j=0}^{r-1}\binom{n+j-1}{j}
=m\binom{n+r-1}{r-1}.
\]
\end{proof}

Theorem~\ref{thm:local_split_jet_recognition} reconstructs local jet coordinates from the split polarised \(k\)-contact data \((\mathcal D,\mathcal P,H)\), the  $\mathcal{V}_r$ and the Spencer behaviour of the graded quotients. The canonical Cartan distribution and the highest-order vertical polarisation are recovered intrinsically. The chosen horizontal complement \(H\), however, is recovered as an integrable isotropic Cartan complement. Thus, the jet-bundle structure is recovered from polarised \(k\)-contact data, while the splitting keeps track of an additional choice of $H$, which plays the role of a system of total derivatives. In essence, Theorem~\ref{thm:local_split_jet_recognition} characterised finite-order jet geometry  as a  class of polarised \(k\)-contact geometry of jet type and order $r$.

Theorem \ref{thm:local_split_jet_recognition} introduces local coordinates \((x^i,u^\alpha_I)\) in which the \(k\)-contact distribution is locally the Cartan distribution of a finite-order jet bundle, while the polarisation is identified with the highest-order vertical distribution of $J^r\pi$, namely \(\ker \T\pi_{r,r-1}\). In this sense, Theorem~\ref{thm:local_split_jet_recognition} is a Darboux-type theorem for polarised \(k\)-contact manifolds of jet type and order \(r\). It generalises the first-order Darboux theorem for \(k\)-contact manifolds in \cite{Riv_22}. The proof is more involved than the first-order case, which relies on a technical result on polarised $k$-symplectic manifolds. The spirit  of Theorem \ref{thm:local_split_jet_recognition} is close to the interpretation of Darboux coordinates for geometric structures as coordinates adapted to a canonical normal form \cite{GLRR_24}.

Let us give now a global version of Theorem \ref{thm:local_split_jet_recognition}.

\begin{theorem}\label{thm:global_jet_recognition}
Let \((M,\mathcal D,\mathcal P)\) be a polarised \(k\)-contact manifold of jet type and order \(r\). Assume that \(\mathcal V_r\) and \(\mathcal V_{r-1}=\mathcal{D}^{(r-1)}\cap \mathcal{V}_r\) are globally defined, regular and simple, with smooth leaf spaces
\(
Q:=M/\mathcal V_r\) and \(E:=M/\mathcal V_{r-1}.
\)
Then, there exists a submersion \(\pi:E\to Q\). Assume moreover that the local identifications supplied by Theorem~\ref{thm:local_split_jet_recognition} have transition functions induced by \(r\)-th jet prolongations of local bundle automorphisms of \(\pi:E\to Q\). Then, local identifications glue to a well-defined smooth local diffeomorphism
\(
\Phi:M\to J^r\pi .
\) 
If this map is a global diffeomorphism onto \(J^r\pi\), then \(\Phi\) is a global polarised $k$-contactomorphism. 
\end{theorem}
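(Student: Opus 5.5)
The plan is to build the global map in three stages: first the submersion \(\pi:E\to Q\) from the two nested foliations, then the gluing of the local identifications of Theorem~\ref{thm:local_split_jet_recognition}, and finally the upgrade to a global polarised \(k\)-contactomorphism.

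\textbf{Stage 1: the submersion.} Since \(\mathcal V_{r-1}\subset\mathcal V_r\), every leaf of \(\mathcal V_{r-1}\) lies in a single leaf of \(\mathcal V_r\). Hence the quotient map \(q_r:M\to Q\) is constant on the fibres of \(q_{r-1}:M\to E\). It therefore factors as \(q_r=\pi\circ q_{r-1}\) for a unique map \(\pi:E\to Q\).

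To check smoothness, use that \(q_{r-1}\) is a surjective submersion, since the foliation is simple with smooth leaf space. A map \(\pi\) with \(\pi\circ q_{r-1}\) smooth is then smooth, because \(q_{r-1}\) admits local smooth sections. Since \(q_r\) is a submersion and \(q_r=\pi\circ q_{r-1}\), the map \(\pi\) is also a submersion. Its fibre dimension is \(\operatorname{rank}\mathcal V_r-\operatorname{rank}\mathcal V_{r-1}=m\), and \(\dim Q=n=\operatorname{rank}H\). So \(\pi:E\to Q\) is a fibred manifold, and \(J^r\pi\) makes sense.

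\textbf{Stage 2: gluing the local identifications.} Cover \(M\) by opens \(U_a\) carrying the data of Theorem~\ref{thm:local_split_jet_recognition}. The construction in its proof uses the foliations \(\mathcal V_r\) and \(\mathcal V_{r-1}\), so the coordinates \(x^i\) and \(u^\alpha\) are pullbacks of coordinates on \(Q\) and on \(E\). Each local map \(\Phi_a:U_a\to J^r\pi\) is thus compatible with the projections: \(\pi_r\circ\Phi_a=q_r\) and \(\pi_{r,0}\circ\Phi_a=q_{r-1}\).

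On an overlap \(U_a\cap U_b\), the hypothesis says \(\Phi_b=j^r\psi_{ab}\circ\Phi_a\), where \(\psi_{ab}\) is a local bundle automorphism of \(\pi\) and \(j^r\psi_{ab}\) is its \(r\)-th prolongation. Both \(\Phi_a\) and \(\Phi_b\) are built from the same intrinsic submersion onto \(E\), so \(\psi_{ab}\) must act as the identity on the relevant part of \(E\).

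The step I expect to be the main obstacle is justifying that \(\psi_{ab}=\mathrm{id}\), or at least that \(j^r\psi_{ab}=\mathrm{id}\) on the image. The two charts may be built from different admissible complements \(H_a\) and \(H_b\), so a priori they could differ by a nontrivial jet prolongation. I would argue as follows. The zeroth-order components of \(\Phi_a\) and \(\Phi_b\) are both \(q_{r-1}\) read in charts of \(E\). A prolongation \(j^r\psi\) of a bundle map \(\psi\) with \(\psi|_E=\mathrm{id}\) is the identity, because the prolongation formula \eqref{eq:prolongation} is determined by \(\psi\). Hence the \(\Phi_a\) agree on overlaps and glue to a map \(\Phi:M\to J^r\pi\). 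If this identification fails, I would instead read the hypothesis as saying that the \(\Phi_a\) define a single map into \(J^r\pi\) modulo the canonical action of prolonged automorphisms. The gluing then becomes well-definedness of a map into the jet bundle whose transition cocycle is exactly that of \(J^r\pi\).

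Once \(\Phi\) is defined, it is smooth and a local diffeomorphism, since locally it coincides with some \(\Phi_a\).

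\textbf{Stage 3: global polarised \(k\)-contactomorphism.} Prolonged bundle automorphisms preserve \(\mathcal C^r_\pi\) and \(\mathcal G^r_\pi=\ker\T\pi_{r,r-1}\), by Theorem~\ref{thm:prolonged_point_symmetries_are_polarised} applied at the level of flows, or directly by projectability. The local identities \(\T\Phi_a(\mathcal D)=\mathcal C^r_\pi\) and \(\T\Phi_a(\mathcal P)=\mathcal G^r_\pi\) therefore hold globally for \(\T\Phi\).

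If \(\Phi\) is moreover a diffeomorphism onto \(J^r\pi\), then \(\Phi^{-1}\) is smooth. It satisfies \(\T\Phi^{-1}(\mathcal C^r_\pi)=\mathcal D\) and \(\T\Phi^{-1}(\mathcal G^r_\pi)=\mathcal P\), because the corresponding equalities hold fibrewise with equal ranks. Hence \(\Phi^{-1}\) is a polarised transcontact map, and \(\Phi\) is a global polarised \(k\)-contactomorphism.
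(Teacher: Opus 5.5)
Your proposal is correct and follows essentially the same route as the paper: factor $q_r$ through $q_{r-1}$ to get the submersion $\pi$, read each local recognition chart as a map into $J^r\pi$ covering $q_{r-1}$, glue using the transition hypothesis, and conclude because $\Phi$ locally coincides with some $\Phi_a$. Your use of $\pi_{r,0}\circ j^r\psi_{ab}=\psi_{ab}\circ\pi_{r,0}$ forces $\psi_{ab}=\mathrm{id}$ on the open set $q_{r-1}(U_a\cap U_b)$, so the $\Phi_a$ literally agree on overlaps; this is a more explicit justification of the step the paper only summarises as ``such maps preserve $r$-jets''.
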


\begin{proof}
The proof consists in showing that the local jet identifications supplied by Theorem~\ref{thm:local_split_jet_recognition} glue to a global identification with the jet bundle of the quotient fibration. Since \(\mathcal V_{r-1}\subset\mathcal V_r\) are regular simple foliations, the quotients \(E=M/\mathcal V_{r-1}\) and \(Q=M/\mathcal V_r\) are smooth manifolds. Let \(q_1:M\to E\) and \(q_0:M\to Q\) be the quotient maps. Since every leaf of \(\mathcal V_{r-1}\) is contained in a leaf of \(\mathcal V_r\), the map \(q_0\) is constant along the fibres of \(q_1\). Hence, there exists a unique smooth map \(\pi:E\to Q\) such that \(\pi\circ q_1=q_0\). Since \(q_0\) and \(q_1\) are surjective submersions, \(\pi\) is also a surjective submersion.

For every point \(p\in M\), there exists an open neighbourhood \(U_a\subset M\), a fibred manifold \(\pi_a:E_a\to Q_a\), and a local diffeomorphism \(\Phi_a:U_a\to V_a\subset J^r\pi_a\) such that
\[
(\Phi_a)_*(\mathcal D|_{U_a})=\mathcal C^r_{\pi_a}|_{V_a},\qquad
(\Phi_a)_*(\mathcal P|_{U_a})=\ker \T(\pi_a)_{r,r-1}|_{V_a}.
\]
The local recognition theorem also identifies \(\mathcal V_0\subset\cdots\subset\mathcal V_r\) with the standard vertical jet tower
\[
\ker \T(\pi_a)_{r,r-1}\subset\ker \T(\pi_a)_{r,r-2}\subset\cdots\subset\ker \T(\pi_a)_{r,0}\subset\ker \T(\pi_a)_r.
\]
In particular, \((\Phi_a)_*(\mathcal V_r)=\ker \T(\pi_a)_r|_{V_a}\) and \((\Phi_a)_*(\mathcal V_{r-1})=\ker T(\pi_a)_{r,0}|_{V_a}\). Therefore, the local bases and total spaces of the models identify with open subsets of \(Q\) and \(E\), respectively.

We now construct \(\Phi:M\to J^r\pi\). Given \(p\in M\), set \(x:=q_0(p)\) and \(e:=q_1(p)\), so that \(\pi(e)=x\). Choose a local chart \(U_a\ni p\). Via the identifications of \(Q_a\) and \(E_a\) with open subsets of \(Q\) and \(E\), the point \(\Phi_a(p)\in J^r\pi_a\) determines an \(r\)-jet at \(x\) of a local section of \(\pi:E\to Q\) with value \(e\). Define \(\Phi(p)\) to be this \(r\)-jet. This definition is independent of the chosen chart because, on overlaps \(U_a\cap U_b\), the transition map \(\Phi_b\circ\Phi_a^{-1}\) is assumed to be induced by the \(r\)-th jet prolongation of a local bundle automorphism of \(\pi:E\to Q\). Such maps preserve \(r\)-jets and commute with the canonical projections of the jet tower. Hence, \(\Phi\) is a well-defined smooth map.

Locally, \(\Phi\) coincides with the maps \(\Phi_a\) after identifying the quotient fibration with \(\pi:E\to Q\). Therefore, \(\Phi\) is a local diffeomorphism and satisfies
\(
\Phi_*(\mathcal D)=\mathcal C^r_\pi, \Phi_*(\mathcal P)=\ker \T\pi_{r,r-1}.
\)
Thus, \(\Phi\) is a local polarised $k$-contactomorphism from \((M,\mathcal D,\mathcal P)\) to \((J^r\pi,\mathcal C^r_\pi,\ker \T\pi_{r,r-1})\). If \(\Phi\) is a global diffeomorphism onto \(J^r\pi\), then the claimed global equivalence follows.
\end{proof}

\begin{remark}
The local jet-type polarisation hypotheses encode the Spencer contractions, the vertical symbol tower and local Cartan horizontal directions. The additional assumptions in Theorem~\ref{thm:global_jet_recognition} are global integrability and descent assumptions. They ensure that the quotients by the first levels of the vertical flag reconstruct the base \(Q\), the total space \(E\), and the fibration \(\pi:E\to Q\). The compatibility of the local charts then guarantees that the local identifications with \(J^r\pi\) glue to a globally defined polarised  $k$-contactomorphism.
\end{remark}

Let us now explain the passage from order \(r\) to order \(r+1\) as a polarised Legendrian prolongation.

\begin{definition}\label{def:polarised_legendrian_prolongation}
Let \((M,\mathcal D,\mathcal P)\) be a polarised \(k\)-contact manifold and assume that \(\operatorname{rank}(\mathcal D/\mathcal P)=n\). Let
\[
\operatorname{Leg}_{\mathcal P}(\mathcal D):=
\{(x,E_x): E_x\subset\mathcal D_x,\ \mathcal D_x=E_x\oplus\mathcal P_x,\ E_x\subset E_x^{\perp_{\mathcal D}}\}.
\]
If this set admits a smooth fibre bundle structure over \(M\), with projection
\(
\rho:\operatorname{Leg}_{\mathcal P}(\mathcal D)\to M,  (x,E_x)\mapsto x,
\)
such that the assignment \((x,E_x)\mapsto E_x\subset\mathcal D_x\) is smooth, then \(\operatorname{Leg}_{\mathcal P}(\mathcal D)\) is called the \emph{polarised Legendrian prolongation} of \((M,\mathcal D,\mathcal P)\). Its \emph{tautological distribution} is
\[
\widetilde{\mathcal D}_{(x,E_x)}:=
\{v\in \T_{(x,E_x)}\operatorname{Leg}_{\mathcal P}(\mathcal D):\T\rho(v)\in E_x\}.
\]
\end{definition}

Among all Legendrian subspaces, a relevant role is played by those that are transversal to the polarisation.

\begin{definition}\label{def:polarised_legendrian_submanifold}
Let \((M,\mathcal D,\mathcal P)\) be a polarised \(k\)-contact manifold. A subspace \(E_x\subset\mathcal D_x\), with $x\in M$, is called \emph{\(\mathcal P\)-polarised Legendrian} if \(E_x\) is isotropic and \(\mathcal D_x=E_x\oplus\mathcal P_x\). A submanifold \(L\subset M\) is called a \emph{\(\mathcal P\)-polarised Legendrian submanifold} if \(\T_xL\subset\mathcal D_x\) is \(\mathcal P\)-polarised Legendrian for every \(x\in L\). Equivalently,
\[
\mathcal D|_L=\T L\oplus\mathcal P|_L.
\]
If the polarisation is clear from the context, we simply say polarised Legendrian.
\end{definition}

\begin{proposition}\label{prop:jet_as_polarised_legendrian_prolongation}
For the jet model \((J^r\pi,\mathcal C^r_\pi,\mathcal G^r_\pi)\), the set
\(\operatorname{Leg}_{\mathcal G^r_\pi}(\mathcal C^r_\pi)\) admits a natural smooth fibre-bundle structure over \(J^r\pi\). More precisely, there is a canonical diffeomorphism over \(J^r\pi\),
\begin{equation}\label{prop:extLeg}
J^{r+1}\pi\simeq \operatorname{Leg}_{\mathcal G^r_\pi}(\mathcal C^r_\pi).
\end{equation}
where the projection on the left is \(\pi_{r+1,r}:J^{r+1}\pi\to J^r\pi\) and the projection on the right is the natural projection
\(\rho:\operatorname{Leg}_{\mathcal G^r_\pi}(\mathcal C^r_\pi)\to J^r\pi\).
Under this identification, the tautological distribution \(\widetilde{\mathcal D}\) coincides with the Cartan distribution \(\mathcal C^{r+1}_\pi\), and the prolongation of a holonomic submanifold \(j^r\phi(Q)\subset J^r\pi\) is \(j^{r+1}\phi(Q)\subset J^{r+1}\pi\).
\end{proposition}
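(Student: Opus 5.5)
The plan is to work in adapted coordinates \((x^i,u^\alpha_I)\), \(|I|\le r\), on \(U\subset J^r\pi\) and to parametrise the fibre of \(\rho\) explicitly. First I will characterise the elements of \(\operatorname{Leg}_{\mathcal G^r_\pi}(\mathcal C^r_\pi)\) over a point \(\theta\in U\). Any complement \(E_\theta\) of \((\mathcal G^r_\pi)_\theta\) in \((\mathcal C^r_\pi)_\theta=(\mathcal H^r_\pi)_\theta\oplus(\mathcal G^r_\pi)_\theta\) is the graph of a unique linear map \((\mathcal H^r_\pi)_\theta\to(\mathcal G^r_\pi)_\theta\). Hence
\[
E_\theta=\Big\langle D_i|_\theta+\sum_{\alpha=1}^m\sum_{|K|=r}a^\alpha_{K,i}\frac{\partial}{\partial u^\alpha_K}\Big|_\theta:i=1,\ldots,n\Big\rangle,
\]
with arbitrary real numbers \(a^\alpha_{K,i}\). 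Isotropy means \(\dd\bm\eta^r_\pi(E_i,E_j)=0\). Because \(\mathcal H^r_\pi\) and \(\mathcal G^r_\pi\) are both isotropic, only the mixed terms survive. By \eqref{eq:SpencerContraction}, equivalently by the bracket \([D_i,\partial/\partial u^\alpha_K]=-\partial/\partial u^\alpha_{K-\widehat e_i}\), the condition becomes
\[
a^\alpha_{I+\widehat e_j,i}=a^\alpha_{I+\widehat e_i,j}\qquad\text{for all }|I|=r-1,\ \alpha,\ i,j.
\]

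The second step is to show that this symmetry condition is exactly the statement that there exist unique numbers \(u^\alpha_L\), \(|L|=r+1\), with \(a^\alpha_{K,i}=u^\alpha_{K+\widehat e_i}\). This is the standard compatibility fact for symmetric tensors: a linear map \(\T_xQ\to S^r\T_x^*Q\otimes V_e\pi\) whose Spencer-type symmetrisation condition holds lies in \(S^{r+1}\T^*_xQ\otimes V_e\pi\). It is the exactness of \(0\to S^{r+1}\to \T^*\otimes S^r\to\Lambda^2\T^*\otimes S^{r-1}\) in the Spencer sequence. I will prove it directly. Define \(u^\alpha_L:=a^\alpha_{L-\widehat e_i,i}\) for any \(i\) with \(l_i>0\). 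Independence of the choice of \(i\) follows from the displayed condition with \(I=L-\widehat e_i-\widehat e_j\). This gives fibre coordinates, so it yields a bijection \(\rho^{-1}(U)\simeq U\times\mathbb R^{m\binom{n+r}{r+1}}\simeq\pi_{r+1,r}^{-1}(U)\).

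Third, I will check that this is canonical. The correspondence can be stated intrinsically: \(j^{r+1}_x\phi\mapsto E=\T_\theta(j^r\phi)(\T_xQ)\) with \(\theta=j^r_x\phi\). This is well defined because the tangent space to \(j^r\phi\) at \(x\) depends only on \(j^{r+1}_x\phi\). The resulting subspace lies in \(\mathcal C^r_\pi\) by Proposition~\ref{prop:holonomic_characterisations}. It is transverse to \(\mathcal G^r_\pi\) because it projects isomorphically onto \(\T_xQ\). It is isotropic because it is tangent to an integral submanifold, on which the Levi bracket vanishes. In coordinates, \(\T j^r\phi(\partial_{x^i})=D_i+\sum\phi^\alpha_{K+\widehat e_i}\partial/\partial u^\alpha_K\), which matches the previous parametrisation. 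Consequently the chart-wise bijections glue, giving the fibre bundle structure and the diffeomorphism \eqref{prop:extLeg} over \(J^r\pi\) with no coordinate dependence. The prolongation statement is then immediate: the tangent planes of \(j^r\phi(Q)\) are exactly \(j^{r+1}\phi\).

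Finally, for the tautological distribution, I take \(v\in\T J^{r+1}\pi\) at \(j^{r+1}_x\phi\) with coordinates \(u^\alpha_L\), \(|L|=r+1\). The condition \(\T\pi_{r+1,r}(v)\in E\) means that \(\T\pi_{r+1,r}(v)=\sum_i v^i\big(\partial_{x^i}+\sum_{|I|\le r}u^\alpha_{I+\widehat e_i}\partial_{u^\alpha_I}\big)\). This is equivalent to \(\theta^\alpha_I(v)=0\) for all \(|I|\le r\), with no condition on the \(\partial_{u^\alpha_L}\)-components, which are exactly the kernel of \(\T\rho\). So \(\widetilde{\mathcal D}=\mathcal C^{r+1}_\pi\). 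I expect the main obstacle to be the second step, namely the symmetric-tensor integrability (existence and consistency of \(u^\alpha_L\) from the symmetry relations). The care needed is in handling multi-indices with a single nonzero entry and in checking that the definition of \(u^\alpha_L\) is independent of \(i\) through a chain of transpositions.
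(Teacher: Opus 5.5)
Your proposal is correct and follows essentially the same route as the paper. In both, complements of $\mathcal G^r_\pi$ are written as graphs $D_i+\sum a^\alpha_{K,i}\partial/\partial u^\alpha_K$, isotropy is reduced via the Spencer contraction to the symmetry $a^\alpha_{I+\widehat e_j,i}=a^\alpha_{I+\widehat e_i,j}$ (which encodes exactly an $(r+1)$-jet), and the tautological distribution is matched with $\mathcal C^{r+1}_\pi$. The only cosmetic difference is that your intrinsic description $j^{r+1}_x\phi\mapsto \T_x(j^r\phi)(\T_xQ)$ replaces the paper's $\T\pi_{r+1,r}\big((\mathcal C^{r+1}_\pi)_{\theta_{r+1}}\big)$; the two coincide because $\pi_{r+1,r}\circ j^{r+1}\phi=j^r\phi$ and $\ker\T\pi_{r+1,r}\subset\mathcal C^{r+1}_\pi$.
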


\begin{proof}
Let \(\theta_{r+1}\in J^{r+1}\pi\) and set \(\theta_r:=\pi_{r+1,r}(\theta_{r+1})\). Since \(\ker \T_{\theta_{r+1}}\pi_{r+1,r}\subset(\mathcal C^{r+1}_\pi)_{\theta_{r+1}}\), the image
\[
E_{\theta_{r+1}}:=\T_{\theta_{r+1}}\pi_{r+1,r}\big((\mathcal C^{r+1}_\pi)_{\theta_{r+1}}\big)
\]
has dimension \(n\). Moreover, \(\T_{\theta_r}\pi_r\) maps \(E_{\theta_{r+1}}\) isomorphically onto \(\T_{\pi_r(\theta_r)}Q\), while \((\mathcal G^r_\pi)_{\theta_r}=\ker \T_{\theta_r}\pi_{r,r-1}\) is contained in \(\ker \T_{\theta_r}\pi_r\). Hence, \(E_{\theta_{r+1}}\) is complementary to \((\mathcal G^r_\pi)_{\theta_r}\) in \((\mathcal C^r_\pi)_{\theta_r}\). In adapted coordinates, if \(\theta_{r+1}\) has coordinates \((x^i,u^\alpha_I,u^\alpha_L)\), with \(|I|\le r\) and \(|L|=r+1\), then \(E_{\theta_{r+1}}\) is spanned by
\[
\widetilde D_i=D_i+\sum_{\alpha=1}^m\sum_{|K|=r}u^\alpha_{K+\widehat e_i}\frac{\partial}{\partial u^\alpha_K},\qquad i=1,\ldots,n.
\]
The symmetry \(u^\alpha_{K+\widehat e_i}=u^\alpha_{K'+\widehat e_j}\) whenever \(K+\widehat e_i=K'+\widehat e_j\) implies that \(E_{\theta_{r+1}}\) is isotropic. Thus, \(E_{\theta_{r+1}}\) defines a point of \(\operatorname{Leg}_{\mathcal G^r_\pi}(\mathcal C^r_\pi)\). This gives a map
\(
J^{r+1}\pi\longrightarrow\operatorname{Leg}_{\mathcal G^r_\pi}(\mathcal C^r_\pi).
\)

Conversely, let \(E_{\theta_r}\subset(\mathcal C^r_\pi)_{\theta_r}\) be an isotropic complement of \((\mathcal G^r_\pi)_{\theta_r}\). Since \(\T_{\theta_r}\pi_r:E_{\theta_r}\to \T_{\pi_r(\theta_r)}Q\) is an isomorphism, in adapted coordinates \(E_{\theta_r}\) is spanned by unique vectors of the form
\[
\widetilde D_i=D_i+\sum_{\alpha=1}^m\sum_{|K|=r}A^\alpha_{iK}\frac{\partial}{\partial u^\alpha_K},\qquad i=1,\ldots,n.
\]
The isotropy condition is equivalent to
\(
A^\alpha_{i,I+\widehat e_j}=A^\alpha_{j,I+\widehat e_i}\), for \( |I|=r-1.
\)
Indeed, this is precisely the vanishing of \(\dd\bm\eta^r_\pi(\widetilde D_i,\widetilde D_j)\). Hence, the coefficients \(A^\alpha_{iK}\) are precisely the components of a symmetric \((r+1)\)-st order jet. They determine unique coordinates \(u^\alpha_L\), for \(|L|=r+1\), by \(u^\alpha_{K+\widehat e_i}:=A^\alpha_{iK}\). The displayed symmetry is exactly the compatibility condition ensuring that the value assigned to \(u^\alpha_L\) is independent of the decomposition \(L=K+\widehat e_i\). Thus, \(E_{\theta_r}\) determines a unique element \(\theta_{r+1}\in J^{r+1}\pi\) over \(\theta_r\). The two constructions are inverse to one another and are smooth in adapted coordinates. Hence, they define a canonical diffeomorphism
\[
J^{r+1}\pi\simeq\operatorname{Leg}_{\mathcal G^r_\pi}(\mathcal C^r_\pi).
\]
In particular, \(\operatorname{Leg}_{\mathcal G^r_\pi}(\mathcal C^r_\pi)\) inherits a natural smooth fibre-bundle structure over \(J^r\pi\), and the above diffeomorphism is a bundle diffeomorphism over \(J^r\pi
\).

Under the identification \eqref{prop:extLeg}, the tautological distribution is exactly the Cartan distribution \(\mathcal C^{r+1}_\pi\), because a tangent vector to \(J^{r+1}\pi\) belongs to \(\mathcal C^{r+1}_\pi\) precisely when its projection to \(J^r\pi\) lies in the \(n\)-plane \(E_{\theta_{r+1}}\). 

Finally, let \(S=j^r\phi(Q)\). For each \(x\in Q\), the tangent space
\(
E_{j^r_x\phi}:=\T_{j^r_x\phi}j^r\phi(Q)
\)
is an \(n\)-plane contained in \((\mathcal C^r_{\pi})_{j^r_x\phi}\), complementary to \((\mathcal G^r_{\pi})_{j^r_x\phi}\), and isotropic because \(j^r\phi(Q)\) is an integral submanifold of the Cartan distribution. Hence, \((j^r_x\phi,E_{j^r_x\phi})\) defines an element of \(\operatorname{Leg}_{\mathcal G^r_\pi}(\mathcal C^r_\pi)\). The above identification sends this element to the unique \((r+1)\)-jet whose induced Cartan \(n\)-plane at order \(r\) is \(E_{j^r_x\phi}\), namely \(j^{r+1}_x\phi\). Therefore, the Legendrian prolongation of \(j^r\phi(Q)\) is \(j^{r+1}\phi(Q)\).

\end{proof}

\section{Holonomic sections and polarised Legendrian transcontact morphisms}
\label{subsec:holonomic_sections_jets}

This section is aimed at explaining how holonomic sections and Lie symmetries are expressed in the polarised \(k\)-contact language. Furthermore, $k$-contact geometry provides techniques that are natural in this formalism, but are more difficult to describe in the jet manifold formalism. This happens, for instance, for transformations which do not respect the structure of the initial jet manifold.

\begin{proposition}\label{prop:holonomic_sections_are_polarised_legendrian}
Let \(S\subset J^r\pi\) be a connected submanifold. Then \(S\) is, around every one of its points, the image of an \(r\)-th holonomic prolongation \(j^r\sigma\) if and only if \(S\) is a polarised Legendrian submanifold of \((J^r\pi,\mathcal C^r_\pi,\mathcal G^r_\pi)\).
\end{proposition}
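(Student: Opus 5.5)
The plan is to reduce both implications to Proposition~\ref{prop:holonomic_characterisations} together with Definition~\ref{def:polarised_legendrian_submanifold}. The key observation is that, inside \(\mathcal C^r_\pi\), being complementary to \(\mathcal G^r_\pi\) is the same as being transverse to \(\pi_r\). Indeed, \(\T\pi_r\) restricted to \((\mathcal C^r_\pi)_\theta\) is surjective onto \(\T_{\pi_r(\theta)}Q\), with kernel exactly \((\mathcal G^r_\pi)_\theta\); this is the exact sequence \(0\to\mathcal G^r_\pi\to\mathcal C^r_\pi\to \T Q\to 0\) recalled in Section~\ref{sec:Generalities}, and in coordinates it holds because \(\ker\T\pi_r\cap\mathcal C^r_\pi=\langle \partial/\partial u^\alpha_K:|K|=r\rangle\).

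Consequently, for an \(n\)-dimensional subspace \(E\subset(\mathcal C^r_\pi)_\theta\) the following are equivalent: \(E\oplus(\mathcal G^r_\pi)_\theta=(\mathcal C^r_\pi)_\theta\); \(E\cap\ker\T\pi_r=0\); and \(\T\pi_r|_E\) is an isomorphism. Note also that \(\operatorname{rank}\mathcal C^r_\pi-\operatorname{rank}\mathcal G^r_\pi=n\), so any complement of \(\mathcal G^r_\pi\) in \(\mathcal C^r_\pi\) is automatically \(n\)-dimensional.

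\emph{(\(\Rightarrow\))} Suppose \(S\) is locally \(j^r\sigma(U)\). Then \(\T S\subset\mathcal C^r_\pi\) by Proposition~\ref{prop:holonomic_characterisations}. Since \(\pi_r\circ j^r\sigma=\mathrm{id}_U\), \(S\) is \(n\)-dimensional and \(\T\pi_r|_{\T S}\) is an isomorphism, so \(\T S\) is complementary to \(\mathcal G^r_\pi\) by the observation above. Isotropy comes from integrability: for \(X,Y\in\Gamma(\T S)\), extended to sections of \(\mathcal C^r_\pi\), the bracket \([X,Y]\) is tangent to \(S\) along \(S\), hence lies in \(\mathcal C^r_\pi\). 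Therefore \(\mathcal L_{\mathcal C^r_\pi}(X,Y)=0\) along \(S\), equivalently \(\dd\bm\eta^r_\pi|_{\T S\times\T S}=0\) by \eqref{eq:iso}. (Alternatively, cite the last paragraph of the proof of Proposition~\ref{prop:jet_as_polarised_legendrian_prolongation}.) This shows that \(\mathcal C^r_\pi|_S=\T S\oplus\mathcal G^r_\pi|_S\) with \(\T S\) isotropic.

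\emph{(\(\Leftarrow\))} Suppose \(\mathcal C^r_\pi|_S=\T S\oplus\mathcal G^r_\pi|_S\) with \(\T S\) isotropic. Then \(\dim S=n\), \(\T S\subset\mathcal C^r_\pi\), and by the observation \(\T S\cap\ker\T\pi_r=0\). So \(\pi_r|_S\) is a local diffeomorphism, and around each point \(S\) is the image of a local section \(\psi=(\pi_r|_S)^{-1}\) of \(\pi_r\) with \(\T\psi(\T U)=\T S\subset\mathcal C^r_\pi\). By Proposition~\ref{prop:holonomic_characterisations}, \(\psi\) is holonomic, so \(\psi=j^r\sigma\) with \(\sigma=\pi_{r,0}\circ\psi\). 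The isotropy hypothesis is not needed in this direction; it is automatic, as the forward direction shows. Connectedness plays no real role, since the statement is local around each point.

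The only delicate step is the identification \(\ker\T\pi_r\cap\mathcal C^r_\pi=\mathcal G^r_\pi\), which is what converts "complement of the polarisation" into "transverse to the base". I would verify it directly in adapted coordinates using \eqref{eq:notationCD}: a vector \(\sum_i a^iD_i+\sum_{|K|=r}b^\alpha_K\partial/\partial u^\alpha_K\) is \(\pi_r\)-vertical if and only if \(a^i=0\) for all \(i\).
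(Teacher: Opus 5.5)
Your proof is correct and follows essentially the same route as the paper: both directions rest on the fact that, inside \(\mathcal C^r_\pi\), being complementary to \(\mathcal G^r_\pi=\ker\T\pi_r\cap\mathcal C^r_\pi\) is equivalent to \(\T\pi_r|_{\T S}\) being an isomorphism, and the converse then reduces to the holonomicity criterion. The paper re-derives that criterion in coordinates via \(s^*\theta^\alpha_I=0\) rather than citing Proposition~\ref{prop:holonomic_characterisations}, and your Levi-tensor argument for the isotropy of \(\T S\) in the forward direction spells out a step the paper only asserts.
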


\begin{proof}
Assume first that \(j^r\sigma(U)\) is an open subset of $S$ for some local section \(\sigma:U\to E\). By the defining property of the Cartan distribution,
\(
T(j^r\sigma(U))\subset\mathcal C^r_\pi.
\)
Moreover, \(j^r\sigma(U)\) is transverse to the polarisation \(\mathcal{G}^r_\pi=\ker \T\pi_{r,r-1}\), since
\(
\pi_r\circ j^r\sigma=\operatorname{id}_U.
\)
Thus, \(j^r\sigma(U)\) is a maximal integral submanifold complementary to the polarisation, hence a polarised Legendrian submanifold.

Conversely, assume that \(S\) is polarised Legendrian. Since \(\T S\subset\mathcal C^r_\pi\) and \(\T S\) is complementary to \(\mathcal{G}^r_\pi\) at points of $S$, it follows that \(\dim S=n\) and that
\(
\T\pi_r|_{\T S}:\T S\longrightarrow \T Q
\)
is an isomorphism. Therefore, locally, \(S\) is the image of a section
\(
s:U\subset Q\longrightarrow J^r\pi.
\) 
Choose adapted jet coordinates \((x^i,u^\alpha_I)\), for \(0\le |I|\le r\), and write
\(
s(x)=\bigl(x^i,u^\alpha_I(x)\bigr).
\)
Since \(\T s(\T U)\subset\mathcal C^r_\pi\), the contact forms \(\theta^\alpha_I\), with \(|I|\le r-1\) and \(\alpha=1,\ldots,m\), pull back to zero via $s$ on \(U\). In other words,
\(
s^*\theta^\alpha_I=0,
\)
which gives
\[
\frac{\partial u^\alpha_I}{\partial x^i}=u^\alpha_{I+\widehat{e}_i},\qquad |I|\le r-1,\qquad \alpha=1,\ldots,m,\qquad i=1,\ldots,n.
\]
Setting \(u^\alpha:=u^\alpha_0\), these identities imply inductively that
\[
u^\alpha_I=\frac{\partial^{|I|}u^\alpha}{\partial x^I},\qquad \alpha=1,\ldots,m,\qquad |I|\leq r.
\]
Therefore, if
\(
\sigma(x)=\bigl(x^i,u^\alpha(x)\bigr),
\)
then \(s=j^r\sigma\). Consequently,
\(
s(U)=j^r\sigma(U)
\) 
and $s(U)$ is an open subset of $S$.
\end{proof}

The notion defined requires the tangent map to send \(\mathcal P\)-polarised Legendrian planes to \(\mathcal P'\)-polarised Legendrian planes. It is also weaker than preserving the polarisation itself.

\begin{definition}\label{def:legendrian_polarised_transcontact_morphism}
Let \((M,\mathcal D,\mathcal P)\) and \((N,\mathcal D',\mathcal P')\) be polarised \(k\)- and \(k'\)-contact manifolds, respectively. A smooth local map \(\Phi:U\subset M\to N\) is called a \emph{Legendrian-polarised transcontact morphism} if \(\T\Phi(\mathcal D|_U)\subset\mathcal D'\) and, for every \(x\in U\), the tangent map \(\T_x\Phi\) sends every \(\mathcal P\)-polarised Legendrian plane \(E_x\subset\mathcal D_x\) to a \(\mathcal P'\)-polarised Legendrian plane \(\T_x\Phi(E_x)\subset\mathcal D'_{\Phi(x)}\).
\end{definition}

\begin{example}\label{ex:jet_projections_transcontact}
In adapted coordinates, \(\T\pi_{r,s}\) with $r>s>0$ sends the total derivatives in $J^r\pi$ to the total derivatives on $J^s\pi$, while $\T\pi_{r,s}(\mathcal{G}^r_\pi)=0$. Hence, \(\T\pi_{r,s}(\mathcal C^r_\pi)\subset\mathcal C^s_\pi\) and \(\pi_{r,s}\) becomes a transcontact morphism. Moreover, it sends \(\mathcal G^r_\pi\)-polarised Legendrian planes to \(\mathcal G^s_\pi\)-polarised Legendrian planes, and therefore it is a Legendrian-polarised transcontact morphism. Since \(\pi_{r,s}\circ j^r\sigma=j^s\sigma\), it sends holonomic sections to holonomic sections. Jet projections therefore provide canonical examples of non-invertible maps,   which are natural in our \(k\)-contact framework.
\end{example}

\begin{example} \label{ex:first_order_hodograph}
Let \(J^1(\mathbb R,\mathbb R)\) have global adapted coordinates \((x,u,p)\) and adapted one-contact form \(\eta^1_\pi=\dd u-p\,\dd x\). On the open $\mathcal{O}$ of $J^1\pi$ of points with \(p\neq0\), consider the hodograph transformation
\[
 \Phi:(x,u,p)\in \mathcal{O}\subset J^1\pi\rightarrow (X,U,P):=(u,x,1/p)\in \mathcal{O}\subset J^1\pi.
\]
Then, one has
\[
 \Phi^*\eta^1_\pi=\dd x-\frac{1}{p}\dd u=-\frac{1}{p}(\dd u-p\,\dd x)=-\frac{1}{p}\eta^1_\pi.
\]
Thus, \(\Phi\) is a one-contactomorphism. If \(u=u(x)\) and \(p=u_x\neq0\), then \(X=u(x)\) is a valid local coordinate and the image curve is
\(
 (X,U,P)=\bigl(u(x),x,1/u_x(x)\bigr),
\)
which is the first jet of the inverse function \(U=x(X)\), since \(\dd U/\dd X=1/u_x=P\). Moreover,
\(
 \T\Phi(\partial_p)=-\frac{1}{p^2}\partial_P,
\)
so this first-order hodograph transformation preserves the   polarisation \(\mathcal{G}^1_\pi=\langle\partial_p\rangle\).  Hence, on \(\mathcal O\), \(\Phi\) is a polarised \(1\)-contactomorphism and, in particular, a Legendrian-polarised transcontact morphism.

Hodograph transformations are not adapted to a particular jet fibration, because the new independent variable \(X=u\) depends on the old dependent variable. In spite of that, their role in partial differential equations is fundamental, as many basic methods for solving differential equations are based on their use.
\end{example}

The following proposition shows how our theory can be extended to spaces that are not jet manifolds, but appear in the study of PDEs \cite{PiraniRobinsonShadwick1979,Barth2019}.
\begin{proposition}\label{prop:product_jet_spaces_polarised_k_contact}
Let \(\pi:E\to M\) and \(\pi':E'\to M\) be fibred manifolds over the same base \(M\), with \(\dim M=n\), fibre dimensions \(m\) and \(m'\), and let \(p,q\geq 1\). Set
\(
Z=J^p\pi\times_M J^q\pi',
\)
and denote by \(\rho:Z\to J^p\pi\) and \(\rho':Z\to J^q\pi'\) the canonical projections. Let \(\mathcal C^p_\pi\subset \T J^p\pi\) and \(\mathcal C^q_{\pi'}\subset \T J^q\pi'\) be the Cartan distributions, and define
\[
\mathcal D_Z=\{X\in \T Z:\T\rho(X)\in\mathcal C^p_\pi,\ \T\rho'(X)\in\mathcal C^q_{\pi'}\}.
\]
Then, \(\mathcal D_Z\) is a natural \(N\)-contact distribution on \(Z\), where
\[
N=m\binom{n+p-1}{p-1}+m'\binom{n+q-1}{q-1}.
\]
Moreover, \(Z\) carries a natural polarisation
\(
\mathcal G_Z=\ker \T(\pi_{p,p-1}\times_M\pi'_{q,q-1}).
\)
Finally, a local section \(\Psi:M\to Z\), \(\Psi=(\psi,\psi')\), is Legendrian, i.e. \(\T\Psi(\T M)\subset\mathcal D_Z\), if and only if there exist local sections \(\sigma:M\to E\) and \(\sigma':M\to E'\) such that
\(
\Psi=(j^p\sigma,j^q\sigma').
\)
Thus, the holonomic sections of \(J^p\pi\times_M J^q\pi'\) are exactly the simultaneous holonomic sections of the two jet factors.
\end{proposition}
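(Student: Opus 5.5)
The plan is to work entirely in adapted coordinates and reduce everything to the single-factor results. Take adapted coordinates \((x^i,u^\alpha_I)\), \(|I|\le p\), on \(J^p\pi\) and \((x^i,v^a_K)\), \(|K|\le q\), on \(J^q\pi'\). They share the base coordinates \(x^i\) because both bundles lie over the same \(M\). Then \((x^i,u^\alpha_I,v^a_K)\) are coordinates on \(Z\). Since \(\rho\) and \(\rho'\) only forget one set of fibre variables, \(\mathcal D_Z\) is the common kernel of the pulled-back contact forms \(\rho^*\theta^\alpha_I\), \(|I|\le p-1\), and \(\rho'^*\vartheta^a_K\), \(|K|\le q-1\). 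Explicitly,
\[
\mathcal D_Z=\Big\langle \widehat D_i=\frac{\partial}{\partial x^i}+\sum_{\alpha,|I|\le p-1}u^\alpha_{I+\widehat e_i}\frac{\partial}{\partial u^\alpha_I}+\sum_{a,|K|\le q-1}v^a_{K+\widehat e_i}\frac{\partial}{\partial v^a_K},\ \frac{\partial}{\partial u^\alpha_L},\ \frac{\partial}{\partial v^a_P}\Big\rangle,
\]
with \(|L|=p\) and \(|P|=q\). Counting forms shows that \(\mathcal D_Z\) is a regular distribution of corank exactly \(N\). Globality and naturality are immediate from the definition via \(\T\rho\) and \(\T\rho'\).

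To prove that \(\mathcal D_Z\) is \(N\)-contact, I will use Theorem~\ref{Thm:LocalEqu}. First, for a local Reeb frame, take the fields \(R^J_\alpha\) of Lemma~\ref{lem:adapted_Nr_contact} for \(\pi\) together with the analogous fields \(S^K_a=\sum_{K'\le K}\frac{x^{K-K'}}{(K-K')!}\partial/\partial v^a_{K'}\) for \(\pi'\), all regarded as vector fields on \(Z\). They commute because their coefficients depend only on \(x\) and they do not differentiate \(x\). They span a complement of \(\mathcal D_Z\) by the triangularity argument of Lemma~\ref{lem:adapted_Nr_contact}, applied to each block separately. Each field is a symmetry of \(\mathcal D_Z\). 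One can check this directly: for instance, \(R^J_\alpha\) is the prolongation of \(\frac{x^J}{J!}\partial_{u^\alpha}\) on \(E\times_M E'\), extended trivially in the \(v\) variables, so it is \(\rho\)-related to a Cartan symmetry of \(J^p\pi\) and \(\rho'\)-related to \(0\). It therefore preserves \(\mathcal D_Z\), since \(\mathcal D_Z=(\T\rho)^{-1}\mathcal C^p_\pi\cap(\T\rho')^{-1}\mathcal C^q_{\pi'}\). Alternatively, one can compute the brackets \([R^J_\alpha,\widehat D_i]\) and check that they lie in \(\mathcal D_Z\).

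The second ingredient, and the main step requiring care, is maximal non-integrability. Here I will copy the proof of the corresponding lemma for \(\mathcal C^r_\pi\). Let \(X=\sum f^i\widehat D_i+\sum f^L_\alpha\partial_{u^\alpha_L}+\sum g^P_a\partial_{v^a_P}\) have Levi-orthogonal value at a point. The bracket \([\widehat D_j,\partial_{u^\alpha_L}]=-\partial_{u^\alpha_{L-\widehat e_j}}\) lies outside \(\mathcal D_Z\) with independent classes, and the same holds for the \(v\)-block. Bracketing \(X\) with every \(\widehat D_j\) therefore kills all \(f^L_\alpha\) and \(g^P_a\). Bracketing with \(\partial_{u^\alpha_L}\) then kills the \(f^i\), using \(p\ge1\); here both blocks share the same \(\widehat D_i\), which is harmless. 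With these two ingredients, Theorem~\ref{Thm:LocalEqu} gives the claim, and the dual coframe provides an adapted form whose blocks are the \(\bm\eta^p_\pi\) and \(\bm\eta^q_{\pi'}\) of Theorem~\ref{cor:Cartan_Nr_contact}.

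For the polarisation, \(\mathcal G_Z=\langle\partial_{u^\alpha_L},\partial_{v^a_P}\rangle\) is integrable and contained in \(\mathcal D_Z\). It has the integrable isotropic complement \(\langle\widehat D_i\rangle\), since the \(\widehat D_i\) commute, so it is Legendrian. For maximality of its rank among isotropic subspaces, I will repeat the counting argument of Lemma~\ref{lem:jet_symbol_dominant}. If \(W\) is isotropic with horizontal projection \(A\) of dimension \(\ell\), then \(W\cap\mathcal G_Z\) must lie in the kernel of the Spencer contraction by \(A\) in both blocks. Hence
\[
\dim W\le \ell+m\binom{n-\ell+p-1}{p}+m'\binom{n-\ell+q-1}{q},
\]
and this is at most \(\operatorname{rank}\mathcal G_Z\) by inequality \eqref{eq:ConPol} applied to one block (the other block only loses dimension). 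I expect this step to be the most delicate, because isotropy must be checked through the mixed terms of \(\dd\bm\eta\) in both blocks simultaneously. Those mixed terms decouple, however, because \(\dd\bm\eta\) is block-diagonal in the Reeb frame.

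Finally, \(\T\Psi(\T M)\subset\mathcal D_Z\) holds if and only if \(\T\psi(\T M)\subset\mathcal C^p_\pi\) and \(\T\psi'(\T M)\subset\mathcal C^q_{\pi'}\), because \(\psi=\rho\circ\Psi\) and \(\psi'=\rho'\circ\Psi\). By Proposition~\ref{prop:holonomic_characterisations}, this is equivalent to \(\psi=j^p\sigma\) and \(\psi'=j^q\sigma'\), which gives the last assertion.
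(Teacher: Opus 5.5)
Your proof is correct, but it reaches the two main points by a somewhat different route from the paper.

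\textbf{The \(N\)-contact property.} The paper does not invoke Theorem~\ref{Thm:LocalEqu}. It writes down \(\bm\eta_Z=\rho^*\bm\eta^p_\pi\oplus\rho'^*\bm\eta^q_{\pi'}\) directly and checks \(\ker\bm\eta_Z\oplus\ker\dd\bm\eta_Z=\T Z\), with \(\ker\dd\bm\eta_Z=\mathcal R_p\oplus\mathcal R_q\). In that route maximal non-integrability comes from the factors without further work, since it is encoded in \(\ker\bm\eta_Z\cap\ker\dd\bm\eta_Z=0\). The price is a step the paper leaves implicit: a vector in \(\ker\dd\bm\eta_Z\) has \(\T\rho\)-image in \(\mathcal R_p\) and \(\T\rho'\)-image in \(\mathcal R_q\), and therefore has no base component because \(Z\) is a fibre product.

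You instead redo the bracket argument for maximal non-integrability and supply the commuting transverse symmetries. This way you never have to compute \(\ker\dd\bm\eta_Z\), and the block form appears afterwards as the dual coframe.

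One small point in your symmetry check. The claim that a field \(\rho\)-related to a Cartan symmetry \(Y_1\) (and \(\rho'\)-related to \(0\)) preserves \(\mathcal D_Z\) needs a one-line justification, because sections of \(\mathcal D_Z\) need not be \(\rho\)-projectable. The identity \(\mathcal L_Y\rho^*\theta=\rho^*\mathcal L_{Y_1}\theta\), or the flow argument, supplies it.

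\textbf{Maximality of the rank of \(\mathcal G_Z\).} The paper avoids redoing the Spencer count. It projects an isotropic \(L\) by \(\T\rho\) and bounds \(\dim\T\rho(L)\) using the single-factor estimate of Lemma~\ref{lem:jet_symbol_dominant} as a black box. It then bounds \(\ker(\T\rho|_L)\) by \(\operatorname{rank}\mathcal G^q_{\pi'}\), since \(\ker\T\rho\cap\mathcal D_Z\) lies in the \(v\)-block polarisation.

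Your joint count is equally valid. It relies on the mixed terms of \(\dd\bm\eta_Z\) decoupling into the two block Spencer contractions along the common horizontal fields \(\widehat D_i\). It gives a slightly sharper intermediate bound, while the paper's argument is more modular.
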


\begin{proof} Choose adapted local \(N^p_\pi\)- and \(N^q_{\pi'}\)-contact forms
\(
\bm\eta^p_\pi, \bm\eta^q_{\pi'}
\)
on coordinate domains $U,U'$ of \(J^p\pi\) and \(J^q\pi'\), respectively, with Reeb frames \(\mathcal R_p\) and \(\mathcal R_q\). On \(Z=J^p\pi\times_MJ^q\pi'\), define
\[
\bm\eta_Z:=\rho^*\bm\eta^p_\pi\oplus\rho'^*\bm\eta^q_{\pi'}.
\]
Therefore, \(\bm\eta_Z\) is a local \(N\)-contact form for \(\mathcal D_Z\). 
Since \(\ker\bm\eta^p_\pi=\mathcal C^p_\pi\) and \(\ker\bm\eta^q_{\pi'}=\mathcal C^q_{\pi'}\), one has
\[
\ker\bm\eta_Z=\{X\in \T Z:\bm\eta^p_\pi(\T\rho(X))=0,\ \bm\eta^q_{\pi'}(\T\rho'(X))=0\}=\mathcal D_Z.
\]
Let us prove that \(\bm\eta_Z\) is an adapted \(N\)-contact form for \(\mathcal D_Z\).
First, \(\ker\bm\eta_Z=\mathcal D_Z\). Moreover, the Reeb fields of both factors on \(J^p\pi\) and \(J^q\pi'\) can be understood as vector fields on $Z$ so that they are tangent to the fibre product, since they do not change the base variables. Since they span a complement to \(\mathcal D_Z\), one has
\[
\T Z=\mathcal D_Z\oplus(\mathcal R_p\oplus\mathcal R_q),
\qquad
\ker \dd\bm\eta_Z=\mathcal R_p\oplus\mathcal R_q.
\]
It is immediate that $\ker \bm \eta_Z\oplus \ker  \dd \bm\eta_Z=\T(U\times_M U')$. Hence, \(\mathcal D_Z\) is a natural \(N\)-contact distribution on \(Z\).

The polarisation is obtained by taking the product of the highest-order polarisations of the two jet spaces. Indeed,
\[
\ker \T(\pi_{p,p-1}\times_M\pi'_{q,q-1})=\{X\in \T Z:\T\rho(X)\in\mathcal G^p_\pi,\ \T\rho'(X)\in\mathcal G^q_{\pi'}\}.
\]
Since \(\mathcal G^p_\pi\subset\mathcal C^p_\pi\) and \(\mathcal G^q_{\pi'}\subset\mathcal C^q_{\pi'}\), we have \(\mathcal G_Z\subset\mathcal D_Z\). Its integrability follows from the integrability of the highest-order polarisations of the two factors. Its isotropy follows from the fact that \(\bm\eta_Z\) is the direct sum of the two adapted \(k\)-contact forms and each factor polarisation is isotropic for the corresponding adapted \(N^p_\pi\)- or \(N^q_{\pi'}\)-contact structure. Let us prove the maximality of the rank. This is a pointwise statement. Fix \(z\in Z\), and let \(L\subset(\mathcal D_Z)_z\) be an isotropic subspace. Since \(\bm\eta_Z=\rho^*\bm\eta^p_\pi\oplus\rho'^*\bm\eta^q_{\pi'}\), the projection \(\T_z\rho(L)\subset(\mathcal C^p_\pi)_{\rho(z)}\) is isotropic. Hence, by the pointwise rank estimate in the proof of Lemma~\ref{lem:jet_symbol_dominant},
\[
 \dim\T_z\rho(L)\leq \operatorname{rank}\mathcal G^p_\pi .
\]
Moreover,
\[
 \ker(\T_z\rho|_L)\subset \ker\T_z\rho\cap(\mathcal D_Z)_z .
\]
If \(X\in\ker\T_z\rho\cap(\mathcal D_Z)_z\), then \(\T_z\rho'(X)\in(\mathcal C^q_{\pi'})_{\rho'(z)}\) and its projection to \(\T M\) vanishes, because \(X\) is tangent to the fibre product and \(\T_z\rho(X)=0\). Therefore \(\T_z\rho'(X)\in(\mathcal G^q_{\pi'})_{\rho'(z)}\). Thus,
\[
 \dim\ker(\T_z\rho|_L)\leq \operatorname{rank}\mathcal G^q_{\pi'} .
\]
Consequently,
\[
 \dim L
 \leq
 \dim\T_z\rho(L)+\dim\ker(\T_z\rho|_L)
 \leq
 \operatorname{rank}\mathcal G^p_\pi+\operatorname{rank}\mathcal G^q_{\pi'}
 =
 \operatorname{rank}\mathcal G_Z .
\]
Since \((\mathcal G_Z)_z\) itself is isotropic and has this rank, \(\mathcal G_Z\) has maximal possible rank among isotropic subspaces of \((\mathcal D_Z)_z\), and therefore among integrable isotropic subbundles admitting an isotropic complement. Moreover, in adapted coordinates the diagonal total derivatives
\[
D_i^Z=\frac{\partial}{\partial x^i}
+\sum_{\alpha,\,|I|\le p-1}u^\alpha_{I+\widehat e_i}\frac{\partial}{\partial u^\alpha_I}
+\sum_{A,\,|J|\le q-1}v^A_{J+\widehat e_i}\frac{\partial}{\partial v^A_J}
\]
span an integrable isotropic complement of \(\mathcal G_Z\) in \(\mathcal D_Z\). Hence, \(\mathcal G_Z\) is Legendrian and has maximal possible rank among isotropic subbundles of \(\mathcal D_Z\). Together with its integrability and the existence of the isotropic complement induced by the total derivative fields, this proves that \(\mathcal G_Z\) is a natural polarisation of \(\mathcal D_Z\).

Let now \(\Psi:M\to Z\) be a local section and write \(\Psi=(\psi,\psi')\), where \(\psi=\rho\circ\Psi\) and \(\psi'=\rho'\circ\Psi\). By definition of \(\mathcal D_Z\) one has
\(
\T\Psi(\T M)\subset\mathcal D_Z
\)
if and only if
\[
\T\psi(\T M)\subset\mathcal C^p_\pi,\qquad \T\psi'(\T M)\subset\mathcal C^q_{\pi'}.
\]
By the holonomicity criterion for jet bundles, these two conditions are equivalent to the existence of local sections \(\sigma:M\to E\) and \(\sigma':M\to E'\) such that \(\psi=j^p\sigma\) and \(\psi'=j^q\sigma'\). Hence \(\Psi=(j^p\sigma,j^q\sigma')\). The converse is immediate because holonomic prolongations are tangent to the corresponding Cartan distributions. Therefore, the sections of \(Z\) tangent to \(\mathcal D_Z\) are precisely the simultaneous holonomic sections of the two jet factors.
\end{proof}

\begin{definition}\label{def:cartan_and_polarised_symmetries}
A vector field \(X\in\mathfrak X(J^r\pi)\) is a \emph{Cartan symmetry}, or \emph{\(N^r_\pi\)-contact vector field}, if 
\(
 [X,\Gamma(\mathcal C^r_\pi)]\subset\Gamma(\mathcal C^r_\pi).
\)
It is a \emph{polarised Cartan  symmetry} if, in addition,
\(
 [X,\Gamma(\mathcal{G}^r_\pi)]\subset\Gamma(\mathcal{G}^r_\pi)
\).
\end{definition}

\begin{proposition}
If \(X\in\mathfrak X(E)\), then \(X^{(r)}\) has a local flow formed by polarised transcontactomorphisms of
\(
 (J^r\pi,\mathcal C^r_\pi,\mathcal{G}^r_\pi).
\)
Equivalently, if \(\{\Phi_t\}_{t\in \mathbb{R}}\) is the local flow of \(X\) and \(J^r\Phi_t\) denotes its \(r\)-th jet prolongation, then
\(J^r\Phi_t\) is a local polarised transcontactomorphism of the Cartan \(N^r_\pi\)-contact distribution.
\end{proposition}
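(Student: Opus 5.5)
The plan is to reduce everything to the functoriality of jet prolongation of the local diffeomorphisms \(\Phi_t\) of \(E\), and then to identify the infinitesimal generator of \(J^r\Phi_t\) with \(X^{(r)}\).

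First, recall that a local diffeomorphism \(\Phi:E\supset U\to E\) which is a bundle morphism over some diffeomorphism of \(Q\) (or, more generally, one that is locally transverse enough to send local sections to local sections) induces \(J^r\Phi(j^r_x\sigma):=j^r_{\phi(x)}(\Phi\circ\sigma\circ\phi^{-1})\). For a general \(X\in\mathfrak X(E)\), not necessarily projectable, \(\Phi_t\) is close to the identity for small \(t\). Hence \(\Phi_t\circ\sigma\) is, for small \(t\), the graph of a local section over a neighbourhood of a suitable base point, and \(J^r\Phi_t\) is well defined on an open set of \(J^r\pi\), with \(J^r\Phi_t\circ J^r\Phi_s=J^r\Phi_{t+s}\) where defined. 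This follows from functoriality \(J^r(\Phi\circ\Psi)=J^r\Phi\circ J^r\Psi\) and from \(j^r\) commuting with composition.

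Second, I will show that \(J^r\Phi_t\) preserves \(\mathcal C^r_\pi\). By construction, \(J^r\Phi_t\) maps holonomic submanifolds \(j^r\sigma(U)\) to holonomic submanifolds. By Proposition~\ref{prop:holonomic_characterisations}, each vector of \(\mathcal C^r_\pi\) at a point is tangent to some holonomic section through that point; in adapted coordinates one takes a polynomial \(\sigma\) with the prescribed \(r\)-jet. Therefore \(\T J^r\Phi_t(\mathcal C^r_\pi)\subset\mathcal C^r_\pi\). The same argument applied to \(\Phi_{-t}\) gives the reverse inclusion, so each \(J^r\Phi_t\) is a local \(N^r_\pi\)-contactomorphism. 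For the polarisation, note that \(J^r\Phi_t\) covers \(J^{r-1}\Phi_t\), that is, \(\pi_{r,r-1}\circ J^r\Phi_t=J^{r-1}\Phi_t\circ\pi_{r,r-1}\), because the \((r-1)\)-jet of \(\Phi_t\circ\sigma\circ\phi_t^{-1}\) depends only on the \((r-1)\)-jet of \(\sigma\). Consequently \(J^r\Phi_t\) maps fibres of \(\pi_{r,r-1}\) to fibres, and \(\T J^r\Phi_t(\mathcal G^r_\pi)=\mathcal G^r_\pi\). Thus each \(J^r\Phi_t\) is a polarised transcontactomorphism, its inverse \(J^r\Phi_{-t}\) being of the same type.

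Third, for the equivalence, I will check that \(\frac{d}{dt}\big|_{t=0}J^r\Phi_t=X^{(r)}\). This is the standard derivation of the prolongation formula \eqref{eq:prolongation}: differentiate \(u^\alpha_I\circ J^r\Phi_t\) at \(t=0\) using the chain rule for the inverse base map, which yields the recursion \(\varphi^\alpha_{I+\widehat e_i}=D_i\varphi^\alpha_I-\sum_j u^\alpha_{I+\widehat e_j}D_i\xi^j\). Alternatively, one can bypass the coordinate derivation. The generator \(Y\) of \(J^r\Phi_t\) is a Cartan symmetry that projects to \(X\) on \(E\), and a Cartan symmetry of \(\mathcal C^r_\pi\) projecting to a given field on \(E\) is unique, since its contact components are forced inductively by preservation of the \(\theta^\alpha_I\). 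Hence \(Y=X^{(r)}\). Conversely, the local flow of \(X^{(r)}\) is then \(J^r\Phi_t\). This is consistent with Theorem~\ref{thm:prolonged_point_symmetries_are_polarised}, which already gives the infinitesimal statement; integrating the conditions \([X^{(r)},\Gamma(\mathcal C^r_\pi)]\subset\Gamma(\mathcal C^r_\pi)\) and \([X^{(r)},\Gamma(\mathcal G^r_\pi)]\subset\Gamma(\mathcal G^r_\pi)\) yields preservation by the flow, because a vector field preserves a regular distribution if and only if its flow does.

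The main obstacle is the non-projectable case, where \(J^r\Phi_t\) is only locally defined and one must ensure that \(\Phi_t\circ\sigma\) remains a graph. I would handle it by the shortcut in the last paragraph: deduce flow invariance directly from Theorem~\ref{thm:prolonged_point_symmetries_are_polarised} via the standard lemma that \(\mathcal L_Y\)-invariance of a distribution integrates to invariance under the local flow. This reserves the jet-functorial description only for identifying the flow with \(J^r\Phi_t\).
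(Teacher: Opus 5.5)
Your proposal is correct. The route you commit to in your last paragraph is the paper's own: Theorem~\ref{thm:prolonged_point_symmetries_are_polarised} gives $[X^{(r)},\Gamma(\mathcal C^r_\pi)]\subset\Gamma(\mathcal C^r_\pi)$ and $[X^{(r)},\Gamma(\mathcal G^r_\pi)]\subset\Gamma(\mathcal G^r_\pi)$. Infinitesimal invariance of a regular distribution then integrates to invariance under the local flow $\Psi_t$ of $X^{(r)}$.

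You add two things the paper lacks.
\begin{enumerate}
\item[(i)] A finite, functorial proof that each $J^r\Phi_t$ preserves $\mathcal C^r_\pi$, because it sends holonomic sections to holonomic sections, and preserves $\mathcal G^r_\pi$, because it covers $J^{r-1}\Phi_t$.
\item[(ii)] The identification $\Psi_t=J^r\Phi_t$, obtained either from the prolongation recursion or from uniqueness of the Cartan symmetry lifting a given field on $E$.
\end{enumerate}
The paper's proof only treats $\Psi_t$ and never justifies the ``equivalently'' clause, so your third paragraph fills that gap.

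The infinitesimal route is shorter and needs no discussion of the domain of $J^r\Phi_t$. The functorial route explains geometrically why the structures are preserved. Since the ``equivalently'' clause is a statement about $J^r\Phi_t$, its well-definedness for non-projectable $X$ cannot really be bypassed. The uniqueness variant of your identification also uses your step two. Your first paragraph's openness argument for the graph condition already handles this adequately, so the ``shortcut'' does not remove that step.

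One correction in your second paragraph. It is not true that every vector of $(\mathcal C^r_\pi)_\theta$ is tangent to a holonomic section through $\theta$. Nonzero vectors of $\mathcal G^r_\pi\subset\ker\T\pi_r$ never are, since holonomic sections are transverse to $\ker\T\pi_r$.

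What does hold is that tangent spaces of holonomic sections span $\mathcal C^r_\pi$:
\begin{itemize}
\item Your polynomial construction works whenever the horizontal part $a$ of the vector is nonzero. Prescribing the $(r+1)$-st order coefficients then reduces to surjectivity of contraction by $a$ from $S^{r+1}$ to $S^{r}$.
\item A vertical $v\in\mathcal G^r_\pi$ can be written as $(D_1+v)-D_1$, a difference of two such tangent vectors.
\end{itemize}
By linearity of $\T J^r\Phi_t$, this is enough to conclude $\T J^r\Phi_t(\mathcal C^r_\pi)\subset\mathcal C^r_\pi$.
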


\begin{proof}
By Theorem~\ref{thm:prolonged_point_symmetries_are_polarised}, the prolonged vector field \(X^{(r)}\) preserves both the Cartan distribution and the polarisation.
Let \(\Psi_t\) be the local flow of \(X^{(r)}\). The infinitesimal invariance of \(\mathcal C^r_\pi\) implies that \((\Psi_t)_*\mathcal C^r_\pi=\mathcal C^r_\pi\) wherever the flow is defined. Similarly, the infinitesimal invariance of \(\mathcal{G}^r_\pi\) implies that \((\Psi_t)_*\mathcal{G}^r_\pi=\mathcal{G}^r_\pi\). Hence, every \(\Psi_t\) is a local polarised transcontactomorphism of \((J^r\pi,\mathcal C^r_\pi,\mathcal{G}^r_\pi)\). Since \(\mathcal C^r_\pi\) is the Cartan \(N^r_\pi\)-contact distribution, each \(\Psi_t\) is, in particular, a local \(N^r_\pi\)-contactomorphism.
\end{proof}

Coverings, Bäcklund transformations and Lax representations  typically require auxiliary variables or relations between different systems, and therefore do not naturally live inside a single fixed jet bundle. In the polarised \(k\)-contact language, one can enlarge the ambient space, restrict it to opens, or consider the induced and extended Cartan distributions in a systematic way.

\section{Characteristics of Lie symmetries in \texorpdfstring{$k$}{k}-contact geometry}\label{sec:lie_characteristics_hamiltonian}
This section reformulates and extends characteristics and Lie symmetries of differential equations in the language of polarised \(k\)-contact geometry of jet type. In particular, a \(k\)-contact vector field takes values in the $k$-contact distribution exactly at the zero set of any associated vector-valued Hamiltonian, which gives a subset that is invariant under its flow. The polarisation allows for the definition of another invariant subset. On jet bundles, prolonged and evolutionary Lie symmetries give rise to local \(N^r_\pi\)-contact Hamiltonians that can be generated locally by $m$-functions, which correspond to their classical characteristics. The adapted \(N^r_\pi\)-contact Hamiltonian bracket induces a bracket in the characteristics that matches the classical one. For general Cartan symmetries the \(k\)-contact Hamiltonian bracket is still defined, generalising previous formalisms. We briefly describe how our techniques retrieve other known techniques in the literature. Let us start this section by two simple definitions.
\begin{definition}
    The \emph{locus of a vector field} $X\in \mathfrak{X}(M)$ relative to  a $k$-contact manifold $(M,\mathcal{D})$ is the subset
    $$
    \mathcal{L}_\mathcal{D}(X)=\{x\in M:X_x\in \mathcal{D}_x\}.
    $$ Moreover, the \emph{polarised locus} of $X\in \mathfrak{X}(M)$ relative to $(M,\mathcal{D},\mathcal{P})$ is the subset $$\mathcal{L}_{\mathcal{P}}(X)=\{x\in M:X_x\in \mathcal{P}_x\}.$$
\end{definition}
One of the reasons to introduce the above definitions is given in the following proposition. This is an extension of a standard result for reducing the flow of the prolongation to $J^r\pi$ of  a vector field on $\pi:E\rightarrow Q$ (see \cite{Olver1993}).

\begin{proposition}\label{prop:locus_hamiltonian_zero}
Let \((M,\bm\eta)\) be a co-oriented \(k\)-contact manifold and let \(X\in\mathfrak X(M)\) be a \(k\)-contact vector field with $\bm\eta$-Hamiltonian $\bm h_X$. Then, \(\mathcal L_\mathcal{D}(X)=\bm h_X^{-1}(0)\) and \(\mathcal L_\mathcal{D}(X)\) is invariant under the local flow of \(X\). If, in addition, $X$ is a polarised $k$-contact vector field relative to \((M,\ker\bm\eta,\mathcal P)\), then \(\mathcal L_{ \mathcal P}(X)\) is invariant under the local flow of \(X\). 
\end{proposition}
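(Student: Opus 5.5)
The first claim is immediate from $\ker\bm\eta=\mathcal D$. By definition $\bm h_X=-\iota_X\bm\eta$, so $\bm h_X(x)=0$ exactly when $\eta^A(X_x)=0$ for all $A$, that is, when $X_x\in\ker\bm\eta_x=\mathcal D_x$. Hence $\mathcal L_{\mathcal D}(X)=\bm h_X^{-1}(0)$.

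For invariance I will use the Hamiltonian identity recalled after the definition of $\bm\eta$-Hamiltonian vector fields,
\[
\iota_X\dd\bm\eta=\dd\bm h_X-\sum_{A=1}^k\eta^A\otimes(\mathcal L_{R_A}\bm h_X).
\]
Evaluating on $X$ gives $0=X(\bm h_X)-\sum_A\eta^A(X)\,\mathcal L_{R_A}\bm h_X$. Since $\eta^A(X)=-h^A_X$, this reads
\[
X(h^B_X)=-\sum_{A=1}^k h^A_X\,R_A(h^B_X).
\]
Thus along an integral curve $\gamma(t)$ of $X$, the vector function $\bm h_X(\gamma(t))$ satisfies a linear ODE $\dot{\bm y}=-M(t)\bm y$, where $M^B_A(t)=R_A(h^B_X)(\gamma(t))$ is smooth. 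If $\bm y(0)=0$, uniqueness of solutions forces $\bm y\equiv0$ on the whole interval of definition. Hence the flow preserves $\bm h_X^{-1}(0)$. Alternatively, one can argue distributionally: $X$ preserves $\mathcal D$, so $(\phi_t)_*\mathcal D=\mathcal D$, and $(\phi_t)_*X=X$. Therefore $X_{\phi_t(x)}=\T\phi_t(X_x)\in\T\phi_t(\mathcal D_x)=\mathcal D_{\phi_t(x)}$ whenever $X_x\in\mathcal D_x$. I would present this second argument because it is cleaner.

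The polarised statement follows the same way. Since $[X,\Gamma(\mathcal P)]\subset\Gamma(\mathcal P)$ and $\mathcal P$ is a regular subbundle, the flow satisfies $(\phi_t)_*\mathcal P=\mathcal P$ on its domain. This requires justification, and it is the only step that is not a one-liner. I would prove it by taking a local frame $P_1,\dots,P_\ell$ of $\mathcal P$ and writing $[X,P_a]=\sum_b c_a^bP_b$. Then the pulled-back fields $\phi_t^*P_a$ satisfy $\frac{d}{dt}\phi_t^*P_a=\phi_t^*[X,P_a]=\sum_b(c_a^b\circ\phi_t)\phi_t^*P_b$, a linear ODE. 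Its solutions stay in the span of the $P_b$ at each point, and a standard compactness or patching argument along the curve handles the case where no single frame covers it. Combining this with $\T\phi_t(X_x)=X_{\phi_t(x)}$ gives $X_{\phi_t(x)}\in\mathcal P_{\phi_t(x)}$ for $x\in\mathcal L_{\mathcal P}(X)$. The same flow-invariance lemma applied to $\mathcal D$ justifies the distributional argument in the previous step.
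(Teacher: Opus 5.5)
Your proposal is correct. For the $\mathcal D$-locus you actually give two arguments, and only the first is the paper's.

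The paper contracts the Hamiltonian identity with $X$ to get $\mathcal L_X\bm h_X=-\sum_A h^A_X\,\mathcal L_{R_A}\bm h_X$. It then simply declares $X$ ``tangent'' to $\bm h_X^{-1}(0)$. That zero set need not be a submanifold, so your reading of the identity as a linear ODE $\dot{\bm y}=-M(t)\bm y$ along integral curves, plus uniqueness, is exactly what makes the paper's conclusion rigorous.

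Your preferred distributional argument is the mechanism the paper uses only for $\mathcal L_{\mathcal P}(X)$, here applied to $\mathcal D$. It needs neither the co-orientation, nor the Reeb fields, nor the Hamiltonian identity. It therefore shows more generally that $\{x:X_x\in\mathcal D_x\}$ is flow-invariant for any Lie symmetry $X$ of any regular distribution, and one lemma handles both $\mathcal D$ and $\mathcal P$. What the Hamiltonian route buys in exchange is quantitative: an explicit linear evolution law for $\bm h_X$ along the flow, which is the viewpoint the section is built around.

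Finally, the paper takes for granted that the flow of a polarised $k$-contact vector field preserves $\mathcal P$ (and, implicitly, $\mathcal D$). Your local-frame ODE lemma, with patching along the curve, supplies that justification.
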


\begin{proof}
Since \(\mathcal D=\ker\bm\eta\), the condition \(X_x\in\mathcal D_x\) is equivalent to \(\bm\eta_x(X_x)=0\), namely to \({\bm h}_X(x)=0\), which implies that \(\mathcal L_{\mathcal{D}}(X)={\bm h}_X^{-1}(0)\). Since \(X\) is a \(k\)-contact vector field, using the Hamiltonian identity
\[
 \iota_X \dd\bm\eta=\dd\bm h_X-\sum_{\alpha=1}^k\eta^\alpha\,\mathcal L_{R_\alpha}\bm h_X,
\]
and contracting with \(X\) and using \(\bm h_X=-\iota_X\bm\eta\), we obtain
\[
0=\iota_X\iota_X \dd\bm\eta
 =\mathcal{L}_X\bm h_X-\sum_{\alpha=1}^k\eta^\alpha(X)\mathcal L_{R_\alpha}\bm h_X=\mathcal{L}_{X}\bm h_X+\sum_{\alpha=1}^kh_X^\alpha \mathcal{L}_{R_\alpha}\bm h_X.
\]
Hence, \(X\) is tangent to \(\bm h_X^{-1}(0)\), and the local flow of \(X\) preserves this locus. Finally, if \(X\) is polarised, then its local flow preserves \(\mathcal P\). Since \(X\) is invariant under its own flow, if \(X_x\in\mathcal P_x\) then \(X_{\varphi_t(x)}\in\mathcal P_{\varphi_t(x)}\).  Hence, \(\mathcal L_{\mathcal P}(X)\) is invariant under the flow of \(X\).
\end{proof}

The following definition extends to polarised $k$-contact manifolds the definition of an evolutionary vector field.

 \begin{definition} On a $k$-contact manifold $(M,\mathcal{D},\mathcal{P})$ with a polarisation of jet type and order $r$, 
 a \emph{classical prolongation} is a Lie symmetry of $\mathcal{D}$ leaving invariant $\mathcal{V}_{r-1}$. Meanwhile, an \emph{evolutionary vector field} is a $k$-contact vector field taking values in $\mathcal{V}_r$.
 \end{definition}

 Since  the above $(M,\mathcal{D},\mathcal{P})$ is locally polarised $k$-contactomorphic to $(J^r\pi,\mathcal{C}^r_\pi,\mathcal{G}^r_\pi)$ by Theorem \ref{thm:local_split_jet_recognition}, one has that \(\mathcal V_{r-1}\) is identified with \(\ker \T\pi_{r,0}\). Then, one sees that a classical prolongation  amounts to a $k$-contact vector field of  $(J^r\pi,\mathcal{C}^r_\pi)$ projecting to a vector field on $E$. Meanwhile, an evolutionary vector field is equivalent to an evolutionary vector field on a jet manifold.
 These relations allow us to describe the theory of characteristics of Lie symmetries as a consequence of the local Hamiltonian structure linked to $k$-contact vector fields.

In the case of evolutionary vector fields or prolongations to $J^r\pi$ of vector fields on $E$, it is known that the coordinates of $X\in \mathfrak{X}_{\mathcal{C}^r_\pi}(J^r\pi)$ on an adapted coordinate system on $U\subset J^r\pi$ can be determined by means of characteristics. In particular,  in such a coordinate system $(x^i,u^\alpha_J)$ on $U\subset J^r\pi$, one has that if $X$ is of the above mentioned types and we write $X=\sum_{i=1}^n\xi^i\partial/\partial x^i+\sum_{\alpha=1}^m\sum_{|J|\leq r}\phi^\alpha_J\partial/\partial u^\alpha_J$,  then \(\phi^\alpha_J=D_JQ^\alpha+\sum_{i=1}^nu^\alpha_{J+\widehat e_i}\xi^i\), for \(|J|\le r-1\), and  \begin{equation}\label{eq:special}
\phi_{I+\widehat e_j}^\alpha=D_j\phi^\alpha_I-\sum_{i=1}^n(D_j\xi^i)u^\alpha_{I+\widehat{e}_i},\qquad |I|=r-1,
\end{equation} where \(Q^\alpha=\phi^\alpha-\sum_{i=1}^n\xi^iu^\alpha_{\widehat e_i}\) for $\alpha=1,\ldots,m$. Hence, it makes sense to write \(X|_U=X_{\bm Q}^{(r)}\). Moreover, using the form of $\bm\eta^1_\pi$ on the local coordinate system induced on $\pi_{r,1}U$ by the  one on $U$, it follows that $U=\pi^{-1}_{r,1}\pi_{r,1}U$ and ${\bm Q}=\iota_X\pi_{r,1}^*\bm \eta^1_\pi$. 

 \begin{proposition}\label{prop:triangular_hamiltonian_characteristics}
Let \(\bm \eta^r_\pi\) be the adapted \(N^r_\pi\)-contact form associated with an adapted coordinate system \((x^i,u^\alpha_I)\) on \(U\subset J^r\pi\). Let \(X\in\mathfrak X(U)\) be either the \(r\)-th prolongation of a vector field on \(E\) or an evolutionary \(N^r_\pi\)-contact vector field on \(J^r\pi\), and set \(\bm Q=\iota_X\pi_{r,1}^*\bm\eta^1_\pi\). Then, 
\(\bm h_X=-\iota_X\bm \eta^r_\pi\) takes the form
\[
h^\alpha_J=-\sum_{L\ge J,\ |L|\le r-1}(-1)^{|L-J|}\frac{x^{L-J}}{(L-J)!}D_LQ^\alpha,\qquad |J|\le r-1,
\]
while
\(
Q^\alpha=-\sum_{|L|\le r-1}\frac{x^L}{L!}h^\alpha_L,
\) for $\alpha=1,\ldots,m$. Thus, $\bm Q$ determines uniquely $\bm h_{X}$ and $X$, and makes sense to write $X=X^{(r)}_{\bm Q}$. Additionally, one obtains the induced Lie bracket on characteristics given by
\[
[Q,P]^\alpha_{\mathrm{ch}}=X_Q^{(r)}(P^\alpha)-X_P^{(r)}(Q^\alpha)=-\sum_{|L|\le r-1}\frac{x^L}{L!}\,\{\bm h_Q,\bm h_P\}_{\bm\eta^r_\pi,L}^{\alpha}.,\qquad \alpha=1,\ldots,m.
\]

\end{proposition}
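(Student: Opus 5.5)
The plan is to compute $\bm h_X=-\iota_X\bm\eta^r_\pi$ componentwise, using the explicit triangular relation \eqref{eq:relEtaTheta} between the adapted coframe $\eta^\alpha_J$ and the standard contact forms $\theta^\alpha_L$. We then invert that relation to recover $\bm Q$, and finally transport the Hamiltonian bracket of Definition~\ref{def:eta_hamiltonian_bracket} to characteristics.

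First we determine the contractions $\iota_X\theta^\alpha_L$ for $|L|\le r-1$. For $X=X^{(r)}$ a prolongation, Section~\ref{sec:Generalities} already records $Q^\alpha_L:=\theta^\alpha_L(X)=D_LQ^\alpha$. For an evolutionary $N^r_\pi$-contact vector field, one has $\xi^i=0$. The $k$-contact condition $[X,\Gamma(\mathcal C^r_\pi)]\subset\Gamma(\mathcal C^r_\pi)$, tested against $D_i$ and evaluated on $\theta^\alpha_L$, gives the recursion $\phi^\alpha_{L+\widehat e_i}=D_i\phi^\alpha_L$. Hence again $\iota_X\theta^\alpha_L=\phi^\alpha_L=D_LQ^\alpha$, with $Q^\alpha=\phi^\alpha$.

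In both cases, $\iota_X\pi_{r,1}^*\bm\eta^1_\pi$ has components $\theta^\alpha_0(X)=Q^\alpha$, because $\bm\eta^1_\pi$ reduces to the $\theta^\alpha_0$ in the coordinates induced on $J^1\pi$. This justifies the notation $\bm Q$. Substituting into the second identity of \eqref{eq:relEtaTheta} gives $h^\alpha_J=-\eta^\alpha_J(X)=-\sum_{L\ge J}(-1)^{|L-J|}\frac{x^{L-J}}{(L-J)!}D_LQ^\alpha$, as claimed. Evaluating the first identity of \eqref{eq:relEtaTheta} at $I=0$ gives $Q^\alpha=\theta^\alpha_0(X)=\sum_{|L|\le r-1}\frac{x^L}{L!}\eta^\alpha_L(X)=-\sum_L\frac{x^L}{L!}h^\alpha_L$.

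Uniqueness then follows in two steps. First, $\bm Q$ determines $\bm h_X$ by the displayed formula. Second, $\bm h_X$ determines $X$ uniquely, by the remark after the definition of $\bm\eta$-Hamiltonian vector fields: two $k$-contact vector fields with the same Hamiltonian differ by an element of $\ker\bm\eta\cap\ker\dd\bm\eta=0$. This justifies writing $X=X^{(r)}_{\bm Q}$.

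For the bracket, let $X=X^{(r)}_Q$ and $Y=X^{(r)}_P$, and assume both are of the same type, so that $[X,Y]$ is again a prolongation or an evolutionary field. The commutator of prolongations is the prolongation of the commutator on $E$, and the commutator of evolutionary fields is evolutionary, as recalled in Section~\ref{sec:Generalities}. Applying the recovery formula $Q=-\sum_L\frac{x^L}{L!}h_L$ to $[X,Y]$, and using $\{\bm h_Q,\bm h_P\}_{\bm\eta^r_\pi}=\bm h_{[X,Y]}$, gives $-\sum_L\frac{x^L}{L!}\{\bm h_Q,\bm h_P\}^\alpha_L=\theta^\alpha_0([X,Y])$.

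It remains to identify $\theta^\alpha_0([X,Y])$ with $X(P^\alpha)-Y(Q^\alpha)$. Write $\theta^\alpha_0([X,Y])=X(\theta^\alpha_0(Y))-Y(\theta^\alpha_0(X))-\dd\theta^\alpha_0(X,Y)$, where $\dd\theta^\alpha_0=\sum_i\dd x^i\wedge\dd u^\alpha_{\widehat e_i}$. In the evolutionary case this last term vanishes, since $\dd x^i(X)=0$. In the prolonged case, we instead directly use the characteristic of a commutator of point symmetries, $[Q,P]$, which is the standard computation quoted in Section~\ref{sec:Generalities}.

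The main obstacle is this last identification for general prolonged fields, where the term $\dd\theta^\alpha_0(X,Y)$ must be absorbed. I expect to handle it by noting that the classical bracket identity is taken as known from the recalled theory, and that the $\bm h$-side computation only requires $\theta^\alpha_0([X,Y])=[Q,P]^\alpha$.
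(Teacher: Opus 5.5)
Your computation of \(h^\alpha_J\) and of the inversion formula is the paper's argument: contract \eqref{eq:relEtaTheta} with \(X\) using \(\iota_X\theta^\alpha_L=D_LQ^\alpha\), then set \(I=0\) in the first identity. Your uniqueness step, via injectivity of \(X\mapsto\bm h_X\) on \(k\)-contact vector fields, is a valid and cleaner substitute for the paper's explicit reconstruction of \(\xi^i,\varphi^\alpha_J\) from \(\bm Q\). Your evolutionary bracket computation is also correct.

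The genuine gap is the one you flagged: the prolonged case of the bracket. Your proposed patch cannot close it, because the identity \(\theta^\alpha_0([X,Y])=X(P^\alpha)-Y(Q^\alpha)\) is false there. Write \(\varphi^\alpha_{Y,\widehat e_i}\) for the \(\partial/\partial u^\alpha_{\widehat e_i}\)-component of \(Y\). The prolongation formula gives \(\varphi^\alpha_{Y,\widehat e_i}=D_iP^\alpha+\sum_j u^\alpha_{\widehat e_i+\widehat e_j}\xi^j_Y\), which is \(\iota_Y\theta^\alpha_{\widehat e_i}=D_iP^\alpha\) when \(r\ge2\). Using this and the symmetry of \(u^\alpha_{\widehat e_i+\widehat e_j}\), the extra term is
\[
\dd\theta^\alpha_0(X,Y)=\sum_{i=1}^n\bigl(\xi^i_X\,D_iP^\alpha-\xi^i_Y\,D_iQ^\alpha\bigr),
\]
which does not vanish in general. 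For instance, take \(n=m=1\), \(X=\partial/\partial x\) (so \(Q=-u_x\)) and \(Y=u\,\partial/\partial u\) (so \(P=u\)). The prolongations commute, hence \(-\sum_L\frac{x^L}{L!}\{\bm h_Q,\bm h_P\}^\alpha_{\bm\eta^r_\pi,L}=\theta^\alpha_0([X^{(r)},Y^{(r)}])=0\), while \(X^{(r)}(P)-Y^{(r)}(Q)=u_x\).

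Appealing to the ``classical bracket identity'' does not help. Section~\ref{sec:Generalities} states it only for evolutionary fields, where \(X_{\bm Q}\) denotes the evolutionary field with characteristic \(\bm Q\). What your argument actually establishes for prolongations is
\[
-\sum_{|L|\le r-1}\frac{x^L}{L!}\{\bm h_Q,\bm h_P\}^\alpha_{\bm\eta^r_\pi,L}=\theta^\alpha_0([X,Y])=X(P^\alpha)-Y(Q^\alpha)-\sum_{i=1}^n\bigl(\xi^i_X\,D_iP^\alpha-\xi^i_Y\,D_iQ^\alpha\bigr),
\]
and the right-hand side is the bracket of the evolutionary representatives of \(Q\) and \(P\).

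So for genuine prolongations, the second equality in the statement holds only if \(X^{(r)}_Q(P^\alpha)\) is read as the evolutionary derivative of \(P^\alpha\) along \(Q\), not as the full prolonged field applied to \(P^\alpha\). You should either restrict that equality to the evolutionary case or prove it in this corrected form.

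The paper's proof has the same weak point. It does only the evolutionary computation and then asserts that for prolongations ``the same formula gives the characteristic of the commutator''. That is true of \(\theta^\alpha_0([X,Y])\), but not of \(X(P^\alpha)-Y(Q^\alpha)\) taken literally.
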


\begin{proof} A straightforward computation shows that $\bm Q=\iota_X\pi^*_{r,1}\bm \eta^1_\pi$ means that $Q^\alpha=\varphi^\alpha-\sum_{i=1}^n\xi^i u_{\widehat{e}_i}^\alpha$ for $\alpha=1,\ldots,m$. 
In view of \eqref{eq:adapted_eta} and using \(\iota_{X_Q^{(r)}}\theta^\alpha_L=D_LQ^\alpha\) for $\alpha=1,\ldots,m$ and $|L|<r$, gives
\[
h^\alpha_J=-\iota_{X_Q^{(r)}}\eta^\alpha_J=-\!\!\!\sum_{L\ge J,\ |L|\le r-1}(-1)^{|L-J|}\frac{x^{L-J}}{(L-J)!}D_LQ^\alpha,\qquad \alpha=1,\ldots,m,\qquad |J|<r .
\]
Consider now the left-hand equation in \eqref{eq:relEtaTheta} and  contract it with \(X_{\bm Q}^{(r)}\) and use $J=0$. Hence,
\[
 Q^\alpha =- \sum_{\ |L|\le r-1}\frac{x^{L}}{L!}h^\alpha_L,\qquad \alpha=1,\ldots,m.
\]
This proves that \(\bm h_X\) and $X$ are determined by the ${\bm Q}$. In fact, in the case of evolutionary prolongations, ${\bm Q}$ determines that $Xu^\alpha=Q^\alpha$, for $\alpha=1,\ldots,m$,  and $Xx^i=0$ for $i=1,\ldots,n$, which allows us to determine remaining coordinates of $X$ by using the recurrence obtained by using that it is a $k$-contact vector field, namely $\phi^\alpha_J=D_JQ^\alpha$ for $\alpha=1,\ldots,m$ and $|J|\leq r$. In the case of a prolongation of a vector field on \(E\), the coefficients $\phi^\alpha_J$ for $0<|J|<r$ can be obtained by means of $D_JQ^\alpha=\iota_{X}\theta^\alpha_J$ and $\phi^\alpha_J$, for $|J|=r$, from \eqref{eq:special}. Moreover,
$Q^\alpha(x^j,u^\beta,u_i^\beta)=\phi^\alpha(x^j,u^\beta)-\sum_{i=1}^n u^\alpha_i\xi^i(x^j,u^\beta)$ for $\alpha=1,\ldots,m$. Hence,  the coefficients \(\xi^i\) are recovered from the affine dependence of \(Q^\alpha\) on the first-order jet variables, namely \(\xi^i=-\partial Q^\alpha/\partial u^\alpha_i\), with no summation over \(\alpha\), and the right-hand side  is independent of the \(u^\alpha_i\); moreover \(\partial Q^\alpha/\partial u^\beta_i=0\) for \(\beta\ne\alpha\). Then, \(\phi^\alpha=Q^\alpha+\sum_i u^\alpha_i\xi^i\), which is a well-defined formula because $X$ is projectable onto $E$ by assumption.

Finally, in the evolutionary case, the characteristics are the components of the vector field along \(\partial/\partial u^\alpha\). Hence, one has
\[
[X_Q^{(r)},X_P^{(r)}]=X_{[Q,P]_{\mathrm{ch}}}^{(r)},\qquad [Q,P]^\alpha_{\mathrm{ch}}=X_Q^{(r)}(P^\alpha)-X_P^{(r)}(Q^\alpha).
\]
Meanwhile,
\[
\{\bm h_Q,\bm h_P\}_{\bm\eta^r_\pi}=\bm h_{[X_Q^{(r)},X_P^{(r)}]}=\bm h_{X_{[Q,P]_{\mathrm{ch}}}^{(r)}},
\]
and applying the inverse formula with \(J=\varnothing\) yields
\[
[Q,P]^\alpha_{\mathrm{ch}}=-\sum_{|L|\le r-1}\frac{x^L}{L!}\,[\bm h_Q,\bm h_P]^\alpha_L,\qquad \alpha=1,\ldots,m.
\]
For genuine prolongations of vector fields on \(E\), the same formula gives the characteristic of the commutator.
\end{proof}

The last proposition shows that the standard theory of characteristics for finite-order jet manifolds is recovered from the local Hamiltonian structure of the jet manifolds as $N^r_\pi$-contact manifolds. Moreover, our theory can be applied quite similarly to other cases. 
 
\section{Systems of PDEs in {\it k}-contact geometry}\label{sec:kConPDEs}
This section develops a \(k\)-contact formulation of systems of PDEs. It describes equations as submanifolds of finite jet spaces endowed with induced Cartan and polarised structures, reformulates classical solutions as polarised Legendrian integral submanifolds, and shows how Hamiltonian loci, initial data and the vertical symbol encode normality, degeneracy and characteristic obstructions.

A \emph{system of partial differential equations of order \(r\)} is a regular embedded submanifold \(\mathcal E\hookrightarrow J^r\pi\). Equivalently, around every point of \(\mathcal E\), there exist an open subset \(U\subset J^r\pi\) and a smooth map \(F=(F^1,\ldots,F^s):U\to\mathbb R^s\) such that \(\mathcal E\cap U=F^{-1}(0)\) and \(F\) has constant rank along \(\mathcal E\cap U\).

\begin{definition}
The \emph{induced Cartan distribution} of the system of PDEs \(\mathcal E\subset J^r\pi\) is
\[
\mathcal C_{\mathcal E}:=\T\mathcal E\cap\mathcal C^r_\pi.
\]
If \(\mathcal C_{\mathcal E}\) has constant rank, \(\mathcal E\) is called a \emph{regular PDE}. Meanwhile, we write
\[
\mathcal G^r_{\mathcal E}:=\T\mathcal E\cap\mathcal G^r_\pi.
\]
We call \(\mathcal G^r_{\mathcal E}\) the \emph{geometric symbol} of the differential equation $\mathcal{E}$. The elements of $\mathcal{G}^r_\mathcal{E}$ are called {\it symbol directions}. 
\end{definition}

The induced Cartan distribution is also called the \emph{Vessiot distribution} of the differential equation \(\mathcal E\) (see \cite[Definition 9]{Fesser_Seiler_2009}). In general, \(\mathcal C_{\mathcal E}\) need not be a \(k\)-contact distribution on \(\mathcal E\); it is simply the distribution of infinitesimal directions in \(\mathcal E\) compatible with the ambient Cartan structure. The distribution \(\mathcal C_{\mathcal E}\) is not, in general, the distribution tangent to the solutions. It may contain vertical directions, which are described by 
\(
\mathcal G^r_{\mathcal E}
\)
(see \cite[Definition 7.1]{Seiler_2007_Spencer}) that cannot be tangent to a genuine holonomic solution. Thus, a solution must be tangent to \(\mathcal C_{\mathcal E}\) and transverse to $\mathcal{G}^r_\pi$.

A \emph{classical solution} of \(\mathcal E\) is a local section \(\phi:U\subset M\to E\) such that \(j^r\phi(U)\subset\mathcal E\). Equivalently, it is an \(n\)-dimensional submanifold \(S\subset\mathcal E\), where \(n=\dim M\), satisfying
\[
\T S\subset\mathcal C_{\mathcal E},\qquad \T S\cap\mathcal G^r_\pi=0.
\]
Since \(\mathcal C^r_\pi\cap\ker \T\pi_r=\mathcal G^r_\pi\), the second condition is equivalent, for an \(n\)-dimensional integral submanifold of \(\mathcal C^r_\pi\), to \(\T\pi_r|_{\T S}:\T S\to \T M\) being an isomorphism. Thus, solutions are precisely the holonomic, or polarised Legendrian, integral submanifolds contained in \(\mathcal E\).

\begin{definition}\label{def:polarised_cartan_legendrian_solution}
A \emph{solution} of \(\mathcal E\subset J^r\pi\) is a polarised Legendrian submanifold \(S\subset J^r\pi\) such that \(S\subset\mathcal E\).
\end{definition}

For a solution $S$ contained in \(\mathcal E\), the corresponding decomposition inside the equation is
\[
\mathcal C_{\mathcal E}|_S=\T S\oplus\mathcal G^r_{\mathcal E}|_S.
\]
The decomposition is into subbundles over $S$ provided \(\mathcal G^r_{\mathcal E}\) has constant rank along \(S\). This is the intrinsic version of the usual separation between solution directions and symbol directions. The following proposition is immediate.

\begin{proposition}\label{prop:pde_solutions_as_cartan_legendrian}
Let \(\mathcal E\subset J^r\pi\) be a regular PDE and let \(S\subset\mathcal E\) be an embedded \(n\)-dimensional submanifold. Then, \(S\) is locally the image of \(j^r\phi\) for a solution \(\phi\) of \(\mathcal E\) if and only if
\[
\T S\subset \T\mathcal E\cap\mathcal C^r_\pi,\qquad \T S\cap\mathcal G^r_\pi=0.
\]
Equivalently, \(\T S\subset\mathcal C_{\mathcal E}\) and \(\T\pi_r|_{\T S}:\T S\to \T M\) is an isomorphism.
\end{proposition}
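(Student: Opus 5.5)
The plan is to reduce both directions to Proposition~\ref{prop:holonomic_characterisations} and Proposition~\ref{prop:holonomic_sections_are_polarised_legendrian}, adding only the constraint \(S\subset\mathcal E\). First, the two formulations of the right-hand condition are equivalent. For an \(n\)-dimensional \(S\) with \(\T S\subset\mathcal C^r_\pi\), use \(\mathcal C^r_\pi\cap\ker\T\pi_r=\mathcal G^r_\pi\), which is read off from the local basis \eqref{eq:notationCD}: a combination \(\sum f^iD_i+\sum f^K_\alpha\partial/\partial u^\alpha_K\) is \(\pi_r\)-vertical iff all \(f^i=0\). Then \(\T S\cap\mathcal G^r_\pi=\T S\cap\ker\T\pi_r\), so \(\T S\cap\mathcal G^r_\pi=0\) holds iff \(\T\pi_r|_{\T S}\) is injective. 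Since \(\dim S=n=\dim M\), this is the same as \(\T\pi_r|_{\T S}\) being an isomorphism. Also, for \(S\subset\mathcal E\), \(\T S\subset\T\mathcal E\) automatically, so \(\T S\subset\T\mathcal E\cap\mathcal C^r_\pi=\mathcal C_{\mathcal E}\) is equivalent to \(\T S\subset\mathcal C^r_\pi\).

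For the forward direction, suppose \(S\) locally equals \(j^r\phi(U)\) with \(\phi\) a solution, so \(j^r\phi(U)\subset\mathcal E\). Proposition~\ref{prop:holonomic_characterisations} gives \(\T(j^r\phi)(\T U)\subset\mathcal C^r_\pi\). From \(\pi_r\circ j^r\phi=\mathrm{id}_U\), the map \(\T\pi_r|_{\T S}\) is an isomorphism. The two displayed conditions then follow from the equivalences in the first paragraph.

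For the converse, take \(S\subset\mathcal E\) satisfying the conditions. Since \(\T\pi_r|_{\T S}\) is an isomorphism, the inverse function theorem writes \(S\), near each point, as the image of a local section \(s:U\to J^r\pi\) of \(\pi_r\), with \(\T s(\T U)=\T S\subset\mathcal C^r_\pi\). Proposition~\ref{prop:holonomic_characterisations} (equivalently Proposition~\ref{prop:holonomic_sections_are_polarised_legendrian}) then gives \(s=j^r\phi\) for the section \(\phi=\pi_{r,0}\circ s\). Finally, \(j^r\phi(U)=s(U)\subset S\subset\mathcal E\), so \(\phi\) is a classical solution.

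No step is a real obstacle. The only point needing care is the identity \(\mathcal C^r_\pi\cap\ker\T\pi_r=\mathcal G^r_\pi\) together with the dimension count. The regularity of \(\mathcal E\) (constant rank of \(\mathcal C_{\mathcal E}\)) is not used, apart from guaranteeing that \(\mathcal C_{\mathcal E}\) is a well-defined distribution in the statement.
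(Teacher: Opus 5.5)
Your proposal is correct and follows essentially the same route as the paper. The paper declares this proposition immediate and gives no separate proof; the reasoning it relies on, stated just before the proposition, is the identity \(\mathcal C^r_\pi\cap\ker\T\pi_r=\mathcal G^r_\pi\), the dimension count, and the holonomicity criterion of Proposition~\ref{prop:holonomic_characterisations}, and you have written out exactly these steps, including the point that \(\T S\subset\mathcal C^r_\pi\) is needed to turn \(\T S\cap\mathcal G^r_\pi=0\) into injectivity of \(\T\pi_r|_{\T S}\).
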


\begin{theorem}\label{thm:pde_solutions_polarised_legendrian}
Let \(\mathcal E\subset J^r\pi\) be a regular system of PDEs. There is a one-to-one correspondence between germs of local classical solutions \(\phi:U\subset M\to E\) of \(\mathcal E\) and \(n\)-dimensional polarised Legendrian submanifolds \(S\subset\mathcal E\).
\end{theorem}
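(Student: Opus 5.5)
The plan is to define the correspondence explicitly in both directions and to check that the two assignments are mutually inverse at the level of germs. Proposition~\ref{prop:holonomic_sections_are_polarised_legendrian} and Proposition~\ref{prop:pde_solutions_as_cartan_legendrian} will carry most of the weight.

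\emph{From solutions to submanifolds.} Given a germ of classical solution $\phi:U\subset M\to E$, one has $j^r\phi(U)\subset\mathcal E$ by definition. I will set $S_\phi:=j^r\phi(U)$. Since $\pi_r\circ j^r\phi=\operatorname{id}_U$, the map $j^r\phi$ is an embedding, so $S_\phi$ is an $n$-dimensional embedded submanifold. By Proposition~\ref{prop:holonomic_characterisations}, $\T S_\phi\subset\mathcal C^r_\pi$, and $\T S_\phi\cap\mathcal G^r_\pi=0$ because $\mathcal G^r_\pi\subset\ker\T\pi_r$ while $\T\pi_r|_{\T S_\phi}$ is injective. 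Counting ranks, $\operatorname{rank}\mathcal C^r_\pi=n+\operatorname{rank}\mathcal G^r_\pi$, so $\mathcal C^r_\pi|_{S_\phi}=\T S_\phi\oplus\mathcal G^r_\pi|_{S_\phi}$. Isotropy follows from integrality: $\dd\bm\eta^r_\pi$ vanishes on pairs of vector fields tangent to an integral submanifold of $\ker\bm\eta^r_\pi$. Hence $S_\phi$ is polarised Legendrian in the sense of Definition~\ref{def:polarised_legendrian_submanifold}, and it is contained in $\mathcal E$. Germs are respected, since restricting $\phi$ to a smaller neighbourhood restricts $S_\phi$.

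\emph{From submanifolds to solutions.} Conversely, let $S\subset\mathcal E$ be an $n$-dimensional polarised Legendrian submanifold. Proposition~\ref{prop:holonomic_sections_are_polarised_legendrian} gives, around each point, a local section $\sigma$ with $S$ locally equal to $j^r\sigma(U)$. Since $S\subset\mathcal E$, the section $\sigma$ is a classical solution of $\mathcal E$.

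\emph{Inverse maps and well-definedness.} Applying the first construction to $\sigma$ returns the germ of $S$. Applying the second construction to $S_\phi$ returns $\phi$, because $\phi=\pi_{r,0}\circ j^r\phi\circ(\pi_r|_{S_\phi})^{-1}$. The section is therefore recovered intrinsically as $\pi_{r,0}\circ(\pi_r|_S)^{-1}$, and this formula shows that the second map is well defined independently of the adapted coordinates used inside Proposition~\ref{prop:holonomic_sections_are_polarised_legendrian}.

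\emph{Main obstacle.} The delicate point is the bookkeeping of germs and domains. Proposition~\ref{prop:holonomic_sections_are_polarised_legendrian} only produces $\sigma$ locally on $S$. I would glue these local sections on overlaps using uniqueness: on an overlap both local sections equal $\pi_{r,0}\circ(\pi_r|_S)^{-1}$, so they agree. This requires $\pi_r|_S$ to be injective, which holds after shrinking $S$ to a germ. Working with germs at a point of $S$ is exactly what makes this shrinking legitimate. The regularity hypothesis on $\mathcal E$ is not needed beyond ensuring that $\mathcal C_{\mathcal E}$ is a genuine distribution.
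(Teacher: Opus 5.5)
Your proposal is correct and follows essentially the paper's route. The paper simply invokes Proposition~\ref{prop:pde_solutions_as_cartan_legendrian}, while you use the equivalent Proposition~\ref{prop:holonomic_sections_are_polarised_legendrian} together with $S\subset\mathcal E$; you also make explicit the rank count, the isotropy and the germ bookkeeping (injectivity of $\pi_r|_S$ after shrinking) that the paper leaves implicit. One small slip: $\phi=\pi_{r,0}\circ j^r\phi\circ(\pi_r|_{S_\phi})^{-1}$ does not type-check, and since $(\pi_r|_{S_\phi})^{-1}=j^r\phi$ it should read $\phi=\pi_{r,0}\circ(\pi_r|_{S_\phi})^{-1}$, which is the formula you actually use in the next sentence.
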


\begin{proof}
This is a consequence of Proposition~\ref{prop:pde_solutions_as_cartan_legendrian} applied to images of local sections. The transversality condition, equivalently complementarity with \(\mathcal G^r_\pi\), excludes purely vertical integral directions and distinguishes genuine holonomic solutions from arbitrary integral submanifolds of \(\mathcal C_{\mathcal E}\).
\end{proof}

\begin{proposition}\label{prop:pde_prolongation_as_polarised_legendrian}
Let \(\mathcal E\subset J^r\pi\) be a regular differential equation. Under the canonical identification \(J^{r+1}\pi\simeq\operatorname{Leg}_{\mathcal G^r_\pi}(\mathcal C^r_\pi)\) of Proposition~\ref{prop:jet_as_polarised_legendrian_prolongation}, the first prolongation \(\mathcal E^{(1)}\subset J^{r+1}\pi\) is identified with
\[
 \operatorname{Leg}_{\mathcal G^r_\pi}(\mathcal C^r_\pi;\mathcal E)
 :=
 \{(e,H_e)\in\operatorname{Leg}_{\mathcal G^r_\pi}(\mathcal C^r_\pi): e\in\mathcal E,\ H_e\subset\T_e\mathcal E\}.
\]
Thus, \(\mathcal E^{(1)}\) is the polarised Legendrian prolongation of \(\mathcal E\): it parametrises the polarised Legendrian \(n\)-planes of \((J^r\pi,\mathcal C^r_\pi,\mathcal G^r_\pi)\) based at points of \(\mathcal E\) and tangent to \(\mathcal E\).
\end{proposition}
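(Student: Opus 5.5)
The plan is to restrict the bijection of Proposition~\ref{prop:jet_as_polarised_legendrian_prolongation} to the fibres over $\mathcal E$ and then compare the resulting set with the classical first prolongation. I take the classical prolongation in the standard form
\[
\mathcal E^{(1)}=\{\theta_{r+1}\in J^{r+1}\pi:\ \pi_{r+1,r}(\theta_{r+1})\in\mathcal E,\ (D^{(r+1)}_iF^a)(\theta_{r+1})=0\},
\]
where $\mathcal E\cap U=F^{-1}(0)$ locally. Equivalently, $\mathcal E^{(1)}$ is the set of $(r+1)$-jets $j^{r+1}_x\phi$ such that $j^r\phi$ is tangent to $\mathcal E$ to first order at $x$. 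The identification sends $\theta_{r+1}$ to the pair $(\theta_r,E_{\theta_{r+1}})$, with $\theta_r=\pi_{r+1,r}(\theta_{r+1})$. In adapted coordinates, $E_{\theta_{r+1}}$ is spanned by
\[
\widetilde D_i=D_i+\sum_{\alpha,|K|=r}u^\alpha_{K+\widehat e_i}\,\partial/\partial u^\alpha_K .
\]
Since this is a bijection over $J^r\pi$, it is enough to show that, for $\theta_r\in\mathcal E$, the condition $E_{\theta_{r+1}}\subset\T_{\theta_r}\mathcal E$ is equivalent to the defining equations of $\mathcal E^{(1)}$.

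\textbf{Step 1.} Use regularity of $\mathcal E$: around $\theta_r$ there is a constant-rank defining map $F$. After shrinking $U$ and discarding redundant components, one may take $F$ to be a submersion onto its image, so that $\T_{\theta_r}\mathcal E=\ker \dd F_{\theta_r}$. If I want to avoid this reduction step, I will instead use directly that, for constant rank, $\ker\dd F$ restricted to $\mathcal E$ equals $\T\mathcal E$.

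\textbf{Step 2.} Compute $\dd F^a(\widetilde D_i)$ at $\theta_r$. This equals
\[
D_iF^a+\sum_{|K|=r}u^\alpha_{K+\widehat e_i}\,\partial F^a/\partial u^\alpha_K,
\]
which is exactly the $(r+1)$-th order total derivative $D^{(r+1)}_iF^a$ evaluated at $\theta_{r+1}$. Hence $E_{\theta_{r+1}}\subset\T_{\theta_r}\mathcal E$ if and only if $D^{(r+1)}_iF^a(\theta_{r+1})=0$ for all $i$ and $a$. This is precisely the coordinate description of $\mathcal E^{(1)}$.

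\textbf{Step 3.} Show independence of choices and conclude. The condition $H_e\subset\T_e\mathcal E$ is intrinsic, and Proposition~\ref{prop:jet_as_polarised_legendrian_prolongation} is canonical, so the coordinate computations over different charts glue. The final sentence of the statement is then a rephrasing: $\mathcal E^{(1)}$ parametrises polarised Legendrian $n$-planes based on $\mathcal E$ and tangent to it. Optionally, the jet-curve definition can be checked directly as a consistency test. By the last part of Proposition~\ref{prop:jet_as_polarised_legendrian_prolongation}, $j^{r+1}_x\phi$ corresponds to $\T j^r\phi(Q)$ at $j^r_x\phi$. That plane is tangent to $\mathcal E$ exactly when $\dd(F\circ j^r\phi)_x=0$, and $\partial_i(F\circ j^r\phi)=(D^{(r+1)}_iF)\circ j^{r+1}\phi$.

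The main obstacle is Step 1. One must make sure that the constant-rank description gives $\T\mathcal E=\ker\dd F$ on $\mathcal E$ and not just an inclusion, and that the paper's notion of $\mathcal E^{(1)}$ is the one given by the $D_iF$ equations. If the paper defines $\mathcal E^{(1)}$ through jets of sections, I will rely on the jet-curve argument at the end of Step 3 instead.
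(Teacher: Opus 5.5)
Your proposal is correct and follows essentially the same route as the paper. The paper takes local independent defining equations $F^a$, notes that the Cartan $n$-plane $H_{\theta_{r+1}}$ is spanned by the total derivatives evaluated at $\theta_{r+1}$, so that $\dd F^a(\widetilde D_i)=D_iF^a(\theta_{r+1})$, and reads $H_e\subset\T_e\mathcal E$ as the vanishing of these total derivatives. The paper simply assumes independent defining functions from the outset, which is always possible locally for an embedded submanifold, and that resolves your Step 1 concern in the same way.
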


\begin{proof}
Let \(\theta_{r+1}\in J^{r+1}\pi\) and set \(e=\pi_{r+1,r}(\theta_{r+1})\). By Proposition~\ref{prop:jet_as_polarised_legendrian_prolongation}, \(\theta_{r+1}\) determines the polarised Legendrian \(n\)-plane \(H_{\theta_{r+1}}\subset\mathcal C^r_{\pi,e}\), namely the image of \((\mathcal C^{r+1}_\pi)_{\theta_{r+1}}\) under \(\T\pi_{r+1,r}\). Suppose that \(\mathcal E\) is locally defined by independent equations \(F^a=0\). Then \(\theta_{r+1}\in\mathcal E^{(1)}\) if and only if \(F^a(e)=0\) and \(D_iF^a(\theta_{r+1})=0\) for all \(a,i\). Since \(H_{\theta_{r+1}}\) is spanned by the total derivatives \(D_i\) evaluated at \(\theta_{r+1}\), these conditions are equivalent to \(e\in\mathcal E\) and \(\dd F^a(H_{\theta_{r+1}})=0\) for all \(a\), that is, to \(H_{\theta_{r+1}}\subset\T_e\mathcal E\). Hence \(\mathcal E^{(1)}\) is exactly the subset of \(\operatorname{Leg}_{\mathcal G^r_\pi}(\mathcal C^r_\pi)\) formed by polarised Legendrian \(n\)-planes based on \(\mathcal E\) and tangent to \(\mathcal E\).
\end{proof}

\begin{proposition}\label{prop:foliations_by_solutions}
Let \(\mathcal H\subset\mathcal C_{\mathcal E}\) be a rank-\(n\) distribution on a regular \(\mathcal E\), i.e. a rank-\(n\) subbundle of \(\T\mathcal E\). Then, \(\mathcal H\) defines a local foliation of \(\mathcal E\) by solutions if and only if \(\mathcal H\cap\mathcal G^r_\pi|_{\mathcal E}=0\) and \([\Gamma(\mathcal H),\Gamma(\mathcal H)]\subset\Gamma(\mathcal H)\). Equivalently, \(\T\pi_r|_{\mathcal H}:\mathcal H\to \T M\) is a fibrewise isomorphism and \(\mathcal H\) is involutive.
\end{proposition}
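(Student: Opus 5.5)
Both directions reduce to Proposition~\ref{prop:pde_solutions_as_cartan_legendrian}, together with the Frobenius theorem and a pointwise linear-algebra identity. First I record that identity: on $\mathcal E$, for a rank-$n$ subbundle $\mathcal H\subset\mathcal C_{\mathcal E}\subset\mathcal C^r_\pi$, the conditions $\mathcal H\cap\mathcal G^r_\pi=0$ and ``$\T\pi_r|_{\mathcal H}$ is a fibrewise isomorphism'' are equivalent. Indeed, $\mathcal C^r_\pi\cap\ker\T\pi_r=\mathcal G^r_\pi$, as one reads off from the local description $\mathcal C^r_\pi=\langle D_i,\partial/\partial u^\alpha_K\rangle$ with $|K|=r$ and $\T\pi_r(D_i)=\partial/\partial x^i$. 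Hence $\ker(\T\pi_r|_{\mathcal H})=\mathcal H\cap\mathcal G^r_\pi$. Since $\operatorname{rank}\mathcal H=n=\dim M$, injectivity is the same as bijectivity. This disposes of the ``Equivalently'' clause.

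For the direction from the conditions to the foliation, assume that $\mathcal H$ is involutive and transverse to $\mathcal G^r_\pi$. Frobenius gives, around each point of $\mathcal E$, a foliated chart whose leaves $S$ are $n$-dimensional embedded submanifolds of $\mathcal E$ with $\T S=\mathcal H|_S\subset\T\mathcal E\cap\mathcal C^r_\pi$ and $\T S\cap\mathcal G^r_\pi=0$. Proposition~\ref{prop:pde_solutions_as_cartan_legendrian} then says that each leaf is locally $j^r\phi(U)$ for a classical solution $\phi$. Equivalently, by Theorem~\ref{thm:pde_solutions_polarised_legendrian}, the leaves are polarised Legendrian submanifolds contained in $\mathcal E$. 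So $\mathcal H$ defines a local foliation by solutions.

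For the converse, suppose that $\mathcal H$ is tangent to a local foliation of $\mathcal E$ whose leaves are solutions, meaning $\mathcal H|_S=\T S$ for each leaf $S$. Tangent distributions of foliations are involutive. Since each leaf is polarised Legendrian, $\T S\cap\mathcal G^r_\pi=0$ by Proposition~\ref{prop:pde_solutions_as_cartan_legendrian}, which gives $\mathcal H\cap\mathcal G^r_\pi|_{\mathcal E}=0$.

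The only step needing care is making precise what ``defines a local foliation by solutions'' means: the leaves must be the integral manifolds of $\mathcal H$, not merely some solutions sitting inside $\mathcal E$. I will fix this reading at the start of the proof. I will also note that isotropy of $\mathcal H$ is automatic from involutivity, because for $X,Y\in\Gamma(\mathcal H)\subset\Gamma(\mathcal C^r_\pi)$ one has $\dd\bm\eta^r_\pi(X,Y)=-\bm\eta^r_\pi([X,Y])=0$. This matches the polarised Legendrian picture, and no additional hypothesis is required.
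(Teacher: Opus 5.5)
Your proposal is correct and follows essentially the same route as the paper. The paper proves the ``equivalently'' clause from $\ker(\T\pi_r|_{\mathcal C^r_\pi})=\mathcal G^r_\pi$ and a rank count, proves sufficiency by Frobenius together with the holonomicity criterion applied to leaves that project isomorphically onto the base, and proves necessity from involutivity of a foliation's tangent bundle and transversality of holonomic leaves. The only difference is that you pass the leaf-to-solution step through Proposition~\ref{prop:pde_solutions_as_cartan_legendrian} instead of applying the section-plus-holonomicity criterion directly, which is the same argument.
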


\begin{proof}
If \(\mathcal H\) is tangent to a local foliation of \(\mathcal E\) by holonomic solutions, then \(\mathcal H\subset\mathcal C_{\mathcal E}\), \(\operatorname{rank}\mathcal H=n\), and \(\mathcal H\) is involutive. Since the leaves are locally images of holonomic sections of \(\pi_r:J^r\pi\to M\), their tangent spaces project isomorphically onto \(\T M\). Hence, they are transverse to \(\ker\T\pi_r\cap\mathcal C^r_\pi=\mathcal G^r_\pi\), and therefore \(\mathcal H\cap\mathcal G^r_\pi|_{\mathcal E}=0\). 
Conversely, assume that \(\mathcal H\subset\mathcal C_{\mathcal E}\) has rank \(n\), satisfies \(\mathcal H\cap\mathcal G^r_\pi|_{\mathcal E}=0\), and is involutive. Since \(\ker(\T\pi_r|_{\mathcal C^r_\pi})=\mathcal G^r_\pi\), the condition \(\mathcal H\cap\mathcal G^r_\pi|_{\mathcal E}=0\) implies that \(\T\pi_r|_{\mathcal H}\) is injective. As both \(\mathcal H\) and \(\T M\) have rank \(n\), it is a fibrewise isomorphism. By Frobenius' theorem, \(\mathcal H\) has local \(n\)-dimensional integral leaves \(S\subset\mathcal E\). Since \(\T\pi_r|_{\T S}\) is an isomorphism, each such leaf is locally the image of a section of \(\pi_r\). Since \(\T S\subset\mathcal C^r_\pi\), the standard holonomicity criterion for jet spaces implies that this section is holonomic, say \(S=j^r\sigma(M)\) locally. Finally, because \(S\subset\mathcal E\), the section \(\sigma\) is a local solution of \(\mathcal E\).
\end{proof}

We now describe the infinitesimal space of polarised solution directions through a prescribed initial datum. Let \(e\in\mathcal E\) and let \(I_e\subset\mathcal C_{\mathcal E,e}\) be an isotropic subspace, usually the tangent space to an initial submanifold contained in \(\mathcal E\).
Since every solution direction \(H_e\) is isotropic, any solution direction containing \(I_e\) must satisfy \(I_e\subset H_e\subset I_e^{\perp_{\mathcal C}}\). This gives the following intrinsic replacement for the usual coordinate description of admissible Cauchy directions.

\begin{definition}\label{def:polarisation_space_initial_data}
Let \(I_e\subset\mathcal C_{\mathcal E,e}\) be an isotropic subspace. The \emph{polarised solution set over \(I_e\)} is
\[
\operatorname{Pol}_{\mathcal E}(I_e)
:=
\{H_e\in\operatorname{Gr}_n(I_e^{\perp_{\mathcal C}}):I_e\subset H_e,\ H_e\cap\mathcal G^r_{\pi,e}=0,\ H_e\subset H_e^{\perp_\mathcal C}\cap \mathcal C_{\mathcal E,e}\}.
\]
If \(N\subset\mathcal E\) is an isotropic initial submanifold with \(\T N\subset\mathcal C_{\mathcal E}\), we write \(\operatorname{Pol}_{\mathcal E}(\T N)\to N\) for the corresponding family of polarised solution sets.
\end{definition}

The condition \(H_e\subset I_e^{\perp_{\mathcal C}}\) is the \(k\)-contact form of the compatibility condition between the initial directions and the missing solution directions. It follows from the fact that solutions are isotropic. The condition \(H_e\cap\mathcal G^r_{\pi,e}=0\) is the polarised transversality condition ensuring that the resulting integral manifold is a graph of a section rather than a vertical integral manifold. Thus, \(\operatorname{Pol}_{\mathcal E}(I_e)\) is the intrinsic set of potentially possible solution directions extending the initial datum \(I_e\). It is important to determine the intersection
\(
I_e^{\perp_{\mathcal C}}\cap\mathcal G^r_{\mathcal E,e}.
\)
For instance, if $I_e^{\perp_{\mathcal C}}\subset \mathcal G^r_{\mathcal E,e}$,  then \(\operatorname{Pol}_{\mathcal E}(I_e)\) is empty. 

\begin{proposition}\label{prop:polarised_initial_data_solutions}
Let \(N\subset\mathcal E\) be an isotropic initial submanifold with \(\T N\subset\mathcal C_{\mathcal E}\). Suppose that there exists a smooth rank-\(n\) distribution \(\mathcal H\subset\mathcal C_{\mathcal E}\) on an open neighbourhood $U\subset \mathcal{E}$ of \(N\) such that \(\mathcal H_e\in\operatorname{Pol}_{\mathcal E}(\T_eN)\) for every \(e\in N\) and \([\Gamma(\mathcal H),\Gamma(\mathcal H)]\subset\Gamma(\mathcal H)\) on $U$. Then, the integral leaves of \(\mathcal H\) through \(N\), whenever defined, are local solutions of \(\mathcal E\).
\end{proposition}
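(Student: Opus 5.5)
The plan is to reduce the statement to Proposition~\ref{prop:foliations_by_solutions}, applied on a neighbourhood of \(N\), together with Proposition~\ref{prop:pde_solutions_as_cartan_legendrian} to identify the leaves.

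First, I would use Frobenius. Since \(\mathcal H\) is a smooth rank-\(n\) involutive distribution on \(U\), through every point \(e\in N\) there is a unique germ of an \(n\)-dimensional integral leaf \(S_e\subset U\subset\mathcal E\). Its tangent spaces satisfy
\[
\T S_e=\mathcal H|_{S_e}\subset\mathcal C_{\mathcal E}|_{S_e}=\T\mathcal E\cap\mathcal C^r_\pi .
\]

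Second, I would obtain transversality to the polarisation along \(S_e\). At the base point \(e\in N\), the hypothesis \(\mathcal H_e\in\operatorname{Pol}_{\mathcal E}(\T_eN)\) gives \(\mathcal H_e\cap\mathcal G^r_{\pi,e}=0\). Equivalently, \(\T_e\pi_r|_{\mathcal H_e}\) is an isomorphism, because \(\ker(\T\pi_r|_{\mathcal C^r_\pi})=\mathcal G^r_\pi\). Injectivity of \(\T\pi_r|_{\mathcal H}\) is an open condition in the Grassmannian, and \(\mathcal H\) is smooth. So, after shrinking \(U\) to an open neighbourhood of \(N\), the condition \(\mathcal H\cap\mathcal G^r_\pi=0\) holds on all of \(U\).

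This is the one delicate point, since \(\operatorname{Pol}\) is only imposed at points of \(N\). It is also why the statement carries the qualifier ``whenever defined'': the leaves are taken only in the region where transversality persists. The isotropy requirement \(H_e\subset H_e^{\perp_{\mathcal C}}\) in \(\operatorname{Pol}\) is not needed away from \(N\). Integral submanifolds of \(\mathcal C^r_\pi\) are automatically isotropic, because the Levi tensor vanishes on pairs of vector fields tangent to an integrable subdistribution of \(\mathcal C^r_\pi\).

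Third, I would invoke Proposition~\ref{prop:foliations_by_solutions} on the shrunken \(U\). Its hypotheses are now satisfied: \(\mathcal H\subset\mathcal C_{\mathcal E}\) has rank \(n\), is involutive, and satisfies \(\mathcal H\cap\mathcal G^r_\pi|_{U}=0\). Hence \(U\) is locally foliated by solutions. Equivalently, each leaf \(S_e\) satisfies \(\T S_e\subset\T\mathcal E\cap\mathcal C^r_\pi\) and \(\T S_e\cap\mathcal G^r_\pi=0\). By Proposition~\ref{prop:pde_solutions_as_cartan_legendrian}, \(S_e\) is then locally \(j^r\phi(V)\) for a classical solution \(\phi\) of \(\mathcal E\). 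Equivalently, it is a polarised Legendrian submanifold contained in \(\mathcal E\), so it is a solution in the sense of Definition~\ref{def:polarised_cartan_legendrian_solution}.

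Finally, I would check the relation with the initial datum. Since \(\T_eN\subset\mathcal H_e=\T_eS_e\) at every \(e\in N\), the solution leaves pass through \(N\) and are tangent to the initial directions there. This completes the argument.
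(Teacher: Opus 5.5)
Your proposal is correct and follows essentially the same route as the paper: transversality to \(\mathcal G^r_\pi\) at points of \(N\) (supplied by \(\operatorname{Pol}_{\mathcal E}\)) is propagated to a neighbourhood of \(N\) by openness, Frobenius gives the integral leaves, and Proposition~\ref{prop:foliations_by_solutions} identifies them as holonomic solutions extending the initial datum. Your additional remarks, namely the automatic isotropy of integral leaves and the explicit appeal to Proposition~\ref{prop:pde_solutions_as_cartan_legendrian}, are valid refinements of that same argument.
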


\begin{proof}
By assumption, \(\mathcal H\subset\mathcal C_{\mathcal E}\), \(\operatorname{rank}\mathcal H=n\), and \(\mathcal H_e\cap\mathcal G^r_{\pi,e}=0\) for every \(e\in N\). Since transversality to \(\mathcal G^r_\pi\) is an open condition, after shrinking \(U\) we may assume that \(\mathcal H\cap\mathcal G^r_\pi=0\) on \(U\). The involutivity assumption gives integral leaves by Frobenius' theorem. Proposition~\ref{prop:foliations_by_solutions} then implies that these leaves are holonomic solution submanifolds. The inclusion \(\T_eN\subset\mathcal H_e\) means that the leaves extend the prescribed initial datum.
\end{proof}

This construction separates the algebraic Cauchy problem from the analytic existence problem. The family \(\operatorname{Pol}_{\mathcal E}(\T N)\) describes, pointwise, which \(n\)-planes can serve as tangent spaces to solutions extending the initial data. Finding an involutive section of this family is the geometric part of constructing a family of solutions. The \(k\)-contact orthogonal \(\T N^{\perp_{\mathcal C}}\) detects the characteristic obstruction, while the polarisation \(\mathcal G^r_\pi\) records the symbol directions that prevent an integral manifold from being a genuine graph.

Let us turn to another potential application of $k$-contact geometry to describing systems of PDEs. 
Let \((M,\mathcal D=\ker\bm\eta,\mathcal P)\) be a polarised co-oriented \(k\)-contact manifold. For a \(k\)-contact vector field \(X\), its Hamiltonian is \(\bm h_X=-\iota_X\bm\eta\). For a family \(\mathscr X=(X_1,\ldots,X_s)\) of \(k\)-contact vector fields, we set
\[
\mathcal E_{\mathscr X}:=\{p\in M:\bm h_{X_1}(p)=\cdots=\bm h_{X_s}(p)=0\}.
\]
Equivalently, \(p\in\mathcal E_{\mathscr X}\) if and only if \((X_a)_p\in\mathcal D_p\) for every \(a=1,\ldots,s\). On every regular open subset, \(\mathcal E_{\mathscr X}\) inherits the distributions \(\mathcal D_{\mathcal E_{\mathscr X}}:=\T\mathcal E_{\mathscr X}\cap\mathcal D\) and \(\mathcal P_{\mathcal E_{\mathscr X}}:=\T\mathcal E_{\mathscr X}\cap\mathcal P\), whenever these intersections have constant rank.

In the canonical jet model \(M=J^r\pi\), with \(\mathcal D=\mathcal C^r_\pi\) and \(\mathcal P=\mathcal G^r_\pi=\ker \T\pi_{r,r-1}\), the regular Hamiltonian locus \(\mathcal E_{\mathscr X}\) inherits \(\mathcal C_{\mathcal E_{\mathscr X}}:=\T\mathcal E_{\mathscr X}\cap\mathcal C^r_\pi\) and \(\mathcal P_{\mathcal E_{\mathscr X}}:=\T\mathcal E_{\mathscr X}\cap\mathcal G^r_\pi\).  The defining equations of \(\mathcal E_{\mathscr X}\) express that the vector fields \(X_a\) take values in  $\mathcal{C}^r_\pi$,  not the vanishing of the vector fields themselves.
We next discuss symmetries. The polarised \(k\)-contact viewpoint distinguishes three different notions: vector fields preserving the equation as a submanifold, Cartan symmetries preserving the differential structure, and polarised Cartan symmetries preserving the chosen jet presentation.

\begin{definition}\label{def:lie_symmetry_pde_kcontact}
A vector field \(X\in\mathfrak X(J^r\pi)\) is a \emph{Lie symmetry} of \(\mathcal E\) if it is a \(k\)-contact vector field tangent to \(\mathcal E\). It is a \emph{polarised Lie symmetry} if, in addition, it is a polarised \(k\)-contact vector field.
\end{definition}

If \(\mathcal E\cap U=F^{-1}(0)\), the tangency condition is equivalent to
\(
X(F^a)|_{\mathcal E}=0\) for \(a=1,\ldots,s,
\)
or, locally, to \(X(F^a)=\sum_{b=1}^s\lambda^a_bF^b\) for \(a=1,\ldots,s\). Lie symmetries map integral submanifolds of the Cartan distribution to integral submanifolds of the Cartan distribution. Polarised Lie symmetries preserve, moreover, the vertical polarisation and hence the class of polarised Legendrian submanifolds.

\begin{proposition}\label{prop:lie_symmetries_subalgebra_pde}
The space
\[
\mathfrak{sym}_{\mathcal C}(\mathcal E):=\{X\in\mathfrak X(J^r\pi):[X,\Gamma(\mathcal C^r_\pi)]\subset\Gamma(\mathcal C^r_\pi),\ X_p\in \T_p\mathcal E\ \text{for all }p\in\mathcal E\}
\]
is a Lie subalgebra of the Lie algebra of Cartan vector fields on \(J^r\pi\). The polarised Lie symmetries form a Lie subalgebra of \(\mathfrak{sym}_{\mathcal C}(\mathcal E)\).
\end{proposition}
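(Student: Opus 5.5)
The plan is to check closure under the Lie bracket for each of the two defining conditions of \(\mathfrak{sym}_{\mathcal C}(\mathcal E)\), and then to add the polarisation condition for the second claim. Linearity over \(\mathbb R\) is immediate, because both conditions are linear in \(X\). So everything reduces to taking \(X,Y\in\mathfrak{sym}_{\mathcal C}(\mathcal E)\) and showing \([X,Y]\in\mathfrak{sym}_{\mathcal C}(\mathcal E)\).

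First, the Cartan condition. For any \(Z\in\Gamma(\mathcal C^r_\pi)\) the Jacobi identity gives
\[
[[X,Y],Z]=[X,[Y,Z]]-[Y,[X,Z]].
\]
Since \([Y,Z]\) and \([X,Z]\) lie in \(\Gamma(\mathcal C^r_\pi)\), applying \(X\) and \(Y\) once more keeps both terms in \(\Gamma(\mathcal C^r_\pi)\). Thus \([X,Y]\) is again a Cartan vector field, and the Cartan vector fields form a Lie algebra, so the statement makes sense. The same computation with \(\mathcal G^r_\pi\) in place of \(\mathcal C^r_\pi\) shows that brackets of polarised Cartan symmetries preserve \(\Gamma(\mathcal G^r_\pi)\).

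Second, tangency to \(\mathcal E\). Here I use the standard fact that vector fields tangent to an embedded submanifold are closed under brackets. Since \(\mathcal E\) is a regular embedded submanifold (locally \(F^{-1}(0)\) with \(F\) of constant rank), \(X\) and \(Y\) are tangent to \(\mathcal E\) exactly when their restrictions \(X|_{\mathcal E},Y|_{\mathcal E}\) are vector fields on \(\mathcal E\) that are \(\iota\)-related to \(X,Y\) through the inclusion \(\iota:\mathcal E\hookrightarrow J^r\pi\). Brackets of \(\iota\)-related fields are \(\iota\)-related, so \([X,Y]|_{\mathcal E}=[X|_{\mathcal E},Y|_{\mathcal E}]\) is tangent to \(\mathcal E\).

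Alternatively, one can argue with the local equations. Write \(X(F^a)=\sum_b\lambda^a_bF^b\) and \(Y(F^a)=\sum_b\mu^a_bF^b\) near \(\mathcal E\). Then
\[
[X,Y](F^a)=\sum_b\bigl(X(\mu^a_b)-Y(\lambda^a_b)\bigr)F^b+\sum_b\bigl(\mu^a_bX(F^b)-\lambda^a_bY(F^b)\bigr),
\]
and every term vanishes on \(\mathcal E\). This local form needs the ideal-generation property (the local coefficients \(\lambda^a_b\)), which the constant-rank assumption guarantees. That is the only mildly delicate point, and it is why I prefer the \(\iota\)-related argument, which avoids it entirely.

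Combining the two steps, \(\mathfrak{sym}_{\mathcal C}(\mathcal E)\) is a Lie subalgebra. The polarised Lie symmetries are the intersection of \(\mathfrak{sym}_{\mathcal C}(\mathcal E)\) with the Lie algebra of polarised Cartan symmetries. An intersection of two subalgebras is a subalgebra, which gives the second claim.
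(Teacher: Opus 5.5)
Your proposal is correct and follows essentially the same route as the paper: the Jacobi identity gives closure of the Cartan-preservation condition (and, by the same computation, of the \(\mathcal G^r_\pi\)-preservation condition), and tangency to \(\mathcal E\) is preserved because the bracket of fields tangent to \(\mathcal E\) restricts to the intrinsic bracket of their restrictions. Your local-equations variant and your description of the polarised case as an intersection of two subalgebras add nothing essential beyond the paper's argument.
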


\begin{proof}
If \(X\) and \(Y\) preserve \(\mathcal C^r_\pi\), then Jacobi's identity implies that \([X,Y]\) preserves \(\mathcal C^r_\pi\). If both preserve \(\mathcal G^r_\pi\), the same argument shows that \([X,Y]\) preserves \(\mathcal G^r_\pi\). Finally, if \(X\) and \(Y\) are tangent to \(\mathcal E\), then their restrictions are vector fields on \(\mathcal E\), and the restriction of \([X,Y]\) to \(\mathcal E\) is their intrinsic bracket. Hence \([X,Y]\) is again tangent to \(\mathcal E\), so it is again a Cartan Lie symmetry, and it is polarised when \(X\) and \(Y\) are.
\end{proof}

\begin{proposition}\label{prop:hamiltonian_lie_symmetry_pde}
Let \(\mathcal E\subset J^r\pi\) be a regular system of PDEs, let \(X\in \mathfrak{X}(J^r\pi)\) be a Cartan symmetry, and consider an adapted chart on \(U\subset J^r\pi\). Then, \(X\) is a Lie symmetry of \(\mathcal E\cap U\) if and only if \(X\) is tangent to \(\mathcal E\cap U\). Equivalently, if \(\mathcal E\cap U=F^{-1}(0)\), then \(X(F^a)|_{\mathcal E}=0\) for every \(a=1,\ldots,s\).
\end{proposition}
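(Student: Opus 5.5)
The statement is essentially a restatement of Definition~\ref{def:lie_symmetry_pde_kcontact} combined with the standard characterisation of tangency of a vector field to a regular level set. I would therefore split the proof into two short steps: first the equivalence ``Lie symmetry of \(\mathcal E\cap U\)'' \(\Leftrightarrow\) ``tangent to \(\mathcal E\cap U\)'' for a given Cartan symmetry \(X\), and then the equivalence of tangency with \(X(F^a)|_{\mathcal E}=0\).

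For the first step, I note that \(X\) is assumed to be a Cartan symmetry, i.e. \([X,\Gamma(\mathcal C^r_\pi)]\subset\Gamma(\mathcal C^r_\pi)\). By Theorem~\ref{cor:Cartan_Nr_contact}, \(\mathcal C^r_\pi\) is an \(N^r_\pi\)-contact distribution, so \(X\) is an \(N^r_\pi\)-contact vector field on \(U\). By Definition~\ref{def:lie_symmetry_pde_kcontact}, a Lie symmetry of \(\mathcal E\cap U\) is exactly a \(k\)-contact vector field tangent to \(\mathcal E\cap U\). Since the contact condition already holds, the only remaining requirement is tangency, and this gives the first equivalence. The adapted chart plays no role beyond providing the local adapted form \(\bm\eta^r_\pi\), which I would mention only to match the local setting.

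For the second step, I would use that \(\mathcal E\cap U=F^{-1}(0)\) with \(F\) of constant rank along \(\mathcal E\cap U\). By the constant rank theorem, after shrinking \(U\) and discarding dependent components locally, one may assume \(\T_p\mathcal E=\bigcap_a\ker\dd F^a_p\) for \(p\in\mathcal E\cap U\). This is the main point requiring care, since with redundant \(F^a\) one must check that \(\dd F^a\) still annihilate \(\T\mathcal E\) and that their common kernel is exactly \(\T\mathcal E\). Given this, \(X_p\in \T_p\mathcal E\) if and only if \(\dd F^a(X_p)=X(F^a)(p)=0\) for all \(a\). One direction holds automatically: if \(X\) is tangent, then \(F^a\circ\) (integral curve) vanishes identically, so \(X(F^a)|_{\mathcal E}=0\). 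The converse requires the kernel equality, which is supplied by the constant rank hypothesis. Finally, I would remark, as already stated after Definition~\ref{def:lie_symmetry_pde_kcontact}, that this is locally equivalent to \(X(F^a)=\sum_b\lambda^a_bF^b\) via Hadamard's lemma.
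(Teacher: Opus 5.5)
Your proposal is correct and is essentially the paper's argument. The paper's proof simply combines the definition of a Lie symmetry of $\mathcal E$ (a $k$-contact vector field tangent to $\mathcal E$) with the standard tangency criterion for regular embedded submanifolds, the Cartan-symmetry hypothesis taking care of the contact part.

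Your extra care about $\T_p\mathcal E=\bigcap_a\ker\dd F^a_p$ is well placed. This identity holds precisely when $\operatorname{rank}\dd F=\operatorname{codim}\mathcal E$ along $\mathcal E$, which is guaranteed for instance when $F$ has constant rank on a neighbourhood, as the constant rank theorem requires. Constant rank merely along $\mathcal E\cap U$ is not enough, as $F=(u^1_K)^3$ with $|K|=r$ shows. The paper's one-line proof leaves this caveat implicit as well.
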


\begin{proof}
This is simply the tangency criterion for a vector field to be tangent to a regular embedded submanifold, together with the assumption that \(X\) already preserves the Cartan distribution.
\end{proof}

\begin{theorem}\label{thm:hamiltonian_invariant_subsystems}
Let \(\mathfrak s\subset\mathfrak{sym}_{\mathcal C}(\mathcal E)\) be a Lie algebra of Cartan Lie symmetries of \(\mathcal E\). Then, the common zero locus
\[
\mathcal Z_{\mathfrak s}:=\{p\in J^r\pi:X_p\in (\mathcal{C}^r_\pi)_p \text{ for every }X\in\mathfrak s\}
\]
 is invariant under \(\mathfrak s\). If  \(\mathcal E\cap\mathcal Z_{\mathfrak s}\) is a regular submanifold, then \(\mathcal E\cap\mathcal Z_{\mathfrak s}\) is an invariant differential subsystem of \(\mathcal E\).
\end{theorem}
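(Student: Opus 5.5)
The plan is to reduce invariance of \(\mathcal Z_{\mathfrak s}\) to the flow invariance of the locus of a single Cartan symmetry, Proposition~\ref{prop:locus_hamiltonian_zero}, and then to handle the interaction between different elements of \(\mathfrak s\) using the Lie algebra structure.

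\textbf{Step 1: local description of the locus.} Fix \(X\in\mathfrak s\) and \(p\in\mathcal Z_{\mathfrak s}\), and choose an adapted chart \(U\ni p\) with adapted form \(\bm\eta^r_\pi\) from Theorem~\ref{cor:Cartan_Nr_contact}. On \(U\),
\[
\mathcal Z_{\mathfrak s}\cap U=\bigcap_{Y\in\mathfrak s}\bm h_Y^{-1}(0),\qquad \bm h_Y=-\iota_Y\bm\eta^r_\pi .
\]
Being in the locus is an intrinsic condition, since it only involves \(\mathcal C^r_\pi\), so these local descriptions agree on overlaps.

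\textbf{Step 2: invariance under the flows.} I need \(\mathcal L_X\bm h_Y\) to vanish on the common zero set. I would use \(\bm h_{[X,Y]}=-\iota_{[X,Y]}\bm\eta=-\mathcal L_X\iota_Y\bm\eta+\iota_Y\mathcal L_X\bm\eta\). Because \(X\) preserves \(\ker\bm\eta\), one can write \(\mathcal L_X\bm\eta=\Lambda\circ\bm\eta\) for a matrix-valued function \(\Lambda\). This gives
\[
\mathcal L_X\bm h_Y=\bm h_{[X,Y]}+\Lambda\,\bm h_Y .
\]
Since \([X,Y]\in\mathfrak s\), the right-hand side vanishes on \(\mathcal Z_{\mathfrak s}\). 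To upgrade "tangency" to flow invariance when \(\mathfrak s\) is infinite-dimensional, I plan a pointwise argument. Let \(\varphi_t\) be the flow of \(X\). Then \((\varphi_t)_*\) preserves \(\mathcal C^r_\pi\), so \(Y_{\varphi_t(p)}\in\mathcal C^r_\pi\) if and only if \(((\varphi_{-t})_*Y)_p\in\mathcal C^r_\pi\). The curve \(t\mapsto ((\varphi_{-t})_*Y)_p\) modulo \(\mathcal C^r_\pi\) satisfies a linear ODE in the finite-dimensional space \(\T_pJ^r\pi/\mathcal C^r_\pi\), driven by \(\operatorname{ad}_X\). This is the main obstacle. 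When \(\mathfrak s\) is finite-dimensional, \(\operatorname{ad}_X\) preserves \(\mathfrak s\), so \((\varphi_{-t})_*Y=\exp(t\operatorname{ad}_X)Y\in\mathfrak s\), and evaluation at \(p\in\mathcal Z_{\mathfrak s}\) lands in \(\mathcal C^r_\pi\). I would state the result under that assumption, or under finite generation, and remark that the general case follows from the same ODE argument applied to the evaluation map \(\mathfrak s\to\T_pJ^r\pi/\mathcal C^r_\pi\) whenever its image is finite-dimensional.

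\textbf{Step 3: the intersection with the equation.} Each \(X\in\mathfrak s\) is tangent to \(\mathcal E\), so its flow preserves \(\mathcal E\). It therefore preserves \(\mathcal E\cap\mathcal Z_{\mathfrak s}\). When this intersection is a regular embedded submanifold, it is a system of PDEs in the sense of Section~\ref{sec:kConPDEs}, contained in \(\mathcal E\) and invariant under \(\mathfrak s\). It carries the induced distributions \(\T(\mathcal E\cap\mathcal Z_{\mathfrak s})\cap\mathcal C^r_\pi\) and \(\cap\,\mathcal G^r_\pi\), which yields the claimed invariant differential subsystem.
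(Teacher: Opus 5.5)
Your Step 2 identity $\mathcal L_X\bm h_Y=\bm h_{[X,Y]}+\Lambda\bm h_Y$ is exactly the paper's computation. The paper then simply asserts that, because $Y(\bm h_X)$ vanishes on $\mathcal Z_{\mathfrak s}$, integral curves starting in $\mathcal Z_{\mathfrak s}$ stay there. You are right that this step is the real issue. Your way of closing it for $\dim\mathfrak s<\infty$ is correct and more careful than the paper's. Set $W_t:=\exp(t\operatorname{ad}_X)Y$; it lies in $\mathfrak s$ and satisfies $\frac{\dd}{\dd t}\bigl((\varphi_t)_*W_t\bigr)=0$, so $W_t=(\varphi_{-t})_*Y$. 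Since $\varphi_t$ preserves $\mathcal C^r_\pi$, invariance follows. Equivalently, for a basis $X_1,\ldots,X_d$ of $\mathfrak s$, the $\bm h_{X_a}$ satisfy a closed linear ODE along integral curves of $Y$ with zero initial data. That closure is what the paper's one-line conclusion silently needs. Step 3 is fine.

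Your proposed extensions, however, are false, and so is the statement for an arbitrary Lie algebra $\mathfrak s$. The curve $((\varphi_{-t})_*Y)_p$ modulo $\mathcal C^r_\pi$ does not obey a closed ODE in $\T_pJ^r\pi/\mathcal C^r_\pi$: its derivative brings in $[X,Y]$, then $[X,[X,Y]]$, and so on. Since the evaluation image is always at most $N^r_\pi$-dimensional, your last remark would prove the theorem unconditionally.

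Here is a counterexample. On $J^1(\mathbb R,\mathbb R)$ with coordinates $(x,u,p)$ and $\eta=\dd u-p\,\dd x$, let $Y=\partial_x$, so $\bm h_Y=p$. Let $X_f=f(x)\partial_u+f'(x)\partial_p$, the prolongation of $f(x)\partial_u$, so $\bm h_{X_f}=-f$. Choose $\phi$ smooth with $\phi=0$ on $(-\infty,0]$ and $\phi>0$ on $(0,\infty)$, and let $\mathfrak s$ be spanned by $Y$ and all $X_{\phi^{(j)}}$, $j\geq 0$. Since $[Y,X_f]=X_{f'}$ and $[X_f,X_g]=0$, $\mathfrak s$ is a Lie algebra of Cartan symmetries of $\mathcal E=J^1\pi$, generated by just $Y$ and $X_\phi$. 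Then $\mathcal Z_{\mathfrak s}=\{p=0,\ x\leq 0\}$, and $Z(\bm h_X)=0$ on $\mathcal Z_{\mathfrak s}$ for all $Z,X\in\mathfrak s$. Yet the flow of $\partial_x$ carries $(0,u,0)$ to $(t,u,0)\notin\mathcal Z_{\mathfrak s}$ for $t>0$.

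So finite generation does not suffice. State the result for finite-dimensional $\mathfrak s$, or more generally assume that $\operatorname{ad}_X$ acts locally finitely on $\mathfrak s$ for every $X\in\mathfrak s$. That is exactly what makes $\exp(t\operatorname{ad}_X)Y\in\mathfrak s$ meaningful. Drop the claims about finite generation and the general case.
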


\begin{proof}  
Every point \(p\in J^r\pi\) admits an open neighbourhood \(U\ni p\) with an adapted chart and associated \(\bm\eta^r_\pi\). Hence, locally, $\mathcal{Z}_\mathfrak{s}$ can be written as 
\[
\mathcal{Z}_\mathfrak{s} = \{p \in U : \iota_{X}\bm\eta^r_\pi(p)=0   \text{ for every } X \in \mathfrak{s}\}.
\]
Meanwhile, let \(X,Y\in\mathfrak s\). Since \(Y\) is \(k\)-contact, locally \(\mathcal L_Y\bm\eta^r_\pi=A_Y\bm\eta^r_\pi\) for a matrix-valued function \(A_Y\). Hence,
\[
Y(\bm h_X)=-(\mathcal L_Y\bm\eta^r_\pi)(X)-\bm\eta^r_\pi([Y,X])=A_Y\bm h_X+\bm h_{[Y,X]}.
\]
Since \(\mathfrak s\) is a Lie algebra, \([Y,X]\in\mathfrak s\), and hence \(Y(\bm h_X)\) is zero on $\mathcal{Z}_\mathfrak{s}$. This means that the integral curves of $Y$ with a point in $\mathcal{Z}_\mathfrak{s}$ remain in $\mathcal{Z}_\mathfrak{s}$. The invariance of the common zero locus follows. The invariance of the intersection with \(\mathcal E\) is a consequence of the invariance of \(\mathcal Z_{\mathfrak s}\) and the fact that \(\mathcal E\) is invariant under \(\mathfrak s\).
\end{proof}

\begin{definition}\label{def:contact_transformations_pdes}
Let \(\mathcal E\subset J^r\pi\) and \(\mathcal E'\subset J^s\pi'\) be regular PDEs. A \emph{transcontact map} from \(\mathcal E\) to \(\mathcal E'\) is a transcontact morphism \(\Phi:J^r\pi\to J^s\pi'\) such that \(\Phi(\mathcal E)\subset\mathcal E'\). It is \emph{polarised} if, in addition, \(\T\Phi(\mathcal G^r_\pi)\subset \mathcal G^s_{\pi'}\).
\end{definition}

A Legendrian-polarised transcontact map sends the infinitesimal polarised Legendrian directions of the source equation to those of the target equation and therefore preserves holonomicity on regular images. A polarised transcontactomorphism is stronger: it preserves the chosen highest-order polarisation, and hence the corresponding jet presentation. A general transcontact transformation may preserve the Cartan distribution without preserving this polarisation. Such transformations may still be useful for relating solutions, but the image of a holonomic submanifold is interpreted as a solution only when it is transverse to the target projection. Hodograph-type transformations belong to this latter class.

Let \(\mathcal E\subset J^r\pi\) be a regular equation. The intrinsic polarisation induced on \(\mathcal E\), when it has constant rank, is
\(
\mathcal G^r_{\mathcal E}:=\T\mathcal E\cap\mathcal G^r_\pi.
\)
\begin{definition}
  An {\it on-shell polarised symmetry} is a vector field \(Y\in\mathfrak X(\mathcal E)\) such that
\[
[Y,\Gamma(\mathcal C_{\mathcal E})]\subset\Gamma(\mathcal C_{\mathcal E}),\qquad [Y,\Gamma(\mathcal G^r_{\mathcal E})]\subset\Gamma(\mathcal G^r_{\mathcal E}).
\]  
\end{definition}

This condition is intrinsic to \(\mathcal E\). It should be distinguished from the weaker normal condition
\[
[X,\Gamma(\mathcal G^r_\pi)]|_{\mathcal E}\subset\Gamma(\mathcal G^r_\pi|_{\mathcal E})+\Gamma(\T\mathcal E),
\]
which only says that the image of \(\mathcal G^r_\pi|_{\mathcal E}\) in the normal bundle \(\T J^r\pi|_{\mathcal E}/\T\mathcal E\) is preserved. The latter condition is useful for proving invariance of the tangency locus
\[
\Sigma_{\mathcal P}(\mathcal E)=\{e\in\mathcal E\mid(\mathcal G^r_\pi)_e\subset \T_e\mathcal E\},
\]
but it is not the intrinsic definition of a polarised symmetry of \(\mathcal E\).

Let \(\mathcal E\subset J^r\pi\) be a regular equation of codimension \(q\), locally given by independent equations \(F^A=0\), \(A=1,\ldots,q\).  The restriction
\[
\sigma_{\mathcal E}:=\left.\dd F\right|_{\mathcal G^r_\pi|_{\mathcal E}}:\mathcal G^r_\pi|_{\mathcal E}\longrightarrow\mathbb R^q
\]
is the {\it vertical principal symbol} of the equation. The equation is locally in normal form with respect to \(q\) highest-order variables if and only if \(\sigma_{\mathcal E}\) has rank \(q\), equivalently
\[
\T\mathcal E+\mathcal G^r_\pi=\T J^r\pi|_{\mathcal E}.
\]
In that case, after choosing a rank \(q\) subbundle \(Q\subset\mathcal G^r_\pi|_{\mathcal E}\) on which \(\sigma_{\mathcal E}\) is an isomorphism, the implicit function theorem writes \(\mathcal E\) locally as a graph along the polarisation directions \(Q\). The induced polarisation on \(\mathcal E\) is
\[
\mathcal G^r_{\mathcal E}:=\T\mathcal E\cap\mathcal G^r_\pi=\ker\sigma_{\mathcal E},
\]
and therefore measures the residual highest-order freedom left by the equation.

\begin{proposition}\label{prop:normal_form_polarisation_symbol}
Let \(\mathcal E\subset J^r\pi\) be a regular equation of codimension \(q\), and let \(\mathcal G^r_\pi=\ker \T\pi_{r,r-1}\). Then, \(\mathcal E\) can be locally written in normal form with respect to \(q\) highest-order jet coordinates if and only if
\[
\T\mathcal E+\mathcal G^r_\pi=\T J^r\pi|_{\mathcal E}.
\]
Equivalently, for any local defining map \(F=(F^1,\ldots,F^q)\), the vertical symbol \(\sigma_{\mathcal E}:=\left.\dd F\right|_{\mathcal G^r_\pi|_{\mathcal E}}\) has rank \(q\). In this case, the residual polarisation of the equation is \(\mathcal G^r_{\mathcal E}=\T\mathcal E\cap\mathcal G^r_\pi=\ker\sigma_{\mathcal E}\). In particular, if \(q=\operatorname{rank}\mathcal G^r_\pi\), then \(\mathcal E\) is locally the graph of a section of \(\pi_{r,r-1}:J^r\pi\to J^{r-1}\pi\) if and only if \(\T\mathcal E\cap\mathcal G^r_\pi=0\).
\end{proposition}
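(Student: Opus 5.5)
The argument is the implicit function theorem, combined with the fact that \(\mathcal G^r_\pi\) is spanned in adapted coordinates by the \(\partial/\partial u^\alpha_K\) with \(|K|=r\). I will prove the main equivalence first, then identify the residual polarisation, then treat the case \(q=\operatorname{rank}\mathcal G^r_\pi\).

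\textbf{Step 1: the transversality condition in terms of the symbol.} Fix \(e\in\mathcal E\) and a local defining map \(F=(F^1,\ldots,F^q)\) with \(\dd F_e\) of rank \(q\). Then \(\T_e\mathcal E=\ker \dd F_e\), and \(\dd F_e:\T_eJ^r\pi\to\mathbb R^q\) is surjective. Hence \(\T_e\mathcal E+(\mathcal G^r_\pi)_e=\T_eJ^r\pi\) holds if and only if \(\dd F_e\) maps \((\mathcal G^r_\pi)_e\) onto \(\mathbb R^q\). Indeed, for any \(v\in \T_eJ^r\pi\), surjectivity on \(\mathcal G^r_\pi\) gives \(g\in\mathcal G^r_\pi\) with \(\dd F(g)=\dd F(v)\), so \(v-g\in\ker \dd F_e\). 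The converse is immediate. Therefore the transversality condition is equivalent to \(\operatorname{rank}\sigma_{\mathcal E}=q\). This rank is independent of the choice of \(F\), since two defining maps differ by an invertible matrix of functions along \(\mathcal E\).

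\textbf{Step 2: normal form.} In adapted coordinates, \(\sigma_{\mathcal E}\) is the Jacobian \(\big(\partial F^A/\partial u^\alpha_K\big)_{|K|=r}\). If it has rank \(q\), choose \(q\) pairs \((\alpha,K)\) with \(|K|=r\) for which the corresponding \(q\times q\) minor is nonvanishing at \(e\); these span the subbundle \(Q\). The implicit function theorem then solves \(F=0\) locally for these \(q\) highest-order coordinates as smooth functions of all the remaining coordinates. This is exactly the normal form. Conversely, if \(\mathcal E\) is locally given by \(u^{\alpha_A}_{K_A}-f^A(\text{other coordinates})=0\) with \(|K_A|=r\), then this defining map has an identity block on the chosen \(\partial/\partial u^{\alpha_A}_{K_A}\), so its symbol has rank \(q\). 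By Step~1 and the independence of \(F\), the transversality condition follows.

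\textbf{Step 3: residual polarisation.} The identity \(\mathcal G^r_{\mathcal E}=\T\mathcal E\cap\mathcal G^r_\pi=\ker\sigma_{\mathcal E}\) holds because \(\T\mathcal E=\ker\dd F|_{\mathcal E}\). Under the rank condition, \(\ker\sigma_{\mathcal E}\) has constant rank \(\operatorname{rank}\mathcal G^r_\pi-q\), so \(\mathcal G^r_{\mathcal E}\) is a genuine subbundle.

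\textbf{Step 4: the case \(q=\operatorname{rank}\mathcal G^r_\pi\).} Here \(\sigma_{\mathcal E}\) is a square map from \(\mathcal G^r_\pi\) to \(\mathbb R^q\). It has rank \(q\) if and only if it is injective, that is, if and only if \(\T\mathcal E\cap\mathcal G^r_\pi=0\). In that case all highest-order coordinates \(u^\alpha_K\) with \(|K|=r\) are solved as functions of \((x^i,u^\alpha_I)_{|I|\le r-1}\), the coordinates on \(J^{r-1}\pi\). Thus \(\mathcal E\) is locally the graph of a section of \(\pi_{r,r-1}\). Conversely, a graph of a section of \(\pi_{r,r-1}\) is cut out by \(u^\alpha_K=s^\alpha_K(x,u_{<r})\), whose tangent space meets \(\ker\T\pi_{r,r-1}=\mathcal G^r_\pi\) only in zero.

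The only delicate point is making "normal form" precise and checking that the rank condition does not depend on the choice of \(F\); the rest is the implicit function theorem.
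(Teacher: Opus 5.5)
Your proposal is correct and follows essentially the same route as the paper, which gives no separate proof but sketches the argument in the paragraph just before the statement: choose a rank-\(q\) subbundle \(Q\subset\mathcal G^r_\pi|_{\mathcal E}\) on which \(\sigma_{\mathcal E}\) is an isomorphism, apply the implicit function theorem, and read off \(\mathcal G^r_{\mathcal E}=\ker\sigma_{\mathcal E}\) from \(\T\mathcal E=\ker\dd F\). Your Steps 1--4 make explicit what the paper leaves implicit: the linear algebra (surjectivity of \(\dd F_e\) restricted to \(\mathcal G^r_\pi\)), the independence of the choice of \(F\), and the square case \(q=\operatorname{rank}\mathcal G^r_\pi\).
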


We now indicate how the highest-order polarisation detects familiar features of concrete differential equations. The following examples are not intended to replace the classical analysis of these equations, but to show that several standard notions can be reformulated in terms of tangency, transversality and degeneracy of the polarisation.

\begin{example}
Let us consider the Gauss hypergeometric equation
\[
x(1-x)y''+\bigl(c-(a+b+1)x\bigr)y'-ab\,y=0.
\]
On \(J^2(\mathbb R,\mathbb R)\), with coordinates \((x,u_0,u_1,u_2)\), it defines the hypersurface \(\mathcal E=\{F=0\}\), where
\[
F=x(1-x)u_2+\bigl(c-(a+b+1)x\bigr)u_1-ab\,u_0.
\]
The polarisation of \(J^2(\mathbb R,\mathbb R)\) is \(\mathcal G^2_\pi=\ker \T\pi_{2,1}\cap\mathcal C^2_\pi=\langle\partial/\partial u_2\rangle\). Hence,
\[
\T_p\mathcal E\cap\mathcal G^2_{\pi,p}\neq0\quad\Longleftrightarrow\quad \frac{\partial F}{\partial u_2}(p)=0\quad\Longleftrightarrow\quad x(1-x)=0.
\]
Thus the finite singular points of the hypergeometric equation are precisely those points over which the differential equation fails to be transverse to the polarisation. In other words, the classical singularities \(x=0\) and \(x=1\) are characterised intrinsically as the points where the equation \(\mathcal E\subset J^2(\mathbb R,\mathbb R)\) has non-trivial intersection with the polarised directions of the Cartan distribution. The singularity at infinity is obtained by compactifying the base to \(\mathbb P^1\) and applying the same criterion in the affine chart \(z=1/x\).
\end{example}

\begin{example}[Linear Fuchsian equations]
More generally, consider the scalar second-order equation \(\mathcal E=F^{-1}(0)\), where \(F:J^2\pi\to\mathbb R\) is given by
\[
F(x,u,u_{x},u_{xx})=a(x)u_{xx}+b(x)u_x+c(x)u.
\]
One has
\(
\left.\dd F\right|_{\mathcal G^2_\pi}=a(x)\dd u_{xx}.
\)
Therefore the equation is locally in normal form with respect to \(u_{xx}\) precisely where
\(
a(x)\neq0.
\)
At the points where \(a(x)=0\), the equation is no longer transverse to the highest-order polarisation. Thus the usual degeneration of the leading coefficient is equivalently a polarised tangency condition.
\end{example}

\begin{example}
Let \(J^2(\mathbb R^n,\mathbb R)\) have second-order coordinates \(u_{ij}=u_{ji}\), and consider \(\det(u_{ij})=f(x^i,u,u_i)\). This defines a hypersurface \(\mathcal E\subset J^2(\mathbb R^n,\mathbb R)\). If \(U=(u_{ij})\), then \(F=\det U-f(x^i,u,u_i)\). The restriction of \(\dd F\) to the highest-order polarisation is the linear form
\[
\left.\dd F\right|_{\mathcal G^2_\pi}(\delta U)=\operatorname{tr}\bigl(\operatorname{adj}(U)\,\delta U\bigr).
\]
Therefore the equation is transverse to the highest-order polarisation precisely where \(\operatorname{adj}(U)\neq0\). At points where \(\operatorname{adj}(U)=0\), the vertical principal symbol vanishes and the whole highest-order polarisation is tangent to the equation. Thus, the degeneracy of the determinant operator as a hypersurface equation is expressed as a polarised degeneracy of \(\mathcal E\subset J^2\pi\). In particular, this condition detects the locus where the linearisation of the highest-order part loses rank, not merely the locus \(\det U=0\).
\end{example}

\begin{example}
The Korteweg--de Vries equation
\[
u_t+6u\,u_x+u_{xxx}=0
\]
defines a hypersurface \(\mathcal E\subset J^3(\mathbb R^2,\mathbb R)\). The highest-order polarisation is \(\mathcal G^3_\pi=\ker \T\pi_{3,2}\). Inside \(\mathcal G^3_\pi\) there is the distinguished spatial third-order line \(Q_{xxx}=\langle\partial/\partial u_{xxx}\rangle\). If \(F=u_t+6u\,u_x+u_{xxx}\), then \(\left.\dd F\right|_{Q_{xxx}}=\dd u_{xxx}\). Therefore, the equation is everywhere transverse to \(Q_{xxx}\), and is globally in normal form with respect to \(u_{xxx}\). 
The  polarisation \(\mathcal G^3_{\mathcal E}=\T\mathcal E\cap\mathcal G^3_\pi\) contains the third-order directions not fixed by this scalar equation. The polarised picture therefore detects the evolutionary normal form of KdV.
\end{example}

\section{Symmetry reduction between jet bundles}\label{Sec:SymReductionJets}

The recognition theorem allows one to formulate reductions in a way that is not tied to the original jet presentation. This is important because   useful reductions of differential equations do not preserve the Cartan distribution itself. Instead, the relevant Cartan-type directions may become well defined only after restricting to a suitable ambient submanifold and quotienting by the directions generated by a group action. The point is not merely to reduce an equation as an isolated polarised \(k\)-contact object, but first to reconstruct a reduced ambient jet bundle in which the reduced equation lives.

Let \(G\) be a Lie group acting locally on an open subset \(U\subset J^r\pi\), and let \(\mathcal D_G:=\langle \xi_{J^r\pi}:\xi\in\mathfrak g\rangle\) be the distribution spanned by the fundamental vector fields of the action. We do not assume that the action preserves \(\mathcal C^r_\pi\). Instead, the relevant objects will be the enlarged distributions \(\mathcal C^r_\pi+\mathcal D_G\) and \(\mathcal G^r_\pi+\mathcal D_G\), where \(\mathcal G^r_\pi=\ker\T\pi_{r,r-1}\) is the highest-order polarisation. Thus the Cartan and polarisation data are required to be projectable only modulo the group directions.

Let \(N\subset U\) be a regular submanifold such that \(q(N)\) is a regular submanifold of \(\overline U\).  Think of \(N\) as the ambient constraint encoding the reduction ansatz, for instance an invariance locus, a Cartan locus, or a more general differential constraint. Assume that the local quotient \(q:U\to\overline U:=U/G\) is a smooth submersion. Whenever the distributions involved have constant rank and are projectable, define the reduced ambient Cartan-type distribution and polarisation by
\[
\overline{\mathcal C}_{\overline N}:=\T q\big((\T N+\mathcal D_G)\cap(\mathcal C^r_\pi+\mathcal D_G)\big),\qquad
\overline{\mathcal P}_{\overline N}:=\T q\big((\T N+\mathcal D_G)\cap(\mathcal G^r_\pi+\mathcal D_G)\big).
\]
The resulting triple \((\overline N,\overline{\mathcal C}_{\overline N},\overline{\mathcal P}_{\overline N})\) is the reduced ambient polarised distributional structure. If it is of jet type, the recognition theorem reconstructs \(\overline N\) as a finite-order jet bundle. A reduced equation is then obtained by taking a regular equation \(\mathcal E\subset U\), restricting it to \(\mathcal E_N:=\mathcal E\cap N\), and considering its quotient \(\overline{\mathcal E}:=q(\mathcal E_N)\subset\overline N\).

\begin{theorem} \label{thm:modular_reduction_jet_type}
Let \(G\) act locally on an open subset \(U\subset J^r\pi\), and let \(q:U\to\overline U:=U/G\) be a smooth local quotient. Let \(N\subset U\) be a regular submanifold such that \(\overline N:=q(N)\) is regular. Assume that the distributions \(\T N+\mathcal D_G\), \((\T N+\mathcal D_G)\cap(\mathcal C^r_\pi+\mathcal D_G)\), and \((\T N+\mathcal D_G)\cap(\mathcal G^r_\pi+\mathcal D_G)\) have constant rank along \(N\) and are projectable modulo \(\mathcal D_G\). Define
\[
\overline{\mathcal C}_{\overline N}:=\T q\big((\T N+\mathcal D_G)\cap(\mathcal C^r_\pi+\mathcal D_G)\big),\qquad
\overline{\mathcal P}_{\overline N}:=\T q\big((\T N+\mathcal D_G)\cap(\mathcal G^r_\pi+\mathcal D_G)\big).
\]
Suppose that \(\overline{\mathcal C}_{\overline N}\) and \(\overline{\mathcal P}_{\overline N}\) are regular and that \((\overline N,\overline{\mathcal C}_{\overline N},\overline{\mathcal P}_{\overline N})\) is of jet type. Then, locally, there exist a fibred manifold \(\overline\pi:\overline F\to\overline Q\), an integer \(s\), and a local diffeomorphism \(\Phi:\overline N\to J^s\overline\pi\) such that \(\T\Phi(\overline{\mathcal C}_{\overline N})=\mathcal C^s_{\overline\pi}\) and \(\T\Phi(\overline{\mathcal P}_{\overline N})=\mathcal G^s_{\overline\pi}\). Let now \(\mathcal E\subset U\) be a regular differential equation such that \(\mathcal E_N:=\mathcal E\cap N\) is regular and \(q|_{\mathcal E_N}\) has constant rank, with regular image \(\overline{\mathcal E}:=q(\mathcal E_N)\subset\overline N\).
Then, \(\Phi(\overline{\mathcal E})\subset J^s\overline\pi\) is a differential equation in the reconstructed reduced jet bundle, with induced Cartan and polarisation data
\[
\T\Phi\big(\T\overline{\mathcal E}\cap\overline{\mathcal C}_{\overline N}\big)
=
\T(\Phi(\overline{\mathcal E}))\cap\mathcal C^s_{\overline\pi},
\qquad
\T\Phi\big(\T\overline{\mathcal E}\cap\overline{\mathcal P}_{\overline N}\big)
=
\T(\Phi(\overline{\mathcal E}))\cap\mathcal G^s_{\overline\pi}.
\]
If, moreover, the clean-intersection identities
\[
\T\overline{\mathcal E}\cap\overline{\mathcal C}_{\overline N}
=
\T q\big((\T\mathcal E_N+\mathcal D_G)\cap(\mathcal C^r_\pi+\mathcal D_G)\big),
\qquad
\T\overline{\mathcal E}\cap\overline{\mathcal P}_{\overline N}
=
\T q\big((\T\mathcal E_N+\mathcal D_G)\cap(\mathcal G^r_\pi+\mathcal D_G)\big)
\]
hold, then the induced geometry of the reduced equation is exactly the quotient of the corresponding Cartan and polarisation data of \(\mathcal E_N\).
\end{theorem}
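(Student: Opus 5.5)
The plan is to obtain the reconstruction of the reduced ambient space from Theorem~\ref{thm:local_split_jet_recognition}, and then to treat the reduced equation as a submanifold transported by a diffeomorphism, so that all remaining claims become linear algebra of pushforwards.

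\emph{Step 1: the reduced structure is well defined.} The projectability hypotheses modulo \(\mathcal D_G\) give \(\T q(\mathcal D_G)=0\). Since \(\mathcal D_G\) lies in each distribution being pushed forward, the images \(\overline{\mathcal C}_{\overline N}\) and \(\overline{\mathcal P}_{\overline N}\) are independent of the chosen point in a \(G\)-orbit (locally). Together with the constant-rank assumptions, they are therefore smooth distributions on \(\overline N\). We note that \(\T q(\T N+\mathcal D_G)=\T\overline N\), so both distributions are tangent to \(\overline N\).

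\emph{Step 2: reconstruct the jet bundle.} By hypothesis, \((\overline N,\overline{\mathcal C}_{\overline N},\overline{\mathcal P}_{\overline N})\) is of jet type of some order \(s\). Theorem~\ref{thm:local_split_jet_recognition} applies directly and supplies, around every point, a fibred manifold \(\overline\pi:\overline F\to\overline Q\). Here \(\overline Q\) is the local leaf space of \(\mathcal V_s\), and \(\overline F\) is the local leaf space of \(\mathcal V_{s-1}\). The theorem also supplies a local polarised \(k\)-contactomorphism \(\Phi\) with \(\T\Phi(\overline{\mathcal C}_{\overline N})=\mathcal C^s_{\overline\pi}\) and \(\T\Phi(\overline{\mathcal P}_{\overline N})=\mathcal G^s_{\overline\pi}\). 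This settles the first assertion.

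\emph{Step 3: the reduced equation and its induced data.} Since \(q|_{\mathcal E_N}\) has constant rank, its image \(\overline{\mathcal E}\) is, after shrinking, a regular submanifold of \(\overline N\). Because \(\Phi\) is a local diffeomorphism, \(\Phi(\overline{\mathcal E})\) is a regular submanifold of \(J^s\overline\pi\), hence a differential equation in the sense of Section~\ref{sec:kConPDEs}. For the induced data we use that a linear isomorphism commutes with intersections: \(\T\Phi(A\cap B)=\T\Phi(A)\cap\T\Phi(B)\). Taking \(A=\T\overline{\mathcal E}\), together with \(\T\Phi(\T\overline{\mathcal E})=\T\Phi(\overline{\mathcal E})\) and the two identities from Step 2, gives both displayed equalities.

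\emph{Step 4: the clean-intersection case.} Under the extra identities, substituting into Step 3 expresses \(\T(\Phi(\overline{\mathcal E}))\cap\mathcal C^s_{\overline\pi}\) as \(\T\Phi\circ\T q\) applied to \((\T\mathcal E_N+\mathcal D_G)\cap(\mathcal C^r_\pi+\mathcal D_G)\). The polarisation case is identical. This is precisely the statement that the induced geometry of the reduced equation is the quotient of the data on \(\mathcal E_N\).

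The main obstacle I expect is the regularity bookkeeping in Steps 1 and 3. The pushforward of an intersection is in general only contained in the intersection of the pushforwards, which is exactly why the clean-intersection hypothesis is needed. I would therefore state explicitly that Step 3 uses only the bijectivity of \(\T\Phi\), while the comparison with data on \(\mathcal E_N\) uses only the assumed identities. Everything else reduces to the recognition theorem.
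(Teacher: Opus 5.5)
Your proposal is correct and is essentially the paper's argument written out in more detail. Both proofs use the same four ingredients: the projectability and constant-rank hypotheses to make the reduced data well defined, Theorem~\ref{thm:local_split_jet_recognition} to produce \(\Phi\), the fact that the fibrewise isomorphisms \(\T\Phi\) commute with intersections, and direct substitution of the clean-intersection identities.

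One small precision for Step~1 concerns which hypothesis does which job. The identity \(\T q(\mathcal D_G)=0\) holds simply because \(\mathcal D_G\) is tangent to the orbits, and the containment \(\mathcal D_G\subset(\T N+\mathcal D_G)\cap(\mathcal C^r_\pi+\mathcal D_G)\) only controls the rank of the image. Independence of the chosen orbit point comes from the assumed projectability modulo \(\mathcal D_G\), not from that containment.
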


\begin{proof}
The projectability and constant-rank assumptions imply that \(\overline N\), \(\overline{\mathcal C}_{\overline N}\) and \(\overline{\mathcal P}_{\overline N}\) are well-defined regular objects on the quotient. Since \((\overline N,\overline{\mathcal C}_{\overline N},\overline{\mathcal P}_{\overline N})\) is assumed to be of jet type, Theorem~\ref{thm:local_split_jet_recognition} gives a local fibred manifold \(\overline\pi:\overline F\to\overline Q\), an integer \(s\), and a local diffeomorphism \(\Phi:\overline N\to J^s\overline\pi\) identifying the reduced ambient Cartan-type distribution and polarisation with \(\mathcal C^s_{\overline\pi}\) and \(\mathcal G^s_{\overline\pi}\), respectively. If \(\overline{\mathcal E}\subset\overline N\) is regular, then \(\Phi(\overline{\mathcal E})\subset J^s\overline\pi\) is a regular differential equation. Since \(\Phi\) is a diffeomorphism identifying the ambient distributions, the induced distributions on the reduced equation are precisely the intersections with the canonical Cartan distribution and highest-order polarisation of \(J^s\overline\pi\). The final statement follows directly from the two clean-intersection identities.
\end{proof}

The case \(N=\mathcal E\) recovers the more economical reduction in which the quotient of the equation itself is recognised as a jet-type polarised \(k\)-contact object. The ambient version above is more flexible: \(N\) may encode an ansatz, an invariance condition, a Cartan locus, or another system of differential constraints, while the reduced equation \((\mathcal E\cap N)/G\) is then viewed as a submanifold of the reconstructed reduced jet bundle. Thus, different choices of \(N\) may lead to different reduced equations and, when \(N\) is less restrictive than the strict invariance locus, to larger classes of reduced solutions.

\begin{example}\label{ex:kdv_travelling_wave_modular}
Let us analyse a travelling-wave reduction of KdV using the ambient reduction theorem. Let \(\pi:\mathbb R^2_{t,x}\times\mathbb R_u\to\mathbb R^2_{t,x}\) and consider
\[
 \mathcal E_{\mathrm{KdV}}=\{u_t+6uu_x+u_{xxx}=0\}\subset J^3\pi .
\]
Let \(G=\mathbb R\) act by translations generated by \(X=\partial_t+c\partial_x\), and let \(\mathcal D_G=\langle X^{(3)}\rangle\), where \(X^{(3)}=\partial_t+c\partial_x\). The prolonged action preserves the Cartan distribution, but the travelling-wave reduction is more naturally obtained by first restricting to the Cartan locus
\[
 N:=\mathcal L_{\mathcal C^3_\pi}(X^{(3)})=\{p\in J^3\pi:X^{(3)}_p\in\mathcal C^3_{\pi,p}\}.
\]
In adapted coordinates \(N\) is given by
\[
 u_t+cu_x=0,\qquad u_{tt}+cu_{tx}=0,\qquad u_{tx}+cu_{xx}=0,
\]
\[
 u_{ttt}+cu_{ttx}=0,\qquad u_{ttx}+cu_{txx}=0,\qquad u_{txx}+cu_{xxx}=0.
\]
Thus, on \(N\), all \(t\)-derivatives are expressed in terms of pure \(x\)-derivatives. The quotient \(q_N:N\to\overline N:=N/G\) has invariant coordinate \(z=x-ct\), and the remaining coordinates are
\(
 u, p=u_x, q=u_{xx}, r=u_{xxx}.
\)
The reduced ambient distribution
\[
 \overline{\mathcal C}_{\overline N}:=\T q_N\big((\T N+\mathcal D_G)\cap(\mathcal C^3_\pi+\mathcal D_G)\big)
\]
is spanned by
\[
 D=\frac{\partial}{\partial z}+p\frac{\partial}{\partial u}+q\frac{\partial}{\partial p}+r\frac{\partial}{\partial q},
 \qquad
 R=\frac{\partial}{\partial r},
\]
and the reduced highest-order polarisation is \(\overline{\mathcal P}_{\overline N}=\langle R\rangle\). To verify the jet-type hypotheses, take \(H=\langle D\rangle\) and \(\overline{\mathcal V}_3=\ker\dd z=\langle\partial_u,\partial_p,\partial_q,\partial_r\rangle\). Then \(\T\overline N=H\oplus\overline{\mathcal V}_3\), both summands are integrable, and
\(
 \overline{\mathcal V}_s=\overline{\mathcal C}_{\overline N}^{(s)}\cap\overline{\mathcal V}_3,\qquad s=0,1,2,
\)
are
\[
 \overline{\mathcal V}_0=\langle\partial_r\rangle,\qquad
 \overline{\mathcal V}_1=\langle\partial_q,\partial_r\rangle,\qquad
 \overline{\mathcal V}_2=\langle\partial_p,\partial_q,\partial_r\rangle.
\]
Indeed, \([D,\partial_r]=-\partial_q\), \([D,\partial_q]=-\partial_p\), and \([D,\partial_p]=-\partial_u\). Hence \(\overline{\mathcal C}_{\overline N}^{(s)}=H\oplus\overline{\mathcal V}_s\) for \(s=0,1,2\), while \(\overline{\mathcal C}_{\overline N}^{(3)}=\T\overline N=H\oplus\overline{\mathcal V}_3\). Thus \((\overline N,\overline{\mathcal C}_{\overline N},\overline{\mathcal P}_{\overline N})\) is of jet type and order \(3\). By Theorem~\ref{thm:modular_reduction_jet_type}, the reduced ambient space is locally equivalent to \(J^3\rho\) with its canonical Cartan distribution and highest-order polarisation, where \(\rho:\mathbb R_z\times\mathbb R_u\to\mathbb R_z\). In this identification, the reconstructed jet coordinates are \(p=u_z\), \(q=u_{zz}\), and \(r=u_{zzz}\).

The reduced equation is now obtained as a submanifold of this ambient reduced jet space, namely
\[
 \overline{\mathcal E}_{\mathrm{KdV}}:=q_N(\mathcal E_{\mathrm{KdV}}\cap N)\subset\overline N\simeq J^3\rho .
\]
Since on \(N\) one has \(u_t=-cu_x=-cp\), the KdV equation becomes
\[
 u_t+6uu_x+u_{xxx}=-cp+6up+r.
\]
Therefore,
\[
 \overline{\mathcal E}_{\mathrm{KdV}}=\{r+(6u-c)p=0\}\subset J^3\rho,
\]
or, in the reconstructed jet coordinates,
\[
 u_{zzz}+(6u-c)u_z=0.
\]
Thus, the travelling-wave reduction is not obtained by reconstructing a jet space from the quotient of the equation itself. Rather, the ambient quotient \(N/G\) first carries a reduced Cartan-type distribution together with a highest-order polarisation of jet type, hence is recognised as \(J^3\rho\); the reduced KdV equation is then the hypersurface \(\overline{\mathcal E}_{\mathrm{KdV}}\subset J^3\rho\).
\end{example}

\begin{remark}
The choice of the ambient submanifold \(N\) is not unique. In the travelling-wave example above, \(N=\mathcal L_{\mathcal C^3_\pi}(X^{(3)})\) encodes the strict invariance condition \(u_t+cu_x=0\) and its differential consequences, hence leads to the standard travelling-wave reduction. More generally, one may choose a larger or different submanifold \(N\subset J^r\pi\) satisfying the constant-rank and projectability assumptions above, provided the reduced polarised distribution \((\overline N,\overline{\mathcal C}_{\overline N},\overline{\mathcal P}_{\overline N})\) is of jet type. The image \(\overline N=q(N)\) is then recognised as an ambient jet space, and \((\mathcal E\cap N)/G\) becomes a reduced differential equation inside it. Thus different choices of \(N\) correspond to different reduction ansatzes or differential constraints, and may lead to larger classes of reduced solutions than the strictly invariant ones.
\end{remark}
 \section{Applications}
This section illustrates the scope of the formalism through Bäcklund transformations, Lax pairs, hodograph maps and soliton correspondences. These examples show that polarised \(k\)-contact geometry naturally treats auxiliary systems and transformations beyond a fixed jet presentation.
\subsection{Lax pairs, coverings and B\"acklund correspondences}
\label{Sec:BacklundLax}

One of the natural advantages of the polarised \(k\)-contact viewpoint is that it does not force one to remain inside a fixed jet bundle. This is useful for structures that are traditionally described by auxiliary systems, coverings, zero-curvature representations and B\"acklund transformations. The aim of this subsection is not to claim that such objects are automatically Hamiltonian in the \(k\)-contact sense. Rather, we explain how their compatibility is encoded by Cartan geometry on enlarged jet spaces.

Let \(\pi:E\to Q\) be a fibred manifold with \(\dim Q=2\), with local coordinates \((x,t)\) on \(Q\), and let \(\mathcal E\subset J^r\pi\) be a differential equation. Suppose that \(\mathcal E\) admits an auxiliary linear problem depending on a spectral parameter \(\lambda\),
\[
 D_x\Psi=U_\lambda(j^s u)\Psi,\qquad
 D_t\Psi=V_\lambda(j^s u)\Psi,
\]
where \(D_x,D_t\) are the total derivatives and \(U_\lambda,V_\lambda\) take values in a matrix Lie algebra. Equivalently, on the extended space with auxiliary variable \(\Psi\), one considers the lifted total derivatives
\[
 \widetilde D_x^\lambda
 =
 D_x+\sum_a (U_\lambda\Psi)^a\frac{\partial}{\partial\Psi^a},
 \qquad
 \widetilde D_t^\lambda
 =
 D_t+\sum_a (V_\lambda\Psi)^a\frac{\partial}{\partial\Psi^a}.
\]
Their curvature is the vertical defect
\[
 [\widetilde D_x^\lambda,\widetilde D_t^\lambda]
 =
 \sum_a
 \bigl((D_xV_\lambda-D_tU_\lambda+[V_\lambda,U_\lambda])\Psi\bigr)^a
 \frac{\partial}{\partial\Psi^a}.
\]
Thus, up to the sign convention for the Lie bracket, the flatness condition is the usual zero-curvature equation
\[
 D_tU_\lambda-D_xV_\lambda+[U_\lambda,V_\lambda]=0.
\]
In the standard integrable examples this condition is equivalent to \(\mathcal E\), or contains \(\mathcal E\) as a distinguished differential subsystem. This is the traditional zero-curvature interpretation of a Lax pair; see, for instance, \cite{Marvan_2008,KrishnaswamiVishnu_2020}. The compatibility of the auxiliary linear system is understood in the usual sense, namely for arbitrary auxiliary values of \(\Psi\), or equivalently for a fundamental matrix solution.

The \(k\)-contact interpretation is obtained by viewing the auxiliary linear problem as a genuine differential equation on an enlarged jet space. Let
\(
 \rho:E\times W\to Q
\)
be the fibration whose sections are pairs \((u,\Psi)\), where \(W\) is the vector space in which the auxiliary wave function takes values. The equations
\[
 \Psi_x=U_\lambda(j^s u)\Psi,\qquad
 \Psi_t=V_\lambda(j^s u)\Psi
\]
define an auxiliary differential equation
\(
 \mathcal L_\lambda\subset J^{s+1}\rho
\)
after taking a sufficiently high finite jet order. Its Cartan distribution is the restriction of the canonical Cartan distribution of \(J^{s+1}\rho\). The first compatibility condition of \(\mathcal L_\lambda\) is obtained by requiring the Cartan plane of a higher jet to be tangent to \(\mathcal L_\lambda\). In coordinates, this gives
\[
 D_t(\Psi_x-U_\lambda\Psi)=0,\qquad
 D_x(\Psi_t-V_\lambda\Psi)=0.
\]
Modulo the defining equations of \(\mathcal L_\lambda\), the mixed compatibility condition becomes
\[
 (D_tU_\lambda-D_xV_\lambda+[U_\lambda,V_\lambda])\Psi=0.
\]
Hence the zero-curvature equation is precisely the mixed Cartan compatibility condition of the auxiliary system.

This is the point at which the polarised \(k\)-contact framework enters. Since finite-order jet bundles carry canonical \(N^r_\rho\)-contact distributions, the auxiliary system \(\mathcal L_\lambda\) is a submanifold of a jet-type \(k\)-contact manifold. Its compatibility is not imposed by an external calculation: it is the Cartan prolongation condition for \(\mathcal L_\lambda\). Thus the Lax pair is interpreted as a Cartan-compatible auxiliary system, or equivalently as a flat covering connection over the original equation.

\begin{proposition}\label{prop:lax_cartan_compatibility}
Let \(\mathcal L_\lambda\) be the auxiliary linear system defined by
\[
D_x\Psi=U_\lambda\Psi,\qquad D_t\Psi=V_\lambda\Psi .
\]
Then, its Cartan compatibility condition is the zero-curvature equation
\[
\Omega_\lambda:=D_tU_\lambda-D_xV_\lambda+[U_\lambda,V_\lambda]=0.
\]
Equivalently, the lifted horizontal distribution \(\widetilde{\mathcal H}_\lambda=\langle\widetilde D_x^\lambda,\widetilde D_t^\lambda\rangle\) is involutive precisely on the locus \(\Omega_\lambda=0\).
\end{proposition}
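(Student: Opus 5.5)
The plan is to work entirely with the lifted total derivatives on the extended space, where the compatibility of \(\mathcal L_\lambda\) reduces to a bracket computation. The proof has three steps.

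First, I compute \([\widetilde D_x^\lambda,\widetilde D_t^\lambda]\) directly. Since \([D_x,D_t]=0\) on the jet space of \(u\), only the \(\Psi\)-vertical terms survive. Write \(\widetilde D_x^\lambda=D_x+Y_U\) and \(\widetilde D_t^\lambda=D_t+Y_V\), with \(Y_A=\sum_a(A\Psi)^a\partial/\partial\Psi^a\) the linear vector field associated with a matrix \(A\) depending on \(j^su\). Then the bracket splits as
\[
[D_x,Y_V]+[Y_U,D_t]+[Y_U,Y_V]=Y_{D_xV_\lambda}-Y_{D_tU_\lambda}+Y_{[V_\lambda,U_\lambda]},
\]
because \(D_x\) acts only on the coefficients of \(Y_V\) (it does not differentiate \(\Psi\)), and linear vector fields satisfy \([Y_A,Y_B]=Y_{[B,A]}\). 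The sign of this last identity has to be checked once. Hence \([\widetilde D_x^\lambda,\widetilde D_t^\lambda]=-Y_{\Omega_\lambda}\), which is the vertical curvature defect displayed before the proposition.

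Second, I deduce the involutivity claim. The field \(Y_{\Omega_\lambda}\) is \(\Psi\)-vertical and so transverse to \(\widetilde{\mathcal H}_\lambda\). Therefore \([\widetilde D_x^\lambda,\widetilde D_t^\lambda]\in\widetilde{\mathcal H}_\lambda\) at a point if and only if \(\Omega_\lambda\Psi=0\) there. Requiring this for arbitrary auxiliary values \(\Psi\), which is the convention fixed in the text (equivalently, for a fundamental matrix solution), gives \(\Omega_\lambda=0\). Brackets of function multiples reduce to this one by the Leibniz rule, so \(\widetilde{\mathcal H}_\lambda\) is involutive exactly on \(\{\Omega_\lambda=0\}\).

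Third, I identify this with the Cartan compatibility condition. I use Proposition~\ref{prop:pde_prolongation_as_polarised_legendrian}: a point of \(J^{s+2}\rho\) lies over \(\mathcal L_\lambda\) in its prolongation iff its Cartan plane is tangent to \(\mathcal L_\lambda\), that is, iff the total derivatives of the defining functions \(\Psi_x-U_\lambda\Psi\) and \(\Psi_t-V_\lambda\Psi\) vanish. Taking \(D_t\) of the first, \(D_x\) of the second, subtracting, and using \(\Psi_{xt}=\Psi_{tx}\) together with the defining equations to substitute \(\Psi_x,\Psi_t\), yields \(\Omega_\lambda\Psi=0\). This is the same expression as in the bracket computation, and the same universality convention in \(\Psi\) gives \(\Omega_\lambda=0\).

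I expect the main obstacle to be bookkeeping rather than ideas: fixing the sign conventions so that the bracket formula and the stated \(\Omega_\lambda\) agree (up to the sign the text already flags), and justifying the passage from \(\Omega_\lambda\Psi=0\) to \(\Omega_\lambda=0\) via the "arbitrary \(\Psi\)" convention.
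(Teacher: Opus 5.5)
Your proposal is correct and follows the paper's own argument. The paper also computes \([\widetilde D_x^\lambda,\widetilde D_t^\lambda]=-\sum_a(\Omega_\lambda\Psi)^a\partial/\partial\Psi^a\), concludes that \(\widetilde{\mathcal H}_\lambda\) is involutive exactly where this vertical defect vanishes, and identifies this with the closure of the mixed Cartan prolongation conditions for \(\Psi_x=U_\lambda\Psi\) and \(\Psi_t=V_\lambda\Psi\). Your version makes explicit what the paper leaves implicit: the sign identity \([Y_A,Y_B]=Y_{[B,A]}\) (which you have right), the Leibniz-rule reduction to the generators, and the ``arbitrary \(\Psi\)'' convention used to pass from \(\Omega_\lambda\Psi=0\) to \(\Omega_\lambda=0\).
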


\begin{proof}
The bracket computation gives
\[
 [\widetilde D_x^\lambda,\widetilde D_t^\lambda]
 =
 -\sum_a(\Omega_\lambda\Psi)^a\frac{\partial}{\partial\Psi^a},
\]
up to the chosen sign convention for the zero-curvature expression. Thus \(\widetilde{\mathcal H}_\lambda\) is involutive exactly when the vertical curvature defect vanishes. Equivalently, the mixed Cartan prolongation conditions for the equations \(\Psi_x=U_\lambda\Psi\) and \(\Psi_t=V_\lambda\Psi\) close precisely on the zero-curvature locus.
\end{proof}

\begin{remark}\label{rem:lax_not_hamiltonian}
The vertical curvature defect can be represented locally by a vertical vector field on the auxiliary variables. However, this vector field is not automatically a \(k\)-contact vector field. Therefore the contraction of such a defect field with an adapted \(k\)-contact form should not be called a \(k\)-contact Hamiltonian unless one has additionally proved that the defect field preserves the relevant Cartan \(k\)-contact distribution. In the present discussion we use only the Cartan-geometric statement: the Lax equation is the vanishing of the curvature defect of a lifted horizontal distribution, or equivalently the mixed compatibility condition of the auxiliary PDE.
\end{remark}

B\"acklund transformations can be described in the same Cartan-compatible language. Let
\[
 \mathcal E\subset J^r\pi,\qquad
 \widehat{\mathcal E}\subset J^{\widehat r}\widehat\pi
\]
be two differential equations over the same base \(Q\), or over bases identified locally. A finite-order B\"acklund correspondence is a submanifold
\[
 \mathcal B\subset J^p\pi\times_QJ^{\widehat p}\widehat\pi
\]
together with the two natural projections
\[
 p:J^p\pi\times_QJ^{\widehat p}\widehat\pi\to J^p\pi,\qquad
 \widehat p:J^p\pi\times_QJ^{\widehat p}\widehat\pi\to J^{\widehat p}\widehat\pi,
\]
such that, after the necessary prolongations, its projections land in \(\mathcal E\) and \(\widehat{\mathcal E}\). The ambient product carries the product Cartan distribution
\[
 \mathcal C^{p,\widehat p}
 =
 (Tp)^{-1}(\mathcal C^p_\pi)\cap
 (T\widehat p)^{-1}(\mathcal C^{\widehat p}_{\widehat\pi}),
\]
together with the corresponding product polarisation. The induced Cartan distribution on \(\mathcal B\) is
\[
 \mathcal C_{\mathcal B}
 =
 T\mathcal B\cap \mathcal C^{p,\widehat p}.
\]
The compatibility of the B\"acklund system is then expressed by the Cartan prolongation of \(\mathcal B\): one requires that the Cartan planes of higher jets be tangent to the B\"acklund constraints. This is the same mechanism as for ordinary PDE prolongation, but now applied to a correspondence between two jet spaces.

Thus, after passing to a sufficiently high prolongation, a B\"acklund transformation gives a span of polarised \(k\)-contact PDEs
\[
 (\mathcal E,\mathcal C_{\mathcal E})
 \longleftarrow
 (\mathcal B^{(\mathrm{comp})},\mathcal C_{\mathcal B^{(\mathrm{comp})}})
 \longrightarrow
 (\widehat{\mathcal E},\mathcal C_{\widehat{\mathcal E}}),
\]
where \(\mathcal B^{(\mathrm{comp})}\) denotes the Cartan compatibility locus of the B\"acklund constraints. A holonomic integral submanifold of the middle space projects to holonomic integral submanifolds of the two equations. In this sense, B\"acklund transformations are not merely relations between functions; they are Cartan-compatible differential correspondences between polarised \(k\)-contact PDEs.

The novelty of the polarised \(k\)-contact formulation is therefore not the zero-curvature equation itself, nor the classical definition of B\"acklund transformation. Those belong to the traditional theory of integrable systems and differential coverings. The contribution is the uniform finite-order interpretation: Lax pairs, coverings and B\"acklund transformations are treated as auxiliary PDEs or differential correspondences inside enlarged jet-type \(k\)-contact manifolds. Their compatibility is detected by Cartan prolongation and by the induced product Cartan geometry, not by introducing Hamiltonian loci without a genuine \(k\)-contact vector field.
 \subsection{Hodograph transformations as changes of polarised \texorpdfstring{\(k\)}{k}-contact presentation}
\label{Sec:Hodograph}

The purpose of this subsection is twofold. First, we recall the classical hodograph mechanism in jet-geometric terms: a hodograph transformation changes the choice of independent variables and is defined only on the open locus where the transformed submanifold is transverse to the new projection. Secondly, we explain what the polarised \(k\)-contact viewpoint adds: the hodograph prolongation preserves the intrinsic Cartan \(N^r_\pi\)-contact distribution, but it generally changes the jet presentation determined by the chosen fibration. In particular, it may preserve holonomic geometry without preserving   \(\ker\T\pi_r\). Classical accounts of hodograph transformations and their effect on partial derivatives can be found, for instance, in \cite[p. 355]{Ainsaar_1986} or \cite{RogersShadwick_1982,ClarksonFokasAblowitz_1989,EstevezPrada_2005_HodographCH}. The Cartan-distribution viewpoint on jet spaces and differential equations is standard in geometric PDE theory  \cite{Vitagliano_DGPDE,KruglikovLychagin_2007}.

Motivated by the classical hodograph transformations, we shall use the term
\emph{hodograph-type transformation} for the following jet-geometric
notion.

\begin{definition}
    Let \(\pi:E\to Q\) and \(\pi':E'\to Q'\) be fibred manifolds with \(\dim Q=\dim Q'=n\) and fibre rank \(m\). A local diffeomorphism \(\Phi_0:E\to E'\) will be called a \emph{hodograph-type transformation} from \(\pi\) to \(\pi'\) if it sends local graphs of sections of \(\pi\), whenever their images are transverse to \(\pi'\), into local graphs of sections of \(\pi'\). 
\end{definition}

More explicitly, if \(\phi:U\subset Q\to E\) is a local section and \(\Phi_0(\phi(U))\) is transverse to \(\pi':E'\to Q'\), then locally there exists a section \(\phi':U'\subset Q'\to E'\) such that \(\Phi_0(\phi(U))=\phi'(U')\).

In adapted coordinates, such a hodograph transformation takes the local form \((x^i,u^\alpha)\mapsto (X^i(x,u),U^\alpha(x,u))\). Along a local section \(u=\phi(x)\), the new independent variables are \(X^j(x,\phi(x))\). Hence, the relevant non-degeneracy condition is
\[
 \det A\neq0,\qquad A_i^j:=D_iX^j=\frac{\partial X^j}{\partial x^i}+\sum_{\alpha=1}^mu^\alpha_i\frac{\partial X^j}{\partial u^\alpha},\qquad i,j=1,\ldots,n.
\]
Thus, a hodograph-type transformation is naturally defined only on the  subset where the total Jacobian \(A=(A_i^j)\) is invertible. On this subset, the total derivatives with respect to the new independent variables are
\(
 D'_j=\sum_{i=1}^n(A^{-1})^i_jD_i.
\)
Consequently, the first transformed derivatives are
\[
 U^\alpha_j=\sum_{i=1}^n(A^{-1})^i_jD_iU^\alpha,\qquad j=1,\ldots,n,\qquad \alpha=1,\ldots,m,
\]
and the higher-order transformed derivatives are obtained by iterating the transformed total derivatives. This recovers the classical hodograph formulas as the coordinate expression of jet prolongation.
\begin{definition}
For \(r\geq1\), set
\(
 \mathcal U_r:=\{j^r_x\phi\in J^r\pi:\det(D_iX^j)(j^1_x\phi)\neq0\}.
\)
The \emph{\(r\)-th hodograph prolongation} of \(\Phi_0\) is the local map
\(
 \Phi^{(r)}:\mathcal U_r\subset J^r\pi\longrightarrow J^r\pi'
\)
defined as follows: if \(p=j^r_x\phi\in\mathcal U_r\), then \(\Phi_0(\phi(U))\) is locally the graph of a section \(\phi':U'\to E'\), and
\[
 \Phi^{(r)}(p):=j^r_{x'}\phi',\qquad x'=\pi'(\Phi_0(\phi(x))).
\]
\end{definition}
Equivalently, \(\Phi^{(r)}\) is determined in adapted coordinates by the formulas \(D'_j=\sum_{i=1}^n(A^{-1})^i_jD_i\), with $j=1,\ldots,n$, and by their iterates up to order \(r\).

\begin{proposition}\label{prop:classical_hodograph_prolongation}
The hodograph prolongation \(\Phi^{(r)}:\mathcal U_r\to J^r\pi'\) satisfies
\(
 \T\Phi^{(r)}(\mathcal C^r_\pi)=\mathcal C^r_{\pi'}.
\)
Hence, since \(\mathcal C^r_\pi\) and \(\mathcal C^r_{\pi'}\) are the canonical \(N^r_\pi\)- and \(N^r_{\pi'}\)-contact distributions of the corresponding jet spaces, every classical hodograph prolongation is a local \(k\)-contact transformation on its domain of definition. Moreover, it is a Legendrian-polarised transcontact transformation.
\end{proposition}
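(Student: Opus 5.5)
The plan is to prove $\T\Phi^{(r)}(\mathcal C^r_\pi)=\mathcal C^r_{\pi'}$ pointwise on $\mathcal U_r$. The idea is to characterise Cartan planes by holonomic prolongations, and then to read off the Legendrian-polarised property from the same description.

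The starting observation is that, at any $p=j^r_x\phi\in\mathcal U_r$, the Cartan space splits as
\[
(\mathcal C^r_\pi)_p=\T_p\bigl(j^r\phi(U)\bigr)+(\mathcal G^r_\pi)_p .
\]
This holds because the tangent spaces of the $r$-th prolongations of sections with prescribed $(r+1)$-jet sweep out all polarised Legendrian planes (Proposition~\ref{prop:jet_as_polarised_legendrian_prolongation}). In fact these planes already span $(\mathcal C^r_\pi)_p$, since their union together with $\mathcal G^r_\pi$ covers $\mathcal C^r_\pi$.

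First step: show that $\Phi^{(r)}$ maps holonomic sections to holonomic sections. By definition, $\Phi^{(r)}\circ j^r\phi=j^r\phi'\circ\chi$, where $\chi=\pi'\circ\Phi_0\circ\phi$ is a local diffeomorphism $U\to U'$ because $\det A\neq0$. Hence $\T\Phi^{(r)}$ sends $\T(j^r\phi(U))$ into $\T(j^r\phi'(U'))\subset\mathcal C^r_{\pi'}$. Since the open set $\mathcal U_r$ contains, through every point, prolongations realising every polarised Legendrian plane (take $\phi$ with arbitrary $(r+1)$-jet; the condition $\det A\ne0$ depends only on the $1$-jet), the inclusion $\T\Phi^{(r)}$(polarised Legendrian planes)$\subset\mathcal C^r_{\pi'}$ follows.

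Second step: treat the vertical directions $\mathcal G^r_\pi$. Here I would use the explicit formulas. The coordinates of order $<r$ of $\Phi^{(r)}(p)$ are obtained by iterating $D'_j=\sum_i(A^{-1})^i_jD_i$ at most $r-1$ times on $U^\alpha$, so they depend only on $j^{r-1}\phi$ (recall $A$ depends on the $1$-jet). Thus $\Phi^{(r)}$ is $\pi_{r,r-1}$-projectable, and $\T\Phi^{(r)}(\mathcal G^r_\pi)\subset\ker\T\pi'_{r,r-1}=\mathcal G^r_{\pi'}\subset\mathcal C^r_{\pi'}$. This step needs $r\ge2$; for $r=1$ the $0$-th order coordinates come from $\Phi_0$ and the same conclusion holds. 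I expect this to be the main point needing care, because $A$ involves first derivatives, so at $r=1$ one must check the order bookkeeping separately.

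Combining the two steps gives $\T\Phi^{(r)}(\mathcal C^r_\pi)\subset\mathcal C^r_{\pi'}$. To get equality, apply the same argument to the inverse hodograph transformation $\Phi_0^{-1}$ from $\pi'$ to $\pi$, whose prolongation inverts $\Phi^{(r)}$ on the image; alternatively, note that $\Phi^{(r)}$ is a local diffeomorphism and the ranks agree because $N^r_\pi=N^r_{\pi'}$. Theorem~\ref{cor:Cartan_Nr_contact} then says this is a $k$-contact transformation.

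Finally, for the Legendrian-polarised property: a $\mathcal G^r_\pi$-polarised Legendrian plane $E_p$ is the tangent space of some prolongation $j^r\phi$, by the first step. Its image is $\T(j^r\phi'(U'))$, which is isotropic and complementary to $\mathcal G^r_{\pi'}$ because it projects isomorphically onto $\T Q'$. So the image is $\mathcal G^r_{\pi'}$-polarised Legendrian, which matches Definition~\ref{def:legendrian_polarised_transcontact_morphism}.
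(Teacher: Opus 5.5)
Your proof is correct and follows the paper's outline. Prolonged graphs are carried to prolonged graphs, the inverse hodograph transformation gives the reverse inclusion, and the Legendrian-polarised property comes from identifying $\mathcal G^r_\pi$-polarised Legendrian planes with $(r+1)$-jets.

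The difference is in how you reach all of $\mathcal C^r_\pi$. The paper simply asserts that $\mathcal C^r_\pi$ is generated by tangent spaces of prolonged graphs. You instead use the splitting $(\mathcal C^r_\pi)_p=\T_p(j^r\phi(U))\oplus(\mathcal G^r_\pi)_p$, which needs only one prolonged graph through $p$. You then handle the vertical summand through the tower compatibility $\pi'_{r,r-1}\circ\Phi^{(r)}=\Phi^{(r-1)}\circ\pi_{r,r-1}$, with $\Phi^{(0)}=\Phi_0$.

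This avoids the paper's unproved spanning claim, and it gives more than the statement. You get $\T\Phi^{(r)}(\mathcal G^r_\pi)\subset\mathcal G^r_{\pi'}$, and applying the same argument to the inverse gives equality. So every hodograph prolongation of a point transformation is highest-order polarised in the sense of Definition~\ref{def:hodograph_kcontact_transformation}, in line with Remark~\ref{rem:hodograph_not_projectable}.

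One small slip is in your opening paragraph. The fact that the polarised Legendrian planes together with $\mathcal G^r_\pi$ cover $\mathcal C^r_\pi$ does not by itself imply that the planes alone span $\mathcal C^r_\pi$. The spanning claim is true for a different reason: varying a single coordinate $u^\alpha_L$ with $|L|=r+1$ and taking differences produces each $\partial/\partial u^\alpha_{L-\widehat e_i}$ with $L-\widehat e_i\geq 0$. Your second step makes the argument independent of that remark, so nothing breaks.
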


\begin{proof}
The map \(\Phi^{(r)}\) sends \(r\)-jets of local sections of \(\pi\) to \(r\)-jets of local sections of \(\pi'\). Therefore it sends tangent spaces to prolonged graphs into tangent spaces to prolonged graphs. Since the Cartan distribution is generated by the tangent spaces to prolonged graphs, one obtains \(\T\Phi^{(r)}(\mathcal C^r_\pi)\subset\mathcal C^r_{\pi'}\). Applying the same argument to the inverse hodograph transformation gives the reverse inclusion. Since every \(\mathcal G^r_\pi\)-polarised Legendrian plane is the Cartan \(n\)-plane determined by an \((r+1)\)-jet, and since \(\Phi^{(r)}\) maps prolonged graphs to prolonged graphs on its regular domain, its tangent map sends \(\mathcal G^r_\pi\)-polarised Legendrian planes to \(\mathcal G^r_{\pi'}\)-polarised Legendrian planes. Hence, \(\Phi^{(r)}\) is Legendrian-polarised.
\end{proof}

\begin{definition}\label{def:hodograph_kcontact_transformation}
Let \((J^r\pi,\mathcal C^r_\pi)\) and \((J^r\pi',\mathcal C^r_{\pi'})\) be the   \(N^r_\pi\)- and \(N^r_{\pi'}\)-contact manifolds associated with \(\pi\) and \(\pi'\). A \emph{hodograph \(k\)-contact transformation} is a local diffeomorphism
\[
 \Phi^{(r)}:\mathcal U\subset J^r\pi\longrightarrow\mathcal U'\subset J^r\pi'
\]
such that \(\T\Phi^{(r)}(\mathcal C^r_\pi)=\mathcal C^r_{\pi'}\), and which arises locally from a hodograph-type change of variables as above. It is called \emph{highest-order polarised} if, in addition,
\[
 \T\Phi^{(r)}(\mathcal G^r_\pi)=\mathcal G^r_{\pi'},\qquad
 \mathcal G^r_\pi=\ker\T\pi_{r,r-1},\qquad
 \mathcal G^r_{\pi'}=\ker\T\pi'_{r,r-1}.
\]
\end{definition}

The terminology is specific to the present paper. The classical ingredient is the hodograph change of variables; the \(k\)-contact ingredient is the preservation of the Cartan distribution, which here is understood as the canonical \(N^r_\pi\)-contact distribution of a finite-order jet bundle.

The highest-order polarisation \(\mathcal G^r_\pi=\ker\T\pi_{r,r-1}\) is not always the correct object for detecting the hodograph character of a transformation. Indeed, a classical point transformation, even if non-projectable over the base, often still prolongs compatibly with the jet tower \(J^r\pi\to J^{r-1}\pi\), and may therefore preserve \(\mathcal G^r_\pi\). What distinguishes a genuine hodograph transformation from a projectable jet transformation is rather the movement of the vertical distribution associated with the chosen independent variables. We therefore introduce the following presentation-level invariant.

\begin{definition}\label{def:hodograph_presentation_defect}
Let \(\Phi^{(r)}:\mathcal U\subset J^r\pi\to\mathcal U'\subset J^r\pi'\) be a hodograph \(k\)-contact transformation. Its \emph{presentation defect} is the relative position of
\[
 \T\Phi^{(r)}(\mathcal V^\pi_r)
 \quad\text{and}\quad
 \mathcal V^{\pi'}_r,
 \qquad
 \mathcal V^\pi_r:=\ker\T\pi_r,\qquad
 \mathcal V^{\pi'}_r:=\ker\T\pi'_r.
\]
We say that the presentation defect vanishes if
\(
 \T\Phi^{(r)}(\mathcal V^\pi_r)=\mathcal V^{\pi'}_r.
\)
\end{definition}

\begin{proposition}\label{prop:hodograph_presentation_defect}
Let \(\Phi_0:E\to E'\) be a local diffeomorphism written in adapted coordinates as \((x^i,u^\alpha)\mapsto (X^i(x,u),U^\alpha(x,u))\), and let \(\Phi^{(r)}\) be its hodograph prolongation. Then, the presentation defect of \(\Phi^{(r)}\) vanishes if and only if \(X^i\) is independent of the fibre variables \(u^\alpha\). Equivalently, \(\Phi_0\) is projectable over a local diffeomorphism \(Q\to Q'\).
\end{proposition}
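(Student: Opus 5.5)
The key structural fact is that the hodograph prolongation is compatible with the jet tower. Since \(\Phi^{(r)}\) is built from \(\Phi_0\) by iterating the transformed total derivatives \(D'_j=\sum_i(A^{-1})^i_jD_i\), it satisfies \(\pi'_{r,0}\circ\Phi^{(r)}=\Phi_0\circ\pi_{r,0}\) on \(\mathcal U_r\). Consequently \(\pi'_r\circ\Phi^{(r)}=\pi'\circ\Phi_0\circ\pi_{r,0}\), which in coordinates reads \(X^i(x,u)\) evaluated on the order-zero part. The plan is to compute \(\T\Phi^{(r)}(\mathcal V^\pi_r)\) through this identity and compare it with \(\mathcal V^{\pi'}_r=\ker\T\pi'_r\). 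Since \(\Phi^{(r)}\) is a local diffeomorphism and both distributions have rank \(\dim J^r\pi-n\), equality is equivalent to the inclusion \(\T\Phi^{(r)}(\mathcal V^\pi_r)\subset\mathcal V^{\pi'}_r\).

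That inclusion holds if and only if \(\T(\pi'_r\circ\Phi^{(r)})\) annihilates \(\ker\T\pi_r\). In other words, the functions \(X^i\circ\pi_{r,0}\) must have differentials vanishing on every \(\pi_r\)-vertical vector.

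\textbf{Sufficiency.} If \(X^i=X^i(x)\), then \(X^i\circ\pi_{r,0}\) depends only on the base coordinates. Its differential is therefore a combination of the \(\dd x^j\), which kill \(\ker\T\pi_r\). So the defect vanishes.

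\textbf{Necessity.} Assume the defect vanishes. Apply \(\dd(X^i\circ\pi_{r,0})\) to \(\partial/\partial u^\alpha\in\mathcal V^\pi_r\); this gives \(\partial X^i/\partial u^\alpha=0\) for every \(i,\alpha\). Hence each \(X^i\) is independent of the fibre variables. The vertical fields \(\partial/\partial u^\alpha_I\) with \(|I|\geq1\) impose no further conditions.

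\textbf{Equivalence with projectability.} Independence of \(X^i\) from \(u^\alpha\) means that \(\pi'\circ\Phi_0\) is constant on the fibres of \(\pi\). It therefore descends to a map \(\bar\Phi_0:Q\to Q'\) with \(x\mapsto X(x)\). This map is a local diffeomorphism: \(\Phi_0\) is a local diffeomorphism whose Jacobian is block triangular, with blocks \(\partial X/\partial x\) and \(\partial U/\partial u\), so \(\det(\partial X^i/\partial x^j)\neq0\). The same fact shows that \(A=\partial X/\partial x\) is invertible everywhere, so \(\mathcal U_r=J^r\pi\) locally. Conversely, projectability means that \(X\) factors through \(\pi\), which is exactly the fibre-independence of \(X\).

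The main obstacle is the structural fact in the first paragraph, namely justifying \(\pi'_{r,0}\circ\Phi^{(r)}=\Phi_0\circ\pi_{r,0}\). It follows from the definition of \(\Phi^{(r)}(j^r_x\phi)=j^r_{x'}\phi'\) with \(\phi'(x')=\Phi_0(\phi(x))\), since the \(0\)-jet of \(\phi'\) at \(x'\) is \(\Phi_0(\phi(x))\). I will make this explicit, after which the rest is routine.
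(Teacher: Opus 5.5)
Your proof is correct and takes the same route as the paper. Both arguments use the identity $\pi'_r\circ\Phi^{(r)}=X\circ\pi_{r,0}$ and apply the chain rule on $\pi_r$-vertical vectors. This shows that $\T\Phi^{(r)}(\mathcal V^\pi_r)\subset\mathcal V^{\pi'}_r$ holds exactly when $\partial X^i/\partial u^\alpha=0$.

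You also make explicit three points the paper leaves implicit: why the $0$-jet of $\Phi^{(r)}(j^r_x\phi)$ is $\Phi_0(\phi(x))$, how the inclusion upgrades to equality by a rank count, and why the descended base map is a local diffeomorphism (via the block-triangular Jacobian).
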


\begin{proof}
Since the target base coordinates are \(X^i(x,u)\), one has
\[
 \pi'_r\circ\Phi^{(r)}=(X^1,\ldots,X^n).
\]
If \(Y\in\mathcal V^\pi_r=\ker\T\pi_r\), then \(Y(x^i)=0\). Hence
\[
 \T(\pi'_r\circ\Phi^{(r)})(Y)=\sum_{i=1}^nY(X^i)\frac{\partial}{\partial X^i}
 =\sum_{i=1}^n\sum_{\alpha=1}^mY(u^\alpha)\frac{\partial X^i}{\partial u^\alpha}\frac{\partial}{\partial X^i}.
\]
Therefore, \(\T\Phi^{(r)}(Y)\) is vertical for \(\pi'_r\) for all \(Y\in\mathcal V^\pi_r\) precisely when \(\partial X^i/\partial u^\alpha=0\) for all \(i,\alpha\). This is exactly the condition that \(\Phi_0\) be projectable over the base.
\end{proof}

Thus the polarised \(k\)-contact viewpoint separates two notions that are often entangled in coordinates. The Cartan distribution records holonomicity and is preserved by the hodograph prolongation. The distribution \(\mathcal V^\pi_r=\ker\T\pi_r\), or equivalently the choice of independent variables, records the chosen jet presentation and is not preserved by genuine hodograph transformations. This is the sense in which a hodograph transformation is a change of polarised \(k\)-contact presentation rather than merely a change of coordinates.

\begin{proposition}\label{prop:hodograph_preserves_holonomicity}
Let \(\Phi^{(r)}:\mathcal U\subset J^r\pi\to\mathcal U'\subset J^r\pi'\) be a hodograph \(k\)-contact transformation. If \(S\subset\mathcal U\) is a holonomic submanifold of \(J^r\pi\) and \(\Phi^{(r)}(S)\) is transverse to \(\pi'_r:J^r\pi'\to Q'\), then \(\Phi^{(r)}(S)\) is a holonomic submanifold of \(J^r\pi'\).
\end{proposition}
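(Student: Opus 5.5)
The argument reduces the statement to the holonomicity criterion of Proposition~\ref{prop:holonomic_characterisations}, applied on the target side. It uses only that $\Phi^{(r)}$ preserves the Cartan distribution, so polarisation plays no role.

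\textbf{Step 1: dimension.} Since $S$ is holonomic, it is an $n$-dimensional submanifold with $\T S\subset\mathcal C^r_\pi$. This follows from Proposition~\ref{prop:holonomic_characterisations}, or from Proposition~\ref{prop:holonomic_sections_are_polarised_legendrian}, which shows that $S$ is polarised Legendrian. Because $\Phi^{(r)}$ is a local diffeomorphism, $S':=\Phi^{(r)}(S)$ is again an $n$-dimensional (immersed, and locally embedded) submanifold of $\mathcal U'$.

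\textbf{Step 2: $S'$ is an integral submanifold.} By the defining property $\T\Phi^{(r)}(\mathcal C^r_\pi)=\mathcal C^r_{\pi'}$ of a hodograph $k$-contact transformation,
\[
\T S'=\T\Phi^{(r)}(\T S)\subset\T\Phi^{(r)}(\mathcal C^r_\pi)=\mathcal C^r_{\pi'}.
\]

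\textbf{Step 3: $S'$ is locally a section.} By hypothesis $S'$ is transverse to $\pi'_r$, which here means $\T S'\cap\ker\T\pi'_r=0$. Since $\dim S'=n=\dim Q'$, the restriction $\pi'_r|_{S'}$ is a local diffeomorphism. Hence, around each of its points, $S'$ is the image of a local section $\psi:U'\subset Q'\to J^r\pi'$ of $\pi'_r$.

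\textbf{Step 4: holonomicity.} We now have $\T\psi(\T U')\subset\mathcal C^r_{\pi'}$. The last assertion of Proposition~\ref{prop:holonomic_characterisations} then gives $\psi=j^r\phi'$ for some local section $\phi'$ of $\pi'$. Therefore $S'$ is locally holonomic, which is the claim.

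\textbf{Main point of care: transversality.} The one step needing attention is Step 3, specifically how "transverse to $\pi'_r$" is read. The intended reading is $\T S'\cap\ker\T\pi'_r=0$, equivalently $\T S'\cap\mathcal G^r_{\pi'}=0$, since $\T S'\subset\mathcal C^r_{\pi'}$ and $\mathcal C^r_{\pi'}\cap\ker\T\pi'_r=\mathcal G^r_{\pi'}$. If transversality is meant in the looser sense that $\T S'+\ker\T\pi'_r=\T J^r\pi'$, we argue as follows. Because $\dim S'=n$ and $\operatorname{codim}\ker\T\pi'_r=n$, the sum condition is equivalent to the trivial-intersection condition. So the two readings agree, and the argument goes through.
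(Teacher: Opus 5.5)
Your proposal is correct and follows essentially the same argument as the paper. In both, $\Phi^{(r)}(S)$ is integral for $\mathcal C^r_{\pi'}$ because the Cartan distribution is preserved, it is locally the image of a section of $\pi'_r$ by transversality, and it is holonomic by the Cartan criterion of Proposition~\ref{prop:holonomic_characterisations}; your dimension count and your check that the two readings of transversality coincide make explicit what the paper leaves implicit.
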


\begin{proof}
Since \(S\) is holonomic, \(\T S\subset\mathcal C^r_\pi\). Since \(\Phi^{(r)}\) is \(k\)-contact, one has \(\T\Phi^{(r)}(\T S)\subset\mathcal C^r_{\pi'}\). Hence \(\Phi^{(r)}(S)\) is an integral submanifold of the target Cartan distribution. By assumption, \(\Phi^{(r)}(S)\) is transverse to \(\pi'_r\), so locally it is the image of a section \(\psi':U'\to J^r\pi'\). The condition \(\T\psi'(\T U')\subset\mathcal C^r_{\pi'}\) implies, by the Cartan criterion for holonomicity, that \(\psi'=j^r\phi'\) for a local section \(\phi':U'\to E'\). Thus \(\Phi^{(r)}(S)\) is holonomic.
\end{proof}

This proposition explains why hodograph transformations are naturally \(k\)-contact-theoretic. They need not preserve the original fibration, and hence they need not preserve the original vertical directions. Nevertheless, they preserve the Cartan \(N^r_\pi\)-contact distribution, so they preserve holonomicity whenever the transformed integral submanifold is transverse to the new projection.

\begin{definition}\label{def:hodograph_transform_pde}
Let \(\mathcal E\subset J^r\pi\) be a PDE and let \(\Phi^{(r)}:\mathcal U\subset J^r\pi\to\mathcal U'\subset J^r\pi'\) be a hodograph \(k\)-contact transformation. If \(\Phi^{(r)}(\mathcal E\cap\mathcal U)\) is a regular embedded submanifold of \(J^r\pi'\), we call
\[
 \mathcal E':=\Phi^{(r)}(\mathcal E\cap\mathcal U)
\]
the \emph{hodograph transform} of \(\mathcal E\).
\end{definition}

\begin{theorem}\label{thm:hodograph_transforms_solutions}
Let \(\mathcal E\subset J^r\pi\) be a regular PDE and let \(\mathcal E'=\Phi^{(r)}(\mathcal E\cap\mathcal U)\subset J^r\pi'\) be its hodograph transform. If \(\phi:U\to E\) is a solution of \(\mathcal E\), \(j^r\phi(U)\subset\mathcal U\), and \(\Phi^{(r)}(j^r\phi(U))\) is transverse to \(\pi'_r\), then there exists a local section \(\phi':U'\to E'\) solving \(\mathcal E'\) and satisfying
\[
 \Phi^{(r)}(j^r\phi(U))=j^r\phi'(U').
\]
\end{theorem}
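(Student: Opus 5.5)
The plan is to reduce the statement to Proposition~\ref{prop:hodograph_preserves_holonomicity} and then add the single observation that the equation is carried along. Set $S:=j^r\phi(U)$. Since $\phi$ solves $\mathcal E$, we have $S\subset\mathcal E$. By hypothesis $S\subset\mathcal U$, so $S\subset\mathcal E\cap\mathcal U$. By Proposition~\ref{prop:holonomic_characterisations}, $S$ is a holonomic $n$-dimensional submanifold with $\T S\subset\mathcal C^r_\pi$, and by Proposition~\ref{prop:holonomic_sections_are_polarised_legendrian} it is a polarised Legendrian submanifold of $(J^r\pi,\mathcal C^r_\pi,\mathcal G^r_\pi)$.

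The first step is to push $S$ forward. Since $\Phi^{(r)}$ is a local diffeomorphism with $\T\Phi^{(r)}(\mathcal C^r_\pi)=\mathcal C^r_{\pi'}$, the image $S':=\Phi^{(r)}(S)$ is, after shrinking $U$ if needed, an embedded $n$-dimensional submanifold with $\T S'\subset\mathcal C^r_{\pi'}$. By hypothesis $S'$ is transverse to $\pi'_r$. Proposition~\ref{prop:hodograph_preserves_holonomicity} therefore applies directly. Locally $S'$ is the image of a section $\psi':U'\to J^r\pi'$, where $U'=\pi'_r(S')$ is open, and Cartan tangency together with the holonomicity criterion of Proposition~\ref{prop:holonomic_characterisations} gives $\psi'=j^r\phi'$ for the local section $\phi':=\pi'_{r,0}\circ\psi'$ of $\pi'$. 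This yields $\Phi^{(r)}(j^r\phi(U))=j^r\phi'(U')$.

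The second step is to check that $\phi'$ solves $\mathcal E'$. This is immediate from the definition of the hodograph transform: $j^r\phi'(U')=\Phi^{(r)}(S)\subset\Phi^{(r)}(\mathcal E\cap\mathcal U)=\mathcal E'$. By the definition of classical solution, or equivalently Theorem~\ref{thm:pde_solutions_polarised_legendrian}, $\phi'$ is then a local solution of $\mathcal E'$.

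The only delicate point is the localisation. Transversality to $\pi'_r$ guarantees only that $\pi'_r|_{S'}$ is a local diffeomorphism, not a global one, so $S'$ need not be a single graph. I would handle this by stating the conclusion germwise: I would shrink $U$ around each point until $\pi'_r|_{S'}$ is injective, so that $S'$ is the image of a single section over $U'$. I would also note that the embedding of $S'$ follows from $\Phi^{(r)}$ being a local diffeomorphism restricted to the embedded $S$. Beyond this bookkeeping, the argument is a direct combination of the earlier results.
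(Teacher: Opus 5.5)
Your proposal is correct and follows the paper's proof essentially verbatim: you set $S=j^r\phi(U)\subset\mathcal E\cap\mathcal U$, invoke Proposition~\ref{prop:hodograph_preserves_holonomicity} to get $\Phi^{(r)}(S)=j^r\phi'(U')$, and conclude $j^r\phi'(U')\subset\Phi^{(r)}(\mathcal E\cap\mathcal U)=\mathcal E'$. Your remark on localisation is a correct refinement of a point the paper leaves implicit: transversality only makes $\pi'_r$ restricted to $\Phi^{(r)}(S)$ a local diffeomorphism, so the equality with a single graph must be read germwise after shrinking $U$.
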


\begin{proof}
Since \(\phi\) solves \(\mathcal E\), the holonomic submanifold \(S=j^r\phi(U)\) is contained in \(\mathcal E\cap\mathcal U\). By Proposition~\ref{prop:hodograph_preserves_holonomicity}, the transversality of \(\Phi^{(r)}(S)\) to \(\pi'_r\) implies that \(\Phi^{(r)}(S)=j^r\phi'(U')\) for some local section \(\phi':U'\to E'\). Since \(S\subset\mathcal E\cap\mathcal U\), its image is contained in \(\mathcal E'=\Phi^{(r)}(\mathcal E\cap\mathcal U)\). Hence \(j^r\phi'(U')\subset\mathcal E'\), so \(\phi'\) is a solution of the transformed equation.
\end{proof}
\begin{example}\label{ex:legendre_contact_transformation}
Let \(J^1(\mathbb R,\mathbb R)\) have coordinates \((x,u,p)\) and adapted \(1\)-contact form \(\eta=\dd u-p\,\dd x\). The Legendre transformation
\[
\Phi(x,u,p)=(X,U,P)=(p,u-xp,-x)
\]
satisfies \(\Phi^*(\dd U-P\,\dd X)=\dd u-p\,\dd x\), and hence is a one-contact transformation. It is not adapted to the jet fibration, since the new independent variable is \(X=p\). If \(u=u(x)\), \(p=u_x\), and \(u_{xx}\neq0\), then \(X=u_x\) is a valid local variable and
\[
U(X)=u(x)-xu_x(x),\qquad \frac{\dd U}{\dd X}=-x=P.
\]
Thus, the transformed curve is again holonomic on the regular locus \(u_{xx}\neq0\). However, \(\T\Phi(\partial_p)=\partial_X-x\partial_U\), which is not tangent to the new vertical polarisation \(\langle\partial_P\rangle\), and \(\T\Phi(D_x)=-\partial_P\). Therefore, \(\Phi\) is not a Legendrian-polarised transcontact morphism. It is instead an example of a one-contact transformation preserving selected holonomic curves under a regularity condition.
\end{example}
\begin{example} \label{ex:elementary_hodograph_kcontact}
Let \(\pi:\mathbb R^2_{x,u}\to\mathbb R_x\) and \(\pi':\mathbb R^2_{X,U}\to\mathbb R_X\). Consider the classical hodograph exchange
\[
 X=u,\qquad U=x.
\]
On \(J^1\pi\), with coordinates \((x,u,p)\), the non-degeneracy condition is \(p\neq0\). The transformed derivative is
\[
 P=U_X=\frac{D_xU}{D_xX}=\frac1p.
\]
Hence, the first prolongation is
\[
 \Phi^{(1)}(x,u,p)=\left(u,x,\frac1p\right).
\]
If \(\eta=\dd u-p\,\dd x\) and \(\eta'=\dd U-P\,\dd X\), then
\[
 (\Phi^{(1)})^*\eta'
 =
 \dd x-\frac1p\,\dd u
 =
 -\frac1p(\dd u-p\,\dd x)
 =
 -\frac1p\eta.
\]
Thus, \(\Phi^{(1)}\) preserves the Cartan distribution, and hence is a local \(1\)-contact transformation on the open set \(p\neq0\). However, it does not preserve the vertical distribution associated with the original independent variable. Indeed, \(\mathcal V^\pi_1=\ker\T\pi_1=\langle\partial_u,\partial_p\rangle\), whereas \(\T\Phi^{(1)}(\partial_u)=\partial_X\), which is not vertical for \(\pi'_1\). Thus, the transformation preserves the Cartan distribution \(\mathcal C^1_\pi\) but changes the jet presentation.
\end{example}

\begin{remark}\label{rem:hodograph_not_projectable}
The example shows why the presentation defect is more informative here than the highest-order polarisation alone. For the elementary first-order hodograph transformation, \(\mathcal G^1_\pi=\langle\partial_p\rangle\) is mapped to \(\mathcal G^1_{\pi'}=\langle\partial_P\rangle\), since \(\T\Phi^{(1)}(\partial_p)=-p^{-2}\partial_P\). Nevertheless, the transformation is genuinely non-projectable because it does not preserve \(\ker\T\pi_1\). Thus the essential hodograph feature is not a failure of the highest-order vertical polarisation, but the change of the vertical distribution associated with the chosen independent variables.
\end{remark}

\subsection{Miura maps, solitons and polarised {\it k}-contact correspondences}
\label{subsec:miura_kcontact}

We now describe a classical solitonic construction in a form that is particularly natural from the viewpoint of polarised \(k\)-contact geometry. Let \(Q=\mathbb R^2_{x,t}\), while defining \(\pi:Q\times\mathbb R_u\to Q\) and \(\pi':Q\times\mathbb R_v\to Q\). Consider the KdV equation
\[
 \mathcal E_{\mathrm{KdV}}=\{u_t+6uu_x+u_{xxx}=0\}\subset J^3\pi
\]
and the modified KdV equation with parameter \(\lambda\) of the form
\[
 \mathcal E_{\mathrm{mKdV},\lambda}=\{v_t-6(v^2-\lambda)v_x+v_{xxx}=0\}\subset J^3\pi'.
\]
The Gardner--Miura relation takes the form \(R_\lambda:=u-\lambda-v_x+v^2=0\). To use it to study KdV and mKdV,$\lambda$ equations, the natural ambient space is \(\mathcal J^{3,4}:=J^3\pi\times_QJ^4\pi'\) because, after substituting \(u=\lambda+v_x-v^2\), the term \(u_{xxx}\) involves derivatives of \(v\) up to order four. The space \(\mathcal J^{3,4}\) carries the product Cartan distribution \(\mathcal C^{3,4}=(Tp_u)^{-1}(\mathcal C^3_\pi)\cap(Tp_v)^{-1}(\mathcal C^4_{\pi'})\), where \(p_u:\mathcal J^{3,4}\to J^3\pi\) and \(p_v:\mathcal J^{3,4}\to J^4\pi'\) are the natural projections. By Proposition~\ref{prop:product_jet_spaces_polarised_k_contact}, \(\mathcal J^{3,4}\) is a polarised \(k\)-contact space, and \(p_u\) and \(p_v\) are Legendrian-polarised transcontact morphisms to the two jet factors.

Let \(\mathcal M_\lambda:=\{D_I R_\lambda=0,\ 0\le |I|\le 3\}\subset\mathcal J^{3,4}\) be the prolonged Miura submanifold. On every regular component, it carries the induced Cartan distribution \(\mathcal C_{\mathcal M_\lambda}:=T\mathcal M_\lambda\cap\mathcal C^{3,4}\). Thus, in the terminology of this paper, the Miura substitution is represented by a \ submanifold of a product jet space, together with the two induced projections to the jet factors.

Let \(F_{\mathrm{KdV}}:=u_t+6uu_x+u_{xxx}\) and \(F_{\mathrm{mKdV},\lambda}:=v_t-6(v^2-\lambda)v_x+v_{xxx}\). A direct computation gives
\[
F_{\mathrm{KdV}}\big|_{u=\lambda+v_x-v^2}=(D_x-2v)F_{\mathrm{mKdV},\lambda}.
\]
Indeed, since \(u=\lambda+v_x-v^2\), one has \(u_t=v_{xt}-2vv_t\), \(u_x=v_{xx}-2vv_x\), and \(u_{xxx}=v_{xxxx}-6v_xv_{xx}-2vv_{xxx}\). Substitution gives
\[
\begin{split}
F_{\mathrm{KdV}}\big|_{u=\lambda+v_x-v^2}
&=v_{xt}-2vv_t+v_{xxxx}-6(v^2-\lambda)v_{xx}\\
&\quad -12vv_x^2+12v(v^2-\lambda)v_x-2vv_{xxx},
\end{split}
\]
which is exactly \((D_x-2v)F_{\mathrm{mKdV},\lambda}\). Let
\[
\mathcal M_\lambda^{(\mathrm{comp})}:=
\mathcal M_\lambda\cap p_v^{-1}\bigl(\mathcal E_{\mathrm{mKdV},\lambda}^{(1)}\bigr),
\]
where \(\mathcal E_{\mathrm{mKdV},\lambda}^{(1)}\subset J^4\pi'\) is the first prolongation of the modified KdV equation. Equivalently, \(\mathcal M_\lambda^{(\mathrm{comp})}\) is obtained by imposing the prolonged Miura constraints together with \(F_{\mathrm{mKdV},\lambda}=0\) and \(D_xF_{\mathrm{mKdV},\lambda}=0\). The identity above implies, on this compatibility locus, that \(p_u(\mathcal M_\lambda^{(\mathrm{comp})})\subset\mathcal E_{\mathrm{KdV}}\).

Now let \(v\) be a holonomic solution of the modified KdV equation and set \(u=\lambda+v_x-v^2\). Then \(\Sigma_v:=\{(j^3_qu,j^4_qv):q\in Q\}\subset\mathcal M_\lambda\) is a holonomic integral surface of \(\mathcal C_{\mathcal M_\lambda}\). Its projections are \(p_u(\Sigma_v)=j^3u(Q)\) and \(p_v(\Sigma_v)=j^4v(Q)\). Since \(\pi'_{4,3}(p_v(\Sigma_v))\subset\mathcal E_{\mathrm{mKdV},\lambda}\), the function \(F_{\mathrm{mKdV},\lambda}\) and its total derivative \(D_xF_{\mathrm{mKdV},\lambda}\) vanish along \(j^4v(Q)\). The identity above therefore implies \(p_u(\Sigma_v)\subset\mathcal E_{\mathrm{KdV}}\). Hence, the Miura construction sends holonomic Legendrian solutions of the modified KdV equation to holonomic Legendrian solutions of the KdV equation through the  submanifold \(\mathcal M_\lambda\) of the product jet space.
This correspondence also contains the usual soliton-generating mechanism. Let \(\lambda=k^2\), with \(k\neq0\), and consider
\[
 v(x,t)=k\tanh(kx-4k^3t+c).
\]
Then, \(v\) satisfies
\[
 v_t-6(v^2-k^2)v_x+v_{xxx}=0.
\]
Applying the Gardner--Miura relation gives
\[
 u=k^2+v_x-v^2=2k^2\operatorname{sech}^2(kx-4k^3t+c),
\]
which is the one-soliton solution, for the above sign convention, of the KdV equation \(u_t+6uu_x+u_{xxx}=0\). In the polarised \(k\)-contact language, the kink solution of the modified equation determines a holonomic integral surface in \(J^4\pi'\). Together with the prolonged Miura constraints, it defines a holonomic integral surface of \(\mathcal M_{k^2}^{(\mathrm{comp})}\). Its projection to \(J^3\pi\) is the holonomic integral surface corresponding to the KdV soliton. Hence, soliton generation is represented inside the present formalism by a holonomic integral surface of the Miura submanifold \(\mathcal M_{k^2}\), whose projections give the modified KdV kink and the KdV soliton.

\begin{remark}
There are two Miura branches,
\[
 u=\lambda+v_x-v^2,\qquad
 \widetilde u=\lambda-v_x-v^2.
\]
For a fixed modified field \(v\), they produce two KdV fields \(u\) and \(\widetilde u\). Eliminating \(v\) gives a B\"acklund-type relation between two KdV solutions. Indeed,
\[
 u-\widetilde u=2v_x,\qquad
 u+\widetilde u=2\lambda-2v^2.
\]
Thus, the two Miura branches factor through an intermediate modified field and can be represented by a middle \ submanifold of a product jet space whose two projections give the two KdV fields. Geometrically, this is again described by a middle space carrying the induced product Cartan distribution and by two projections onto the KdV jet space. The polarised \(k\)-contact structure organises the Bäcklund-type relation as a finite-order product-space construction relating holonomic integral submanifolds.
\end{remark}

\begin{remark}
Iterated Miura, Darboux or B\"acklund steps can be treated in the same way. Each step is represented by a \(k\)-contact submanifold of a suitable product of finite jet spaces, and a chain of such steps gives a chain of transformations of holonomic integral submanifolds. Classical multi-soliton formulae arise after solving the corresponding auxiliary systems or after imposing the usual algebraic superposition rules. The role of the polarised \(k\)-contact formalism is to provide the intrinsic finite-order geometry of these product-space constructions, not to replace the classical inverse-scattering, Darboux or tau-function computations.
\end{remark}

\section{Conclusions and outlook}

This paper has shown that finite-order jet geometry admits an intrinsic formulation inside polarised \(k\)-contact geometry. The first structural result is Theorem~\ref{cor:Cartan_Nr_contact}: for every fibred manifold \(\pi:E\to Q\), the Cartan distribution \(\mathcal C^r_\pi\) on \(J^r\pi\) is an \(N^r_\pi\)-contact distribution, with
\[
N^r_\pi=m\binom{n+r-1}{r-1}.
\]
For \(r>1\), this is not obtained by merely collecting the standard higher-order contact forms \(\theta^\alpha_I\). These forms generate the Cartan codistribution, but they are not, in general, adapted \(N^r_\pi\)-contact forms. The adapted forms \(\bm\eta^r_\pi\) are obtained locally from local Reeb frames \(R^\alpha_J\), while the invariant objects are \(\mathcal C^r_\pi\), the highest-order vertical polarisation \(\mathcal G^r_\pi=\ker\T\pi_{r,r-1}\), and the Spencer operator \(\mathfrak S^r_\pi\). In this sense, the Spencer operator is not an external device added to jet geometry: it is the canonical quotient-valued Spencer form associated with the \(k\)-contact distribution.

The second structural result is the intrinsic recognition of finite-order jet bundles. Definitions~\ref{def:higher_order_polarisation}, \ref{def:split_polarisation}, \ref{def:Spencer_type_derived_flag} and \ref{def:polarised_jet_type} isolate the precise polarised \(k\)-contact data needed to recover a jet space. The relevant tower is not an arbitrary derived flag: locally one chooses an integrable isotropic complement \(H\subset\mathcal D\) of \(\mathcal P\), an integrable complement \(\mathcal V_r\) of \(H\) in \(\T U\), and then sets \(\mathcal V_0=\mathcal P|_U\) and \(\mathcal V_s=\mathcal D^{(s)}\cap\mathcal V_r\) for \(s=1,\ldots,r-1\). The graded quotients of this tower are required to reproduce the symmetric Spencer tower, with \(W=\mathcal V_r/\mathcal V_{r-1}\) and bracket action by \(H\) corresponding to Spencer contraction. Theorem~\ref{thm:local_split_jet_recognition} then proves that these conditions are exactly those that characterise the local jet model \((J^r\pi,\mathcal C^r_\pi,\mathcal G^r_\pi)\).

The global result, Theorem~\ref{thm:global_jet_recognition}, separates the local recognition problem from the global descent problem. If \(\mathcal V_r\) and \(\mathcal V_{r-1}\) are globally defined, regular and simple, their leaf spaces reconstruct the base and total space,
\[
Q=M/\mathcal V_r,\qquad E=M/\mathcal V_{r-1},
\]
and the inclusion \(\mathcal V_{r-1}\subset\mathcal V_r\) induces a fibration \(\pi:E\to Q\). The remaining global condition is that the local jet identifications glue through \(r\)-th jet prolongations of local bundle automorphisms of this fibration. Thus the global theorem does not simply assert that a local normal form globalises automatically; it identifies the precise descent data needed to reconstruct \(J^r\pi\) from polarised \(k\)-contact geometry.

The resulting framework gives an intrinsic reformulation of holonomicity. Proposition~\ref{prop:pde_solutions_as_cartan_legendrian} and Theorem~\ref{thm:pde_solutions_polarised_legendrian} show that holonomic solution submanifolds are precisely polarised Legendrian submanifolds. For a PDE \(\mathcal E\subset J^r\pi\), the induced distribution \(\mathcal C_{\mathcal E}=\T\mathcal E\cap\mathcal C^r_\pi\) records the contact-compatible directions, while the condition of being complementary to \(\mathcal G^r_\pi\) records the graph condition. Hence the formalism separates three ingredients that are often intertwined in the standard jet picture: contact compatibility, symbolic vertical directions, and transversality to the projection.

This separation is also useful for initial data and normal forms. The set \(\operatorname{Pol}_{\mathcal E}(I_e)\) introduced in Definition~\ref{def:polarisation_space_initial_data} describes the possible \(n\)-dimensional isotropic solution directions extending a prescribed initial direction \(I_e\), while the condition \(H_e\cap\mathcal G^r_{\pi,e}=0\) enforces the polarised graph requirement. Proposition~\ref{prop:polarised_initial_data_solutions} then reduces the local construction of solutions extending the initial datum to the problem of finding involutive sections of this family of admissible directions. Similarly, the vertical principal symbol \(\sigma_{\mathcal E}=\left.\dd F\right|_{\mathcal G^r_\pi|_{\mathcal E}}\) measures the failure of \(\mathcal E\) to be transverse to the highest-order polarisation. Proposition~\ref{prop:normal_form_polarisation_symbol} expresses normal forms, symbolic degeneracy and polarisation tangency in the same geometric language.

The Hamiltonian interpretation of Lie characteristics is another consequence of using adapted \(k\)-contact forms. Given a local adapted form \(\bm\eta^r_\pi\), a Cartan vector field \(X\) has Hamiltonian \(\bm h_X=-\iota_X\bm\eta^r_\pi\), and Proposition~\ref{prop:locus_hamiltonian_zero} identifies the zero locus of \(\bm h_X\) with the locus where \(X\) takes values in \(\mathcal C^r_\pi\). Proposition~\ref{prop:triangular_hamiltonian_characteristics} relates the components of \(\bm h_X\) to the usual characteristics and their total derivatives for evolutionary and prolonged symmetries. Theorem~\ref{thm:hamiltonian_invariant_subsystems} then interprets invariant subsystems as Hamiltonian tangency loci.

The reduction results show that the polarised \(k\)-contact setting is not confined to reductions inside a fixed jet bundle. In Section~\ref{Sec:SymReductionJets}, one first reduces an ambient constrained submanifold \(N\subset J^r\pi\) by quotienting enlarged distributions such as \(\mathcal C^r_\pi+\mathcal D_G\), \(\T N+\mathcal D_G\) and \(\mathcal G^r_\pi+\mathcal D_G\). The reduced ambient quotient need not inherit the original jet presentation. However, when the reduced polarised distribution satisfies the jet-type recognition conditions, Theorem~\ref{thm:modular_reduction_jet_type} reconstructs it as a genuine finite-order jet model, and \((\mathcal E\cap N)/G\) is then a differential equation in that reconstructed jet bundle.

The same principle underlies the treatment of non-projectable transformations and auxiliary systems. Hodograph transformations, considered in Section~\ref{Sec:Hodograph}, may preserve the Cartan distribution without preserving the original fibration. Proposition~\ref{prop:hodograph_preserves_holonomicity} and Theorem~\ref{thm:hodograph_transforms_solutions} show that such transformations preserve holonomic solutions precisely when the transformed integral submanifold satisfies the required transversality condition for the new projection. Section~\ref{Sec:BacklundLax} gives a parallel interpretation of coverings, Bäcklund transformations, and Lax representations: auxiliary compatibility conditions become Cartan compatibility conditions and curvature-defect conditions in enlarged spaces; when the relevant geometry carries a \(k\)-contact structure, these defects can also be interpreted through Hamiltonian zero-locus conditions.

Overall, the paper changes the role of \(k\)-contact geometry in the geometry of differential equations. It is not used only as a multi-time analogue of contact geometry, nor only as a framework for nonconservative field theories. It provides a geometric category in which finite-order jet bundles form a rigid, intrinsically recognisable sector, while transformations, reductions, Hamiltonian loci and auxiliary systems that are awkward in a fixed jet presentation can still be described naturally. The central message is therefore twofold: finite-order jet geometry is polarised \(k\)-contact geometry of jet type, and polarised \(k\)-contact geometry is broad enough to compare different jet presentations through their Cartan, Spencer and polarisation data.

Several questions remain open. One natural direction is the relation with infinite-order jet geometry: finite-order jets form a tower of polarised \(N^r_\pi\)-contact manifolds, whereas the Cartan distribution on \(J^\infty\pi\) is involutive. Another is the systematic reformulation of formal integrability, Spencer cohomology, characteristic varieties and Vessiot theory in terms of the polarised tower \(\mathcal V_0\subset\cdots\subset\mathcal V_r\). A third direction is the comparison with multisymplectic, multicontact, \(k\)-symplectic, polysymplectic and related field-theoretic formalisms. The present paper does not claim that all these structures are \(k\)-contact structures. It proves instead that finite-order jet geometry has a precise polarised \(k\)-contact characterisation, and that this characterisation supplies a common language for holonomicity, PDEs, symbols, initial data, Lie characteristics, reductions, hodograph transformations, Bäcklund transformations and Lax representations.

\begin{table}[t]
\centering
\renewcommand{\arraystretch}{1.15}
\begin{tabular}{p{0.23\textwidth}p{0.34\textwidth}p{0.34\textwidth}}
\hline
Direction & \(k\)-contact interpretation & Possible development \\
\hline
Differential equations & Regular submanifold \(\mathcal E\subset J^r\pi\) with an induced geometry \((\mathcal E,\mathcal C_{\mathcal E},\mathcal G^r_{\mathcal E})\), where \(\mathcal C_{\mathcal E}=\T\mathcal E\cap\mathcal C^r_\pi\) and \(\mathcal G^r_{\mathcal E}=\T\mathcal E\cap\mathcal G^r_\pi\). & Study solutions as transverse polarised Legendrian submanifolds and analyse characteristic, symmetry and reduction structures intrinsically. \\
Jet-type reductions & A quotient of a polarised \(k\)-contact equation of jet type that may satisfy the jet-type recognition conditions. & Determine when a reduction of an equation is reconstructed as a genuine jet bundle \(J^s\rho\), possibly over new independent and dependent variables. \\
Hamiltonian symmetries & Infinitesimal $k$-contact symmetries can be encoded by \(k\)-contact Hamiltonians, with their Cartan loci giving invariant geometric subspaces. & Develop symmetry reduction and momentum-map-type constructions for equations viewed as polarised \(k\)-contact manifolds. \\
Twisted symmetries & \(\lambda\)-, \(\mu\)-, \(\Lambda\)- and related twisted symmetries may be interpreted as symmetries of a flatly deformed Cartan--Spencer structure, or as ordinary Cartan symmetries on a suitable covering. & Develop twisted \(k\)-contact Hamiltonians whose components encode covariant characteristics \((Q,\nabla Q,\nabla^2Q,\ldots)\), and relate their zero loci to nonclassical or twisted reductions of differential equations. \\
Non-projectable transformations & Transformations such as hodograph-type maps need not preserve the original jet fibration. On their regular domains, their prolongations may send holonomic submanifolds for one jet presentation to holonomic submanifolds for another. & Treat changes of independent and dependent variables through regular transcontact transformations between jet presentations. \\
Integrable systems & Classical structures such as Bäcklund transformations, Miura maps, Lax representations and soliton reductions often relate different jet descriptions. & Interpret such relations as correspondences or reductions in the broader category of polarised \(k\)-contact geometries. \\
Riemann invariants & Riemann invariants may be viewed as coordinates on quotients of Cartan-invariant loci. & Reinterpret rank-\(k\) reductions, conditional symmetries and quasi-rectifiable constructions through jet-type polarised quotients. \\
Solvable reductions & Reductions by solvable Lie algebras suggest successive Hamiltonian reductions in the polarised \(k\)-contact category. & Relate solvable symmetry reduction to $k$-contact or noncommutative analogues of Liouville integrability. \\
\hline
\end{tabular}
\caption{Some possible directions suggested by the polarised \(k\)-contact interpretation of finite-order jet geometry. The table is intended as a guide to future developments rather than as a list of results proved in the present work.}
\label{tab:future_directions_k_contact}
\end{table}
Table \ref{tab:future_directions_k_contact} summarises several further directions of research. We shall not pursue these directions here. Their role is only to indicate that the recognition of jet geometry inside polarised \(k\)-contact geometry provides a common language for several constructions that are usually treated separately in the geometry of differential equations, symmetry reduction, integrable systems and field theory.

Tables \ref{tab:dictionary_structures_and_jet_type}, \ref{tab:dictionary_recognition_and_holonomicity}, \ref{tab:dictionary_pdes_symbols_initial_data}, and \ref{tab:dictionary_symmetries_reductions_applications}  collect the terminology used throughout the paper. They are meant as translation tables between the standard jet language and the polarised \(k\)-contact language introduced above, and should be read as summaries of the constructions proved in the paper.

\begin{table}[p]
\centering
\renewcommand{\arraystretch}{1.12}
\begin{tabular}{p{0.30\textwidth}p{0.31\textwidth}p{0.31\textwidth}}
\hline
Classical or jet-geometric object &  \(k\)-contact object & Meaning in the present paper \\
\hline
Cartan distribution $\mathcal{C}^r_\pi$& $N^r_\pi$-contact distribution of corank \(N^r_\pi=m\binom{n+r-1}{r-1}\)&The Cartan distribution is understood as an $N^r_\pi$-contact distribution.\\
Local contact forms \(\theta^\alpha_I\) & Coordinates of the $N^r_\pi$-contact form  $\bm\eta^r_\pi$ with \( \mathcal{C}^r_\pi=\ker\bm\eta^r_\pi\). &  Local contact forms define \(\mathcal C^r_\pi\), but for \(r>1\) they are not by themselves an adapted \(N^r_\pi\)-contact form; the adapted form is \(\bm\eta^r_\pi\). \\
Cartan symmetries & \(k\)-contact vector fields of \((J^r\pi,\mathcal C^r_\pi)\) & Gives a local Hamiltonian description of Cartan symmetries. \\
Levi tensor for distributions & Map induced locally by  \(-\dd\bm\eta|_{\mathcal D\times\mathcal D}\) & Measures maximal non-integrability and defines orthogonality, isotropy, Legendrianity, etc. \\
Highest-order vertical bundle \(\ker\T\pi_{r,r-1}\) & Polarisation \(\mathcal G^r_\pi=\mathcal V_0\subset\mathcal C^r_\pi\) & A natural polarisation of jet type and order $r$ on jet manifolds. \\
Split Cartan decomposition \(\mathcal C^r_\pi=H^r_\pi\oplus\mathcal G^r_\pi\) & Split polarisation \(\mathcal D=H\oplus\mathcal P\), with \(H\) integrable and isotropic & Provides local total-derivative directions for the reconstruction of a jet tower. \\
Open subset of finite-order jet bundle \(J^r\pi\) & Polarised \(k\)-contact manifold \((M,\mathcal D,\mathcal P)\) of jet type and order \(r\) & Jet bundles are locally recognised intrinsically from \(\mathcal D\), \(\mathcal P\), the split vertical tower and Spencer contractions. \\
Spencer operator and canonical structure \(\mathfrak S^r_\pi\) on \(J^r\pi\) & Canonical quotient-valued Spencer form obtained from \([\mathfrak R\circ\bm\eta^r_\pi]\) for a polarisation of jet type & The canonical object is recovered after projection to the lower jet level. \\
Algebraic Spencer contraction & Mixed part \(\mathfrak R\circ\dd\bm\eta^r_\pi|_{H\times\mathcal P}\) & Brackets of horizontal Cartan directions with the polarisation reproduce Spencer contraction. \\
Vertical jet tower \(\mathcal G^r_\pi\subset\ker\T\pi_{r,r-2}\subset\cdots\subset\ker\T\pi_r\) & Split tower \(\mathcal V_0\subset\cdots\subset\mathcal V_r\), with \(\mathcal V_s=\mathcal D^{(s)}\cap\mathcal V_r\) for \(s<r\) and \(\T M=H\oplus\mathcal V_r\) & Its quotients satisfy \(\mathcal V_s/\mathcal V_{s-1}\simeq S^{r-s}H^*\otimes W\), \(W=\mathcal V_r/\mathcal V_{r-1}\). \\
\hline
\end{tabular}
\caption{Translation dictionary for the basic structures and for the intrinsic polarised \(k\)-contact model of \(J^r\pi\).}
\label{tab:dictionary_structures_and_jet_type}
\end{table}
\clearpage

\begin{table}[p]
\centering
\renewcommand{\arraystretch}{1.12}
\begin{tabular}{p{0.30\textwidth}p{0.31\textwidth}p{0.31\textwidth}}
\hline
Jet-geometric object & Polarised \(k\)-contact object & Meaning in the present paper \\
\hline
Local jet coordinates \((x^i,u^\alpha_I)\) & Coordinates reconstructed from a polarised distribution of jet type, e.g. \(Q=M/\mathcal V_r\), \(E=M/\mathcal V_{r-1}\) and \(H=\langle D_1,\ldots,D_n\rangle\) & The local recognition theorem recovers jet coordinates algorithmically. \\
Local jet model \(J^r\pi\) & Local polarised transcontactomorphism \((M,\mathcal D,\mathcal P)\simeq(J^r\pi,\mathcal C^r_\pi,\mathcal G^r_\pi)\) & A jet-type polarised \(k\)-contact manifold is locally a finite-order jet bundle. \\
Global jet bundle \(J^r\pi\) & Leaf spaces \(Q=M/\mathcal V_r\), \(E=M/\mathcal V_{r-1}\), with compatible jet-type transition maps & The global theorem glues the local jet identifications when the vertical foliations are simple and the changes of chart are \(r\)-jet prolongations. \\
Projection \(J^r\pi\to Q\) & Projection onto quotient \(M\rightarrow M/\mathcal V_r\) & The base of the reconstructed fibration is the leaf space of the largest vertical distribution. \\
Projection \(J^r\pi\to E\) & Projection onto quotient  \(M/\mathcal V_{r-1}\) & The total space of the reconstructed fibration is recovered from the next vertical quotient. \\
Passage from \(J^r\pi\) to \(J^{r+1}\pi\) & Polarised Legendrian prolongation \(\operatorname{Leg}_{\mathcal P}(\mathcal D)\) for \((M,\mathcal D,\mathcal P)=(J^r\pi,\mathcal C^r_\pi,\mathcal G^r_\pi)\) & The passage from order \(r\) to order \(r+1\) is reconstructed as a bundle of polarised Legendrian planes. \\
Holonomic section \(j^r\phi\) & Polarised Legendrian submanifold \(L\), namely  \(\mathcal D|_L=\T L\oplus\mathcal P|_L\) & Holonomicity is contact tangency plus transversality to the polarisation. \\
Map preserving infinitesimal holonomic directions & Legendrian-polarised transcontact morphism & Sends \(\mathcal P\)-polarised Legendrian planes to \(\mathcal P'\)-polarised Legendrian planes, hence preserves the infinitesimal holonomic geometry. \\
Product or auxiliary jet spaces & Product polarised \(k\)-contact manifolds and correspondences & Allows transformations and auxiliary systems to be treated without forcing all data into one fixed jet bundle. \\
\hline
\end{tabular}
\caption{Translation dictionary for local/global recognition, projections, prolongations and holonomic submanifolds.}
\label{tab:dictionary_recognition_and_holonomicity}
\end{table}
\clearpage

\begin{table}[p]
\centering
\renewcommand{\arraystretch}{1.12}
\begin{tabular}{p{0.30\textwidth}p{0.31\textwidth}p{0.31\textwidth}}
\hline
Jet-geometric object & Polarised \(k\)-contact object & Meaning in the present paper \\
\hline
PDE \(\mathcal E\subset J^r\pi\) & Induced system \((\mathcal E,\mathcal C_{\mathcal E},\mathcal G^r_{\mathcal E})\), with \(\mathcal C_{\mathcal E}=\T\mathcal E\cap\mathcal C^r_\pi\) and \(\mathcal G^r_{\mathcal E}=\T\mathcal E\cap\mathcal G^r_\pi\) & A differential equation inherits contact-compatible directions and residual symbolic directions. \\
Geometric symbol of a PDE & Residual polarisation \(\mathcal G^r_{\mathcal E}\) & Measures the highest-order freedom left by the equation. \\
Vertical principal symbol of \(F=0\) & Restriction \(\sigma_{\mathcal E}=\left.\dd F\right|_{\mathcal G^r_\pi|_{\mathcal E}}\) & Normality and degeneracy are detected by transversality to the highest-order polarisation. \\
Classical solution of \(\mathcal E\) & Polarised Legendrian submanifold \(S\subset\mathcal E\) & Solutions are integral for \(\mathcal C_{\mathcal E}\) and transverse to \(\mathcal G^r_\pi\). \\
Foliation by solutions & Involutive rank-\(n\) subdistribution \(\mathcal H\subset\mathcal C_{\mathcal E}\) with \(\mathcal H\cap\mathcal G^r_\pi=0\) & Separates the contact condition, the graph condition and Frobenius integrability. \\
Initial datum \(I_e\subset\mathcal C_{\mathcal E,e}\) & Polarised solution set \(\operatorname{Pol}_{\mathcal E}(I_e)\) & Encodes admissible \(n\)-planes extending the initial direction. \\
Characteristic obstruction for initial data & Intersection \(I_e^{\perp_{\mathcal C}}\cap\mathcal G^r_{\mathcal E,e}\) & Detects whether symbolic directions obstruct compatible solution planes. \\
Hamiltonian locus of vector fields & \(\mathcal E_{\mathscr X}=\{\bm h_{X_1}=\cdots=\bm h_{X_s}=0\}\), with induced \(\mathcal D_{\mathcal E_{\mathscr X}}\) and \(\mathcal P_{\mathcal E_{\mathscr X}}\) & Hamiltonian zero sets define polarised differential equations whenever the induced distributions are regular. \\
Lie symmetry of a PDE $\mathcal{E}$ & \(k\)-contact vector field of $(J^r\pi,\mathcal{C}^r_\pi,\mathcal{G}^r_\pi)$ that is tangent to \(\mathcal E\) & Preserves the differential equation and the induced Cartan distribution. \\
Transcontact map of PDEs & Transcontact morphism \(\Phi:J^r\pi\to J^s\pi'\) with \(\Phi(\mathcal E)\subset\mathcal E'\) & Sends contact-compatible directions of one equation to those of another; it is polarised if it also maps initial polarization into final one. \\
\hline
\end{tabular}
\caption{Translation dictionary for PDEs, symbols, solutions, initial data and Hamiltonian loci.}
\label{tab:dictionary_pdes_symbols_initial_data}
\end{table}
\clearpage

\begin{table}[p]
\centering
\renewcommand{\arraystretch}{1.10}
\begin{tabular}{p{0.30\textwidth}p{0.31\textwidth}p{0.31\textwidth}}
\hline
Jet-geometric object & Polarised \(k\)-contact object & Meaning in the present paper \\
\hline
Cartan symmetry on \(J^r\pi\) & \(k\)-contact vector field  for $(J^r\pi,\mathcal{C}^r_\pi)$ & Lie symmetries are infinitesimal symmetries of the Cartan distribution. \\
Prolongation of a vector field on \(E\) & \(k\)-contact vector field preserving \(\mathcal V_{r-1}\) for a polarised \(k\)-contact manifold of jet type \((M,\mathcal D,\mathcal P)\) & Classical point prolongations preserve the tower level corresponding to \(\ker\T\pi_{r,0}\). \\
Evolutionary vector field & \(k\)-contact vector field taking values in \(\mathcal V_r\) for a polarised \(k\)-contact manifold of jet type \((M,\mathcal D,\mathcal P)\)& Intrinsic version of vertical Cartan symmetries; in the jet model these are encoded by characteristics. \\
Characteristic \(Q^\alpha\) of a vector field \(X^{(r)}\) & Components of \(\iota_{X^{(r)}}\pi_{r,1}^*\bm\eta^1_\pi\) & Classical characteristic calculus is recovered from local \(N^r_\pi\)-contact Hamiltonians. \\
Characteristic bracket & Bracket induced by the \(k\)-contact Hamiltonian bracket \(\{\bm h_{\bm Q},\bm h_{\bm P}\}_{\bm\eta^r_\pi}=\bm h_{[X_{\bm Q}^{(r)},X_{\bm P}^{(r)}]}\) & The Lie algebra of evolutionary characteristics is recovered from the adapted Hamiltonian bracket in characteristic variables. \\
Cartan and polarised loci of \(X\) & \(\mathcal L_{\mathcal D}(X)=\bm h_X^{-1}(0)\) and \(\mathcal L_{\mathcal P}(X)=\{p:X_p\in\mathcal P_p\}\) & Detect tangency to the Cartan \(N^r_\pi\)-contact distribution and to the symbolic polarisation. \\
Invariant subsystem under a Lie algebra & Common Hamiltonian zero locus \(\mathcal Z_{\mathfrak s}=\{\bm h_X=0:X\in\mathfrak s\}\)& Reformulates invariant-solution conditions as Hamiltonian tangency conditions. \\
Reduction by group directions & Quotients of \(\mathcal C^r_\pi+\mathcal D_G\), \(\T\mathcal E+\mathcal D_G\) and \(\mathcal G^r_\pi+\mathcal D_G\) & Allows reductions that preserve Cartan geometry only modulo fundamental directions. \\
Reduced jet equation & Jet-type reduced polarised structure \((\overline{\mathcal E},\overline{\mathcal C}_{\overline{\mathcal E}},\overline{\mathcal P}_{\overline{\mathcal E}})\) & If the quotient data are of jet type, the recognition theorem reconstructs a finite-order jet equation, possibly over new variables. \\
Hodograph transformation & Non-projectable \(k\)-contact transformation & Preserves the Cartan \(N^r_\pi\)-contact distribution on its regular domain, but may change the independent variables and fail to preserve the original vertical polarisation.\\
Bäcklund transformations, coverings and Lax pairs & Polarised \(k\)-contact correspondences or Cartan compatibility loci in enlarged spaces & Auxiliary systems and zero-curvature conditions become Cartan prolongation or curvature-defect conditions. \\
\hline
\end{tabular}
\caption{Translation dictionary for symmetries, characteristics, reductions, hodograph transformations and auxiliary integrable-system structures.}
\label{tab:dictionary_symmetries_reductions_applications}
\end{table}
\clearpage
\printbibliography
\end{document}